\newtheorem{theorem}{Theorem}[section]
\newtheorem{assumption}[theorem]{Assumption}
\newtheorem{corollary}[theorem]{Corollary}
\newtheorem{proposition}[theorem]{Proposition}
\newtheorem{lemma}[theorem]{Lemma}
\theoremstyle{definition}
\newtheorem{definition}[theorem]{Definition}
\newtheorem{example}[theorem]{Example}
\newtheorem{remark}[theorem]{Remark}
\numberwithin{equation}{section}
\newcommand{\N}{\ensuremath{{\mathbb N}}}
\newcommand{\Nz}{\N_0}
\newcommand{\R}{\ensuremath{{\mathbb R}}}
\newcommand{\cond}{\operatorname{cond}}
\DeclareMathOperator{\supp}{supp}
\DeclareMathOperator{\ran}{ran}
\DeclareMathOperator{\rank}{rank}
\DeclareMathOperator{\ranks}{ranks}
\DeclareMathOperator{\diag}{diag}
\DeclareMathOperator{\asm}{\tau}
\DeclareMathOperator{\ramp}{ramp}
\DeclareMathOperator{\rcond}{rcond}
\DeclareMathOperator{\opramp}{mramp}
\DeclareMathOperator{\oprcond}{mrcond}
\newcommand{\lorth}{\ensuremath{\operatorname{orth}^-}}
\newcommand{\rorth}{\ensuremath{\operatorname{orth}^+}}
\def\Chi{\raise .3ex\hbox{\large $\chi$}}
\providecommand{\abs}[1]{\lvert#1\rvert}
\providecommand{\norm}[1]{\lVert#1\rVert}
\providecommand{\bignorm}[1]{\bigl\lVert#1\bigr\rVert}
\providecommand{\Bignorm}[1]{\Bigl\lVert#1\Bigr\rVert}
\DeclareMathOperator*{\DProd}{\times}
\DeclareMathOperator*{\Comp}{\circ}
\DeclareMathOperator*{\TProd}{\otimes}
\DeclareMathOperator*{\KProd}{\otimes}
\DeclareMathOperator*{\KProdBig}{\bigotimes}
\DeclareMathOperator*{\MP}{\bullet}
\DeclareMathOperator*{\RP}{\Join}
\DeclareMathOperator*{\essinf}{ess\,inf}
\newcommand{\SetMinus}{\mathbin{\mathpalette\rsetminusaux\relax}}
\newcommand{\rsetminusaux}[2]{\mspace{-4mu}
    \raisebox{\rsmraise{#1}\depth}{\rotatebox[origin=c]{-20}{$#1\smallsetminus$}}
  \mspace{-4mu}
}
\newcommand{\rsmraise}[1]{%
  \ifx#1\displaystyle .8\else
    \ifx#1\textstyle .8\else
      \ifx#1\scriptstyle .6\else
        .45%
      \fi
    \fi
  \fi}
\newcommand{\Restr}[1]{\raise-0.5ex\hbox{$\mid$}_{#1}}
\newcommand{\partialup}{\rotatebox[origin=c]{15}{\ensuremath{\partial}}\mkern-2mu}
\DeclareMathOperator{\Boundary}{\partialup\!}
\DeclareMathOperator{\Span}{span}
\newcommand{\DelimiterGroup}[4]{
	\ifcase#1\relax
		#2 #4 #3	%
	\or
		\bigl#2 #4 \bigr#3	%
	\or
		\Bigl#2 #4 \Bigr#3	%
	\or
		\biggl#2 #4 \biggr#3	%
	\or
		\Biggl#2 #4 \Biggr#3	%
	\or
	\or
	\or
		\mleft#2 #4 \mright#3	%
	\or
		\left#2 #4 \right#3	%
	\else
	\fi
}
\newcommand{\Par}[2][0]{\DelimiterGroup{#1}{(}{)}{#2}}
\newcommand{\SqBr}[2][0]{\DelimiterGroup{#1}{[}{]}{#2}}
\newcommand{\CuBr}[2][0]{\DelimiterGroup{#1}{\{}{\}}{#2}}
\newcommand{\Abs}[2][0]{\DelimiterGroup{#1}{\lvert}{\rvert}{#2}}
\newcommand{\IndNorm}[2][0]{\DelimiterGroup{#1}{\lvert}{\rvert}{#2}}
\newcommand{\Norm}[2][0]{\DelimiterGroup{#1}{\lVert}{\rVert}{#2}}
\newcommand{\Set}[2][0]{\DelimiterGroup{#1}{\{}{\}}{#2}}
\newcommand{\Tuple}[2][0]{\DelimiterGroup{#1}{(}{)}{#2}}
\newcommand{\IProd}[3][0]{\DelimiterGroup{#1}{\langle}{\rangle}{#2,#3}}
\newcommand{\IntOO}[3][0]{\DelimiterGroup{#1}{(}{)}{#2,#3}}
\newcommand{\Card}[2][0]{\DelimiterGroup{#1}{\lvert}{\rvert}{#2}}
\newcommand{\SSp}[2][]{\mathrm{H}^{#2}_{#1}}
\newcommand{\LSp}[2][]{\mathrm{L}^{#2}_{#1}}
\newcommand{\lSp}[2][]{\ell^{#2}_{#1}}
\newcommand{\MT}{\mathsf{T}\!}
\newcommand{\CQ}{,{\:}\!}
\newcommand{\QQ}{{\:}\!}
\newcommand{\PartNode}[2]{\hat{\tau}_{#1 \CQ #2}}
\newcommand{\PartInt}[2]{\hat{\varOmega}_{#1 \CQ #2}}
\newcommand{\PartMap}[2]{\hat{\phi}_{#1 \CQ #2}}
\newcommand{\PartMapD}[2]{\phi_{#1 \CQ #2}}
\newcommand{\vphi}[2]{\hat{\varphi}_{#1 \CQ #2}}
\newcommand{\vphiD}[2]{\varphi_{#1 \CQ #2}}
\newcommand{\cJ}{\mathcal{J}}
\newcommand{\RefEl}[2]{\varOmega_{#1 \CQ #2}}
\newcommand{\BR}[3][0]{#2^{\DelimiterGroup{#1}{[}{]}{#3}}}
\newcommand{\BM}[3][0]{#2^{\DelimiterGroup{#1}{\{}{\}}{#3}}}
\newcommand{\TTset}{\mathrm{TT}}
\newcommand{\Mat}[1]{\mathrm{U}_{#1}}
\renewcommand{\Vec}[1]{
	\boldsymbol{#1}
}
\newcommand{\Ten}[1]{
	\boldsymbol{#1}
}
\newcommand{\Core}[1]{#1}
\newcommand{\Rep}[1]{
	\mathsf{#1}
}
\DeclareSymbolFont{stmry}{U}{stmry}{m}{n}
\DeclareMathSymbol\llparenthesis\mathop{stmry}{"4C}
\DeclareMathSymbol\rrparenthesis\mathop{stmry}{"4D}
\newcommand{\cI}{\mathcal{I}}
\newcommand{\cV}{\mathcal{V}}
\newcommand{\bB}{\Ten{B}}
\newcommand{\bA}{\Ten{A}}
\newcommand{\bC}{\Ten{C}}
\newcommand{\bu}{\Vec{u}}
\newcommand{\bv}{\Vec{v}}
\newcommand{\bw}{\Vec{w}}
\newcommand{\bbf}{\Vec{f}}
\newcommand{\bg}{\Vec{g}}
\newcommand{\cS}{\mathcal{S}}
\newcommand{\cO}{\mathcal{O}}
\algrenewcommand\algorithmicrequire{\textbf{input:}}
\algrenewcommand\algorithmicensure{\textbf{output:}}
\algrenewcommand\algorithmicfunction{\textbf{function}}
\algrenewcommand\algorithmicwhile{\textbf{while}}
\algrenewcommand\algorithmicdo{}
\algrenewcommand\algorithmicend{\textbf{end}}
\algrenewcommand\algorithmicforall{\textbf{for all}}
\algrenewcommand\algorithmicfor{\textbf{for}}
\algrenewcommand\algorithmicrepeat{\textbf{repeat}}
\algrenewcommand\algorithmicuntil{\textbf{until}}
\algrenewcommand\algorithmicif{\textbf{if}}
\algrenewcommand\algorithmicelse{\textbf{else}}
\algrenewcommand\algorithmicthen{\textbf{then}}
\newcommand{\stsolve}{{\textsc{STSolve}}}
\newcommand{\amen}{{\textsc{AMEn}}}
\setlist{nolistsep}
\title[Running Title]{Stability of Low-Rank Tensor Representations\\ and Structured Multilevel Preconditioning\\ for Elliptic PDEs}
\date{\today}
\author{Markus Bachmayr$^1$}
\address{\rm $^1$ Institut f\"ur Mathematik, Johannes Gutenberg-Universit\"at Mainz, Staudingerweg 9, 55128 Mainz, Germany}
\email[Markus Bachmayr]{bachmayr@uni-mainz.de}
\thanks{M.B.\ acknowledges support by the Hausdorff Center of Mathematics, University of Bonn}
\author{Vladimir Kazeev$^2$}
\email[Vladimir Kazeev]{kazeev@stanford.edu}
\address{\rm $^2$ Department of Mathematics, Stanford University, 450 Serra Mall, 94305 Stanford, USA}
\begin{document}
\maketitle
\vspace{-6pt}
\begin{abstract}
Folding grid value vectors of size $2^L$ into $L$th order tensors of mode size $2\times \cdots\times 2$, combined with low-rank representation in the tensor train format, has been shown to result in highly efficient approximations for various classes of functions. These include solutions of elliptic PDEs on nonsmooth domains or with oscillatory data.
This tensor-structured approach is attractive because it leads to highly compressed, adaptive approximations based on simple discretizations.
Standard choices of the underlying bases, such as piecewise multilinear finite elements on uniform tensor product grids, entail the well-known \emph{matrix ill-conditioning} of discrete operators.
We demonstrate that, for low-rank representations, the use of tensor structure itself additionally introduces \emph{representation ill-conditioning}, a new effect specific to computations in tensor networks.
We analyze the tensor structure of a BPX preconditioner for a second-order linear elliptic operator and construct an explicit tensor-structured representation of the preconditioner, with ranks independent of the number $L$ of discretization levels.
The straightforward application of the preconditioner yields discrete operators whose matrix conditioning is uniform with respect to
the discretization parameter, but in decompositions that suffer from representation ill-conditioning.
By additionally eliminating certain redundancies in the representations of the preconditioned discrete operators,
we obtain reduced-rank decompositions that are free of both matrix and representation ill-conditioning.
For an iterative solver based on soft thresholding of low-rank tensors, we obtain convergence and complexity estimates and demonstrate its reliability and efficiency for discretizations with up to $2^{50}$ nodes in each dimension.

\textbf{Keywords:} elliptic boundary value problems, multilevel preconditioning, tensor decompositions, representation condition number, solver complexity

\textbf{Mathematics Subject Classification (2010):} 15A69, 35J25, 65N12, 65N30, 65N55, 65F08, 65F35, 65Y20
\end{abstract}

\section{Introduction}\label{Sc:Intro}

The direct textbook treatment of elliptic PDEs by low-order discretizations on uniform grids becomes unaffordable for many important problem classes. The high computational costs are due to the prohibitively large number of degrees of freedom required to resolve specific features of solutions, such as singularities and high-frequency oscillations, that arise in problems with nonsmooth or oscillatory data. More efficient discretizations can be obtained with basis functions that are adapted to the given problem and require fewer degrees of freedom.
However, the construction and analysis of such methods (for instance, of $hp$-adaptive solvers) generally depends on specific features of the considered problem classes and accordingly specialized analytical tools.

By the approach considered in this work, efficiency is achieved in a different way:
extremely large arrays of coefficients parametrizing simple, uniformly refined low-order discretizations are themselves
parametrized as nonlinear functions of relatively few effective degrees of freedom.
The latter parametrization is based on representing the coefficient arrays, reshaped into high-order tensors, in the \emph{tensor train} decomposition with low ranks. This representation exploits low-rank structure with respect to a hierarchy of dyadic scales,
providing, at each scale, a problem-adapted basis that can be computed using standard techniques of numerical linear algebra.
In other words, for the identification of suitable degrees of freedom,
this approach avoids relying on problem-specific \emph{a priori} information;
instead, suitable degrees of freedom are found by the low-rank tensor compression of generic,
conceptually straightforward discretizations.

In numerical solvers for PDE problems that operate on such highly compressed, nonlinear representations of basis coefficients, new difficulties arise compared to a standard entry-wise representation.
 As we demonstrate in this contribution, specific types of ill-conditioning in such tensor representations can dramatically affect the numerical stability of solvers. We show how a special low-rank representation of a BPX preconditioner allows to overcome these difficulties and obtain estimates for the total computational complexity of computing solutions with low-rank tensor-train structure.

\subsection{Low-rank tensor approximations}
The development of low-rank tensor
representations~\cite{Hackbusch:2009:HTF,Oseledets:2009:TT,Oseledets:2011:TT,Grasedyck:2010:HierarchicalSVD,Uschmajew:2013:Geometry},
such as the \emph{tensor train} format,
has originally been motivated by applications to high-dimensional PDEs.
As observed in \cite{Oseledets:2009:QTT:Dokl,Oseledets:2010:QTT,Khoromskij:2011:QuanticsApprox,Grasedyck:10:tensorization}, the artificial treatment of coefficient vectors in lower-dimensional problems as high-dimensional quantities, known in the literature as \emph{quantized tensor train} (QTT) decomposition or \emph{tensorization}, leads to highly efficient approximations in many problems of interest. See \cite{Kh18} for a general overview and, for instance, \cite{KhKh:14,KKNS:14} for further applications.

To briefly illustrate this concept, let us suppose that a function $u$ has an accurate approximation
$u \approx \sum_{j = 1}^N \bu_j \phi_j$
in terms of the basis functions
$\{ \phi_j \}_{j = 1, \ldots, N}$ with the coefficient vector $\bu = (\bu_{j} )_{j=1,\ldots,N} \in \R^N$.
The basic idea is to re-interpret $\bu$ as a higher-order tensor of mode sizes $n_1\times \cdots\times n_L$ with $\prod_{\ell=1}^L n_\ell = N$ via the identification
\[
  j\quad \leftrightarrow \quad (i_1, \ldots, i_L) \in \{ 0,\ldots,n_1-1\} \times\cdots\times \{ 0,\ldots, n_L-1\}
\]
provided by the unique decomposition
\[
    j-1
    =
    \sum_{\ell=1}^L i_\ell \prod_{k=\ell+1}^L n_k
    \quad\text{with}\quad
    i_\ell \in \{0,\ldots,n_\ell-1\}
    \quad\text{for all}\quad
    \ell = 1,\ldots,L
    \, .
\]

We assume a simple choice of basis functions, such as low-order splines, combined with a compressed, nonlinearly parametrized approximation of the corresponding coefficients $\bu$ in the tensor train format,
\begin{equation}\label{eq:ttapprox}
 \Vec{u}_{i_1, \ldots, i_L}
		\approx
		\sum_{\alpha_1=1}^{r_1}
		\cdots
		\sum_{\alpha_{L-1}=1}^{r_{L-1}}
		U_1(1, i_1, \alpha_1)\, U_2(\alpha_1, i_2, \alpha_2)
		\, \cdots \,
		U_L(\alpha_{L-1}, i_L, 1).
\end{equation}
The actual degrees of freedom are now the entries of the third-order tensors
$U_\ell \in \R^{r_{\ell-1} \times n_\ell \times r_\ell}$ with $\ell\in\{1,\ldots,L\}$, which are referred to as \emph{cores} (where $r_0 = r_L=1$ for notational convenience). In the case of $n_\ell = n\in\N$ for all $\ell$, which we consider in this work, the total number of parameters defining this approximation equals
$\sum_{\ell = 1}^L n_\ell \, r_{\ell-1} \, r_\ell \lesssim (\log N) \max \{ r_1^2,\ldots,r_{L-1}^2\}$.

For certain representative approximation problems (such as functions with isolated singularities or high-frequency oscillations), as shown in \cite{Grasedyck:10:tensorization,KazeevSchwab,KORS:2017:Multiscale1d,Khoromskij:2011:QuanticsApprox}, one obtains approximations where the rank parameters $r_1,\ldots,r_{L-1}$ grow at most polylogarithmically in the corresponding error.
This suggests the possibility of constructing numerical methods with complexity scaling as $(\log N)^\alpha$ for a fixed $\alpha$.

\subsection{Multilevel low-rank approximations for elliptic boundary value problems}

In this work, we focus on the application of low-rank tensor techniques for solving second-order elliptic boundary value problems on domains $\varOmega\subset \R^D$, where we are mainly interested in the cases of $D\in \{1,2,3\}$.
First, consider the exact solution $u$ and finite element solutions $u_h$, where $h>0$ is a mesh-size parameter,
that are simple, low-order finite element functions with coefficient vectors $\bu_h$.
These are given by suitable linear systems of the form $\bA_h \bu_h = \bbf_h$.
For each mesh size $h$, one can seek instead $u^{\mathrm{LR}}_h$ from the same finite element space
whose coefficient vector $\bu_h^{\mathrm{LR}}$ is a low-rank approximation in the form~\eqref{eq:ttapprox} of $\bu_h$.
In order to benefit from the complexity reduction afforded by the representation~\eqref{eq:ttapprox},
the vector $\bu_h^\mathrm{LR}$ needs to be computed directly in this low-rank representation.
Using corresponding representations of $\bA_h$ and $\bbf_h$,
this can be achieved by iteratively solving the nonlinear problem in terms of the cores $U_1,\ldots,U_L$ of $\bu_h^\mathrm{LR}$ in~\eqref{eq:ttapprox}.
In our setting, the binary indexing $(i_1,\ldots,i_L)$ used in the interpretation of $\bu_h$ as a tensor
of order $L$ corresponds to uniform grid refinement with $L$ levels, and thus $h\sim 2^{-L}$.
The separation of variables expressed by~\eqref{eq:ttapprox} therefore applies not to the spatial dimensions
but rather to the dyadic scales of $u^{\mathrm{LR}}_h$.

In our model problem, the underlying discretization uses piecewise $D$-linear finite elements.
Using the triangle inequality, we can decompose the error $u - u^\mathrm{LR}_h$
into a discretization error $u-u_h$, for which on uniform meshes one obtains bounds of the form
\begin{equation}\label{eq:discrerr}
  \| u - u_h \|_{\SSp{1}} \leq C_u h^s,
\end{equation}
with $C_u>0$ depending only on $u$ and $0<s\leq 1$, and the computation error $u_h - u^\mathrm{LR}_h$
including the error of low-rank approximation.
In problems where $u$ exhibits, for instance, singularities or high-frequency oscillations, one may be dealing with $C_u$ extremely large or with $s \ll 1$.
Thus, achieving reasonable total errors may require values of $h$ that are so small that the entry-wise representations of coefficient vectors and matrices is computationally infeasible.

Under natural assumptions on the data and on the underlying mesh, the problem of finding $u_h$ remains well-conditioned with respect to the problem data independently of $h$.
However, for very small $h$ as considered here, it becomes a nontrivial issue to ensure numerical stability of algorithms, since these are affected by the  condition numbers $\mathcal{O}(h^{-2})$ of $\bA_h$. Regardless of the type of solver that is employed, preconditioning $\bA_h$ becomes a necessity for avoiding numerical instabilities even for moderately small $h$.
As a first step, we therefore construct a preconditioner for $\bA_h$ that can be applied directly in low-rank form, where both the resulting matrix condition numbers after preconditioning and the tensor representation ranks are uniformly bounded with respect to the discretization level $L$.

However, we also find that when such a preconditioner is applied as usual by the standard matrix-vector multiplication in the tensor format, numerical solvers \emph{still} stagnate at an error $\norm{ u_h - u_h^\mathrm{LR} }_{\SSp{1}}$ of order $\mathcal{O}(h^{-2} \epsilon)$, where $\epsilon$ is the machine precision.
This shows that ensuring uniformly bounded matrix condition numbers by preconditioning is not sufficient for low-rank tensor methods to remain numerically stable for very small $h$. It turns out that tensor representations of vectors in the form of~\eqref{eq:ttapprox} generated by the action of $\bA_h$ can be extremely sensitive to perturbations of each single core. This new type of ill-conditioning
cannot be eradicated by simply multiplying by the preconditioner, and any further numerical manipulations
of the resulting tensor representations are prone to large round-off errors.
To quantify this effect, we introduce the notion of \emph{representation condition numbers}.

Without addressing the issue of representation ill-conditioning, one can therefore only expect $\| u - u_h \|_{\SSp{1}} = \mathcal{O}( C_u h^s + h^{-2} \epsilon)$. With the optimal choice of $h$, this yields a total error of order $\mathcal{O}(C_u^{2/(2+s)} \epsilon^{s/(2+s)})$; even in the ideal case $s=1$, one thus has a limitation to $\mathcal{O}(\epsilon^{1/3})$.
In the present paper, by analytically combining the low-rank representations of the preconditioner and of the stiffness matrix, we obtain a tensor representation that retains favorable representation condition numbers also for large $L$ and leads to solvers that remain numerically stable even for $h$ on the order of the machine precision $\epsilon$.
For the problems preconditioned in this manner, we can apply results from \cite{BD:15,BS16} to obtain bounds for the number of operations required for computing $\bu_h^\mathrm{LR}$, in terms of the ranks of low-rank \emph{best} approximations of $\bu_h$ with the same error. Since the costs depend only weakly on the discretization level $L$, one may then in fact simply choose $L$ so large that $h \approx \epsilon$. This ensures that the discretization error $\norm{u - u_h}_{\SSp{1}}$ is negligible in all practical situations and
only the explicitly controllable low-rank approximation error $\norm{u_h - u_h^\mathrm{LR}}_{\SSp{1}}$ remains.

\subsection{Conditioning of tensor train representations}

Let us now briefly outline the source of numerical instability that we need to mitigate here.
Subspace-based tensor decompositions such as the Tucker format, hierarchical tensors \cite{Hackbusch:2009:HTF}, or the presently considered tensor train format \cite{Oseledets:2011:TT} share the basic stability property that the existence of low-rank best approximations with fixed rank parameters is guaranteed. In contrast, such best approximation problems for canonical tensors are in general ill-posed \cite{Silva:08}, and one has the well-known border rank phenomena where given tensors can be approximated arbitrarily well by tensors of lower canonical ranks. In subspace-based formats, such pathologies of the canonical rank are avoided by working only with matrix ranks of certain tensor matricizations. This leads to natural higher-order generalizations of the singular value decomposition (SVD), in particular the TT-SVD algorithm for tensor trains.

However, when performing computations in such tensor formats, tensors in general do not remain in orthogonalized standard representations, such as those given by the TT-SVD. For instance, the action of low-rank representations of finite element stiffness matrices in iterative solvers may create tensor train representations with substantial redundancies that are far from their respective SVD forms. A return to the rank-reduced SVD form can then in principle be accomplished by applying standard linear algebra operations (such as QR decomposition and SVD) to the representation components.

As we demonstrate in what follows, in relevant cases, tensor train representations can become so ill-conditioned that performing this rank reduction with machine precision no longer produces useful results. To our knowledge, this particular point has not received attention in the literature so far. As we consider in further detail in Section \ref{sec:repcond}, a particular instance where this effect occurs are multilevel low-rank representations of discretization matrices of differential operators.

In order to illustrate these issues, let us consider a low-rank matrix $M=A B^\MT$ with $A \in \R^{m\times r}$ and $B \in \R^{n\times r}$. Performing numerical manipulations of $A$, for instance a QR factorization with machine precision, amounts to replacing $M$ by $\tilde M = \tilde A B^\MT$ with $\norm{A - \tilde A}_F \leq \delta \norm{A}_F$, where $\delta$ will ideally be close to the relative machine precision. Similarly to standard perturbation estimates for matrix products (see, e.g., \cite[Sec.\ 3]{MR1927606}), one obtains the generally sharp worst-case bound
\[
  \norm{M - \tilde M}_F \leq \delta \norm{A}_F \norm{B}_{2\to 2}.
\]
In the case of high-order tensor train representations, one may think of $B$ as composed of many individual cores. Even when each of these cores looks completely innocent, their cumulative effect can lead to very large $\norm{B}_{2\to 2}$.
In cases where cancellations occur in the product with $A$, the size of $\norm{M}_F$, however, can be small compared to $\norm{B}_{2\to 2}$, and perturbations to $A$ are strongly amplified.
This means that any numerical manipulation of such representations (such as orthogonalization, which is also the first step in performing a TT-SVD, see Section \ref{sec:opcores}) can introduce extremely large errors in the represented tensor.

We define the representation condition number of an operator in low-rank representation as the factor by which its action may deteriorate the conditioning of tensor train representations. In the case of the finite element stiffness matrices $\Ten{A}_h$, we find that this condition number scales (matching the standard matrix condition number) as $\mathcal{O}(h^{-2})$, which agrees with the numerically observed loss of precision. One may regard this as a tensor-decomposition analogue of the classical amplification of relative errors by ill-conditioned matrices. However, this error amplification manifests itself not in the action of the tensor representation of $\Ten{A}_h$ on any single tensor core, which by itself is harmless, but rather in the \emph{cumulative} effect that emerges when further operations are performed on the resulting output cores.

\subsection{Novelty and relation to previous work}

As a main contribution of this work, we introduce basic notions and auxiliary results for studying the representation conditioning of tensor train representations.
In particular, our finding that the stiffness matrix represented in low-rank format has a representation condition number of order $2^{2 L}$ explains numerical instabilities in its direct application for large $L$ as observed in tests in \cite{Chertkov:16}.
We prove a new result on a BPX preconditioner for second-order elliptic problems that is tailored to our purposes, and we construct a low-rank decomposition of the preconditioned stiffness matrix with the following properties:  it is well-conditioned uniformly in discretization level $L$ as a matrix; its ranks are independent of $L$; and its representation condition numbers remain moderate for large $L$.
Based on these properties, we establish an estimate for the total computational complexity of finding approximate solutions in low-rank form. These complexity bounds are shown for an iterative solver based on the soft thresholding of tensors \cite{BS16}, for which the ranks of approximate solutions can be estimated in terms of the ranks of the exact Galerkin solution. We identify appropriate approximability assumptions on solutions in the present context, which are slightly different from those proved in \cite{KazeevSchwab}.

Difficulties with the numerical stability of solvers for large $L$ have also been noted previously in \cite{KazeevSchwab}. In \cite{Chertkov:16,OseledetsRakhubaChertkov:16}, a reformulation as a constrained minimization problem with Volterra integral operators is proposed. It is demonstrated numerically in \cite{Chertkov:16} up to $L \approx 20$ to lead to improved numerical stability, compared to a direct finite difference discretization, for Poisson-type problems with $D=2$ dimensions.
However, in this reformulation, which so far has been studied only experimentally, the matrix condition number still grows exponentially with respect to $L$, and numerical stability is still observed to be lacking for larger values of $L$.

A different class of preconditioners based on approximate matrix exponentials has been proposed for QTT decompositions in \cite{KhO:QTTPrec:11}.
In the different context of separation of spatial coordinates in high-dimensional problems, tensor representations have been combined with multilevel preconditioners based on multigrid methods \cite{BallaniGrasedyck:2013,Hackbusch:2015}, BPX preconditioners \cite{AndreevTobler:2015}, and wavelet Riesz bases \cite{MR3474850}.
 There the required representation ranks of preconditioners have been observed to increase with discretization levels, in contrast to the uniformly bounded ranks that we obtain in our present setting of tensor separation between scales.

\subsection{Outline} In Section \ref{Sc:DiscrPrec}, we consider the structure of discretization matrices in detail and establish a result on symmetric BPX preconditioning. In Section \ref{Sc:TT}, we recapitulate basic notation and operations for the tensor train format. In Section \ref{sec:repcond}, we introduce notions of representation condition numbers of tensor decompositions and investigate some of their basic properties. Building on these concepts, in Section \ref{sec:tensorstructure} we construct well-conditioned multilevel low-rank representations of preconditioned discretization matrices. In Section \ref{sec:solvers}, we discuss the implications of our findings on the complexity of finding approximate solutions, and illustrate the performance of numerical solvers in Section \ref{sec:numexp}.

We use the following general notational conventions: $A\lesssim B$ denotes $A \leq C B$ with $C$ independent of any parameters explicitly appearing in the expressions $A$ and $B$, and $A\sim B$ denotes $A\lesssim B \wedge A \gtrsim B$.
We use $\norm{\cdot}_2$ to denote the $\ell^2$-norm both of vectors and of higher-order tensors, and $\norm{\cdot}_{2\to 2}$ to denote the associated operator norm. In addition, $\norm{\cdot}_F$ denotes the Frobenius norm of matrices.
By $\langle \cdot,\cdot\rangle$, we denote the $\ell^2$-inner product of vectors and tensors or the $\LSp{2}$-inner product of functions, as well as the corresponding duality product.

\section{Discretization and Preconditioning}\label{Sc:DiscrPrec}

The model problem that we focus on in what follows is posed on the product domain
$\varOmega=\hat{\varOmega}^D \subset \R^D$ with $\hat{\varOmega}=\IntOO{0}{1}$.
With $\varGamma = \Set{ x\in \Boundary \varOmega\colon \; x_1 \cdots x_D = 0 }$,
we consider the corresponding Sobolev space of functions
defined on $\varOmega$ and vanishing on $\varGamma$,
\begin{equation}\label{Eq:Space}
	V = \Set{ v \in \SSp{1}\Par{\varOmega}\colon v\Restr{\varGamma} = 0 },
\end{equation}
with norm $\norm{v}_V = \norm{v}_{\SSp[0]{1}\Par{\varOmega}} \sim \norm{v}_{\SSp{1}\Par{\varOmega}}$.
On this space, we consider the variational problem
\begin{equation}\label{varform}
	\text{find $u\in V$ such that}
	\quad
	a(u,v) = f(v)
	\quad\text{for all}\quad
	v \in V,
\end{equation}
where $a: V \times V \rightarrow \R$ is the bilinear form given by
\begin{equation}\label{Eq:DefBLF}
	a(w,v)
	=
	\int_\varOmega
	\Par{ \nabla v }^{\MT} A \QQ \nabla w
	+
	\int_\varOmega
	c \QQ v \QQ w
	\quad\text{for all}\quad
	w,v \in V,
\end{equation}
and $f \in V'$ is a given linear form.
We assume the diffusion and reaction coefficients
$A\in\LSp{\infty}\Par[1]{\varOmega, \R^{D \times D}}$
and
$c\in\LSp{\infty}\Par{\varOmega}$ to be
strongly elliptic and nonnegative, respectively:
\begin{equation}\nonumber
	\underline{A}
	=
	\essinf_{\varOmega}
	\inf_{\xi\in\R^D \SetMinus\Set{0}}
	\frac{\xi^{\MT} \! A \QQ \xi}{\xi^{\MT} \xi} > 0
	\quad\text{and}\quad
	c \geq 0
	\quad\text{a.e. on }
	\varOmega
	\, .
\end{equation}
The problem~\eqref{varform}
is a variational formulation
of a boundary value problem
for a reaction-diffusion equation with
homogeneous mixed boundary conditions:
of Dirichlet type on $\varGamma$,
and of Neumann type on $\Boundary \varOmega \SetMinus \varGamma$.

Under the assumptions on the data made so far,
the bilinear form $a$ is continuous and coercive
and the linear form $f$ is continuous.
By the Lax--Milgram theorem, \eqref{varform}
has a unique solution satisfying
\begin{equation}\label{laxmilgrambound}
	\Norm{u}_{V}
	\leq
	\underline{A}^{-1}
	\,
	\Norm{f}_{V'}
	\, .
\end{equation}
Additional assumptions on the data of the problem~\eqref{varform},
essential for its tensor-structured
preconditioning and solution,
are stated in Sections \ref{Sc:DiscrPrec} and \ref{sec:tensorstructure}.

In what follows, we consider a hierarchy of discretizations based on piecewise $D$-linear nodal basis functions on a sequence of uniform grids with cell sizes $2^{-\ell}\times\cdots \times 2^{-\ell}$, $\ell = 0,1,2,\ldots$; the basis functions can be written as tensor products of standard univariate hat functions.

In this section,
we describe
$V_\ell$ with $\ell\in\Nz$,
nested finite-dimensional subspaces of $V$ introduced in~\eqref{Eq:Space}.
We will use these subspaces to approximate the solution
of the variational problem stated in~\eqref{varform}.

\subsection{Finite element spaces for $\hat{\varOmega}=\IntOO{0}{1}$}\label{Sc:BasisFunctions}

Throughout this section, we assume that an arbitrary number $\ell\in\Nz$
of refinement levels
is fixed. We consider a uniform partition of $\hat{\varOmega}$ into $2^\ell$ subintervals
and corresponding $2^\ell$ continuous piecewise linear functions defined on $\hat{\varOmega}$.
Then, by tensorization, we introduce basis functions defined on $\varOmega$.

First, we
consider
the uniform partition
of
$\hat{\varOmega}$
that consists of
the $2^{\ell}$ intervals
\begin{equation}\label{Eq:PartInt}
	\PartInt{\ell}{i}
	=
	\IntOO{ \PartNode{\ell}{i-1} }{ \PartNode{\ell}{i} \QQ }
	\quad\text{with}\quad
	i\in\hat{\cJ}_\ell
	=
	\Set{ 1, \ldots, 2^\ell}
\end{equation}
given by the $2^{\ell} + 1$ nodes
\begin{equation}\label{Eq:PartNode}
	\PartNode{\ell}{j} = 2^{-\ell} j
	\quad\text{with}\quad
	j=0,\ldots,2^{\ell}
	\, .
\end{equation}
For each $i\in\hat{\cJ}_\ell$, we introduce
an affine mapping $\PartMap{\ell}{i}$ from $\IntOO{-1}{1}$ onto $\PartInt{\ell}{i}$:
\begin{equation}\label{Eq:DefPartMap}
	\PartMap{\ell}{i} \Par{t}
	=
	\frac12 \Par{ \PartNode{\ell}{i} + \PartNode{\ell}{i-1} }
	+
	\frac{t}{2}
	\Par{ \PartNode{\ell}{i} - \PartNode{\ell}{i-1} }
	=
	2^{-\ell} \QQ i
	+ 2^{-\ell-1} (t-1)
\end{equation}
for all $t\in\IntOO{-1}{1}$.

Further, we consider
nodal functions defined on $\hat{\varOmega}$ and
associated with these nodes:
for each $j\in\hat{\cJ}_\ell$, by
$\vphi{\ell}{j}$ we denote the function
that
is
linear on
each $\PartInt{\ell}{i}$ with $i\in\hat{\cJ}_\ell$,
continuous on $\hat{\varOmega}$
and
such that
\begin{equation}\label{Eq:PartFct}
	\vphi{\ell}{j} \Par{ \PartNode{\ell}{j'} }
	=
	2^{\frac{\ell}{2}}
	\QQ
	\delta_{j j'}
	\quad\text{for all}\quad
	j'=0,\ldots,2^\ell
	\, .
\end{equation}
The $\ell$-dependent normalization factor
in the right-hand side of~\eqref{Eq:PartFct}
results in the uniform normalization
\begin{equation}\label{Eq:vphiNorm}
	\Norm{\vphi{\ell}{j}}_{\LSp{2}\Par{\hat{\varOmega}}}
	\sim
	1
	\, .
\end{equation}
\begin{subequations}
By the above construction of basis functions,
each $\vphi{\ell}{j}$ with $j\in\hat{\cJ}_\ell$
is a degree-one polynomial
on every $\PartInt{\ell}{i}$ with $i\in\hat{\cJ}_\ell$.
This implies that, for $\alpha=0,1$,
there exist matrices $\hat{\Ten{M}}_{\ell \CQ \alpha}$
with rows and columns indexed by
$\hat{\cJ}_\ell \times \Set{\alpha,1}$
and $\hat{\cJ}_\ell$, respectively,
such that
\begin{equation}\label{Eq:PartFctDec}
	\partial^{\alpha} \vphi{\ell}{j} \Comp \PartMap{\ell}{i}
	=
	\sum_{\beta=\alpha,1}
	\Par{\hat{\Ten{M}}_{\ell \CQ \alpha}}_{i \beta \; j}
	\;\,
	\hat{\psi}_\beta
	\quad\text{on}\quad
	\IntOO{-1}{1}
\end{equation}
for all $i,j\in\hat{\cJ}_\ell$,
where $\hat\psi_0$ and $\hat\psi_1$
are the standard monomials of degree zero and one,
\begin{equation}\label{Eq:BasFct}
	\hat{\psi}_0\Par{t}
	=
	1
	\quad\text{and}\quad
	\hat{\psi}_1\Par{t}
	=
	t
	\quad\text{for all}\quad
	t\in\IntOO{-1}{1}
	\, .
\end{equation}
We note that the matrix $\hat{\Ten{M}}_{\ell \CQ 0}$ is rectangular of size
$2^{\ell+1} \times 2^\ell$
and
the matrix $\hat{\Ten{M}}_{\ell \CQ 1}$ is a square matrix of order $2^{\ell}$.

For the basis functions defined in~\eqref{Eq:BasFct},
since $\hat{\psi}_1' = \hat{\psi}_0$,
the odd rows of $\hat{\Ten{M}}_{\ell \CQ 0}$
form
a multiple of $\hat{\Ten{M}}_{\ell \CQ 1}$:
for $\beta = 1$ and all $i,j\in\hat{\cJ}_\ell$,
we have
\begin{equation}\label{Eq:M01Emb}
	\Par{\hat{\Ten{M}}_{\ell \CQ 1}}_{i \beta \; j}
	=
	2^{\ell+1}
	\Par{\hat{\Ten{M}}_{\ell \CQ 0}}_{i \beta \; j}
	\, .
\end{equation}
Furthermore,
the matrices $\hat{\Ten{M}}_{\ell \CQ 0}$ and $\hat{\Ten{M}}_{\ell \CQ 1}$
have the following explicit form, which will be used below:
\begin{equation}\label{Eq:DefM01}
	\begin{array}{rclcl}
		\hat{\Ten{M}}_{\ell \CQ 0}
		&=&
		2^{\frac{1}{2}\ell-1}
		\QQ
		\CuBr[3]{
		\Par{ \hat{\Ten{I}}_{\ell} + \hat{\Ten{S}}_{\ell} \QQ }
		\otimes
		\begin{pmatrix}
			1
			\\
			0
			\\
		\end{pmatrix}
		+
		\Par{ \hat{\Ten{I}}_{\ell} - \hat{\Ten{S}}_{\ell} \QQ }
		\otimes
		\begin{pmatrix}
			0
			\\
			1
			\\
		\end{pmatrix}
		}
		\, ,
		\\
		\hat{\Ten{M}}_{\ell \CQ 1}
		&=&
		2^{\frac{1}{2}\ell-1 + \Par{\ell+1}}
		\QQ
		\Par{ \hat{\Ten{I}}_{\ell} - \hat{\Ten{S}}_{\ell} \QQ }
		\, ,
	\end{array}
\end{equation}
where
\begin{equation}\label{Eq:DefIS}
	\hat{\Ten{I}}_{\ell}
	=
	\begin{pmatrix}
		1	&																			\\
		0	& \ddots	& 										\\
			&	\ddots	&	\ddots			&								\\
			&					&	0			& 1
	\end{pmatrix}
	\quad\text{and}\quad
	\hat{\Ten{S}}_{\ell}
	=
	\begin{pmatrix}
		0	&																			\\
		1	& \ddots	& 										\\
			&	\ddots	&	\ddots			&								\\
			&					&	1			& 0
	\end{pmatrix}
\end{equation}
are square matrices of order $2^\ell$.
\end{subequations}

The finite element spaces
$\Span\Set{\vphi{\ell}{j}}_{j\in\hat{\cJ}_\ell}$ with $\ell\in\Nz$
are nested:
for all $L,\ell\in\Nz$ such that $\ell \leq L$,
we have
\begin{equation}\label{Eq:PartFctNst}
	\vphi{\ell}{j}
	=
	\sum_{j'\in\hat{\cJ}_L}
	\Par{ \hat{\Ten{P}}_{\ell,L} }_{j' \: j}
	\;
	\vphi{L}{j'}
	\quad\text{for all}\quad
	j\in\hat{\cJ}_\ell
	\, ,
\end{equation}
where $\hat{\Ten{P}}_{\ell,L}$ is the matrix
of the identity operator
from
$\Span\Set{\vphi{\ell}{j}}_{j\in\hat{\cJ}_\ell}$
to
$\Span\Set{\vphi{L}{j'}}_{j'\in\hat{\cJ}_L}$
with respect to the bases defined in~\eqref{Eq:PartFct}:
\begin{equation}\label{Eq:DefP}
	\hat{\Ten{P}}_{\ell,L}
	=
	2^{\Par{\ell-L}/2}
	\QQ
	\Par[1]{
		\hat{\Ten{I}}_{\ell}
		\KProd
		\hat{\Vec{\eta}}_{L-\ell}
		+
		\hat{\Ten{S}}_{\ell}
		\KProd
		\,
		\Par{\hat{\Vec{\xi}}_{L-\ell} - \hat{\Vec{\eta}}_{L-\ell} \QQ }
	}
	\,
\end{equation}
where
\begin{equation}\label{Eq:DefXiEta}
	\hat{\Vec{\xi}}_{k}
	=
	\begin{pmatrix}
		1									\\
		1									\\
		\vdots						\\
		1									\\
		1									\\
	\end{pmatrix}
	\quad\text{and}\quad
	\hat{\Vec{\eta}}_{k}
	=
	2^{-k}
	\begin{pmatrix}
		1									\\
		2									\\
		\vdots						\\
		2^{k}-1	\\
		2^{k}		\\
	\end{pmatrix}
\end{equation}
are $2^k$-component vectors for each $k\in\Nz$.

\subsection{Finite element spaces for $\varOmega=\IntOO{0}{1}^D$}\label{Sc:BasisFunctionsD}

The partition~\eqref{Eq:PartInt} induces
a uniform tensor product partition
of $\varOmega$
that consists of
the $2^{D \ell}$ elements
\begin{equation}\label{Eq:RefElD}
	\RefEl{\ell}{i}
	=
	\bigtimes_{d=1}^D
	\PartInt{\ell}{i_d}
	\quad\text{with}\quad
	i = \Tuple{i_1,\ldots,i_D} \in \cJ_\ell
	 = \hat{\cJ}^D_\ell = \Set{ 1, \ldots, 2^\ell}^D
	\, .
\end{equation}
Tensorizing~\eqref{Eq:PartFct}, we obtain
the $2^{D \ell}$ functions
\begin{equation}\label{Eq:RefFctD}
	\vphiD{\ell}{j}
	=
	\bigotimes_{d=1}^D
	\vphi{\ell}{j_d}
	\quad\text{with}\quad
	j = \Tuple{j_1,\ldots,j_D} \in\cJ_\ell
	\, ,
\end{equation}
which are continuous on $\overline{\varOmega}$
and
$D$-linear on each of the partition elements given by~\eqref{Eq:RefElD}.
We will use these functions as a basis of a finite-dimensional subspace of $V$,
\begin{equation}\label{Eq:DefVlD}
	V_{\ell}
	=
	\Span\Set{
		\vphiD{\ell}{j}
	}_{j\in\cJ_\ell}
	\subset V
	\, .
\end{equation}
The normalization of univariate factors in~\eqref{Eq:vphiNorm} implies
\begin{equation}\label{Eq:vphiNormD}
	\Norm{\vphiD{\ell}{j}}_{\LSp{2}\Par{\varOmega}}
	\sim
	1
	\, ,
\end{equation}
and hence
\[
	\Norm[3]{
		\,
		\sum_{i \in \cJ_\ell}
		\Vec{v}_{i}\, \vphiD{\ell}{i }
	}_{\LSp{2}\Par{\varOmega}}
	\sim
	\,
	\Norm{\Vec{v}}_{\lSp{2}}
	 \quad\text{for all}\quad
	 \Vec{v} \in \R^{\cJ_\ell}
	 \, ,
\]
with equivalence constants independent of $\ell\in\N$.
\begin{subequations}
Also, the relationship~\eqref{Eq:PartFctDec}
results in
\begin{equation}\label{Eq:PartFctDecD}
	\partial^{\alpha} \vphiD{\ell}{j}
	\Comp
	\PartMapD{\ell}{i}
	\\
	=
	\!\!\!\!\!\!
	\sum_{\beta\in\Set{\alpha_1,1} \times\cdots\times \Set{\alpha_D,1} }
	\!\!\!\!\!\!\!\!\!\!
	\Par{\Ten{M}_{\ell \CQ \alpha}}_{
		i \beta \;j
	}
	\;
	\psi_\beta
	\quad\text{on}\quad
	\IntOO{-1}{1}^D
\end{equation}
for all $\alpha=\Tuple{\alpha_1,\ldots,\alpha_D}\in\Set{0,1}^D$
and $i,j\in\cJ_\ell$
with
\begin{equation}\label{Eq:DefPartMapChM01D}
	\PartMapD{\ell}{i}
	=
	\bigotimes_{d=1}^D \PartMap{\ell}{i_d}
	\quad\text{and}\quad
	\psi_{\beta}
	=
	\bigotimes_{d=1}^D \hat{\psi}_{\beta_d}
\end{equation}
for all
$i=\Tuple{i_1,\ldots,i_D}\in\cJ_\ell$
and $\beta=\Tuple{\beta_1,\ldots,\beta_D}\in\Set{0,1}^D$
and with $\Ten{M}_{\ell \CQ \alpha}$ given by
\begin{equation}\label{Eq:DefMD}
	\Par{ \Ten{M}_{\ell \CQ \alpha} }_{i \beta \; j}
	=
	\prod_{k=1}^D \Par{ \hat{\Ten{M}}_{\ell \CQ \alpha_k} }_{i_k \beta_k \; j_k}
\end{equation}
for all
$i=\Tuple{i_1,\ldots,i_D}\in\cJ_\ell$, $j=\Tuple{j_1,\ldots,j_D}\in\cJ_\ell$
and $\beta=\Tuple{\beta_1,\ldots,\beta_D}\in\Set{0,1}^D$.
Note that, for each $\alpha \in \Set{0,1}^D$,
the rows and columns of
$\Ten{M}_{L \CQ \alpha}$
are indexed by
$\cJ_L \times \Set{\alpha_1,1} \times\cdots\times \Set{\alpha_D,1}$
and
$\cJ_L$,
respectively.
The embedding~\eqref{Eq:M01Emb} implies
\begin{equation}\label{Eq:M01EmbD}
	\Par{\Ten{M}_{\ell \CQ \alpha'}}_{i \beta \; j}
	=
	2^{ \IndNorm{\alpha'-\alpha} \Par{\ell+1} }
	\Par{\Ten{M}_{\ell \CQ \alpha}}_{i \beta \; j}
\end{equation}
for all $i,j\in \cJ_\ell$
and
$\alpha,\alpha',\beta\in \Set{0,1}^D$
such that
$\alpha_k \leq \alpha'_k \leq \beta_k$
for each $k=1,\ldots,D$.
\end{subequations}

The finite element spaces $V_\ell$ with $\ell\in\Nz$
are also nested:
for all $L,\ell\in\Nz$ such that $\ell \leq L$,
we have $V_\ell \subset V_L$.
In particular,
the basis functions of $V_\ell$ and $V_L$ introduced in~\eqref{Eq:RefFctD}
satisfy the refinement relation
\begin{equation}\label{Eq:PartFctNstD}
	\vphiD{\ell}{j}
	=
	\sum_{j'\in\cJ_L}
	\Par{
		\Ten{P}_{\ell,L}
	}_{j' \; j}
	\;\,
	\vphiD{L}{j'}
	\quad\text{for all}\quad
	j\in\cJ_\ell
	\, ,
\end{equation}
where
\begin{equation}\label{Eq:DefPD}
	\Ten{P}_{\ell,L}
	=
	\bigotimes_{k=1}^D
	\hat{\Ten{P}}_{\ell,L}
\end{equation}
with $\hat{\Ten{P}}_{\ell,L}$ given by~\eqref{Eq:DefP}.

The stiffness matrix for the bilinear form $a$ and discretization level $\ell$ is  given by
\begin{equation}\label{Adef}
  \Ten{A}_\ell =	\bigl( a(\vphiD{\ell}{i}  ,\vphiD{\ell}{j})\bigr)_{j,i\in \cJ_\ell} .
\end{equation}
Note that due to \eqref{Eq:vphiNormD},
\begin{equation*}
	\langle \Ten{A}_\ell \QQ \Vec{v}, \Vec{v} \rangle
	 \sim  \Bignorm{ \sum_{i\in \cJ_\ell} \Vec{v}_i \,\vphiD{\ell}{i}}_{V}^2
	 \quad\text{for all}\quad
	 \Vec{v} \in \R^{\cJ_\ell}
	 \, .
\end{equation*}
For the right-hand side, we set
$
   \Vec{f}_\ell = \bigl( f(\vphiD{\ell}{i}) \bigr)_{i \in \cJ_\ell}
$.

\subsection{Representation of differential operators}\label{Sc:FactDiffOp}

\begin{subequations}
	The bilinear form $a\!:\, V \times V \rightarrow \R$ in~\eqref{Eq:DefBLF} can be rewritten in the form
	\begin{equation}\label{Eq:DefBLF2}
		a(u,v)
		=
		\sum_{\Tuple{\alpha,\alpha'} \in \mathcal{D}}
		\;
		\int_\varOmega
		c_{\alpha \alpha'}
		\QQ
		\Par{\partial^{\QQ \alpha} v}
		\QQ
		\Par{\partial^{\QQ \alpha'} u}
		\quad\text{for all}\quad
		u,v \in V
	\end{equation}
	with a $\mathcal{D} \subset \Set{0,1}^D \times \Set{0,1}^D$.
	We assume that
	each coefficient function
	$c_{\alpha \alpha'}\in\LSp{\infty}\Par{\varOmega}$
	with $\Tuple{\alpha,\alpha'} \in \mathcal{D}$
	is given by
	\begin{equation}\label{Eq:DefBLFc}
		c_{\alpha \alpha'}
		\QQ \Comp \QQ
		\PartMapD{L}{i}
		=
		\sum_{\gamma \in \varGamma_{\! \alpha \alpha'}}
		\Par{ \Vec{c}_{L \CQ \alpha \CQ \alpha'} }_{ i \QQ \gamma }
		\,
		\chi_{\alpha \alpha' \gamma}
		\quad\text{on}\quad
		\IntOO{-1}{1}^D
		\quad\text{for all}\quad
		i \in \cJ_L
	\end{equation}
	in terms of
	the affine transformations
	$\PartMapD{L}{i}$ with $i \in \cJ_L$
	defined
	by~\eqref{Eq:DefPartMap} and~\eqref{Eq:DefPartMapChM01D},
	a finite index set $\varGamma_{\! \alpha \alpha'}$
	of cardinality $R_{\alpha \alpha'} = \Card{ \varGamma_{\! \alpha \alpha'} }$,
	functions $\chi_{\alpha \alpha' \gamma}\in\LSp{\infty}\Par{\IntOO{-1}{1}^D}$
	with $\gamma\in\varGamma_{\! \alpha \alpha'}$
	and
	a coefficient vector
	$
		\Vec{c}_{L \CQ \alpha \CQ \alpha'}
		\in
		\R^{\cJ_L \times \varGamma_{\! \alpha \alpha'}}
		\simeq
		\R^{2^{DL} R_{\alpha \alpha'}}
	$.
\end{subequations}
\begin{subequations}
	In this section, we analyze the dimension structure of the matrix
	$\Ten{A}_L$ of $a$ restricted to $V_L \times V_L$
	with respect to the basis of $\vphiD{L}{j}$ with $j\in\cJ_L$,
	whose entries are
	\begin{equation}\label{Eq:DefBLFA}
		\Par{\Ten{A}_L}_{j \; j'}
		=
		a \Par{\vphiD{L}{j}, \vphiD{L}{j'}}
		=
		\sum_{\Tuple{\alpha,\alpha'} \in \mathcal{D}}
		\;
		\int_\varOmega
		c_{\alpha \alpha'}
		\QQ
		\Par{\partial^{\QQ \alpha} \vphiD{L}{j}}
		\QQ
		\Par{\partial^{\QQ \alpha'} \! \vphiD{L}{j'}}
		\quad\text{with}\quad
		j,j' \in \cJ_L
		,
	\end{equation}
	induced by the tensor product dimension structure
	of the basis.
	Splitting integration over the
	elements
	$\RefEl{\ell}{i}$
	with $i\in\cJ_L$,
	given by~\eqref{Eq:RefElD},
	and applying~\eqref{Eq:PartFctDecD},
	we obtain
	\begin{multline}\label{Eq:OpDecElementwise}
		\Par{\Ten{A}_{L}}_{j \; j'}
		=
		\sum_{\Tuple{\alpha,\alpha'} \in \mathcal{D}}
		\,
		\sum_{ i \in \cJ_L }
		\;
		\int_{\RefEl{L}{i}}
		\!\!\!\!
		c_{\alpha \alpha'}
		\QQ
		\Par{\partial^{\QQ \alpha} \vphiD{L}{j}}
		\QQ
		\Par{\partial^{\QQ \alpha'} \! \vphiD{L}{j'}}
		\\
		=
		\sum_{\Tuple{\alpha,\alpha'} \in \mathcal{D}}
		\,
		\sum_{ i \in \cJ_L }
		\,
		\sum_{\gamma\in\varGamma_{\! \alpha \alpha'}}
		2^{-D \QQ \Par{L+1}}
		\,
		\Par{\Vec{c}_{L \CQ \alpha \alpha'}}_{ i \QQ \gamma }
		\!\!\!\!
		\int\displaylimits_{\IntOO{-1}{1}^D}
		\!\!\!\!\!\!
		\chi_{\alpha \alpha' \gamma}
		\\
		\sum_{\beta\in\Set{\alpha_1,1} \times\cdots\times \Set{\alpha_D,1} }
		\!\!\!\!\!\!\!\!\!\!\!\!\!\!\!
		\Par{\Ten{M}_{L \CQ \alpha}}_{i\beta \; j}
		\;\,
		\partial^{\QQ \alpha} \QQ \psi_{\beta}
		\sum_{\beta'\in\Set{\alpha'_1,1} \times\cdots\times \Set{\alpha'_D,1} }
		\!\!\!\!\!\!\!\!\!\!\!\!\!\!\!
		\Par{\Ten{M}_{L \CQ \alpha'}}_{i\beta' \; j'}
		\;\,
		\partial^{\QQ \alpha' \!} \psi_{\beta'}
			\, .
	\end{multline}
\end{subequations}
\begin{subequations}
Let us now, for all $\alpha,\alpha'\in\mathcal{D}$, introduce
a matrix $\Ten{\varLambda}_{L \CQ \alpha \CQ \alpha'}$
of size $2^{D(L+1)-\IndNorm{\alpha}} \times 2^{D(L+1)-\IndNorm{\alpha'}}$:
\begin{equation}\label{Eq:OpDecTermwiseLambda}
	\Par{\Ten{\varLambda}_{L \CQ \alpha \CQ \alpha'}}_{i \QQ \beta \; i' \! \beta'}
	=
	\delta_{i i'}
	\;
	2^{-D\Par{L+1}}
	\!\!
	\sum_{\gamma\in\varGamma_{\! \alpha \alpha'}}
	\!\!
	\Par{\Vec{c}_{L \CQ \alpha \CQ \alpha'}}_{ i \QQ \gamma }
	\!\!\!\!
	\int\displaylimits_{\IntOO{-1}{1}^D}
	\!\!\!\!\!\!
	\chi_{\alpha \alpha' \gamma}
	\,
	\Par{\partial^{\QQ \alpha} \psi_{\beta}}
	\,
	\Par{\partial^{\QQ \alpha' \!} \psi_{\beta' \!}}
\end{equation}
for all
$i,i'\in\cJ_L$,
$\beta\in\Set{\alpha_1,1} \times\cdots\times \Set{\alpha_D,1}$
and
$\beta'\in\Set{\alpha'_1,1} \times\cdots\times \Set{\alpha'_D,1}$.
Using these matrices, we can rewrite~\eqref{Eq:OpDecElementwise} as
\begin{equation}\label{Eq:OpDecTermwise}
	\Ten{A}_{L}
	=
	\sum_{\Tuple{\alpha,\alpha'} \in \mathcal{D}}
	\,
	\Ten{M}_{L \CQ \alpha}^\MT
	\;
	\Ten{\varLambda}_{L \CQ \alpha \CQ \alpha'}
	\,
	\Ten{M}_{L \CQ \alpha'}
	\, .
\end{equation}
\end{subequations}
\begin{example}\label{Ex:LambdaLaplace}
	\begin{subequations}
		In the case of the negative Laplacian, we deal with a bilinear form
		given by~\eqref{Eq:DefBLF2}
		with
		$
			\mathcal{D}
			=
			\Tuple[1]{
				\Tuple{\delta_{k 1},\ldots,\delta_{k D}}
				\CQ
				\Tuple{\delta_{k 1},\ldots,\delta_{k D}}
			}_{k=1}^D
		$
		and $c_{\alpha \alpha'} = 1$ for all
		$\Tuple{\alpha,\alpha'} \in \mathcal{D}$.
		For each $\Tuple{\alpha,\alpha}$,
		the corresponding coefficient is
		of the form~\eqref{Eq:DefBLFc}
		with $\varGamma_{\! \alpha \alpha'}=\Set{0}$,
		$\chi_{\alpha \alpha' \QQ 0}=1$
		and
		$\Par{\Vec{c}_{L \CQ \alpha \alpha'}}_{ i \QQ 0 }=1$
		for all
		$i\in\cJ_L$.
		The corresponding matrix $\Ten{\varLambda}_{L \CQ \alpha \CQ \alpha}$
		given by~\eqref{Eq:OpDecTermwiseLambda} takes the Kronecker product form
		\begin{equation}\label{Eq:LambdaLaplaceKronProd}
			\Ten{\varLambda}_{L \CQ \alpha \CQ \alpha}
			=
			\KProdBig_{k=1}^D
			\hat{\Ten{\varLambda}}_{L \CQ \alpha_k \CQ \alpha_k}
			\, ,
		\end{equation}
		where
		the factors
		$\hat{\Ten{\varLambda}}_{L \CQ 0 \CQ 0}$
		and
		$\hat{\Ten{\varLambda}}_{L \CQ 1 \CQ 1}$
		are diagonal matrices
		independent of $\Tuple{\alpha,\alpha'} \in \mathcal{D}$
		whose rows and columns
		are indexed by
		$\cJ_L \otimes \Set{0,1}$
		and
		$\cJ_L \otimes \Set{1}$
		respectively. Specifically, their nonzero entries are
		\begin{equation}\label{Eq:LambdaLaplaceEntries}
			\Par{
				\hat{\Ten{\varLambda}}_{L \CQ 0 \CQ 0}
			}_{
				i \CQ 0 \; i \CQ 0
			}
			=
			\Par{
				\hat{\Ten{\varLambda}}_{L \CQ 1 \CQ 1}
			}_{
				i \CQ 1 \; i \CQ 1
			}
			=
			2^{-L}
			\quad\text{and}\quad
			\Par{
				\hat{\Ten{\varLambda}}_{L \CQ 0 \CQ 0}
			}_{
				i \CQ 1 \; i \CQ 1
			}
			=
			\frac13 \,
			2^{-L}, \qquad
			 i\in\cJ_L\,.
		\end{equation}
	\end{subequations}
\end{example}

The multilevel tensor structure of the factorization~\eqref{Eq:OpDecTermwise}
and, in particular, of
$\Ten{\varLambda}_{L \CQ \alpha \CQ \alpha'}$
with $\Tuple{\alpha,\alpha'} \in \mathcal{D}$
is investigated in
in Section~\ref{sec:tensorstructure}.
This analysis applies to
the case of general nonconstant coefficients $c_{\alpha \alpha'}$
with $\Tuple{\alpha,\alpha'} \in \mathcal{D}$
under the assumption that each of them
exhibits the multilevel low-rank structure
in the sense of the following Section~\ref{Sc:TT}.
Specifically, in Section~\ref{sec:tensorstructure},
we analyze the
low-rank structure of every factor matrix $\Ten{M}_{L \CQ \alpha}$
with $\alpha\in\Set{0,1}^D$
and also show how the low-rank structure of
$c_{\alpha \alpha'}$
with $\Tuple{\alpha,\alpha'} \in \mathcal{D}$
translates into that of
$\Ten{\varLambda}_{L \CQ \alpha \CQ \alpha'}$.
First, however, in the remainder of Section~\ref{Sc:DiscrPrec} we turn to the multilevel preconditioning
of $\Ten{A}_{L}$.
This gives rise to the preconditioned operator $\Ten{B}_L$
and matrices
$\Ten{Q}_{L \CQ \alpha}$
with
$\alpha \in \Set{0,1}^D$, defined in~\eqref{Eq:DefQ} below,
which relate to $\Ten{B}_L$
as
$\Ten{M}_{L \CQ \alpha}$
with $\alpha\in\Set{0,1}^D$
to $\Ten{A}_L$.
The low-rank multilevel structure of
$\Ten{B}_L$ and $\Ten{Q}_{L \CQ \alpha}$
with
$\alpha \in \Set{0,1}^D$
is the main topic of Section~\ref{sec:tensorstructure}.

\begin{remark}\label{Rm:LambdaLaplace1d}
	In the case of one dimension ($D=1$),
	let us consider a diffusion operator
	with a coefficient $c$ that is piecewise constant:
	$
		c
		\QQ \Comp \QQ
		\PartMap{L}{i}
		=
		\Par{ \hat{\Vec{c}}_{L} }_{ i }
	$
	on $\IntOO{-1}{1}$ for all $i \in \hat{\cJ}_L$,
	cf.~\eqref{Eq:DefBLFc}.
	Such coefficients appear, for example, as approximations
	in the midpoint quadrature rule.
	Then the representation~\eqref{Eq:OpDecElementwise}
	takes the form
	\begin{equation}\label{Eq:Dec1d}
		\Ten{A}_{L}
		=
		2^{-L} \,
		\hat{\Ten{M}}_{L \CQ 1}^\MT
		\;
		\Par{\diag \hat{\Vec{c}}_{L}}
		\,
		\hat{\Ten{M}}_{L \CQ 1}
		=
		2^{2L}
		\,
		\SqBr[1]{
			\diag \, \Par[1]{
				\Par{\hat{\Ten{I}}_{L} + \hat{\Ten{S}}_{L}^\MT \;} \, \hat{\Vec{c}}_{L}
			}
			-
			\hat{\Ten{S}}_{L}^\MT \, \Par{\diag \hat{\Vec{c}}_{L}}
			-
			\Par{\diag \hat{\Vec{c}}} \QQ \hat{\Ten{S}}_{L}
			\QQ
		}
		\, ,
	\end{equation}
	where
	$
		\hat{\Ten{M}}_{L \CQ 1} = 2^{\frac32 L}
		\Par{ \hat{\Ten{I}}_{\ell} - \hat{\Ten{S}}_{\ell} \QQ }
	$
	is
	defined by~\eqref{Eq:PartFctDec} and is given explicitly by~\eqref{Eq:DefM01}.
	The representation \eqref{Eq:Dec1d} has been used for this one-dimensional case in \cite{Dolgov:2010:55,DKK:2017:Parametric1d,KORS:2017:Multiscale1d}; the representation \eqref{Eq:OpDecTermwise} provides a generalization to higher dimensions and general coefficients.

\end{remark}

\subsection{Multilevel preconditioning}

Among the various existing methods for preconditioning discretization matrices of second-order elliptic problems, we are especially interested in approaches that provide optimal preconditioning and at the same time lead to favorable multilevel low-rank structures. A choice that meets these criteria is based on the classical BPX preconditioner \cite{MR1023042}. For our particular purposes, in what follows we also obtain a new result on symmetric preconditioning by this method.

The BPX preconditioner requires a hierarchy of nested finite element spaces $V_0 \subset V_1 \subset \cdots \subset V_L \subset V$, which in the present case are the uniformly refined spaces defined in \eqref{Eq:DefVlD}.
The standard implementable form of the preconditioner (cf.\ \cite{MR1023042,MR1069277}) is then given by
\[
   C_{2,L} \, v = \sum_{\ell=0}^L 2^{-2\ell} \sum_{j \in \cJ_L} {\langle v, \vphiD{\ell}{j} \rangle} \vphiD{\ell}{j}, \quad v \in V_L.
\]
Interpreting $C_{2,L}$ as a mapping of coefficient sequences $(\langle v, \vphiD{L}{j}\rangle)_{j\in \cJ_L}$ to nodal values of finite element functions, one obtains the corresponding matrix representation
\begin{equation}\label{bpx2def}
	\Ten{C}_{2,L} = \sum_{\ell=0}^L 2^{-2\ell} \Ten{ P}_{\ell,L} \QQ \Ten{ P}_{\ell,L}^\MT
	\, ,
\end{equation}
where $\Ten{P}_{\ell,L}$ is as in \eqref{Eq:PartFctNstD}, \eqref{Eq:DefPD}.
The following result on the BPX preconditioner \eqref{bpx2def} was established in~\cite{Oswald:92,MR1186345},
see also \cite{MR1189535,MR1213412}.

\begin{theorem}\label{thm:bpx}
Let $\Ten{A}_{L}$ and $\Ten{C}_{2,L}$ be as in \eqref{Adef} and \eqref{bpx2def}.
Then there exist $c,C>0$ independent of $L$ such that
\[
 c \, \langle \Ten{C}_{2,L}^{-1} \Vec{v},\Vec{v}\rangle \leq  \langle \Ten{A}_L \Vec{v},\Vec{v}\rangle \leq  C \, \langle \Ten{C}_{2,L}^{-1} \Vec{v},\Vec{v}\rangle , \quad \Vec{v}\in \R^{\cJ_L}.
 \]
\end{theorem}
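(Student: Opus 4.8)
The plan is to reduce Theorem~\ref{thm:bpx} to the classical norm equivalence underlying the analysis of the BPX preconditioner, namely that the family of hierarchical bases $\{\vphiD{\ell}{j}\}$ generates a stable multilevel splitting of the energy space $V$. Concretely, I would first recall that, by the standard BPX theory of \cite{Oswald:92,MR1186345}, for the nested spaces $V_0\subset\cdots\subset V_L\subset V$ one has the norm equivalence
\[
  \norm{w}_V^2 \;\sim\; \inf\Bigl\{\, \sum_{\ell=0}^L \sum_{j\in\cJ_L} 2^{2\ell}\,|c_{\ell,j}|^2 \;:\; w=\sum_{\ell=0}^L\sum_{j\in\cJ_L} c_{\ell,j}\,\vphiD{\ell}{j} \Bigr\},
\]
uniformly in $L$, where the normalization in \eqref{Eq:PartFct} is exactly what turns the usual $2^{-2\ell}$ weights into the $2^{2\ell}$ weights here (the $2^{D\ell/2}$ factor in each basis function shifts the scaling). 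The key point is that this equivalence is precisely a statement about the operator $\Ten{C}_{2,L}$: interpreting $\Ten{C}_{2,L}$ via \eqref{bpx2def} as $\sum_\ell 2^{-2\ell}\Ten{P}_{\ell,L}\Ten{P}_{\ell,L}^\MT$, one sees that $\langle\Ten{C}_{2,L}\Vec{g},\Vec{g}\rangle = \sum_\ell 2^{-2\ell}\norm{\Ten{P}_{\ell,L}^\MT\Vec{g}}_2^2$, and a duality/least-squares argument identifies $\langle\Ten{C}_{2,L}^{-1}\Vec{v},\Vec{v}\rangle$ with the infimum of $\sum_\ell 2^{2\ell}\norm{\Vec{c}_\ell}_2^2$ over all decompositions $\Vec{v}=\sum_\ell \Ten{P}_{\ell,L}\Vec{c}_\ell$.

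Next I would connect the two sides. On one hand, by the remark following \eqref{Adef}, $\langle\Ten{A}_L\Vec{v},\Vec{v}\rangle \sim \norm{w}_V^2$ with $w=\sum_{i\in\cJ_L}\Vec{v}_i\vphiD{L}{i}$, using coercivity and continuity of $a$ together with the equivalence $\norm{\,\sum \Vec{v}_i\vphiD{L}{i}}_{\LSp2}\sim\norm{\Vec{v}}_2$ and the Poincaré-type norm equivalence $\norm{\cdot}_V\sim\norm{\cdot}_{\SSp1}$ on $V$. On the other hand, the coefficient vector $\Vec{v}$ on level $L$ corresponds, via the refinement relations \eqref{Eq:PartFctNstD}, to all possible multilevel decompositions $w=\sum_\ell\sum_j c_{\ell,j}\vphiD{\ell}{j}$, with $\Vec{v}=\sum_\ell\Ten{P}_{\ell,L}\Vec{c}_\ell$. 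Therefore the BPX norm equivalence above reads exactly $\langle\Ten{C}_{2,L}^{-1}\Vec{v},\Vec{v}\rangle\sim\norm{w}_V^2$, uniformly in $L$, and combining the two displays gives $\langle\Ten{A}_L\Vec{v},\Vec{v}\rangle\sim\langle\Ten{C}_{2,L}^{-1}\Vec{v},\Vec{v}\rangle$ with $L$-independent constants, which is the claim.

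The main obstacle — and the place where care is genuinely needed — is the bookkeeping of the $\ell$-dependent normalizations. The classical BPX statements in \cite{Oswald:92,MR1186345,MR1189535,MR1213412} are usually phrased for the \emph{standard} (unit-value or $\LSp2$-normalized-on-a-fixed-level) nodal bases with weights $2^{-2\ell}$ acting on $\LSp2$-type coefficient functionals $\langle v,\vphiD{\ell}{j}\rangle$, whereas here the basis functions are $\LSp2$-normalized \emph{per level} (see \eqref{Eq:vphiNorm}, \eqref{Eq:vphiNormD}), and $\Ten{C}_{2,L}$ is defined as a map on nodal-value coefficients rather than on load functionals. I would resolve this by carefully tracking how \eqref{Eq:DefP}, \eqref{Eq:DefPD} encode the prolongations in the $\LSp2$-normalized bases — so that $\Ten{P}_{\ell,L}^\MT$ indeed realizes (up to uniformly bounded factors) the functionals $v\mapsto(2^{-\ell}\langle v,\vphiD{\ell}{j}\rangle)_j$ — and by invoking that the diagonal scaling between "nodal values" and "load vector" on each fixed level is uniformly well-conditioned because the level-$\ell$ mass matrix satisfies $\hat{\Ten{M}}_\ell\sim\mathrm{Id}$ by \eqref{Eq:vphiNormD}. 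Once this normalization dictionary is in place, the result is just the cited BPX theorem translated into the present conventions, so no new analytical ingredient beyond \cite{Oswald:92,MR1186345} is required.
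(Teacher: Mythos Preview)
Your proposal is correct and matches the paper's treatment: the paper does not give a proof of Theorem~\ref{thm:bpx} at all but simply cites it as the classical result established in \cite{Oswald:92,MR1186345} (see also \cite{MR1189535,MR1213412}). Your sketch of the normalization bookkeeping is exactly the translation needed to see that those references apply under the present $\LSp{2}$-normalized conventions, and no further argument is given in the paper.
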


This preconditioner is therefore optimal, that is, the condition numbers of preconditioned systems remain bounded uniformly in the discretization level.
It is usually applied in the form of a left-sided preconditioning: it implies in particular that $\cond(\Ten{C}_{2,L}^{1/2} \Ten{A}_L \, \Ten{C}_{2,L}^{1/2})$ is uniformly bounded with respect to $L$ and that there exists $\omega>0$ such that the iteration $   \Ten{u}^{k+1} = \Ten{u}^k - \omega \, \Ten{C}_{2,L} \bigl( \Ten{A}_L \Ten{u}^k - \Ten{f}_L  \bigr) $
converges at an $L$-independent rate. Also standard implementations of the preconditioned conjugate gradient method use only the action of $\Ten{C}_{2,L}$.

For our purposes, for several reasons explained in further detail in what follows, we require \emph{symmetric} preconditioning, that is, an implementable operator $\Ten{C}_L$ such that
$\Ten{C}_L \Ten{A}_{L} \Ten{C}_L$
is well-conditioned.
Although $\Ten{C}_{2,L}^{1/2}$ provides optimal symmetric preconditioning
by Theorem \ref{thm:bpx}, this is not directly numerically realizable.

We thus instead consider two-sided preconditioning by the implementable operator
\begin{equation}\label{Cdef}
 \Ten{C}_{L} = \sum_{\ell=0}^L 2^{-\ell} \, \Ten{ P}_{\ell,L} \Ten{ P}_{\ell,L}^\MT
 \; .
\end{equation}
For bounding the condition number of the symmetrically preconditioned operator $\Ten{C}_{L} \Ten{A}_{L} \Ten{C}_{L}$, we need to establish spectral equivalence of $\Ten{A}_L$ and $\Ten{C}_{L}^{-2}$. This is not a direct consequence of Theorem \ref{thm:bpx}. Although relying mainly on adaptations of established techniques as in \cite{MR1443598,MR1189535,MR2425155}, the following result appears to be new.
The proof is given in Appendix \ref{App:PO}.

\begin{theorem}\label{Th:Prec}
\label{thm:bpxsymm}
With $\Ten{A}_{L}$ as in \eqref{Adef} and $\Ten{C}_{L}$ as in \eqref{Cdef}, there exist $c,C>0$ independent of $L$ such that
\begin{equation}\label{bpxtobeshown}
   c \norm{\Vec{v}}_2^2 \leq  \langle \Ten{C}_{L} \Ten{A}_L \Ten{C}_{L} \Vec{v},\Vec{v}\rangle \leq  C \norm{\Vec{v}}_2^2 , \quad \Vec{v}\in \R^{\cJ_L}.
\end{equation}
\end{theorem}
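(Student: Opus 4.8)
The plan is to derive Theorem~\ref{thm:bpxsymm} from Theorem~\ref{thm:bpx} by relating the two preconditioners $\Ten{C}_L$ (with weights $2^{-\ell}$) and $\Ten{C}_{2,L}$ (with weights $2^{-2\ell}$). The estimate \eqref{bpxtobeshown} is equivalent to the spectral equivalence $\Ten{C}_L^{-1} \sim \Ten{A}_L^{1/2}$ as positive operators, or, squaring, to $\Ten{C}_L^{-2} \sim \Ten{A}_L$. By Theorem~\ref{thm:bpx}, $\Ten{A}_L \sim \Ten{C}_{2,L}^{-1}$, so it suffices to show $\Ten{C}_L^{-2} \sim \Ten{C}_{2,L}^{-1}$, i.e.\ $\Ten{C}_L^2 \sim \Ten{C}_{2,L}$. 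Both $\Ten{C}_L$ and $\Ten{C}_{2,L}$ are explicit positive sums of the rank-structured terms $\Ten{P}_{\ell,L}\Ten{P}_{\ell,L}^\MT$, so the task reduces to a purely algebraic/multilevel-analytic comparison of $\bigl(\sum_{\ell} 2^{-\ell}\Ten{P}_{\ell,L}\Ten{P}_{\ell,L}^\MT\bigr)^2$ with $\sum_{\ell} 2^{-2\ell}\Ten{P}_{\ell,L}\Ten{P}_{\ell,L}^\MT$.

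First I would set up the standard multilevel framework: let $Q_\ell \colon V_L \to V_\ell$ be the $\LSp 2(\varOmega)$-orthogonal projections (with $Q_{-1}=0$), so that $I = \sum_{\ell=0}^L (Q_\ell - Q_{\ell-1})$ on $V_L$, and recall the well-known norm equivalence underlying the BPX analysis, $\norm{v}_{\SSp 1}^2 \sim \sum_{\ell=0}^L 2^{2\ell}\norm{(Q_\ell - Q_{\ell-1})v}_{\LSp 2}^2$ for $v\in V_L$ (this is essentially the content of \cite{Oswald:92,MR1186345} behind Theorem~\ref{thm:bpx}, combined with the $\LSp2$-stability of the bases \eqref{Eq:vphiNormD}). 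In this language $\Ten{C}_{2,L}$ corresponds, up to the norm equivalences \eqref{Eq:vphiNormD}, to the operator $\sum_\ell 2^{-2\ell} \tilde Q_\ell$ where $\tilde Q_\ell$ is the (non-orthogonal, Bramble--Pasciak--Xu) additive contribution from level $\ell$; and Theorem~\ref{thm:bpx} says precisely $\langle \Ten{A}_L \bv,\bv\rangle \sim \sum_\ell 2^{2\ell}\norm{(Q_\ell-Q_{\ell-1})v}_{\LSp2}^2$ with $v = \sum_i \bv_i \vphiD Li$. So \eqref{bpxtobeshown} will follow once I show that $\langle \Ten{C}_L^{-2}\bv,\bv\rangle \sim \sum_\ell 2^{2\ell}\norm{(Q_\ell-Q_{\ell-1})v}_{\LSp2}^2$ as well.

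The key estimate is therefore the two-sided bound, for all $v\in V_L$,
\begin{equation}\label{plan:key}
 \Bignorm{\Bigl(\sum_{\ell=0}^L 2^{-\ell} Q_\ell\Bigr) v}_{\LSp2}^2 \;\sim\; \sum_{\ell=0}^L 2^{-2\ell}\norm{Q_\ell v}_{\LSp2}^2 \;\sim\; \sum_{\ell=0}^L 2^{-2\ell}\norm{(Q_\ell - Q_{\ell-1})v}_{\LSp2}^2,
\end{equation}
where the last equivalence is the standard interpolation-space reshuffling. For the first equivalence, the lower bound $\gtrsim$ is immediate by keeping only the diagonal term. For the upper bound I would expand $\norm{\sum_\ell 2^{-\ell}Q_\ell v}^2 = \sum_{\ell,\ell'} 2^{-\ell-\ell'}\langle Q_\ell v, Q_{\ell'} v\rangle$, use $\langle Q_\ell v, Q_{\ell'} v\rangle = \norm{Q_{\min(\ell,\ell')}... }$ — more precisely $\langle Q_\ell v, Q_{\ell'}v\rangle = \langle Q_{\ell}v, v\rangle$ for $\ell\le\ell'$ since $Q_\ell Q_{\ell'} = Q_\ell$, giving $\norm{Q_\ell v}_{\LSp 2}^2 \le \norm{Q_\ell v}_{\LSp2}\norm{Q_{\ell'}v}_{\LSp2}$ — and then sum the resulting geometric-type double series by a Cauchy--Schwarz / Schur-test argument (the matrix $2^{-|\ell-\ell'|/\cdot}$ has bounded $\ell^2\to\ell^2$ norm), which bounds it by $\sum_\ell 2^{-2\ell}\norm{Q_\ell v}^2$ up to a constant. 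Translating \eqref{plan:key} back through the basis stability \eqref{Eq:vphiNormD} and \eqref{Eq:DefPD}, and noting that $\Ten{P}_{\ell,L}\Ten{P}_{\ell,L}^\MT$ is exactly the matrix of $Q_\ell$ up to the uniformly equivalent Gram matrices, yields $\Ten{C}_L^2 \sim \Ten{C}_{2,L}$, hence $\Ten{C}_L^{-2}\sim\Ten{C}_{2,L}^{-1}\sim\Ten{A}_L$ by Theorem~\ref{thm:bpx}, which is \eqref{bpxtobeshown}.

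The main obstacle I anticipate is the bookkeeping in passing between the clean $\LSp2$-projection picture and the actual matrices $\Ten{P}_{\ell,L}\Ten{P}_{\ell,L}^\MT$: the $\Ten{P}_{\ell,L}$ represent the \emph{nodal} prolongations in the scaled bases \eqref{Eq:PartFct}, so $\Ten{P}_{\ell,L}\Ten{P}_{\ell,L}^\MT$ is not literally the $\LSp2$-projection but differs from it by Gram matrices that are spectrally equivalent to the identity uniformly in $\ell$ and $L$ — one has to be careful that these equivalences compose correctly when squaring $\Ten{C}_L$, and that the cross terms $\Ten{P}_{\ell,L}\Ten{P}_{\ell,L}^\MT \Ten{P}_{\ell',L}\Ten{P}_{\ell',L}^\MT$ for $\ell\ne\ell'$ are controlled; this is where the nestedness $V_\ell\subset V_{\ell'}$ and the resulting near-idempotency $\Ten{P}_{\ell,L}^\MT\Ten{P}_{\ell',L}\approx\Ten{P}_{\ell,\ell'}^{(\cdot)}$-type identities must be used. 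The analytic heart — the summation estimate \eqref{plan:key} — is routine multilevel analysis; the care is entirely in making the reduction to Theorem~\ref{thm:bpx} rigorous at the matrix level, which is presumably why the authors relegate it to an appendix.
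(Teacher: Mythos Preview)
Your reduction is sound and genuinely different from the paper's route: you aim for the single spectral equivalence $\Ten{C}_L^2 \sim \Ten{C}_{2,L}$ and then invoke Theorem~\ref{thm:bpx}, whereas the paper treats the two inequalities in~\eqref{bpxtobeshown} separately --- the lower bound by a frame-type argument and the upper bound by expanding $\langle \Ten{C}_L\Ten{D}_L\Ten{C}_L\bw,\bw\rangle$ directly, using the $\ell^2$-orthogonal splitting $\bw=\sum_k(\Ten{R}_k-\Ten{R}_{k-1})\bw$ and a strengthened Cauchy--Schwarz inequality (Lemma~\ref{Lm:strengthenedCS}). Your computation~\eqref{plan:key} for the genuine $\LSp2$-projections $Q_\ell$ is correct and clean: $Q_\ell Q_{\ell'}=Q_{\min(\ell,\ell')}$ makes both directions immediate.

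The real gap is the step you yourself flag as ``the main obstacle,'' and it is more serious than bookkeeping. The matrices $\Ten{S}_\ell:=\Ten{P}_{\ell,L}\Ten{P}_{\ell,L}^\MT$ are \emph{not} projections, and the identity $Q_\ell Q_{\ell'}=Q_{\min}$ that drives your argument fails for them. Termwise spectral equivalence $\Ten{S}_\ell\sim\Ten{R}_\ell$ (or $\sim$ the matrix of $Q_\ell$) does give $\Ten{C}_L\sim\tilde{\Ten{C}}_L:=\sum_\ell 2^{-\ell}\Ten{R}_\ell$, but you then need $\Ten{C}_L^2\sim\tilde{\Ten{C}}_L^2$, and $A\sim B$ does \emph{not} imply $A^2\sim B^2$ for noncommuting SPD matrices --- squaring is not operator monotone. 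Equivalently, ``up to uniformly equivalent Gram matrices'' does not survive the passage from $\sum 2^{-\ell}\Ten{S}_\ell$ to its square. To control the cross terms $\langle \Ten{S}_\ell\bv,\Ten{S}_{\ell'}\bv\rangle$ you need quantitative near-idempotency: a bound on $\Ten{P}_{\ell,L}^\MT\Ten{P}_{k,L}-\Ten{P}_{k,\ell}$ (this is exactly the paper's estimate~\eqref{eq:projkl}) together with the inverse/Bernstein estimates that feed into Lemma~\ref{Lm:strengthenedCS}. Once you have those ingredients, the detour through $\Ten{C}_L^2\sim\Ten{C}_{2,L}$ no longer saves work over the paper's direct estimate; the analytic heart is the same strengthened Cauchy--Schwarz, not the Schur test on~\eqref{plan:key}. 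So the plan is viable but only after supplying precisely the technical lemma the paper proves, which you have not sketched.
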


\begin{remark}
As an immediate consequence of Theorem \ref{thm:bpxsymm},
\begin{equation}\label{normequiv}
  \norm{v}_{\SSp{1}} \sim   \norm{\Ten{C}_{L}^{-1} \Vec{v}}_{2} \qquad \text{for } v = \sum_{j\in \cJ_L} \Vec{v}_j \, \vphiD{L}{j}, \quad \Vec{v}\in \R^{\cJ_L},
\end{equation}
which means that the functions $\sum_{i\in\cJ_L} (\Ten{C}_L)_{ij} \vphiD{L}{i}$, $j\in \cJ_L$, form a Riesz basis of the subspace $V_L \subset \SSp{1}(\varOmega)$ with bounds independent of $L$.
\end{remark}

In what follows, we consider the symmetrically preconditioned problem of finding $\Vec{u}_{L}$ such that
\begin{subequations}
	\begin{equation}\label{precondsystem}
	\Ten{B}_{L} \Vec{u}_{L} = \bg_L\quad\text{where}\quad \Ten{B}_{L} = \Ten{C}_{L} \Ten{A}_{L} \Ten{C}_{L}\text{ and }\bg_L = \Ten{C}_{L} \Vec{f}_{L}.
	\end{equation}
	Then $\bar{\Vec{u}}_{L} = \Ten{C}_{L} \Vec{u}_{L}$ satisfies $\Ten{A}_{L} \bar{\Vec{u}}_{L}=\Vec{f}_{L}$; that is, $\bar{\Vec{u}}_{L}$ are the (rescaled) nodal values of the Galerkin solution at level $L$.
	Using~\eqref{Eq:OpDecTermwise}, we obtain
	\begin{equation}\label{Eq:PrecOpDec}
		\Ten{B}_{L}
		=
		\sum_{\Tuple{\alpha,\alpha'} \in \mathcal{D}}
		\,
		\Ten{Q}_{L \CQ \alpha}^\MT
		\;
		\Ten{\varLambda}_{L \CQ \alpha \alpha'}
		\,
		\Ten{Q}_{L \CQ \alpha'}
		\, ,
	\end{equation}
	where
	\begin{equation}\label{Eq:DefQ}
		\Ten{Q}_{L \CQ \alpha}
		=
		\Ten{M}_{L \CQ \alpha}
		\,
		\Ten{C}_{L}
	\end{equation}
	for all $\alpha\in\Set{0,1}^D$.
\end{subequations}

For our purposes, the symmetrically preconditioned operator
is preferable mainly for two reasons.
On the one hand,
an important advantage of the symmetric preconditioning~\eqref{Eq:PrecOpDec}
consists in the norm equivalence~\eqref{normequiv},
since ultimately we are interested in numerical schemes
with guaranteed convergence in the $\SSp{1}$ norm.
With low-rank methods using SVD-based rank truncations,
as considered in further detail in Section~\ref{sec:solvers},
for any $\varepsilon>0$ we can find $\bv$ such that $\norm{\bu_L - \bv}_2 \leq\varepsilon$ with $\bu_L$ as in~\eqref{precondsystem}.
With the nodal basis coefficients $\bar{\Vec{v}} = \Ten{C}_{L} \Vec{v}$, for the corresponding finite element functions $v = \sum_{j\in \cJ_L} \bar{\Vec{v}}_j \, \vphiD{L}{j}$ and $u_L = \sum_{j\in \cJ_L} \bar{\Vec{u}}_{L,j} \, \vphiD{L}{j}$ we have
$
  \norm{u_L - v}_{\SSp{1}} \lesssim \norm{\Ten{C}_L^{-1}(\bar{\Vec{u}}_L - \bar{\Vec{v}} ) }_2  = \norm{ \bu_L - \bv}_2 \leq \varepsilon
$
by~\eqref{normequiv}.
On the other hand, the symmetric preconditioning~\eqref{Eq:PrecOpDec}
allows for the explicit assembly
of the preconditioned operator $\Ten{B}_{L}$
directly in the low-rank form,
as considered in detail in Section~\ref{sec:tensorstructure}.

\section{Tensor Train Decomposition}\label{Sc:TT}

In this section,
we recapitulate the definition of the tensor train (TT) decomposition
of multidimensional arrays
and present the notation that we need for the following sections.

\subsection{Tensor train decomposition of multidimensional arrays}

Throughout this section, we assume that $L\in\N$.
\begin{subequations}
	Let $n_1, \ldots, n_L \in\N$
	and $\Vec{u}$ be a multidimensional vector of dimension
	$n_1 \cdots n_L$.
	Let $r_1,\ldots,r_{L-1}\in\N$ and, for $\ell=1,\ldots,L$,
	let $U_\ell$ be arrays of size $r_{\ell-1} \times n_\ell \times r_\ell$,
	where $r_0 = 1$ and $r_L = 1$.
	The vector $\Vec{u}$ is said to be
	represented in the
	\emph{tensor train (TT) decomposition}~\cite{Oseledets:2009:TT,Oseledets:2011:TT}
	with
	\emph{ranks} $r_1,\ldots,r_{L-1}$
	and \emph{cores} $U_1,\ldots,U_L$
	if
	\begin{equation}\label{Eq:DefVecTT}
		\Vec{u}_{j_1, \ldots, j_L}
		=
		\sum_{\alpha_1=1}^{r_1}
		\cdots
		\sum_{\alpha_{L-1}=1}^{r_{L-1}}
		U_1(\alpha_0, j_1, \alpha_1)
		\, \cdots \,
		U_L(\alpha_{L-1}, j_L, \alpha_L)
	\end{equation}
	for all
	$j_\ell = 1,\ldots,n_\ell$
	with
	$\ell = 1,\ldots,L$,
	where $\alpha_0 \equiv 1$ and $\alpha_L \equiv 1$ are dummy indices.

	The TT decomposition for matrices is defined analogously.
	Assume that $m_1,n_1, \ldots, m_L,n_L \in\N$
	and that $\Ten{A}$ is a matrix of size $(m_1 \cdots m_L) \times (n_1 \cdots n_L)$.
	Let $p_1,\ldots,p_{L-1}\in\N$ and, for each $\ell=1,\ldots,L$,
	let $A_\ell$ be an array of size
	$p_{\ell-1} \times m_\ell \times n_\ell \times p_\ell$,
	where $p_0 = 1$ and $p_L = 1$. Then
	the representation
	\begin{equation}\label{Eq:DefMatTT}
		\Ten{A}_{i_1, \ldots, i_L \;\; j_1, \ldots, j_L}
		=
		\sum_{\beta_1=1}^{p_1}
		\cdots
		\sum_{\beta_{L-1}=1}^{p_{L-1}}
		A_1(\beta_0, i_1, j_1, \beta_1)
		\, \cdots \,
		A_L(\beta_{L-1}, i_L, j_L, \beta_L)
	\end{equation}
	for all
	$i_\ell = 1,\ldots,n_\ell$
	with
	$\ell = 1,\ldots,L$,
	where $\beta_0 \equiv 1$ and $\beta_L \equiv 1$ are dummy indices,
	is called a tensor train decomposition of the matrix $\Ten{A}$
	with \emph{ranks} $p_1,\ldots,p_{L-1}$ and \emph{cores} $A_1,\ldots,A_L$.
\end{subequations}

The TT decomposition uses one of many possible ways to separate variables
in multidimensional arrays; see, e.g., the survey~\cite{Kolda:2009}
and the monograph~\cite{Hackbusch:TensorCalculus}.
The TT decomposition is a particular case of the more general \emph{hierarchical tensor representation},
also known as the \emph{hierarchical Tucker
representation}~\cite{Hackbusch:2009:HTF,Grasedyck:2010:HierarchicalSVD}.
Both the TT and hierarchical tensor representations can be interpreted as successive
subspace approximation or low-rank matrix factorization, and this relation
allows for the quasi-optimal low-rank approximation of tensors built upon
standard matrix algorithms.

The number of parameters of the representation,
formally linear in $L$, is mainly governed by the ranks, such as
$r_1,\ldots r_{L-1}$ in~\eqref{Eq:DefVecTT} and $p_1,\ldots,p_{L-1}$ in~\eqref{Eq:DefMatTT}.
In many applications, the complexity is observed, theoretically as well as numerically,
to depend moderately on $L$ (see, e.g., \cite{Grasedyck:2013:Survey}),
which allows to lift or completely avoid the so-called \emph{curse of dimensionality}
associated with the entrywise storage of high-dimensional arrays.

The use of $L$ for the dimensionality of tensors in this section
is not accidental: in the present paper,
the ``dimension'' index $\ell\in\Set{1,\ldots,L}$
enumerates the levels of discretization, and
each of the mode indices ($i_\ell$ and $j_\ell$ with $\ell\in\Set{1,\ldots,L}$ above)
represents
the corresponding $D$ bits of the $D$ ``physical'' dimensions.
In this case, the TT format separates not ``physical'' dimensions of tensors
but rather their multilevel structure, and adaptive low-rank approximation
allows to resolve this structure in vectors and matrices. In this setting, the TT decomposition
is known as the quantized tensor train (QTT)
decomposition~\cite{Oseledets:2009:QTT:Dokl,Oseledets:2010:QTT,Khoromskij:2011:QuanticsApprox,Hackbusch:2011:Conv}.
This idea is further explained in Section~\ref{Sc:MultilevelTT}.

\subsection{Core notation}

In this section, we present
the notation
developed in~\cite{KKh:2012:QTT,KKhT:2013:Toeplitz,KRS:2013:DiffTT},
which we extensively use to
work with TT representations.
For the sake of brevity, several definitions and
properties will be stated for cores with two mode indices, which naturally arise in TT
representations of matrices. The setting with a single mode index per core can be considered a particular case
in the same way as vectors can be considered one-column matrices.

If $\BR{U}{\alpha,\beta}$ with $\alpha=1,\ldots,p$ and $\beta=1,\ldots,q$
are tensors
of size $m \times n$,
we call the array $U$ of size $p \times m \times n \times q$
given by
\begin{equation}\label{Eq:CoreBlock}
	U\Par{\alpha,i,j,\beta}
	=
	\BR{U}{\alpha,\beta}_{i j}
\end{equation}
for all
$\alpha=1,\ldots,p$, $i=1,\ldots,m$, $j=1,\ldots,n$ and $\beta=1,\ldots,q$
a \emph{core} of \emph{rank} $p \times q$
and \emph{mode size} $m \times n$.
Conversely, for any core $U$
of rank $p \times q$ and mode size $m \times n$,
we refer to each tensor
$\BR{U}{\alpha,\beta}$ with $\alpha=1,\ldots,p$ and $\beta=1,\ldots,q$
as \emph{block} $(\alpha,\beta)$ of the core $U$.

For explicitly defining a core $U$, as a tensor of order four as in \eqref{Eq:CoreBlock}, in terms of its blocks (which in turn can be matrices or vectors), we 
use the notation
\begin{equation}\label{Eq:CoreNotation}
	U
	=
	\begin{bmatrix}
		{\;}
		\BR{U}{1,1}
		&
		\cdots
		&
		\BR{U}{1,q}
		{\;}
		\\
		\vdots
		&
		\ddots
		&
		\vdots
		\\
		\BR{U}{p,1}
		&
		\cdots
		&
		\BR{U}{p,q}
		\\
	\end{bmatrix},
\end{equation}
where square brackets are used for distinction from matrices.
The following matrices are examples of blocks
that we frequently use in this paper:
\begin{equation}\label{Eq:DefBlocksIJ}
		I
		=
		\begin{pmatrix}
			1	& 0	\\
			0	& 1	\\
		\end{pmatrix}
		,\quad
		J
		=
		\begin{pmatrix}
			0	& 1	\\
			0	& 0	\\
		\end{pmatrix}
		\quad\text{and}\quad
		I_1
		=
		\begin{pmatrix}
			1	& 0	\\
			0	& 0	\\
		\end{pmatrix}
		,\quad
		I_2
		=
		\begin{pmatrix}
			0	& 0	\\
			0	& 1	\\
		\end{pmatrix}
		\, .
\end{equation}

To apply the usual matrix transposition to TT decompositions of matrices, we will use
the transposition of mode indices of cores:
\begin{equation}\label{Eq:DefCoreT}
	U^\MT \Par{\alpha,i,j,\beta}
	=
	U \Par{\alpha,j,i,\beta}
	\, ,
	\quad\text{i.e.,}\quad
	\BR{\Par[1]{U^\MT \: }}{\alpha,\beta}
	=
	\Par[1]{ \BR{U}{\alpha,\beta} }^\MT
\end{equation}
in terms of matrix transposition,
for all values of the indices.

Similarly to~\eqref{Eq:CoreBlock}, for any core $U$
of rank $p \times q$ and mode size $m \times n$,
we refer to
each matrix
$\BM{U}{i j}$
with $i\in\Set{1,\ldots,m}$ and $j\in\Set{1,\ldots,n}$
given by
\begin{equation}\label{Eq:CoreSlice}
	U\Par{\alpha,i,j,\beta}
	=
	\BM{U}{i j}_{\alpha \beta}
\end{equation}
for all
$\alpha=1,\ldots,p$ and $\beta=1,\ldots,q$
as \emph{slice} $(i,j)$ of the core $U$.

\subsection{Strong Kronecker product}

We are interested in cores as factors of TT decompositions,
and now we present how decompositions of the form~\eqref{Eq:DefVecTT}--\eqref{Eq:DefMatTT} can be
expressed in terms of cores.
For that purpose, we use
the \emph{strong Kronecker product}, introduced
for two-level matrices in~\cite{DeLauney:1994:StrKrProd}.
In order to avoid confusion with the Hadamard and tensor products,
we denote this operation by $\RP$,
as in~\cite[Definition~2.1]{KKh:2012:QTT},
where it was introduced
specifically for connecting cores into TT representations.
\begin{definition}[strong Kronecker product of cores]\label{Df:SKP}
	Let $p,q,r\in\N$ and $m_1,m_2,n_1,n_2\in\N$.
	Consider cores $U$ and $V$ of ranks $p \times r$ and $r \times q$
	and of mode size $m_1 \times m_2$
	and $n_1 \times n_2$
	respectively.
	The \emph{strong Kronecker product} $U \RP V$ of $U$ and $V$
	is the core of rank $p \times q$ and mode size
	$m_1 m_2 \times n_1 n_2$
	given,
	in terms of the matrix multiplication of slices
	(of size $p \times r$ and $r \times q$),
	by
	\begin{equation}\nonumber
		\BM{\Par{U \RP V}}{i_1 \QQ i_2, \: j_1 \QQ j_2}
		=
		\BM{U}{i_1,j_1}
		\,
		\BM{V}{i_2,j_2}
	\end{equation}
	for all combinations of
	$i_k\in\Set{1,\ldots,m_k}$ and $j_k\in\Set{1,\ldots,n_k}$ with $k=1,2$.
\end{definition}
In other words, we define $U \RP V$ as
the usual matrix product of the corresponding core matrices,
their entries (blocks) being multiplied
by means of the Kronecker product. For example,
we have
\begin{equation}\label{Eq:ExRP}
	\begin{bmatrix}
	V_{11} & V_{12}\\
	V_{21} & V_{22}\\
	\end{bmatrix}
	\RP
	\begin{bmatrix}
	W_{11} & W_{12}\\
	W_{21} & W_{22}\\
	\end{bmatrix}
	=
	\begin{bmatrix}
	V_{11} \TProd W_{11} + V_{12} \TProd W_{21} & V_{11} \TProd W_{12} + V_{12} \TProd W_{22}\\
	V_{21} \TProd W_{11} + V_{22} \TProd W_{21} & V_{21} \TProd W_{12} + V_{22} \TProd W_{22}\\
	\end{bmatrix}
\end{equation}
for two cores of rank $2 \times 2$.
Using the strong Kronecker product,
we can rewrite~\eqref{Eq:DefVecTT} and~\eqref{Eq:DefMatTT}
as follows:
\begin{equation}\label{Eq:DecTT-CoreNotation}
	\Vec{u}
	=
	\SqBr{\Vec{u}}
	=
	U_1 \RP\cdots\RP U_L
	\quad\text{and}\quad
	\Ten{A}
	=
	\SqBr{\Ten{A}}
	=
	A_1 \RP\cdots\RP A_L
	\, ,
\end{equation}
where the first equalities
indicate that
any tensor of dimension $m \times n$
can be identified with a core of rank $1 \times 1$ and mode size
$m \times n$.

\subsection{Representation map}

\begin{subequations}
	Since many different tuples of cores may represent (or approximate) the same tensor,
	we need to distinguish representations as tuples of cores.
	We denote such tuples by sans-serif letters; for example,
	\begin{equation}\label{Eq:CoreTuples}
		\Rep{U}
		=
		(U_1,\ldots,U_L)
		\quad\text{and}\quad
		\Rep{A}
		=
		(A_1,\ldots,A_L)
	\end{equation}
	for the decompositions given by~\eqref{Eq:DefVecTT} and~\eqref{Eq:DefMatTT}.
	Further, we denote by $\asm$ the function
	mapping \emph{tuples of cores} into \emph{cores}
	(in particular,
	into tensors when the rank of the resulting core is $1\times 1$):
	\begin{equation}\label{Eq:DefRepMap}
		\asm(U_1,\ldots,U_L)
		=
		U_1 \RP \cdots \RP U_L
	\end{equation}
	for any cores $U_1,\ldots,U_L$ such that the right-hand side
	exists in the sense of Definition~\ref{Df:SKP}.
	Under~\eqref{Eq:CoreTuples},
	this allows to rewrite~\eqref{Eq:DefVecTT}--\eqref{Eq:DefMatTT}
	and~\eqref{Eq:DecTT-CoreNotation}
	as
	\begin{equation}\label{Eq:RepMap}
		\Vec{u}
		=
		\SqBr{\Vec{u}}
		=
		\asm(\Rep{U})
		\quad\text{and}\quad
		\Ten{A}
		=
		\SqBr{\Ten{A}}
		=
		\asm(\Rep{A})
		\, .
	\end{equation}
	For the sets of all tuples of
	$L\in\N$ cores
	with compatible ranks,
we write
	$\TTset_L=\TTset^1_L$ in the case of blocks with one mode index, and $\TTset^2_L$ in the case of two mode indices as in \eqref{Eq:CoreBlock}.
\end{subequations}
Furthermore, let us assume that $\Rep{U} = (U_1,\ldots,U_L) \in \TTset_L$,
i.e., that $U_1,\ldots,U_L$ are cores such that $\asm(U_1,\ldots,U_L)$
is a core of rank $r_0 \times r_L$
and mode size $n$,
where $r_0,r_L,n\in\N$.
Then by
$\asm^-$ and $\asm^+$
we denote the matrices of size
$r_0 n \times r_L$
and
$r_0 \times n r_L$,
respectively,
given as follows:
\begin{subequations}
	\begin{equation}\label{Eq:RepMap-}
		\Par[1]{ \asm^-(U_1,\ldots,U_L) }_{\beta_0 i \;\; \beta_L}
		=
		\Par[1] { \asm(U_1,\ldots,U_L) } (\beta_0, i , \beta_L)
	\end{equation}
	and
	\begin{equation}\label{Eq:RepMap+}
		\Par[1]{ \asm^+(U_1,\ldots,U_L) }_{\beta_0 \;\; i \beta_L}
		=
		\Par[1] { \asm(U_1,\ldots,U_L) } (\beta_0, i , \beta_L)
	\end{equation}
	for all $\beta_0=1,\ldots,r_0$, $i=1,\ldots,n$ and $\beta_L=1,\ldots,r_L$.
	These matrices may be called
	matricizations of the core $\asm(U_1,\ldots,U_L)$: they are obtained
	by interpreting the rank indices as row and column indices,
	which is consistent with~\eqref{Eq:CoreNotation},
	and by interpreting all mode indices as either row
	or column indices.
	For notational convenience,
	we set
	$\asm^-(\varnothing) = 1$ and $\asm^+(\varnothing) = 1$
	for empty lists of cores.
	Moreover, for each $\ell=1,\ldots,L$, we define
	\begin{equation}\label{Eq:RepMap-ell}
		\asm^-_\ell (\Rep{U})
		=
		\asm^-(U_1,\ldots,U_{\ell-1})
		\quad\text{for each}\quad
		\ell=1,\ldots,L+1
	\end{equation}
	and
	\begin{equation}\label{Eq:RepMap+ell}
		\asm^+_\ell (\Rep{U})
		=
		\asm^+(U_{\ell+1},\ldots,U_L)
		\quad\text{for each}\quad
		\ell=0,\ldots,L
		\, .
	\end{equation}
	In particular, we have
	$\asm^-_1(U_1,\ldots,U_L)= 1$,
	$\asm^-_{L+1}(U_1,\ldots,U_L)=\asm^-(U_1,\ldots,U_L)$
	and
	$\asm^+_L(U_1,\ldots,U_L) = 1$,
	$\asm^+_{0}(U_1,\ldots,U_L)=\asm^+(U_1,\ldots,U_L)$.
\end{subequations}

\subsection{Unfolding matrices, ranks, and orthogonality}\label{Sc:UnfRankOrth}

Let us consider a vector $\Vec{u}$ of size $n_1 \cdots n_L$
and a matrix $\Ten{A}$ of size $m_1 \cdots m_L \times n_1 \cdots n_L$.
For every $\ell=1,\ldots,L-1$, we denote by $\Mat{\ell}(\Vec{u})$
and $\Mat{\ell}(\Ten{A})$
the $\ell$th \emph{unfolding matrices} of $\Vec{u}$ and $\Ten{A}$,
which are the matrices
of size $n_1 \cdots n_\ell \times n_{\ell+1} \cdots n_L$
and $m_1 n_1 \cdots m_\ell n_\ell  \times m_{\ell+1} n_{\ell+1} \cdots m_L n_L$
given by
\begin{subequations}
	\begin{align}
		\Par[1]{ \Mat{\ell}(\Vec{u}) }_{j_1,\ldots,j_\ell \;\; j_{\ell+1},\ldots,j_L}
		&=
		\Vec{u}_{j_1, \ldots, j_\ell, j_{\ell+1},\ldots, j_L}
		\, ,
		\label{Eq:DefUnfVec}
		\\
		\Par[1]{ \Mat{\ell}(\Ten{A}) }_{
			i_1 j_1,\ldots,i_\ell j_\ell \;\; i_{\ell+1 } j_{\ell+1},\ldots,i_L j_L
		}
		&=
		\Ten{A}_{
			i_1, \ldots, i_\ell, i_{\ell+1},\ldots, i_L
			\;\;
			j_1, \ldots, j_\ell, j_{\ell+1},\ldots, j_L
		}
		\label{Eq:DefUnfMat}
	\end{align}
	for all $i_k = 1,\ldots,m_k$ and $j_k = 1,\ldots,n_k$ with $k=1,\ldots,L$.
    For the ranks of the unfolding matrices, we use the notation
	\begin{equation}\label{Eq:DefUnfRank}
		\rank_\ell(\Vec{u}) = \rank \Mat{\ell}(\Vec{u})
		\quad\text{and}\quad
		\rank_\ell(\Ten{A}) = \rank \Mat{\ell}(\Ten{A})
	\end{equation}
	for each $\ell=1,\ldots,L-1$.
\end{subequations}

The decompositions given by~\eqref{Eq:DefVecTT}--\eqref{Eq:DefMatTT}
or, equivalently, by~\eqref{Eq:RepMap}
imply $\rank_\ell(\Vec{u}) \leq r_\ell$
and $\rank_\ell(\Ten{A}) \leq p_\ell$ for each $\ell=1,\ldots,L-1$;
furthermore, the decompositions provide
low-rank factorizations of the unfolding matrices
with the respective numbers of rank-one terms.
For example, in the case of a vector, using the notation introduced
in~\eqref{Eq:RepMap-ell}--\eqref{Eq:RepMap+ell}, we can write
$
\Mat{\ell}(\Vec{u})
=
\asm^-_{\ell+1}(\Rep{U})
\QQ
\asm^+_{\ell}(\Rep{U})
$.

Conversely, if $\Vec{u}$ and $\Ten{A}$ are such that,
for every $\ell=1,\ldots,L-1$, the unfolding matrices
$\Mat{\ell}(\Vec{u})$ and $\Mat{\ell}(\Ten{A})$
have approximations of ranks $r_\ell$ and $p_\ell$, respectively, and
of accuracy $\varepsilon_\ell$ in the Frobenius norm,
then representations $\Rep{U}=(U_1,\ldots,U_L)$ and
$\Rep{A}=(A_1,\ldots,A_L)$ of ranks $r_1,\ldots,r_{L-1}$ and
$p_1,\ldots,p_{L-1}$ such that
\begin{equation}\nonumber
	\Norm{ \asm(\Rep{U}) - \Vec{u} }_{2}^2
	\leq
	\varepsilon^2
	\quad\text{and}\quad
	\Norm{ \asm(\Rep{A}) - \Ten{A} }_{\textrm{F}}^2
	\leq
	\varepsilon^2
\end{equation}
with $\varepsilon^2 = \varepsilon_1^2 +\cdots+ \varepsilon_{\ell-1}^2$
exist~\cite[Theorem~2.2]{Oseledets:2011:TT}
and can be constructed
by the TT-SVD algorithm~\cite[Algorithm~1]{Oseledets:2011:TT}.

Next, we recapitulate the notion of orthogonality of decompositions
in terms of the matricization operators
defined in~\eqref{Eq:RepMap-}--\eqref{Eq:RepMap+ell}.
If a core $U$ is such that the matrix
$\asm^-(U)$ has orthonormal columns, then the core is called
\emph{left-orthogonal}.
Similarly, if
the matrix
$\asm^+(U)$ has orthonormal rows, then the core is called
\emph{right-orthogonal}.
Further, if $\Rep{U} \in \TTset_L$
is such that the columns of each matrix $\asm^-_\ell(\Rep{U})$ with
$\ell=2,\ldots,L+1$ are orthonormal, then the decomposition is called
\emph{left-orthogonal}.
Analogously, if the rows of each matrix $\asm^+_\ell(\Rep{U})$ with
$\ell=0,\ldots,L-1$ are orthonormal, then the decomposition is called
\emph{right-orthogonal}.
It is easy to see that any core $U$ of the form
$U = U_1 \RP U_2$ is left- or right-orthogonal
if both $U_1$ and $U_2$ are left- or right-orthogonal, respectively.
As a result, any decomposition
$\Rep{U}=(U_1,\ldots,U_{L})$
is left- or right-orthogonal
if each of the cores $U_1,\ldots,U_{L}$
is left- or right-orthogonal.

Moreover, we say that $\Rep{U}$ is in \emph{left-orthogonal TT-SVD form} if $\asm^-_{\ell+1} (\Rep{U})$ has orthonormal columns and $\asm^+_{\ell}(\Rep{U})$ has orthogonal rows for each $\ell=1,\ldots,L-1$; in other words, these matrices provide the SVD of $\Mat{\ell}(\bu)$ for each $\ell$, where the norms of the rows of $\asm^+_{\ell}(\Rep{U})$ are the corresponding singular values, and $\norm{\bu}_2 = \norm{U_L}_2$. Analogously, $\Rep{U}$ is in \emph{right-orthogonal TT-SVD form} if $\asm^-_{\ell+1} (\Rep{U})$ has orthogonal columns and $\asm^+_{\ell}(\Rep{U})$ has orthonormal rows. These TT-SVD forms can be obtained numerically for any given $\Rep{U}$ by the procedure \cite[Algorithm~1]{Oseledets:2011:TT} without rank truncation.

\subsection{Operations on cores}\label{sec:opcores}

We require
several further operations, which are explained in this section.
We start with the mode product of cores,
which was
introduced in~\cite[Definition~2.2]{KKhT:2013:Toeplitz}
and which
generalizes matrix multiplication to the case of cores.
\begin{definition}[mode product of cores]\label{Df:MP}
	Let $p,p',r,r'\in\N$ and $m,n,k\in\N$.
	Consider cores $A$ and $B$ of ranks $p \times p'$ and $r \times r'$
	and of mode size $m \times k$
	and $k \times n$,
	respectively.
	The \emph{mode core product} $A \MP B$ of $A$ and $B$
	is the core of rank $pq \times p'q'$ and mode size
	$m \times n$
	given,
	in terms of the matrix multiplication of blocks
	(of sizes $m \times k$ and $k \times n$),
	by
	\begin{equation}\nonumber
		\BR{\Par{A \MP B}}{\alpha \beta \!,\, \alpha' \beta'}
		=
		\BR{A}{\alpha,\alpha'}
		\,
		\BR{B}{\beta,\beta'}
	\end{equation}
	for all combinations of
	$\alpha=1,\ldots,p$, $\alpha'=1,\ldots,p'$, $\beta=1,\ldots,q$ and
	$\beta=1,\ldots,q'$.
	If $B$ has only one mode index,
	we apply the above definition,
	introducing a dummy mode size $n=1$ in $B$ and
	discarding it in $A \MP B$.
\end{definition}
For example, for a core $A$ with two mode indices and
a core $B$ with one or two mode indices,
each core being of rank $2 \times 2$,
we have
\begin{equation}\label{Eq:ExMP}
	\begin{bmatrix}
	A_{11} & A_{12}\\
	A_{21} & A_{22}\\
	\end{bmatrix}
	\MP
	\begin{bmatrix}
	B_{11} & B_{12}\\
	B_{21} & B_{22}\\
	\end{bmatrix}
	=
	\begin{bmatrix}
	A_{11} B_{11}  & A_{11} B_{12}  & A_{12} B_{11}  & A_{12} B_{12}  \\
	A_{11} B_{21}  & A_{11} B_{22}  & A_{12} B_{21}  & A_{12} B_{22}  \\
	A_{21} B_{11}  & A_{21} B_{12}  & A_{22} B_{11}  & A_{22} B_{12}  \\
	A_{21} B_{21}  & A_{21} B_{22}  & A_{22} B_{21}  & A_{22} B_{22}  \\
	\end{bmatrix}
\end{equation}
if the first mode size of $B$ equals the second of $A$.

The mode product and the strong Kronecker product inherit distributivity
from the usual matrix product and from the Kronecker product:
for $\Rep{A}=\Tuple{A_1,\ldots,A_L}$ and $\Rep{U}=\Tuple{U_1,\ldots,U_L}$
such that
the products $A_\ell \MP U_\ell$ with $\ell=1,\ldots,L$ are
all defined,
we have that the product $\asm\Par{\Rep{A}} \MP \asm\Par{\Rep{U}}$ is defined
and is given by
\begin{multline}\label{Eq:Distr-RP-MP}
	\asm\Par{\Rep{A}} \MP \asm\Par{\Rep{U}}
	\equiv
	\Par{ A_1 \RP \cdots \RP A_L } \MP \, \Par{ U_1 \RP \cdots \RP U_L }
	\\
	=
	\Par{ A_1 \MP U_1 } \RP \cdots \RP \Par{ A_L \MP U_L }
	\equiv
	\asm\Par{A_1 \MP U_1, \ldots, A_L \MP U_L}
	\, .
\end{multline}
When $\asm\Par{\Rep{A}}$ and $\asm\Par{\Rep{U}}$ are both of rank $1 \times 1$
and can therefore be identified with matrices,
$\asm\Par{\Rep{A}} \MP \asm\Par{\Rep{U}}$ is the core of rank $1 \times 1$
identified with the matrix-matrix product of these matrices,
and~\eqref{Eq:Distr-RP-MP} gives a representation for
the product of
a matrix $\Ten{A}=\asm\Par{\Rep{A}}$ and a vector $\Vec{u}=\asm\Par{\Rep{U}}$
given by~\eqref{Eq:DefMatTT} and~\eqref{Eq:DefVecTT}.

Finally, our derivations involve Kronecker products of cores,
which are defined as the Kronecker product of the corresponding arrays.
For any $p,p',q,q'\in\N$ and $m,n,m',n'\in \N$,
let
$A$ be a core of rank $p \times p'$
and mode size $m \times n$
and let
$B$ be a core of rank $q \times q'$
and mode size $m' \times n'$.
Then the Kronecker product $A \KProd B$ of $A$ and $B$ is
the core of rank $pq \times p'q'$ and mode size
$m m' \times n n'$
given by
\begin{subequations}
		\begin{equation}\label{Eq:DefCoreKron1}
			\BR{\Par{U \KProd V}}{\alpha \beta \!,\, \alpha' \beta'}
			=
			\BR{U}{\alpha,\alpha'}
			\KProd
			\BR{V}{\beta,\beta'}
		\end{equation}
	in terms of the Kronecker products of all pairs of block tensors
	or,
	equivalently, by
		\begin{equation}\label{Eq:DefCoreKron2}
			\BM{\Par{U \KProd V}}{i \QQ i', \: j \QQ j'}
			=
			\BM{U}{i,j}
			\KProd
			\BM{V}{i',\QQ j'}
		\end{equation}
	in terms of the Kronecker products of all pairs of slice matrices.
\end{subequations}
Similarly to~\eqref{Eq:Distr-RP-MP}, we have
\begin{multline}\label{Eq:Distr-RP-KP}
	\asm\Par{\Rep{A}} \KProd \asm\Par{\Rep{B}}
	\equiv
	\Par{ A_1 \RP \cdots \RP A_L } \KProd \, \Par{ B_1 \RP \cdots \RP B_L }
	\\
	=
	\Par{ A_1 \KProd B_1 } \RP \cdots \RP \Par{ A_L \KProd B_L }
	\equiv
	\asm\Par{A_1 \KProd B_1, \ldots, A_L \KProd B_L}
\end{multline}
for any representation $\Rep{A}=\Tuple{A_1,\ldots,A_L}$
and
$\Rep{B}=\Tuple{B_1,\ldots,B_L}$.
The relations~\eqref{Eq:Distr-RP-MP} and~\eqref{Eq:Distr-RP-KP}
indicate the well-known fact that the matrix and Kronecker products
can be recast core-wise;
see, e.g.,~\cite{Kolda:2009,Oseledets:2011:TT,Hackbusch:TensorCalculus}.

One of the most important properties of the TT decomposition of tensors
is that any representation can be made left- or right-orthogonal
in the sense of Section~\ref{Sc:UnfRankOrth} by the successive
application of the QR decomposition~\cite{Hackbusch:2009:HTF,Oseledets:2011:TT,Grasedyck:2010:HierarchicalSVD,Hackbusch:TensorCalculus,Kressner:2014:ToolboxHT}.
We now briefly present an algorithm for the left-orthogonalization of a decomposition,
which we use as an example in the discussion of representation conditioning.
This scheme is also the first step in the computation of the TT-SVD form
of a TT representation, as in~\cite[Algorithm~2]{Oseledets:2011:TT}.

\begin{algorithm}
\caption{left-orthogonalization $\lorth$ of a TT representation
(right-orthogonalization $\rorth$ can be performed analogously)
}
\label{Alg:LeftOrthTT}
\begin{algorithmic}[1]
\Function{$\Rep{V}=\lorth$}{$\Rep{U}$}
	\Require a representation $\Rep{U}=(U_1,\ldots,U_L)\in\TTset^S_L$
	with $L,S\in\N$
	\Ensure a left-orthogonal representation $\Rep{V}=(V_1,\ldots,V_L)\in\TTset^S_L$
	such that $\asm(\Rep{V})=\asm(\Rep{U})$
	\State set $W_1 = U_1$
	\Comment{$U_1 \RP U_2 \RP \cdots \RP U_L = W_1 \RP U_2 \RP \cdots \RP U_L$}
	\For{$\ell=1,\ldots,L-1$}
	\Comment{sweep through the representation from left to right}
		\State compute a matrix QR decomposition:
		$\asm^-(W_\ell) = Q_\ell R_\ell$
		\label{Alg:LeftOrthTT::LineWl}
		\State define $V_\ell$, of same dimensions as $U_\ell$,
		so that $\asm^-(V_\ell) = Q_\ell$
		\State define $W_{\ell+1}$, of same dimensions as $U_{\ell+1}$,
		so that $\asm^+(W_{\ell+1}) = R_\ell  \asm^+(U_{\ell+1})$
		\label{Alg:LeftOrthTT::LineWll}

		\Comment{$
			V_1 \RP \cdots \RP V_{\ell-1} \RP W_{\ell} \RP U_{\ell+1} \RP \cdots \RP U_L
			=
			V_1 \RP \cdots \RP V_{\ell} \RP W_{\ell+1} \RP U_{\ell+2} \RP \cdots \RP U_L
		$}
	\EndFor
	\State set $V_L = W_L$
	\Comment{$V_1 \RP \cdots \RP V_{L-1} \RP W_L = V_1 \RP \cdots \RP V_{L-1} \RP V_L$}
\EndFunction
\end{algorithmic}
\end{algorithm}

	In exact arithmetic, we have $\asm(\Rep{V})=\asm(\Rep{U})$ for
	any $\Rep{U}\in\TTset^S_L$ with $L,S\in\N$
	and $\Rep{V}=\lorth(\Rep{U})$,
	and this is the view adhered to in the references cited above.
	However, the situation is drastically different
	when errors are introduced (e.g., due to round-off)
	in the course of orthogonalization, namely,
	in lines~\ref{Alg:LeftOrthTT::LineWl} and~\ref{Alg:LeftOrthTT::LineWll}
	of Algorithm~\ref{Alg:LeftOrthTT}.

\subsection{Low-rank multilevel decomposition of vectors and matrices}\label{Sc:MultilevelTT}

Here, we discuss how we use the tensor train decomposition
for the resolution of low-rank multilevel structure in vectors and matrices
involved in the solution of~\eqref{varform}.

To reorder the entries of Kronecker products,
we use particular permutation matrices defined as follows.
First, for every $L\in\N$, we define
$\Ten{\varPi}_{L}$
as
the permutation matrix of order $2^{D L}$
such that
\begin{equation}\label{Eq:IndTranspMatrix}
	\Par{\Ten{\varPi}_{L}}_{
	\,
		i_{1,1}\,,\ldots,\,i_{D,1}
		, \ldots\ldots ,\,
		i_{1,L}\,,\ldots,\,i_{D,L}
	\;\;\;
		i_{1,1}\,,\ldots,\,i_{1,L}
		, \ldots\ldots ,\,
		i_{D,1}\,,\ldots,\,i_{D,L}
	}
	=
	1
\end{equation}
for all $i_{k,\ell}=1,2$ with $k=1,\ldots,D$ and $\ell=1,\ldots,L$.
\begin{subequations}
	In our present setting,
	we are interested in functions
	\begin{equation}\label{Eq:MultLevTenDec-LinComb}
		u_L
		=
		\sum_{j\in\cJ_L} \Par{\bar{\bu}_L}_j \, \vphiD{L}{j}
		\in V_L
	\end{equation}
	whose coefficients admit low-rank TT representations in the following sense:
	\begin{equation}\label{Eq:MultLevTenDec-CoeffRepr}
		\Ten{\varPi}_{L} \, \bar{\bu}_L
		=
		\asm(\Rep{U})
		=
		U_1 \RP \cdots \RP U_L
	\end{equation}
	with some $\Rep{U}=\Tuple{U_1,\ldots,U_L}$.
\end{subequations}

The set $\cJ_L$,
which is defined by~\eqref{Eq:RefElD},
is isomorphic to
$\Set{1,2}^{DL}$.
The matrix $\Ten{\varPi}_{L}$,
when applied to a vector
whose components are indexed by $\cJ_L$,
folds the vector into a
$DL$-dimensional array,
transposes the $DL$ indices
according to
the transformation of ordering in the product
$\Set{1,\ldots,D} \DProd \, \Set{1,\ldots,L}$
from big-endian to little-endian,
and unfolds the resulting array back into a vector.

In other words, the matrix $\Ten{\varPi}_{L}$, acting on a vector
whose entries are enumerated so that
the indices corresponding to each dimension and all of the levels
occur one after another,
rearranges the entries in such a way that
the indices corresponding to each level and all of the dimensions
occur one after another.
In the present paper, we will use $\Ten{\varPi}_{L}$
to permute the rows and columns of matrices,
as the following example illustrates.
\begin{example}
	In the case of $D=2$ and $L=3$, the following relation holds:
	\begin{equation}\nonumber
		\Ten{\varPi}_{L}
		\,
		\Par[1]{
			\,
			\underbrace{I \KProd J \KProd J^{\MT}}_{\text{dimension 1}}
			\KProd
			\underbrace{I \KProd I_1 \KProd I_2 \,}_{\text{dimension 2}}
			\,
		}
		\,
		\Ten{\varPi}_{L}^\MT
		=
		\underbrace{I \KProd I}_{\text{level 1}}
		\KProd
		\underbrace{J \KProd I_1 \,}_{\text{level 2}}
		\KProd
		\underbrace{J^{\MT} \KProd I_2 \,}_{\text{level 3}}
		\, ,
	\end{equation}
	where we use the matrices that we defined in~\eqref{Eq:DefBlocksIJ} above.
\end{example}

Similarly,
for every $L\in\N$
and $\alpha\in\Set{0,1}^D$,
we introduce
$\widetilde{\Ten{\varPi}}_{L,\alpha}$
as
a permutation matrix of order $2^{D \Par{L+1}-\IndNorm{\alpha}}$
with rows and columns indexed by
$\cJ_L \times \Set{\alpha_1,1} \times\cdots\times \Set{\alpha_D,1}$,
where $\cJ_L$ is given by~\eqref{Eq:RefElD}.
Specifically, we define
$\widetilde{\Ten{\varPi}}_{L,\alpha}$
by
\begin{equation}\label{Eq:IndTranspMatrixTilde}
	\Par{\widetilde{\Ten{\varPi}}_{L,\alpha}}_{
	\,
		i_{1,1}\,,\ldots,\,i_{1,D}
		, \ldots\ldots ,\,
		i_{1,L}\,,\ldots,\,i_{D,L},\,
		\beta_1,\ldots,\beta_D
	\;\;\;
		i_{1,1}\,,\ldots,\,i_{1,L},\,\beta_1
		, \ldots\ldots ,\,
		i_{D,1}\,,\ldots,\,i_{D,L},\,\beta_D
	}
	=
	1
\end{equation}
for all $i_{k,\ell}=1,2$ with $k=1,\ldots,D$ and $\ell=1,\ldots,L$
and for all
$\beta_k\in\Set{\alpha_k,1}$ with $k=1,\ldots,D$.

\section{Representation Conditioning}
\label{sec:repcond}

Since the TT decomposition is based on low-rank matrix factorization,
redundancy (linear dependence) in explicit TT representations can be eliminated analytically.
This is illustrated in Appendix~\ref{App:RR}:
see~\eqref{Eq:CoreTransformation1}--\eqref{Eq:CoreTransformation3} and, for more practical examples,
the proof of Lemma~\ref{Lm:DecMP}.
On the other hand, in the course of computations, this reduction
has to be done numerically.
In exact arithmetic, it can always be achieved
by the TT rounding algorithm~\cite[Algorithm~2]{Oseledets:2011:TT} using the TT-SVD.
In practice, however, it may fail due to round-off errors:
a small perturbation of a single core in a TT decomposition
may, through catastrophic cancellations, introduce a large perturbation in the represented tensor.
This can occur even in the course of orthogonalization (Algorithm~\ref{Alg:LeftOrthTT}),
which is essential for ensuring the stability of the TT rounding algorithm.
We now turn to an analysis of the potential for such error amplification, which we refer to as \emph{representation conditioning}.

\subsection{Examples of ill-conditioning of tensor representations}\label{sec:examples}

We first consider a simple example of a tensor where relative perturbations on the order of the machine precision can lead to large changes in the represented tensors.

\begin{example}\label{ex1}
	Take $D=1$ (so that $\cI=\{0,1\}$) and let $\Vec{x}$ be the tensor with all entries equal to one, $\Vec{x}_{i_1,\ldots,i_L} = 1$ for $i_1,\ldots, i_L\in\cI$. Clearly, $\Vec{x}$ can be represented by $\Rep{X} = (X_\ell)_{\ell=1,\ldots,L}$ with $\ranks(\Rep{X}) = (1, \ldots, 1)$, where $X_\ell = [(1,1)^\MT \,]$ for each $\ell$. However, we also have an alternative representation $\Rep{Y}$ with $\ranks(\Rep{Y}) = (2, \ldots, 2)$: for any fixed $R>0$ and $y_0 = (0,0)^\MT$, $y_R = (R,R)^\MT$, we instead set
	\begin{equation}\label{Eq:Cancellation1}
	   Y_1 = \begin{bmatrix}
	   	  	  (1 + R^{-L}) y_R & -y_R
	   \end{bmatrix}, \;
	    Y_2  = \ldots = Y_{L-1} = \begin{bmatrix}
	   	y_R &  y_0 \\
	   	y_0 & y_R
	   \end{bmatrix},
	   \;
	   Y_L = \begin{bmatrix}
	   	y_R \\  y_R
	   \end{bmatrix}.
	\end{equation}
	For $\varepsilon>0$, consider a perturbation of $\Rep{Y}$ by replacing $Y_\ell$ for some fixed $1<\ell<L$ by \[ \tilde Y_\ell = \begin{bmatrix}
	   	(1+\varepsilon) y_R &  y_0 \\
	   	y_0 & y_R
	   \end{bmatrix}. \]
	    This corresponds to a relative error of order $\varepsilon$ with respect to $\norm{Y_\ell}_{2}$. The resulting perturbed tensor $\Vec{x}_\varepsilon$ is again constant with entries $1 + (R^L+1)\varepsilon$, and therefore satisfies
	\begin{equation}\label{simpleperturbation}
	   \frac{ \norm{ \Vec{x} - \Vec{x}_\varepsilon}_{2} } { \norm{\Vec{x}}_{2} } = (R^L+1)\varepsilon.
	\end{equation}
	For instance, with $R=4$ and $L\geq 25$, we obtain $R^L> 10^{15}$. Consequently, any numerical manipulation of the representations can then lead to very large round-off errors that leave no significant digits in the output; in particular, an automatic rank reduction of the representation by SVD will in general not produce any useful result.

	To illustrate this numerically, we consider the left-orthogonalization $\lorth(\Rep{Y})$ with $R=4$ and machine precision $\epsilon \approx 2\times 10^{-16}$, which is also the first step in computing the TT-SVD. In exact arithmetic, the tensor $\asm(\lorth(\Rep{Y}))$ is identical to $\asm(\Rep{Y})$; however, in inexact arithmetic,
	this can be far from true.
	The associated relative numerical errors are compared to the bound \eqref{simpleperturbation} in Table~\ref{tab:stability1}. We consider two ways of evaluating the difference in $\ell^2$-norm: by extracting all tensor entries and computing the norm of their differences directly, or by assembling the difference in TT format and computing its norm using another orthogonalization. Due to numerical effects, the resulting values are not identical, but agree in their order of magnitude, which is also the same as predicted for a particular perturbation by \eqref{simpleperturbation}.
\end{example}

\begin{table}\centering\footnotesize
	\begin{tabular}{r|n{1}{2}|n{1}{2}|n{1}{2}|n{1}{2}|n{1}{2}}
		& \multicolumn{1}{c}{$L=5$} & \multicolumn{1}{c}{$L=10$} & \multicolumn{1}{c}{$L=15$} & \multicolumn{1}{c}{$L=20$} & \multicolumn{1}{c}{$L=25$} \\
		diff.\ (a) & 4.17e-13  & 6.06e-10  & 6.95e-07 & 9.64e-04 & 9.48e-01  \\
		diff.\ (b) & 3.51e-13 &  3.82e-10  & 7.10e-07 & 7.02e-04 &  1.07e+00 \\
		$(R^L + 1)\epsilon$ &  2.28e-13  &  2.33e-10  &  2.38e-07 &  2.44e-04 & 2.50e-01
	\end{tabular}
	\caption{Relative errors $\norm{\asm(\Rep{Y}) - \asm(\lorth(\Rep{Y}))}_2/\norm{\asm(\Rep{Y})}_2$ for $\Rep{Y}$ as in Example \ref{ex1} with $R=4$, with difference computed using two different methods: (a) entry-wise, (b) in TT format; compared to $(R^L + 1)\epsilon$.}\label{tab:stability1}
\end{table}

	The type of instability observed in Example \ref{ex1} occurs in a similar way in other operations, for instance in the computation of inner products, or even in the extraction of a single entry of the tensor. Due to its fundamental importance in many algorithms, we use orthogonalization as an illustrative example in what follows.

Example \ref{ex1} may seem artificial, since in the explicit construction of tensor representations one will usually try to avoid such redundant representations that can cause cancellations. However, redundancies of this kind may also be generated when matrix-vector products are performed. We next consider an example of practical relevance where both matrix and vector are each in multilevel tensor representations of minimal ranks, but the resulting representation of their product has a similar ill-conditioning as the previous example.

\begin{example}\label{ex2}
	We consider the representation of the negative Laplacian with homogeneous Dirichlet boundary conditions on $(0,1)$, discretized by piecewise linear finite elements on a uniform grid with $2^L$ interior nodes. The resulting stiffness matrix $\Ten{A}^{\text{DD}}_L\in \R^{2^L\times 2^L}$ satisfies $\Ten{A}^{\text{DD}}_L =  A_1 \RP \cdots \RP A_L$ with
	$A_1 = 4 \begin{bmatrix} I & J^\MT & J \end{bmatrix}$,
	\begin{equation}\label{laplacedd}
			A_2 =\cdots =A_{L-1} =
	    4 \begin{bmatrix} I &  J^\MT & J  \\
	       &  J &  \\
	       &   & J^\MT
	        \end{bmatrix}
	   \quad\text{and}\quad
	   A_L = 4 H^2 \begin{bmatrix} 2I - J - J^\MT \QQ \\  -J \\ -J^\MT  \end{bmatrix},
	\end{equation}
    as derived in \cite[Cor.\ 3.2]{KKh:2012:QTT},
	where $H = 1 + 2^{-L}$ and the
	elementary blocks
	are as defined in~\eqref{Eq:DefBlocksIJ}.
	The first eigenvector of $\Ten{A}^{\text{DD}}_L$, corresponding to the lowest eigenvalue $\lambda_{\text{min},L} \approx \pi^2$, is $\Vec{x}_{\text{min},L} = \bigl( \sin( \pi i 2^{-L} / H) \bigr)_{i=1,\ldots,2^L} = X_1\RP \cdots \RP X_L$, where
	\begin{equation}\label{laplaceddev}
		X_1 = \begin{bmatrix}
			x_{\text{c}}^1 & x_{\text{s}}^1
		\end{bmatrix},
		\qquad X_\ell = \begin{bmatrix}
 			x_{\text{c}}^\ell & x_{\text{s}}^\ell \\
			-x_{\text{s}}^\ell & x_{\text{c}}^\ell
		 \end{bmatrix}
		 \quad\text{for}\quad
		 \ell = 2,\ldots, L-1,
		\qquad X_L = \begin{bmatrix}
			\hat x_{\text{s}}^\ell \\ \hat x_{\text{c}}^\ell
		\end{bmatrix},
	\end{equation}
	with $t_\ell=\pi 2^{-\ell} / H$,
	\[  x_{\text{c}}^\ell = \begin{pmatrix} 1\\ \cos(t_\ell) \end{pmatrix}, \quad
	  x_{\text{s}}^\ell = \begin{pmatrix} 0 \\ \sin(t_\ell) \end{pmatrix}, \quad
	    \hat x_{\text{c}}^\ell = \begin{pmatrix} \cos(t_L) \\ \cos(2t_L) \end{pmatrix}, \quad
	     \hat x_{\text{s}}^\ell = \begin{pmatrix} \sin(t_L) \\ \sin(2t_L)) \end{pmatrix}.
	  \]
	 Then the representation $\Rep{A} \MP \Rep{X}$ of the matrix-vector product $\Ten{A}^{\text{DD}}_L\Vec{x}_{\text{min},L}$ in exact arithmetic satisfies $\asm(\Rep{A} \MP \Rep{X}) = \Ten{A}^{\text{DD}}_L\Vec{x}_{\text{min},L}  = \lambda_{\text{min},L} \Vec{x}_{\text{min},L} = \lambda_{\text{min},L} \asm(\Rep{X})$.

	We consider a similar numerical test as in Example \ref{ex1}, comparing the relative error in $\lorth(\Rep{A} \MP \Rep{X})$ to that of $\lorth(\Rep{X})$. The results are given in Table \ref{tab:stability2}, where differences are computed in the TT format. Whereas the numerical manipulation of $\Rep{X}$ leads to errors close to the machine precision $\epsilon$, in $\lorth(\Rep{A} \MP \Rep{X})$ we observe large relative errors of order $2^{2L}\epsilon$.
Note that the representation of $\Ten{A}^{\text{DD}}_L$ according to \eqref{laplacedd} has a similar structure as the redundant representation \eqref{Eq:Cancellation1} in the previous example: the cores $A_1,\ldots,A_{L-1}$ have only positive entries, whereas $A_L$ can introduce cancellations, in particular when the matrix is applied to low-frequency grid functions as above.
\end{example}

\begin{table}\centering\small
	\begin{tabular}{r|n{1}{2}|n{1}{2}|n{1}{2}|n{1}{2}|n{1}{2}}
		& \multicolumn{1}{c}{$L=20$} & \multicolumn{1}{c}{$L=25$} & \multicolumn{1}{c}{$L=30$} & \multicolumn{1}{c}{$L=35$} & \multicolumn{1}{c}{$L=40$} \\
		$e_{\Rep{V}}$ & 1.70e-15 & 1.42e-15 & 1.92e-15  &  3.15e-15 &  2.73e-15 \\
		$e_{\Rep{A} \MP \Rep{V}}$ & 2.97e-05 & 4.50e-02 & 4.21e+01 &  3.46e+04 & 4.05e+07  \\
		$2^{2L} \epsilon$ &  2.44e-04 & 2.50e-01 &  2.56e+02 & 2.62e+05 & 2.68e+08
	\end{tabular}
	\caption{Relative errors $e_{\Rep{A} \MP \Rep{V}}= {\norm{\asm(\Rep{A} \MP \Rep{V}) - \asm(\lorth(\Rep{A} \MP \Rep{V}))}_2}/{\norm{\asm(\Rep{A} \MP \Rep{V})}_2}$ compared to $e_{\Rep{V}}= {\norm{\asm(\Rep{V}) - \asm(\lorth(\Rep{V}))}_2}/{\norm{\asm(\Rep{V})}_2}$, for $\Rep{A}$, $\Rep{V}$ as in Example \ref{ex2}.}\label{tab:stability2}
\end{table}

\subsection{Representation amplification factors and condition numbers}

We now introduce a quantitative measure for the stability of TT representations under numerical manipulations.  To first order in the size of the perturbation, it is determined by the relative condition numbers of the multilinear mapping $\asm$ with respect to the component tensors in its argument. Here we use the appropriate metric on the components that corresponds to the above considered perturbations arising in linear algebra operations.

\begin{definition}
\label{def:rcond}
We define the \emph{representation amplification factors} of $\Rep{X} \in \TTset_L$, for $\ell=1,\ldots,L$, by
\begin{multline}
		\label{ramp}
		 \ramp_\ell (\Rep{X}) =  \lim_{\varepsilon\to 0} \frac1\varepsilon \sup \Bigl\{ {\norm{\asm(\Rep{\tilde X}) - \asm(\Rep{X})}_{2}} \colon \Rep{\tilde X} \in \TTset_L,\\ \norm{\tilde X_\ell - X_\ell}_{2}  \leq \varepsilon \norm{X_\ell}_{2}  \text{ and $\tilde X_k = X_k$ for $k\neq \ell$} \Bigr\} ,
	\end{multline}
and the \emph{representation condition numbers} by
\begin{equation}
	\label{rcond}
	\rcond_\ell(\Rep{X}) = \frac{\ramp_\ell(\Rep{X})}{\norm{\asm(\Rep{X})}_{2}}.
\end{equation}
	\end{definition}

By multilinearity of $\asm$, if $\Rep{X}, \Rep{\tilde X}\in \TTset_L$ with $\Vec{x} = \asm(\Rep{X})$, $\Vec{\tilde x} = \asm(\Rep{\tilde X})$ are such that $\norm{\tilde X_\ell - X_\ell}_{2}  \leq \varepsilon \norm{X_\ell}_{2}$ for each $\ell$, then for such relative perturbations of size $\varepsilon$ of cores we have the bounds
\[
   \norm{\Vec{x}-\tilde{\Vec{x}}}_2 \leq \sum_{\ell=1}^L \ramp_\ell(\Rep{X})\, \varepsilon + \cO(\varepsilon^2),
   \quad  \frac{\norm{\Vec{x}-\tilde{\Vec{x}}}_2}{\norm{\Vec{x}}_2} \leq \sum_{\ell = 1}^L  \rcond_\ell (\Rep{X}) \, \varepsilon + \cO(\varepsilon^2).
\]

In the following characterization, we use the notation $\asm^-_\ell$ and $\asm^+_\ell$ for left and right partial matricizations as introduced in~\eqref{Eq:RepMap-ell}--\eqref{Eq:RepMap+ell}.

\begin{proposition}\label{rampexpr}
	For any $\Rep{X} \in \TTset_L$ and $\ell = 1,\ldots, L$,
	\[
	 \ramp_\ell (\Rep{X}) = { \norm{\asm^-_\ell(\Rep{X})}_{2\to 2}  \norm{X_\ell}_{2} }  \norm{ \asm^+_\ell(\Rep{X})}_{2\to 2}.
	\]
\end{proposition}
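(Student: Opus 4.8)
The plan is to reduce the supremum in the definition of $\ramp_\ell$ to an operator-norm computation by writing the perturbation of the represented tensor in terms of a single core perturbation. Fix $\ell$ and write $\Rep{\tilde X}$ with $\tilde X_k = X_k$ for $k \neq \ell$ and $\norm{\tilde X_\ell - X_\ell}_2 \leq \varepsilon \norm{X_\ell}_2$. Set $E = \tilde X_\ell - X_\ell$, a core of the same rank $r_{\ell-1}\times r_\ell$ and mode size $n$ as $X_\ell$. By multilinearity of $\asm$ (i.e.\ of the strong Kronecker product in each argument), $\asm(\Rep{\tilde X}) - \asm(\Rep{X}) = \asm(X_1,\ldots,X_{\ell-1}, E, X_{\ell+1},\ldots,X_L)$ exactly, with no higher-order terms — this is a linear, not merely first-order, identity, so the $\lim_{\varepsilon\to 0}\frac1\varepsilon$ is really just the homogeneity of degree one in $E$. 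Using the matricization notation from \eqref{Eq:RepMap-ell}--\eqref{Eq:RepMap+ell}, the $\ell$th unfolding of this tensor factors as
\[
  \Mat{\ell}\bigl(\asm(X_1,\ldots,E,\ldots,X_L)\bigr) = \asm^-_\ell(\Rep{X}) \, \bigl(\text{reshaping of } E\bigr)\, \asm^+_\ell(\Rep{X}),
\]
more precisely, identifying $E$ with its slice-matrix form we get that the represented tensor equals the contraction $\asm^-_\ell(\Rep{X})\,E\,\asm^+_\ell(\Rep{X})$ read appropriately; and crucially $\norm{\asm(\cdots)}_2 = \norm{\Mat{\ell}(\asm(\cdots))}_F$ since the Frobenius norm of any unfolding equals the $\ell^2$-norm of the tensor.

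The core estimate is then the following: for fixed matrices $P = \asm^-_\ell(\Rep{X})$ (size $r_{\ell-1} n \times r_{\ell-1}$, with the block structure that the $n$ mode rows are carried along) and $S = \asm^+_\ell(\Rep{X})$, and $E$ ranging over cores with $\norm{E}_2 \leq \varepsilon \norm{X_\ell}_2$, the supremum of $\norm{P \, E \, S}_F$ equals $\norm{P}_{2\to 2}\, \norm{S}_{2\to 2}\, \varepsilon \norm{X_\ell}_2$. This is a standard fact about the operator norm induced on the left and right by matrix multiplication: for any fixed $P$ and $S$, $\sup_{\norm{E}_F \leq 1} \norm{P E S}_F = \norm{P}_{2\to 2}\norm{S}_{2\to 2}$. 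The upper bound is immediate from submultiplicativity of the spectral/Frobenius norm combination ($\norm{PES}_F \le \norm{P}_{2\to2}\norm{E}_F\norm{S}_{2\to2}$). For the lower bound one picks $E$ to be (a scalar multiple of) $v_P u_S^\MT$, where $v_P$ is a right singular vector of $P$ for its largest singular value and $u_S$ a left singular vector of $S$ for its largest singular value; this rank-one choice saturates both operator norms simultaneously, and has the correct Frobenius norm. The only bookkeeping subtlety is that $E$ must be a legitimate core, i.e.\ the intermediate index over which $P$ and the mode index of $E$ combine must be compatible — but since $\asm^-_\ell(\Rep{X})$ has exactly $r_{\ell-1}$ columns and the mode slices of $E$ are $r_{\ell-1}\times r_\ell$, and $\asm^+_\ell(\Rep{X})$ has exactly $r_\ell$ rows, every choice of $E$ of the right shape is admissible, so the optimization is genuinely unconstrained over a full matrix block and the rank-one maximizer is available.

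Dividing by $\varepsilon$ and recalling that the supremum is scale-invariant gives $\ramp_\ell(\Rep{X}) = \norm{\asm^-_\ell(\Rep{X})}_{2\to 2}\,\norm{X_\ell}_2\,\norm{\asm^+_\ell(\Rep{X})}_{2\to 2}$, which is the claim. The main obstacle — really the only nontrivial point — is getting the reshaping of $E$ and the partial matricizations to line up so that the represented tensor's $\ell$th unfolding is literally the triple product $\asm^-_\ell(\Rep{X})\,[\text{matricized }E]\,\asm^+_\ell(\Rep{X})$; once that identification is in place, together with $\norm{\cdot}_2 = \norm{\Mat{\ell}(\cdot)}_F$, the statement follows from the elementary two-sided operator-norm fact. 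I would carry this out by first recording the multilinearity/homogeneity reduction, then the unfolding-factorization identity (citing the analogous formula $\Mat{\ell}(\Vec u) = \asm^-_{\ell+1}(\Rep U)\,\asm^+_\ell(\Rep U)$ already noted in Section~\ref{Sc:UnfRankOrth}, here with the extra mode index of $E$ inserted between the two partial matricizations), and finally the sharp norm bound with its rank-one extremizer.
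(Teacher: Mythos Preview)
Your proposal is correct and follows essentially the same route as the paper's proof. The paper handles the mode index more directly by writing $\norm{\asm(\Rep{\tilde X})-\asm(\Rep X)}_2^2 = \sum_{i_\ell}\bignorm{\asm^-_\ell(\Rep X)\,(\BM{X_\ell}{i_\ell}-\BM{\tilde X_\ell}{i_\ell})\,\asm^+_\ell(\Rep X)}_2^2$ via slices rather than reshaping $E$ into a single matrix, which keeps the dimensions transparent (and avoids your slight slip in the size of $P$) while making the supremum step---with a rank-one extremizer placed in a single slice---immediate.
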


\begin{proof}
	For fixed $\ell$ in \eqref{ramp}, let $\Rep{X}, \Rep{\tilde X}$ satisfy the conditions in the supremum. Then
	\begin{align*}
	\norm{\asm(\Rep{\tilde X}) - \asm(\Rep{X})}_{2}^2
	  &= \sum_{i_1,\ldots,i_L} \SqBr[2]{\BM{X_1}{i_1}\cdots \Par[2]{\BM{X_\ell}{i_\ell} - \BM{\tilde X_\ell}{i_\ell}} \cdots \BM{X_L}{i_L} }^2 \\
	  &= \sum_{i_\ell} \Norm[1]{ \asm^-_\ell (\Rep{X}) (\BM{X_\ell}{i_\ell} - \BM{\tilde X_\ell}{i_\ell}) \asm^+_\ell(\Rep{X})}_{2}^2
	  \, .
	\end{align*}
	The claim thus follows by taking the supremum over $\tilde X_\ell$ such that
	$
		\sum_{i\in\cI} \norm{\BM{\tilde X_\ell}{i} - \BM{X_\ell}{i}}_{F}^2
		\leq
		\varepsilon^2 \norm{X_\ell}_{2}^2
	$, which is in fact attained.
\end{proof}

\begin{remark}
The quantities in Definition \ref{def:rcond} measuring the amplification of perturbations can be defined in an analogous way for more general tensor networks by considering perturbations in the respective components; see \cite{Schollwoeck:2011:DMRG-MPS,Hackbusch:TensorCalculus,Orus:2014:TensorNetworks,BSU} for an overview on  such more general tensor formats.
\end{remark}

We have the following general observations concerning possible representation condition numbers, where in certain special cases, we can also give bounds that depend only on the TT ranks. Here we use the notion of TT-SVD forms introduced in Section \ref{Sc:UnfRankOrth}.

\begin{proposition}\label{prop:rcond}
Let $\Rep{X}\in\TTset_L$, then the following hold for $\ell = 1, \ldots, L$.
\begin{enumerate}[{\rm(i)}]
\item One has $\rcond_\ell(\Rep{X}) \geq 1$.
\item If $\rank_{\ell-1}(\Rep{X}) = \rank_\ell(\Rep{X}) = 1$, then $\rcond_\ell(\Rep{X}) = 1$.
\item If $\Rep{X}$ is in right-orthogonal TT-SVD form, then
	$\rcond_\ell(\Rep{X}) \leq \sqrt{\rank_{\ell-1}(\Rep{X})}$; if it is in left-orthogonal TT-SVD form, then $\rcond_\ell(\Rep{X}) \leq \sqrt{\rank_{\ell}(\Rep{X})}$.
\end{enumerate}
\end{proposition}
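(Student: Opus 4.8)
The plan is to work directly from the explicit formula for the amplification factors in Proposition~\ref{rampexpr}, namely $\ramp_\ell(\Rep{X}) = \norm{\asm^-_\ell(\Rep{X})}_{2\to 2}\,\norm{X_\ell}_2\,\norm{\asm^+_\ell(\Rep{X})}_{2\to 2}$, together with the identity $\asm(\Rep{X}) = \asm^-_\ell(\Rep{X})\,\asm^-(X_\ell)\,\asm^+_\ell(\Rep{X})$ (in the appropriate matricized sense), which expresses $\Mat{\ell-1}$ or $\Mat{\ell}$ of the represented tensor as a product of three factors. For (i), I would use submultiplicativity of the operator norm in reverse: $\norm{\asm(\Rep{X})}_2 = \norm{\asm^-_\ell(\Rep{X})\,\BM{X_\ell}{\cdot}\,\asm^+_\ell(\Rep{X})}$ is bounded above by $\norm{\asm^-_\ell(\Rep{X})}_{2\to2}\,\norm{X_\ell}_2\,\norm{\asm^+_\ell(\Rep{X})}_{2\to2} = \ramp_\ell(\Rep{X})$, since $\norm{X_\ell}_2$ is the Frobenius/$\ell^2$ norm of the middle core and dominates the relevant factor. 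Dividing by $\norm{\asm(\Rep{X})}_2$ gives $\rcond_\ell(\Rep{X})\geq 1$. For (ii), if $r_{\ell-1}=r_\ell=1$ then $\asm^-_\ell(\Rep{X})$ is a column vector and $\asm^+_\ell(\Rep{X})$ is a row vector, and $\asm(\Rep{X})$ factors as their outer product scaled by the vectorized core $X_\ell$; a direct computation shows the inequality in (i) becomes an equality because the norm of a rank-one Kronecker-type product splits exactly, so $\rcond_\ell(\Rep{X}) = 1$.

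For part (iii), the key is to exploit the TT-SVD structure. Suppose $\Rep{X}$ is in right-orthogonal TT-SVD form. Then by the definition in Section~\ref{Sc:UnfRankOrth}, $\asm^+_\ell(\Rep{X})$ has orthonormal rows, so $\norm{\asm^+_\ell(\Rep{X})}_{2\to2} = 1$. Moreover $\asm^-_{\ell}(\Rep{X})$ has orthogonal (not necessarily orthonormal) columns whose norms are the singular values of $\Mat{\ell-1}(\asm(\Rep{X}))$, so $\norm{\asm^-_\ell(\Rep{X})}_{2\to2} = \sigma_{\max}$, the largest such singular value; and since these matrices provide the SVD of $\Mat{\ell-1}$, we have $\norm{\asm(\Rep{X})}_2 = \bigl(\sum_k \sigma_k^2\bigr)^{1/2} \geq \sigma_{\max}$. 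It remains to bound $\norm{X_\ell}_2$. Here I would observe that in the right-orthogonal TT-SVD form, the core $X_\ell$ carries the "absorbed" singular values from the left: more precisely, $\asm^-(X_\ell) = \asm^-_\ell(\Rep{X})^{+}\,(\text{something orthonormal})$, or more concretely, $\norm{X_\ell}_2$ equals $\bigl(\sum_k \sigma_k^2\bigr)^{1/2}$ up to the orthonormality of the right part — actually $\norm{X_\ell}_2 = \norm{\asm^-_\ell(\Rep{X})}_F = \bigl(\sum_{k=1}^{r_{\ell-1}}\sigma_k^2\bigr)^{1/2}$ since the columns of $\asm^-_\ell$ have norms $\sigma_k$ and $X_\ell$ (with its right part orthonormalized away) repackages exactly these. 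Combining, $\rcond_\ell(\Rep{X}) = \sigma_{\max}\cdot\bigl(\sum_k\sigma_k^2\bigr)^{1/2}\cdot 1 \big/ \bigl(\sum_k\sigma_k^2\bigr)^{1/2} = \sigma_{\max}/\bigl(\sum_k\sigma_k^2\bigr)^{1/2}\cdot(\text{correction})$ — I will need to be careful here; the clean bound $\sqrt{r_{\ell-1}}$ should emerge from $\sigma_{\max}\leq\bigl(\sum_k\sigma_k^2\bigr)^{1/2}$ combined with $\norm{X_\ell}_2 \leq \sqrt{r_{\ell-1}}\,\sigma_{\max}$, giving $\ramp_\ell \leq \sqrt{r_{\ell-1}}\,\sigma_{\max}^2 \cdot \ldots$, hmm.

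Let me restate the expected mechanism for (iii) more carefully, since this is where the main obstacle lies. In right-orthogonal TT-SVD form, write $\Mat{\ell-1}(\bx) = \asm^-_\ell(\Rep{X})\,\asm^+_{\ell-1}(\Rep{X})$ with $\asm^+_{\ell-1}$ having orthonormal rows, so $\norm{\asm^-_\ell(\Rep{X})}_{2\to2}$ equals $\norm{\Mat{\ell-1}(\bx)}_{2\to2} = \sigma_1$ (the top singular value) and $\norm{\asm^-_\ell(\Rep{X})}_F = \norm{\Mat{\ell-1}(\bx)}_F = \norm{\bx}_2$. The core $X_\ell$ satisfies $\asm^-(X_\ell)$ equals the matrix of the first $\ell$ cores modulo the orthonormal right part, i.e.\ essentially $\asm^-_\ell(\Rep{X})$ reshaped, whence $\norm{X_\ell}_2 = \norm{\asm^-_\ell(\Rep{X})}_F$; but this is $\norm{\bx}_2$, which would give $\rcond_\ell = \sigma_1 \leq$ anything — not $\sqrt{r_{\ell-1}}$. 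The correct reading is that $X_\ell$'s left matricization is $\asm^-_\ell(\Rep{X})$ times $\asm^-(X_\ell^{\text{local}})$ where the local right-orthogonal core has $\asm^+$-orthonormal rows, so $\norm{X_\ell}_2 \le \sqrt{r_{\ell-1}}\,\norm{\asm^-_\ell(\Rep{X})}_{2\to 2} = \sqrt{r_{\ell-1}}\,\sigma_1$, using that a matrix with $r_{\ell-1}$ orthonormal rows on the right has Frobenius norm at most $\sqrt{r_{\ell-1}}$ times its operator norm when left-multiplied... Then $\ramp_\ell \le \sqrt{r_{\ell-1}}\,\sigma_1\cdot\sigma_1\cdot 1$ and $\rcond_\ell = \ramp_\ell/\norm{\bx}_2 \le \sqrt{r_{\ell-1}}\,\sigma_1^2/\bigl(\sum\sigma_k^2\bigr)^{1/2} \le \sqrt{r_{\ell-1}}\,\sigma_1$, still not tight. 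I expect the resolution is that one of the three norm factors in $\ramp_\ell$ actually contributes the \emph{smaller} of $\sigma_1$ and $\norm{\bx}_2$-type quantities: with the right-orthogonal TT-SVD form, $\norm{\asm^+_\ell}_{2\to2}=1$, $\norm{\asm^-_\ell}_{2\to2}$ is the top singular value of $\Mat{\ell-1}$, and $\norm{X_\ell}_2$ — because $X_\ell$ is the \emph{core} sitting between a left part of operator norm $\sigma_1$-type and an orthonormal right part — satisfies $\norm{X_\ell}_2 = \norm{S_{\ell-1}}_F$ where $S_{\ell-1} = \diag(\sigma_k)$ over $k\le r_{\ell-1}$, i.e.\ $\norm{X_\ell}_2 = \bigl(\sum_k\sigma_k^2\bigr)^{1/2} = \norm{\bx}_2$, while $\norm{\asm^-_\ell}_{2\to2} = 1$ because in the \emph{right}-orthogonal convention the left partial matricization of singular vectors is orthonormal too. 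Then $\ramp_\ell = 1\cdot\norm{\bx}_2\cdot 1$ would give $\rcond_\ell = 1$, contradicting the $\sqrt{r_{\ell-1}}$ bound being sharp. The honest statement of the main obstacle: I need to pin down exactly which matricization is orthonormal versus merely orthogonal in the right-orthogonal TT-SVD convention at position $\ell$, track where the $r_{\ell-1}$ singular values land, and apply the elementary inequality $\sigma_1 \le \bigl(\sum_{k=1}^{r}\sigma_k^2\bigr)^{1/2}$ in the direction that yields $\sqrt{r}$. Concretely, the bound $\rcond_\ell(\Rep{X}) \le \sqrt{r_{\ell-1}}$ should follow from $\ramp_\ell(\Rep{X}) = \sigma_1 \cdot \norm{X_\ell}_2 \cdot 1$ with $\norm{X_\ell}_2 \le \sqrt{r_{\ell-1}}\cdot(\text{operator norm of the slices of }X_\ell)$ and a final division; I will carry out this bookkeeping using the precise definitions from Section~\ref{Sc:UnfRankOrth}, and the left-orthogonal case is entirely symmetric with $r_\ell$ replacing $r_{\ell-1}$.
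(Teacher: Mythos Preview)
Your arguments for (i) and (ii) are fine and match the paper's approach.

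For (iii), however, there is a genuine gap: you never identify what $\norm{X_\ell}_2$ actually equals in the right-orthogonal TT-SVD form, and your repeated attempts circle around the wrong quantities. The missing observation is simple. If $\asm^+_{\ell-1}(\Rep{X})$ and $\asm^+_\ell(\Rep{X})$ both have orthonormal rows, then since
\[
\asm^+_{\ell-1}(\Rep{X})\,\asm^+_{\ell-1}(\Rep{X})^\MT
= \sum_{i}\BM{X_\ell}{i}\,\asm^+_\ell(\Rep{X})\,\asm^+_\ell(\Rep{X})^\MT\,\BM{X_\ell}{i}{}^\MT
= \asm^+(X_\ell)\,\asm^+(X_\ell)^\MT,
\]
the core $X_\ell$ is \emph{itself} right-orthogonal. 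A matrix with $r_{\ell-1}$ orthonormal rows has Frobenius norm exactly $\sqrt{r_{\ell-1}}$, so $\norm{X_\ell}_2 = \sqrt{r_{\ell-1}}$ for $\ell>1$ (and $\norm{X_1}_2=\norm{\bx}_2$ with $\norm{\asm^-_1}_{2\to2}=1$ trivially). This is the piece you keep missing; your guesses $\norm{X_\ell}_2 = \norm{\asm^-_\ell(\Rep{X})}_F$ and $\norm{X_\ell}_2 = \norm{\bx}_2$ are both wrong in general, and the attempt $\norm{X_\ell}_2 \le \sqrt{r_{\ell-1}}\,\sigma_1$ goes in the wrong direction.

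With this in hand the rest is immediate: $\norm{\asm^+_\ell(\Rep{X})}_{2\to2}=1$, and $\asm^-_\ell(\Rep{X})$ has orthogonal columns whose norms are the singular values of $\Mat{\ell-1}(\bx)$, so $\norm{\asm^-_\ell(\Rep{X})}_{2\to2}$ equals the largest such singular value, which is at most $\norm{\bx}_2$. Hence $\ramp_\ell(\Rep{X}) \le \norm{\bx}_2\cdot\sqrt{r_{\ell-1}}\cdot 1$ and $\rcond_\ell(\Rep{X})\le\sqrt{r_{\ell-1}}$. The left-orthogonal case is symmetric with $r_\ell$ in place of $r_{\ell-1}$.
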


\begin{proof}
Statement (i) follows by estimating $\norm{\asm(\Rep{X})}_{2}$ as in the proof of Proposition \ref{rampexpr}; (ii) follows directly from properties of the Kronecker product.
To show (iii), it suffices to consider the right-orthogonal case. With $\Vec{x} = \asm(\Rep{X})$ and $r_\ell = \rank_\ell(\Rep{X})$ for each $\ell$, we need to show that $\ramp_\ell(\Rep{X}) \leq \sqrt{r_{\ell-1}} \norm{\Vec{x}}_{2}$ for each $\ell$. Since $\asm^+_\ell(\Rep{X})$ has orthonormal rows, $\norm{\asm^+_\ell(\Rep{X})}_{2\to 2} =1$ for each $\ell$. For $\ell=1$, we also have $\norm{\asm^-_\ell(\Rep{X})}_{2\to 2} = 1$ by definition and $\norm{X_\ell}_{2}=\norm{\Vec{x}}_{2}$. For $\ell>1$, by right-orthogonality of $X_\ell$ we have $\norm{X_\ell}_{2} = \sqrt{r_{\ell-1}}$. In this case, since the representation is in TT-SVD form, $\asm^-_\ell(\Rep{X})$ has orthogonal columns whose $\ell^2$-norms are the singular values of $\Mat{\ell}(\Vec{x})$, and thus $\norm{\asm^-_\ell(\Rep{X})}_{2\to 2} \leq \norm{\Vec{x}}_{2}$.
\end{proof}

Modifications to the components of a TT representation that leave the represented tensor unchanged can still lead to a change in the representation condition numbers. This change can be bounded from above as follows.

\begin{proposition}\label{reptransformbound}
	For given $\Rep{X} \in \TTset_L$, $1 \leq \ell < L$, and invertible $R \in \R^{r_\ell \times r_{\ell}}$, where $r_\ell = \rank_\ell(\Rep{X})$, let $\Rep{\tilde X}$ be identical to $\Rep{X}$ except for $ \BM{\tilde X_\ell}{i} = \BM{X_\ell}{i} R$, $\BM{\tilde X_{\ell+1}}{i} = R^{-1} \BM{X_{\ell+1}}{i}$ for $i\in \cI$. Then $\asm(\Rep{X}) = \asm(\Rep{\tilde X})$ and
	\begin{equation}\label{transf1}
	  \ramp_\ell (\Rep{\tilde X}) \leq \cond(R) \ramp_\ell (\Rep{X}), \quad
	    \ramp_{\ell+1} (\Rep{\tilde X}) \leq \cond(R) \ramp_{\ell+1} (\Rep{X}).
	\end{equation}
	In the particular case when the matrix $ \asm^+_\ell(\Rep{\tilde X})$ has orthonormal rows, one has the stronger bound
	\begin{equation}\label{transf2}
	   \ramp_\ell (\Rep{\tilde X}) \leq \ramp_\ell (\Rep{X}).
	\end{equation}
	If $ \tilde X_{\ell+1}$ is right-orthogonal, then
	\begin{equation}\label{transf3}
	   \ramp_{\ell+1} (\Rep{\tilde X}) \leq \sqrt{ r_\ell} \ramp_{\ell+1} (\Rep{X}).
	\end{equation}
\end{proposition}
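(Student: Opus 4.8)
The plan is to use the explicit formula for $\ramp_\ell$ from Proposition~\ref{rampexpr}, namely $\ramp_\ell(\Rep{X}) = \norm{\asm^-_\ell(\Rep{X})}_{2\to 2} \norm{X_\ell}_{2} \norm{\asm^+_\ell(\Rep{X})}_{2\to 2}$, and to track how each of the three factors changes when we replace $\Rep{X}$ by $\Rep{\tilde X}$. The first claim, $\asm(\Rep{X}) = \asm(\Rep{\tilde X})$, is immediate from associativity of the strong Kronecker product together with $R R^{-1} = I$: the factor $R$ inserted after core $\ell$ cancels the factor $R^{-1}$ inserted before core $\ell+1$. For the first bound~\eqref{transf1}, I would observe that $\asm^-_\ell(\Rep{\tilde X}) = \asm^-_\ell(\Rep{X})$ and $\asm^+_{\ell+1}(\Rep{\tilde X}) = \asm^+_{\ell+1}(\Rep{X})$ (the modifications are confined to positions $\ell,\ell+1$), while $\asm^+_\ell(\Rep{\tilde X})$ and $\asm^-_{\ell+1}(\Rep{\tilde X})$ pick up a right-multiplication by $R$ and left-multiplication by $R^{-1}$, respectively; and the relevant block norms satisfy $\norm{\tilde X_\ell}_2 \leq \norm{X_\ell}_2 \norm{R}_{2\to2}$, $\norm{\tilde X_{\ell+1}}_2 \leq \norm{R^{-1}}_{2\to2}\norm{X_{\ell+1}}_2$. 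Combining these, every factor that changes is perturbed by at most $\norm{R}_{2\to2}$ or $\norm{R^{-1}}_{2\to2}$; more carefully, for $\ramp_\ell$ the factor $\norm{X_\ell}_2$ contributes $\norm{R}_{2\to2}$ and $\norm{\asm^+_\ell}_{2\to2}$ contributes another $\norm{R^{-1}}_{2\to2}$ (since $\asm^+_\ell(\Rep{\tilde X})$ relates to $\asm^+_\ell(\Rep{X})$ through a core whose slices are $R^{-1}\BM{X_{\ell+1}}{i}$), and likewise for $\ramp_{\ell+1}$, so each bound is controlled by $\norm{R}_{2\to2}\norm{R^{-1}}_{2\to2} = \cond(R)$.

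For the sharper bound~\eqref{transf2}, the key point is that $\ramp_\ell(\Rep{X}) = \norm{\Mat{\ell}(\Vec{x})}_{2\to2} \, \norm{\Mat{\ell-1}(\Vec{x})^{-\text{like}}}\cdots$ — more precisely, write $\ramp_\ell(\Rep{X}) = \norm{\asm^-_\ell(\Rep{X})}_{2\to2}\,\norm{\asm^-(X_\ell)\asm^+_\ell(\Rep{X})}$-type quantities; when $\asm^+_\ell(\Rep{\tilde X})$ has orthonormal rows, the product $\asm^-(\tilde X_\ell)\,\asm^+_\ell(\Rep{\tilde X})$ is, up to an isometry, just $\asm^-(\tilde X_\ell)$, and since $\tilde X_\ell$'s blocks together reconstruct the same $\Mat{\ell}(\Vec{x})$ when combined with $\asm^-_\ell(\Rep{\tilde X})$ (which equals $\asm^-_\ell(\Rep{X})$), one gets $\ramp_\ell(\Rep{\tilde X}) = \norm{\asm^-_\ell(\Rep{X})}_{2\to2}\,\norm{\asm^-(\tilde X_\ell)}_{2\to2}$, and the latter product is bounded by $\norm{\Mat{\ell}(\Vec{x})}_{2\to2}$ by submultiplicativity, which in turn is $\leq \ramp_\ell(\Rep{X})$ since $\norm{\asm^+_\ell(\Rep{X})}_{2\to2}\geq 1$ always. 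I would phrase this as: orthonormality of the rows of $\asm^+_\ell(\Rep{\tilde X})$ forces $\norm{X_\ell}_2\norm{\asm^+_\ell(\Rep{\tilde X})}_{2\to2}$-combined-with-the-left-factor to telescope into $\norm{\Mat{\ell}(\Vec{x})}_{2\to2}$, which is a lower bound for $\ramp_\ell(\Rep{X})$. For~\eqref{transf3}, when $\tilde X_{\ell+1}$ is right-orthogonal we have $\norm{\tilde X_{\ell+1}}_2 = \sqrt{r_\ell}$ and $\norm{\asm^+(\tilde X_{\ell+1})}_{2\to2} = 1$, so $\ramp_{\ell+1}(\Rep{\tilde X}) = \norm{\asm^-_{\ell+1}(\Rep{\tilde X})}_{2\to2}\sqrt{r_\ell}\,\norm{\asm^+_{\ell+1}(\Rep{X})}_{2\to2}$, and since $\asm^-_{\ell+1}(\Rep{\tilde X})$ has the same column space structure as $\Mat{\ell}(\Vec{x})$ one bounds $\norm{\asm^-_{\ell+1}(\Rep{\tilde X})}_{2\to2}\norm{\asm^+_{\ell+1}(\Rep{X})}_{2\to2}$ below by $\ramp_{\ell+1}(\Rep{X})/\norm{\tilde X_{\ell+1}}_2$-type reasoning — more directly, $\asm^-_{\ell+1}(\Rep{\tilde X}) = \asm^-_\ell(\Rep{X})\asm^-(\tilde X_\ell)R$, and its norm equals $\norm{\Mat{\ell}(\Vec{x})\,(\text{right-inverse of }\asm^+_\ell)}$; the cleanest route is to note $\ramp_{\ell+1}(\Rep{X}) = \norm{\asm^-_{\ell+1}(\Rep{X})}_{2\to2}\norm{X_{\ell+1}}_2\cdot 1$ when $\Rep{X}$ too can be assumed appropriately normalized, but in general one just uses $\norm{\asm^-_{\ell+1}(\Rep{\tilde X})}_{2\to2} \le \norm{\asm^-_{\ell+1}(\Rep{X})}_{2\to2}\cdot(\text{something})$; I'll instead argue via $\asm^-_{\ell+1}(\Rep{\tilde X})\asm^+_{\ell+1}(\Rep{\tilde X}) = \Mat{\ell+1}(\Vec{x})$ and right-orthogonality of the piece to the right.

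The main obstacle I anticipate is bookkeeping the interaction between the block norm $\norm{X_\ell}_2$ (which is $\norm{\asm^-(X_\ell)}_F = \norm{\asm^+(X_\ell)}_F$, a Frobenius quantity) and the operator norms $\norm{\asm^\pm_\ell}_{2\to2}$ when a non-orthonormal $R$ is inserted — in particular making sure that in~\eqref{transf1} the two inserted factors $R$ and $R^{-1}$ don't each contribute a full $\cond(R)$ (which would give $\cond(R)^2$). The resolution is that for $\ramp_\ell(\Rep{\tilde X})$ only the blocks $\tilde X_\ell$ (carrying $R$) and the right matricization $\asm^+_\ell(\Rep{\tilde X})$ (carrying $R^{-1}$ through $\tilde X_{\ell+1}$) are affected, contributing $\norm{R}_{2\to2}$ and $\norm{R^{-1}}_{2\to2}$ respectively — but one must verify that $\norm{\asm^+_\ell(\Rep{\tilde X})}_{2\to2} \le \norm{R^{-1}}_{2\to2}\norm{\asm^+_\ell(\Rep{X})}_{2\to2}$, which holds because $\asm^+_\ell(\Rep{\tilde X})$ is obtained from $\asm^+_\ell(\Rep{X})$ by left-multiplying with $R^{-1}$ (viewing $\asm^+_\ell$ as starting from core $\ell+1$ whose slices are premultiplied by $R^{-1}$). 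This separation is what keeps the bound linear in $\cond(R)$ rather than quadratic, and it is the one place where care is genuinely required; the remaining estimates~\eqref{transf2}–\eqref{transf3} are then routine applications of submultiplicativity and of the fact from Proposition~\ref{prop:rcond}(i) that $\norm{\asm^+_\ell}_{2\to2}\geq 1$.
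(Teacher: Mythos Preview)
Your argument for \eqref{transf1} is correct and matches the paper's: $\asm^-_\ell$ is unchanged, $\norm{\tilde X_\ell}_2 \le \norm{X_\ell}_2\norm{R}_{2\to2}$, and $\asm^+_\ell(\Rep{\tilde X}) = R^{-1}\asm^+_\ell(\Rep{X})$ contributes the factor $\norm{R^{-1}}_{2\to2}$; the symmetric argument handles $\ell+1$.

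For \eqref{transf2} and \eqref{transf3}, however, your route through $\Mat{\ell}(\Vec{x})$ does not work, and the specific steps you write down are incorrect. First, you write $\ramp_\ell(\Rep{\tilde X}) = \norm{\asm^-_\ell(\Rep{X})}_{2\to2}\,\norm{\asm^-(\tilde X_\ell)}_{2\to2}$, but the middle factor in Proposition~\ref{rampexpr} is $\norm{\tilde X_\ell}_2 = \norm{\asm^-(\tilde X_\ell)}_F$, a Frobenius norm, not an operator norm. Second, ``the product is bounded by $\norm{\Mat{\ell}(\Vec{x})}_{2\to2}$ by submultiplicativity'' is the wrong direction: submultiplicativity gives $\norm{\text{product}} \le \text{product of norms}$, not the reverse. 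Third, the claim $\norm{\asm^+_\ell(\Rep{X})}_{2\to2} \ge 1$ is false in general (rescale all cores by a small constant), and Proposition~\ref{prop:rcond}(i) says $\rcond_\ell \ge 1$, which is a different statement.

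The missing idea is that the orthogonality hypotheses yield \emph{equalities} that let the extra $R$-factor from the \eqref{transf1} estimate be absorbed into a factor already present in $\ramp(\Rep{X})$. For \eqref{transf2}: since $R\,\asm^+_\ell(\Rep{\tilde X}) = \asm^+_\ell(\Rep{X})$ and $\asm^+_\ell(\Rep{\tilde X})$ has orthonormal rows, one has $\norm{R}_{2\to2} = \norm{\asm^+_\ell(\Rep{X})}_{2\to2}$ exactly; hence $\ramp_\ell(\Rep{\tilde X}) = \norm{\asm^-_\ell(\Rep{X})}_{2\to2}\norm{\tilde X_\ell}_2 \le \norm{\asm^-_\ell(\Rep{X})}_{2\to2}\norm{X_\ell}_2\norm{R}_{2\to2} = \ramp_\ell(\Rep{X})$. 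For \eqref{transf3}: since $\asm^+(X_{\ell+1}) = R\,\asm^+(\tilde X_{\ell+1})$ and $\tilde X_{\ell+1}$ is right-orthogonal, $\norm{X_{\ell+1}}_2 = \norm{R}_F$; combined with $\asm^-_{\ell+1}(\Rep{\tilde X}) = \asm^-_{\ell+1}(\Rep{X})R$, $\norm{R}_{2\to2}\le\norm{R}_F$, and $\norm{\tilde X_{\ell+1}}_2 = \sqrt{r_\ell}$, this yields $\ramp_{\ell+1}(\Rep{\tilde X}) \le \norm{\asm^-_{\ell+1}(\Rep{X})}_{2\to2}\norm{R}_F\sqrt{r_\ell}\,\norm{\asm^+_{\ell+1}(\Rep{X})}_{2\to2} = \sqrt{r_\ell}\,\ramp_{\ell+1}(\Rep{X})$.
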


\begin{proof}
	The estimates \eqref{transf1} follow from
	\[
	 \norm{\tilde X_\ell}_{2} \leq \norm{X_\ell}_{2}\norm{R}_{2\to 2}, \quad
	  \norm{\asm^+_\ell(\Rep{\tilde X})}_{2} \leq \norm{R^{-1}}_{2\to 2} \norm{ \asm^+_\ell (\Rep{X})}_{2}
	 \]
	 for the first, and analogous estimates for the second inequality.
	To see \eqref{transf2}, observe that $R \asm^+_\ell(\Rep{\tilde X}) = \asm^+_\ell (\Rep{X})$ and that under the given additional assumption, $\norm{\asm^+_\ell(\Rep{\tilde X})}_{2\to 2} = 1$ and $\norm{\asm^+_\ell (\Rep{X})}_{2\to 2} = \norm{R}_{2\to 2}$. Under the further assumption for \eqref{transf3}, we have $\norm{ X_{\ell+1}}_2 = \norm{R}_F$, and thus
	\begin{align*}
	 \ramp_{\ell+1} (\Rep{\tilde X}) &= \norm{\asm^-_{\ell+1}(\Rep{X}) R }_{2\to 2} \norm{\tilde X_{\ell+1}}_2 \norm{\asm^+_{\ell+1}(\Rep{X})}_{2\to 2}  \\
	   & \leq \norm{\asm^-_{\ell+1} (\Rep{X}) }_{2\to 2} \norm{R}_F \sqrt{r_{\ell}} \norm{\asm^+_{\ell+1}(\Rep{X})}_{2\to 2} \\ &\leq \sqrt{r_\ell} \, \ramp_{\ell+1}(\Rep{X}).\qedhere
	\end{align*}
\end{proof}

Note that the improved bounds \eqref{transf2} and \eqref{transf3}, which do not depend on the particular transformation $R$, correspond to the transformations made in algorithms for right-orthogonalizing $\Rep{X} \in \TTset_L$. When the roles of $\tilde X_{\ell}$, $\tilde X_{\ell+1}$ and the corresponding orthogonality requirements are reversed, \eqref{transf2} and \eqref{transf3} are replaced by $\ramp_{\ell+1} (\Rep{\tilde X}) \leq \ramp_{\ell+1} (\Rep{X})$ and $\ramp_{\ell} (\Rep{\tilde X}) \leq \sqrt{ r_{\ell+1}} \, \ramp_{\ell} (\Rep{X})$.

\subsection{Orthogonalization as an example of a numerical operation}

Orthogonalization of tensor train representations is usually done via QR decompositions of matricized cores. When performed at machine precision $\epsilon$, these decompositions are affected by round-off errors: applied to $M \in \R^{m\times n}$, where $mn \epsilon$ is sufficiently small, as shown in \cite[\S 19]{MR1927606} the standard Householder algorithm yields $\tilde Q, \tilde R$ such that
\begin{equation}\label{householderqr}
  \norm{M - \tilde Q \tilde R}_F \leq C_{\text{QR}} m n^{3/2} \epsilon \norm{M}_F.
\end{equation}

As a consequence of Proposition \ref{reptransformbound}, we obtain a statement on the numerical errors incurred by orthogonalization of TT representations. As a simplifying assumption, let us suppose that the QR factorizations in $\lorth(\Rep{X})$, $\rorth(\Rep{X})$ of $\Rep{X}\in\TTset_L$ are computed with machine precision $\epsilon$ up to the error bound \eqref{householderqr}, but that matrix-matrix multiplications are performed exactly (and hence the computed Householder reflectors act as exactly orthogonal matrices). Then recursively using \eqref{transf2}, \eqref{transf3}, we obtain
\begin{align}
   \label{lorthbound} \norm{\asm(\rorth(\Rep{X})) - \asm(\Rep{X})}_2 &\leq C_{\text{QR}}\sum_{\ell=2}^L ( 2^D r_{\ell-1} r_\ell )^{3/2} \ramp_\ell(\Rep{X}) \, \epsilon + \cO(\epsilon^2) , \\
   \label{rorthbound} \norm{\asm(\lorth(\Rep{X})) - \asm(\Rep{X})}_2 &\leq C_{\text{QR}} \sum_{\ell=1}^{L-1} ( 2^D r_{\ell-1} r_\ell)^{3/2} \ramp_\ell(\Rep{X})  \epsilon + \cO(\epsilon^2),
\end{align}
where $r_{\ell} = \rank_\ell(\Rep{X})$ for $\ell=1,\ldots,L$.
The analogous statements for the relative errors $\norm{\asm(\rorth(\Rep{X})) - \asm(\Rep{X})}_2/{\norm{\asm(\Rep{X})}_2}$ and $\norm{\asm(\lorth(\Rep{X})) - \asm(\Rep{X})}_2/{\norm{\asm(\Rep{X})}_2}$  hold with $\ramp$ replaced by $\rcond$.

Taking into account further numerical effects due to inexact matrix-matrix multiplications leads to substantially more complicated bounds involving additional prefactors depending more strongly on intermediate steps in the algorithms. As our numerical illustrations in Section \ref{sec:examples} demonstrate, however, the order of magnitude of the resulting errors is typically already very well predicted by the bounds \eqref{lorthbound}, \eqref{rorthbound}.

\subsection{Representations of operators}\label{sec:opramp}

\begin{definition}\label{def:mrepcond}
	For $\ell=1,\ldots,L$, we define the representation amplification factor and representation condition number of the matrix representation $\Rep{A} \in \TTset^2_L$ by
	\begin{equation}\label{Eq:oprconddef}
	   \opramp_\ell(\Rep{A}) = \sup_{\Rep{X}\in \TTset_L} \frac{\ramp_\ell(\Rep{A} \MP \Rep{X})}{\ramp_\ell(\Rep{X})}, \quad
	    \oprcond_\ell(\Rep{A}) = \sup_{\Rep{X}\in \TTset_L} \frac{\rcond_\ell(\Rep{A} \MP \Rep{X})}{\rcond_\ell(\Rep{X})}.
	\end{equation}
\end{definition}

In other words, these are the largest factors by which the action of the matrix representation $\Rep{A}$ can possibly change the representation amplification factors and the condition numbers of a vector representation.
By definition, these functions are submultiplicative:
\[
 \opramp_\ell(\Rep{A} \MP \Rep{B}) \leq \opramp_\ell(\Rep{A})\opramp_\ell(\Rep{B}), \qquad
 \oprcond_\ell(\Rep{A} \MP \Rep{B}) \leq \oprcond_\ell(\Rep{A})\oprcond_\ell(\Rep{B}).
\]
We do not have an explicit representation of these quantities as in Proposition \ref{rampexpr}, but we obtain the following upper bound in terms of the components of representations.

\begin{proposition}\label{matrepbound}
For $\Rep{A}\in\TTset^2_L$, we define the matrices
\begin{align*}
  \Ten{A}^-_{\ell, k} &= \bigl( (A_1\RP\cdots\RP A_{\ell-1})(1,i,j,k) \bigr)_{i\in \mathcal{I}^{\ell-1} ,j \in \mathcal{I}^{\ell-1}}, \quad & k &= 1, \ldots, R_{\ell-1}, \\
  \Ten{A}^+_{\ell, k}  &= \bigl( ( A_{\ell+1}\RP\cdots\RP A_{L})(k, i, j,1) \bigr)_{i\in \mathcal{I}^{L-\ell} , j\in \mathcal{I}^{L-\ell}}, \quad & k &=1,\ldots,R_\ell.
\end{align*}
Then $\opramp_\ell(\Rep{A}) \leq \beta_\ell(\Rep{A})$ for $\ell = 1,\ldots,L$, where we define
\begin{equation}\label{betadef}
  \beta_\ell(\Rep{A})
  =
  \Bigl(
		\sum_{k^-=1}^{R_{\ell-1}}  \norm{\Ten{A}^-_{\ell, k^-}}_{2\to 2}^2
		\sum_{k^+=1}^{R_{\ell}}  \norm{\Ten{A}^+_{\ell, k^+}}_{2\to 2}^2
		\sum_{ k^-=1}^{R_{\ell-1}} \sum_{ k^+ = 1}^{R_\ell}  \bignorm{\BR{A_{\ell}}{k^-, k^+}}_{2\to 2}^2
 	\Bigr)^{\frac12}
	\, ,
\end{equation}
and if $\asm(\Rep{A})$ is invertible,
\begin{equation}\label{oprcondbound}
  \oprcond_\ell(\Rep{A}) \leq \norm{\asm(\Rep{A})^{-1}}_{2\to 2} \opramp_\ell(\Rep{A}).
\end{equation}
\end{proposition}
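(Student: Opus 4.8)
The plan is to bound $\ramp_\ell(\Rep{A}\MP\Rep{X})$ in terms of $\ramp_\ell(\Rep{X})$ using the explicit formula from Proposition \ref{rampexpr} applied to the core tuple $\Rep{A}\MP\Rep{X}=(A_1\MP X_1,\ldots,A_L\MP X_L)$. By Proposition \ref{rampexpr},
\[
  \ramp_\ell(\Rep{A}\MP\Rep{X}) = \norm{\asm^-_\ell(\Rep{A}\MP\Rep{X})}_{2\to 2}\,\norm{A_\ell\MP X_\ell}_{2}\,\norm{\asm^+_\ell(\Rep{A}\MP\Rep{X})}_{2\to 2}.
\]
First I would work out what each of the three factors is, using the distributivity relation \eqref{Eq:Distr-RP-MP}. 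The key structural observation is that $\asm^-_\ell(\Rep{A}\MP\Rep{X}) = \asm^-(A_1\MP X_1,\ldots,A_{\ell-1}\MP X_{\ell-1})$ is the left matricization of the mode product of the two left sub-tuples, and analogously for $\asm^+_\ell$; the rank index of this matricization ranges over the pair $(k^-,\alpha^-)$, where $k^-\in\{1,\ldots,R_{\ell-1}\}$ indexes the matrix-representation rank (producing the block matrices $\Ten{A}^-_{\ell,k^-}$) and $\alpha^-$ indexes the vector-representation rank. Concretely, for the left factor, after reordering indices one has, for a test vector indexed correspondingly,
\[
  \asm^-_\ell(\Rep{A}\MP\Rep{X}) \text{ acting on a row } = \bigl( \Ten{A}^-_{\ell,k^-}\,[\text{corresponding block of }\asm^-_\ell(\Rep{X})]\bigr)_{k^-},
\]
so that $\norm{\asm^-_\ell(\Rep{A}\MP\Rep{X})}_{2\to 2}^2 \le \bigl(\sum_{k^-}\norm{\Ten{A}^-_{\ell,k^-}}_{2\to 2}^2\bigr)\norm{\asm^-_\ell(\Rep{X})}_{2\to 2}^2$; the analogous bound holds for the right factor with $\sum_{k^+}\norm{\Ten{A}^+_{\ell,k^+}}_{2\to 2}^2$. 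For the middle factor, one uses that the core $A_\ell\MP X_\ell$ has blocks $\BR{A_\ell}{k^-,k^+}\otimes\BR{X_\ell}{\alpha^-,\alpha^+}$, whence $\norm{A_\ell\MP X_\ell}_2^2 = \sum_{k^-,k^+,\alpha^-,\alpha^+}\norm{\BR{A_\ell}{k^-,k^+}}_F^2\,\norm{\BR{X_\ell}{\alpha^-,\alpha^+}}_F^2 \le \bigl(\sum_{k^-,k^+}\norm{\BR{A_\ell}{k^-,k^+}}_{2\to 2}^2\bigr)\,\norm{X_\ell}_2^2$ (bounding the Frobenius norms of the small $A_\ell$-blocks by their operator norms is harmless up to the fixed mode-size dimension, but one should check whether the statement intends operator or Frobenius norms here — the displayed \eqref{betadef} uses $\norm{\cdot}_{2\to 2}$, so I would phrase the elementary block inequality $\norm{P\otimes Q}_F \le \norm{P}_{2\to2}\norm{Q}_F$ which gives exactly this). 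Multiplying the three bounds and comparing with $\ramp_\ell(\Rep{X}) = \norm{\asm^-_\ell(\Rep{X})}_{2\to2}\norm{X_\ell}_2\norm{\asm^+_\ell(\Rep{X})}_{2\to2}$ yields $\ramp_\ell(\Rep{A}\MP\Rep{X}) \le \beta_\ell(\Rep{A})\,\ramp_\ell(\Rep{X})$, and taking the supremum over $\Rep{X}$ gives $\opramp_\ell(\Rep{A})\le\beta_\ell(\Rep{A})$.

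For the condition-number bound \eqref{oprcondbound}, I would use $\rcond_\ell = \ramp_\ell/\norm{\asm(\cdot)}_2$ together with $\asm(\Rep{A}\MP\Rep{X}) = \asm(\Rep{A})\,\asm(\Rep{X})$ (the matrix–vector product, by \eqref{Eq:Distr-RP-MP}), so that when $\asm(\Rep{A})$ is invertible, $\norm{\asm(\Rep{X})}_2 = \norm{\asm(\Rep{A})^{-1}\asm(\Rep{A}\MP\Rep{X})}_2 \le \norm{\asm(\Rep{A})^{-1}}_{2\to2}\norm{\asm(\Rep{A}\MP\Rep{X})}_2$. Hence
\[
  \rcond_\ell(\Rep{A}\MP\Rep{X}) = \frac{\ramp_\ell(\Rep{A}\MP\Rep{X})}{\norm{\asm(\Rep{A}\MP\Rep{X})}_2} \le \norm{\asm(\Rep{A})^{-1}}_{2\to2}\,\frac{\ramp_\ell(\Rep{A}\MP\Rep{X})}{\norm{\asm(\Rep{X})}_2} \le \norm{\asm(\Rep{A})^{-1}}_{2\to2}\,\opramp_\ell(\Rep{A})\,\rcond_\ell(\Rep{X}),
\]
and dividing by $\rcond_\ell(\Rep{X})$ and taking the supremum gives \eqref{oprcondbound}.

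The main obstacle I anticipate is purely bookkeeping: correctly identifying how the composite rank index of $\asm^\pm_\ell(\Rep{A}\MP\Rep{X})$ factors into the $\Rep{A}$-part and the $\Rep{X}$-part, and verifying that after the appropriate permutation of rank indices the matricization genuinely has the block structure $(\Ten{A}^-_{\ell,k^-}\cdot[\,\cdot\,])_{k^-}$ that makes the operator-norm estimate $\norm{(\,\cdot\,)}_{2\to2}^2\le\sum_{k^-}\norm{\Ten{A}^-_{\ell,k^-}}_{2\to2}^2\cdot\norm{\asm^-_\ell(\Rep{X})}_{2\to2}^2$ valid (this is the standard bound $\norm{(M_k N)_k}_{2\to2}^2 \le (\sum_k\norm{M_k}_{2\to2}^2)\norm{N}_{2\to2}^2$ for a column-stacked family, which should be stated explicitly). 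The corresponding reordering is exactly the kind encoded by the permutation matrices $\Ten\varPi_L$, $\widetilde{\Ten\varPi}_{L,\alpha}$ introduced in Section \ref{Sc:MultilevelTT}, so I would either invoke those or argue directly that operator norms are invariant under such permutations of index blocks. Everything else reduces to the elementary Kronecker-product norm inequalities $\norm{P\otimes Q}_{2\to2}=\norm{P}_{2\to2}\norm{Q}_{2\to2}$ and $\norm{P\otimes Q}_F=\norm{P}_F\norm{Q}_F$, plus $\norm{P}_F\le\sqrt{\min(m,n)}\,\norm{P}_{2\to2}$ for the middle factor if one insists on the $\norm{\cdot}_{2\to2}$ form in \eqref{betadef} — this last point is the one I would double-check against the intended statement.
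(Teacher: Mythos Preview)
Your approach is essentially the same as the paper's: apply Proposition~\ref{rampexpr} to $\Rep{Y}=\Rep{A}\MP\Rep{X}$, bound each of the three factors separately against the corresponding factor for $\Rep{X}$, and for \eqref{oprcondbound} split $\rcond_\ell$ into $\ramp_\ell$ and the norm of the represented tensor and use $\norm{\asm(\Rep{X})}_2\le\norm{\asm(\Rep{A})^{-1}}_{2\to2}\norm{\asm(\Rep{A}\MP\Rep{X})}_2$. The structure is correct.

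There is one slip you should fix. By Definition~\ref{Df:MP}, the blocks of $A_\ell\MP X_\ell$ are the \emph{matrix products} $\BR{A_\ell}{k^-,k^+}\,\BR{X_\ell}{\alpha^-,\alpha^+}$, not the Kronecker products $\BR{A_\ell}{k^-,k^+}\otimes\BR{X_\ell}{\alpha^-,\alpha^+}$ (the mode product multiplies blocks as matrices; it is the Kronecker product of cores in~\eqref{Eq:DefCoreKron1} that tensorizes blocks). Consequently the identity you write for $\norm{A_\ell\MP X_\ell}_2^2$ is not an equality, and the inequality $\norm{P\otimes Q}_F\le\norm{P}_{2\to2}\norm{Q}_F$ you invoke is in fact false in general (since $\norm{P\otimes Q}_F=\norm{P}_F\norm{Q}_F\ge\norm{P}_{2\to2}\norm{Q}_F$). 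What you actually need, and what the paper uses, is the matrix-product bound
\[
\bignorm{\BR{A_\ell}{k^-,k^+}\,\BR{X_\ell}{\alpha^-,\alpha^+}}_2\le\bignorm{\BR{A_\ell}{k^-,k^+}}_{2\to2}\,\bignorm{\BR{X_\ell}{\alpha^-,\alpha^+}}_2,
\]
which upon summing over all four rank indices gives $\norm{A_\ell\MP X_\ell}_2^2\le\bigl(\sum_{k^-,k^+}\norm{\BR{A_\ell}{k^-,k^+}}_{2\to2}^2\bigr)\norm{X_\ell}_2^2$ directly, with no spurious mode-size factor. So your final worry about needing $\norm{P}_F\le\sqrt{\min(m,n)}\,\norm{P}_{2\to2}$ disappears once the block product is read correctly.
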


\begin{proof}
By Proposition \ref{rampexpr}, with $\Rep{Y} = \Rep{A} \MP \Rep{X}$,
	\begin{equation*}
	\opramp_\ell(\Rep{A}) = \sup_{\Rep{X}\in \TTset_L} \frac{  \norm{\asm^-_\ell(\Rep{Y})}_{2\to 2} \norm{ \asm^+_\ell(\Rep{Y})}_{ 2\to 2} \norm{Y_\ell}_{2}  }{  \norm{\asm^-_\ell(\Rep{X})}_{2\to 2} \norm{ \asm^+_\ell(\Rep{X})}_{2\to 2} \norm{X_\ell}_{2}  } .
	\end{equation*}
The first statement follows with the estimates
\begin{equation*}
	\norm{Y_\ell}_{2}^2
	 = \sum_{ K^- = 1}^{R_{\ell-1}} \sum_{ K^+ = 1}^{R_{\ell}}  \sum_{ k^- = 1}^{r_{\ell-1}} \sum_{ k^+ = 1}^{r_{\ell}}  \norm{ \BR{A_\ell}{K^-,K^+} \,\BR{X_\ell}{k^-,k^+}  }_2^2
	 \leq  \sum_{ K^- = 1}^{R_{\ell-1}} \sum_{ K^+ = 1}^{R_{\ell}}  \bignorm{\BR{A_{\ell}}{  K^-, K^+} }_{2\to 2}^2
	     \; \norm{X_\ell}_{2}^2
\end{equation*}
and
\begin{equation*}
	\norm{\asm^-_\ell(\Rep{Y})}^2_{2\to 2}
	 \leq \sup_{\norm{{y}}_{2}=1}  \sum_{k=1}^{R_{\ell-1}} \bignorm{ \Ten{A}^-_{\ell, k} \,\asm^-_{\ell}(\Rep{X})\,{y} }_{2}^2
	\leq \sum_{k=1}^{R_{\ell-1}} \norm{\Ten{A}^-_{\ell, k} }_{2 \to 2}^2  \norm{\asm^-_{\ell}(\Rep{X})}_{2\to 2}^2,
\end{equation*}
as well as the analogous bound for $\asm^+_\ell(\Rep{Y})$.
For \eqref{oprcondbound}, note that if $\asm(\Rep{A})$ is invertible, then
	\[
	   \oprcond_\ell(\Rep{A}) \leq \biggl(\sup_{\Rep{X}\in\TTset_L} \frac{\norm{\asm(\Rep{X})}_{2}}{\norm{\asm(\Rep{Y})}_{2}} \biggr) \opramp_\ell(\Rep{A}) =  \norm{\asm(\Rep{A})^{-1}}_{2\to 2}  \opramp_\ell(\Rep{A}). \qedhere
	\]
\end{proof}

In certain situations, Proposition \ref{matrepbound} provides qualitatively sharp bounds. We now demonstrate this in the simple example of the stiffness matrix for the Dirichlet Laplacian on $(0,1)$. Similar results are observed numerically for direct representations
of more general stiffness matrices of second-order elliptic problems.

\begin{proposition}\label{prop:lapddrepcond}
 Let $\Ten{A}^{\mathrm{DD}}_L$ be as in Example \ref{ex2}, and let  $\Rep{A}$ with $\asm(\Rep{A}) = \Ten{A}^{\mathrm{DD}}_L$ be as in \eqref{laplacedd}. Then for $\ell = 1, \ldots, L$, one has
$
\opramp_\ell (\Rep{A}) \sim 2^{2L}
$
and
$
   2^{(3L + \ell)/2} \lesssim  \oprcond_\ell (\Rep{A})  \lesssim 2^{2L}.
$
\end{proposition}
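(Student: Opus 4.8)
The plan is to derive the two upper bounds from the abstract machinery of Section~\ref{sec:opramp} and the two lower bounds from two explicit test vectors. For the upper bounds, Proposition~\ref{matrepbound} gives $\opramp_\ell(\Rep{A})\le\beta_\ell(\Rep{A})$, and since $\asm(\Rep{A})=\Ten{A}^{\mathrm{DD}}_L$ is symmetric positive definite with $\norm{(\Ten{A}^{\mathrm{DD}}_L)^{-1}}_{2\to2}=\lambda_{\text{min},L}^{-1}\sim1$, the estimate~\eqref{oprcondbound} reduces the bound $\oprcond_\ell(\Rep{A})\lesssim2^{2L}$ to $\opramp_\ell(\Rep{A})\lesssim2^{2L}$. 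For the $\opramp$ lower bound I would take $\Rep{X}_0$ to be the rank-one representation of a standard basis vector $\Vec{e}_j=\asm(\Rep{X}_0)$: by Proposition~\ref{prop:rcond}(i),(ii) we have $\ramp_\ell(\Rep{X}_0)=1$ and $\ramp_\ell(\Rep{A}\MP\Rep{X}_0)\ge\norm{\asm(\Rep{A}\MP\Rep{X}_0)}_2=\norm{\Ten{A}^{\mathrm{DD}}_L\Vec{e}_j}_2\ge(\Ten{A}^{\mathrm{DD}}_L)_{jj}\sim2^{2L}$, so $\opramp_\ell(\Rep{A})\ge2^{2L}$. For the $\oprcond$ lower bound, the key point is to test with the \emph{eigenvector} representation $\Rep{X}$ of~\eqref{laplaceddev}: since $\asm(\Rep{A}\MP\Rep{X})=\lambda_{\text{min},L}\asm(\Rep{X})$, the ratio $\rcond_\ell(\Rep{A}\MP\Rep{X})/\rcond_\ell(\Rep{X})$ equals $\lambda_{\text{min},L}^{-1}\,\ramp_\ell(\Rep{A}\MP\Rep{X})/\ramp_\ell(\Rep{X})$, and it remains to bound $\ramp_\ell(\Rep{X})$ from above and $\ramp_\ell(\Rep{A}\MP\Rep{X})$ from below.

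The first concrete step is to identify the blocks of the partial strong Kronecker products of $\Rep{A}$. A short induction, starting from $A_1$ and using~\eqref{laplacedd}, shows that $A_1\RP\cdots\RP A_{\ell-1}$ has the three blocks $4^{\ell-1}\hat{\Ten{I}}_{\ell-1}$, $4^{\ell-1}\hat{\Ten{S}}_{\ell-1}$, $4^{\ell-1}\hat{\Ten{S}}_{\ell-1}^{\MT}$ (identity and down/up shift of order $2^{\ell-1}$), so $\norm{\Ten{A}^-_{\ell,k}}_{2\to2}=4^{\ell-1}$. Likewise $A_{\ell+1}\RP\cdots\RP A_L$ has blocks $(A_{\ell+1}\RP\cdots\RP A_L)^{[1,1]}$, $\mp 4^{L-\ell}H^2 J^{\otimes(L-\ell)}$ and $\mp 4^{L-\ell}H^2 (J^{\MT})^{\otimes(L-\ell)}$; here $J^{\otimes m}$ and $(J^{\MT})^{\otimes m}$ have unit operator norm, and because the first block-row of the middle core in~\eqref{laplacedd} is itself the core $A_1$, the diagonal block $(A_{\ell+1}\RP\cdots\RP A_L)^{[1,1]}$ equals $A_1\RP A_{\ell+2}\RP\cdots\RP A_L$, hence (for $\ell\le L-2$) a rescaling of the coarse stiffness matrix $\Ten{A}^{\mathrm{DD}}_{L-\ell}$ by the bounded factor $\bigl(H/(1+2^{-(L-\ell)})\bigr)^2$, so $\norm{\Ten{A}^+_{\ell,k}}_{2\to2}\sim4^{L-\ell}$ (the degenerate cases $\ell\in\{L-1,L\}$ being immediate). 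Since the blocks of every core in~\eqref{laplacedd} are bounded multiples of $I$, $J$, $J^{\MT}$, one has $\sum_{k^-,k^+}\bignorm{\BR{A_\ell}{k^-,k^+}}_{2\to2}^2\lesssim1$. Substituting into~\eqref{betadef} gives $\beta_\ell(\Rep{A})\lesssim\bigl(4^{2(\ell-1)}\cdot4^{2(L-\ell)}\cdot1\bigr)^{1/2}=4^{L-1}\sim2^{2L}$, uniformly in $\ell$ (the endpoint indices $\ell\in\{1,L\}$, where one partial product is empty, only decrease the bound).

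For the lower bound on $\oprcond_\ell$ I would apply Proposition~\ref{rampexpr} to $\Rep{X}$ and to $\Rep{Y}=\Rep{A}\MP\Rep{X}$. The rank-$\le2$ matricizations $\asm^-_\ell(\Rep{X})$ and $\asm^+_\ell(\Rep{X})$ have their columns, respectively rows, given by cosines and sines of the dyadic partial angles built from $t_k=\pi2^{-k}/H$, so the Frobenius bound yields $\norm{\asm^-_\ell(\Rep{X})}_{2\to2}\le2^{\ell/2}$, $\norm{\asm^+_\ell(\Rep{X})}_{2\to2}\le2^{(L-\ell+1)/2}$, $\norm{X_\ell}_2\le2$, hence $\ramp_\ell(\Rep{X})\lesssim2^{L/2}$. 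For $\Rep{Y}$, the distributivity~\eqref{Eq:Distr-RP-MP} applied to the first $\ell-1$ cores shows that $\asm^-_\ell(\Rep{Y})$ has, among its columns, $4^{\ell-1}$ times the cosine column $c$ of $\asm^-_\ell(\Rep{X})$; a geometric-series computation gives $\norm{c}_2^2=\sum_{m=0}^{2^{\ell-1}-1}\cos^2\!\bigl((\pi/H)2^{-(\ell-1)}m\bigr)=2^{\ell-2}+O(1)$, so $\norm{c}_2\gtrsim2^{(\ell-1)/2}$ and $\norm{\asm^-_\ell(\Rep{Y})}_{2\to2}\gtrsim4^{\ell-1}2^{(\ell-1)/2}$. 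Similarly, using the block $\mp4^{L-\ell}H^2(J^{\MT})^{\otimes(L-\ell)}$ of $A_{\ell+1}\RP\cdots\RP A_L$ together with the fact that the all-zeros column of $\asm^+_\ell(\Rep{X})$ is $(\sin t_L,\cos t_L)^{\MT}$, one of the rows of $\asm^+_\ell(\Rep{Y})$ equals $\mp4^{L-\ell}H^2\cos(t_L)\,e_{2^{L-\ell}-1}$, so $\norm{\asm^+_\ell(\Rep{Y})}_{2\to2}\gtrsim4^{L-\ell}$; and the $(1,1)$-block of $A_\ell\MP X_\ell$ is $4\,x_{\mathrm c}^\ell$, giving $\norm{A_\ell\MP X_\ell}_2\ge4$. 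Multiplying the three factors, $\ramp_\ell(\Rep{Y})\gtrsim4^{\ell-1}2^{(\ell-1)/2}\cdot4\cdot4^{L-\ell}\sim2^{2L+(\ell-1)/2}$, and dividing by $\ramp_\ell(\Rep{X})\lesssim2^{L/2}$ and $\lambda_{\text{min},L}\sim1$ yields $\oprcond_\ell(\Rep{A})\gtrsim2^{(3L+\ell)/2}$. The endpoint cases $\ell\in\{1,L\}$ follow the same way, replacing the missing factor by a nonvanishing block of $A_1\MP X_1$, respectively $A_L\MP X_L$.

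I expect the main obstacle to be the structural bookkeeping underlying these estimates: establishing the identity/shift block structure of $A_1\RP\cdots\RP A_{\ell-1}$, the $J^{\otimes}$-structure and the embedded coarse stiffness matrix in $A_{\ell+1}\RP\cdots\RP A_L$, and the corresponding description of $\asm^\pm_\ell(\Rep{X})$ through cosines and sines of dyadic partial angles, together with the elementary but slightly delicate geometric-sum estimate for $\norm{c}_2$ and the careful handling of the boundary indices $\ell\in\{1,L\}$. Once these structural facts are in place, all the remaining norm inequalities are routine.
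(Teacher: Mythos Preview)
Your proposal is correct and largely follows the paper's own route: the upper bounds via Proposition~\ref{matrepbound} and~\eqref{oprcondbound}, and the lower bound on $\oprcond_\ell$ via the minimal eigenvector representation~\eqref{laplaceddev}, are exactly what the paper does (the paper simply says ``direct but tedious calculation'' where you spell out the block structure of the partial strong Kronecker products and the cosine/sine columns of $\asm^\pm_\ell(\Rep{X})$).

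The one genuine difference is the lower bound on $\opramp_\ell$. The paper tests with the representation $\Rep{X}_{\mathrm{max}}$ of the \emph{maximal} eigenvector $\Vec{x}_{\mathrm{max},L}=(\sin(\pi i/H))_i$ and evaluates $\ramp_\ell(\Rep{A}\MP\Rep{X}_{\mathrm{max}})/\ramp_\ell(\Rep{X}_{\mathrm{max}})$ via Proposition~\ref{rampexpr}; you instead test with a rank-one representation $\Rep{X}_0$ of a standard basis vector and use only Proposition~\ref{prop:rcond}(i),(ii) together with $\norm{\Ten{A}^{\mathrm{DD}}_L\Vec{e}_j}_2\gtrsim 2^{2L}$. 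Your argument is shorter and avoids any trigonometric bookkeeping, while the paper's choice keeps the two lower bounds structurally parallel (both via Proposition~\ref{rampexpr} applied to an explicit rank-two eigenvector representation). Either approach works; yours buys simplicity for this particular bound.
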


\begin{proof}
	The upper bounds follow by direct computation from Proposition \ref{matrepbound} via evaluation of the auxiliary matrices in \eqref{betadef}.
For the lower bound on $\opramp_\ell(\Rep{A})$, we estimate the supremum from below using the representation $\Rep{X}_{\mathrm{max}}$ analogous to \eqref{laplaceddev} of the eigenvector $\Vec{x}_{\text{max},L} = \bigl( \sin( \pi i / H) \bigr)_{i=1,\ldots,2^L}$ corresponding to the largest eigenvalue $\lambda_{\text{max},L} \sim 2^{2L}$. To this end, it suffices to evaluate $\ramp_\ell(\Rep{A}\MP \Rep{X}_{\mathrm{max}})/\ramp_\ell(\Rep{X}_{\mathrm{max}})$ via Proposition \ref{rampexpr} in a direct but tedious calculation. For the lower bound on $\oprcond_\ell(\Rep{A})$, we instead use $\Vec{x}_{\mathrm{min},L} = \bigl(\sin( \pi i 2^{-L} / H) \bigr)_{i=1,\ldots,2^L}$ in the representation \eqref{laplaceddev}.
\end{proof}

Thus applying the matrix representation $\Rep{A}$ to the tensor decomposition $\Rep{X}$  of a vector may in general increase its representation condition number by a factor proportional to $2^{2L}$. For instance, if $\Rep{X}$ is given in TT-SVD form with representation condition number close to one, the further numerical manipulation of $\Rep{A}\MP\Rep{X}$ can cause errors of order $\cO( 2^{2L} \epsilon \norm{\asm(\Rep{X})}_2)$. This effect is observed also in the numerical tests in Section \ref{sec:noprecond}.

\section{Multilevel Low-Rank Tensor Structure of the Operator}\label{sec:tensorstructure}

In this section, we analyze the low-rank structure
of the preconditioner $\Ten{C}_{L}$,
given by~\eqref{Cdef},
and of the preconditioned discrete differential operator
$\Ten{B}_{L}$ in the form of~\eqref{Eq:PrecOpDec}.
The resulting low-rank representations are designed specifically to have small representation condition numbers
in the sense of
Definition \ref{def:mrepcond}, which is not generally the case for low-rank decompositions of $\Ten{B}_{L}$.

The central idea for obtaining well-conditioned representations is to directly combine the representations of differential operators $\hat{\Ten{M}}_{L \CQ \alpha}$ as in \eqref{Eq:DefM01} with those of the averaging matrices $\hat{\Ten{P}}_{\ell,L}$ in the preconditioner. This leads to a natural rank-reduced representation of the products $\hat{\Ten{M}}_{L \CQ \alpha} \hat{\Ten{P}}_{\ell,L}$, where the cancellations causing representation ill-conditioning that are present in the tensor decomposition of $\hat{\Ten{M}}_{L \CQ \alpha}$ are explicitly absorbed by the preconditioner and thus removed from the final representation.

\subsection{Auxiliary results}\label{Sc:tensorstructure-aux-res}

In this section, for $\ell\in\N$, we present explicit joint representations
of the identity matrix $\hat{\Ten{I}}_{\ell}$ and of the shift matrix $\hat{\Ten{S}}_{\ell}$,
given by~\eqref{Eq:DefIS}, and of
the linear vectors $\hat{\Vec{\xi}}_{\ell}$ and $\hat{\Vec{\eta}}_{\ell}$,
defined in~\eqref{Eq:DefXiEta}. These representations will be presented here in terms of the following cores:
\begin{equation}\label{Eq:DefCoresZZZ0}
		\hat{\Core{U}}
		=
		\begin{bmatrix*}[l]
			I	& J^{\MT} \QQ						\\
				& J											\\
		\end{bmatrix*}
		,
		\qquad
		\hat{\Core{X}}
		=
		\frac12
		\begin{bmatrix*}[c]
			\begin{pmatrix}
				1			\\
				2			\\
			\end{pmatrix}
			&
			\begin{pmatrix}
				0			\\
				1			\\
			\end{pmatrix}
			\\
			\begin{pmatrix}
				1			\\
				0			\\
			\end{pmatrix}
			&
			\begin{pmatrix}
				2			\\
				1			\\
			\end{pmatrix}
		\end{bmatrix*}
		\quad\text{and}\quad
		\hat{\Core{P}}
		=
		\begin{bmatrix}
			1	\\
			0	\\
		\end{bmatrix}
		\, .
\end{equation}
Our derivations will also involve the square Kronecker-product matrices
\begin{equation}\label{Eq:DefJ}
	\hat{\Ten{J}}_{\ell}
	=
	J^{\QQ \KProd \ell}
	=
	\begin{pmatrix}
		0 &        & 1 \\
		  & \ddots &   \\
		  &        & 0 \\
	\end{pmatrix}
\end{equation}
with $\ell\in\N$
and
iterated strong Kronecker products,
such as
$\hat{\Core{U}}^{\RP \ell} = \Core{U} \RP \cdots \RP \Core{U}$ with $\ell\in\N$ factors.

We start with the following auxiliary result, which appeared in slightly different forms
in~\cite{KKh:2012:QTT,KKhT:2013:Toeplitz}. The brief derivation, in the form given here,
provides an illustration and simplifies further proofs given below.
\begin{lemma}\label{Lm:DecSIJ}
	For every $\ell\in\N$,
	the matrices $\hat{\Ten{I}}_{\ell}$, $\hat{\Ten{S}}_{\ell}$ and $\hat{\Ten{J}}_{\ell}$,
	given by~\eqref{Eq:DefIS} and~\eqref{Eq:DefJ}, satisfy
	\begin{equation}\label{Eq:DecISJ}
		\begin{bmatrix}
			\hat{\Ten{I}}_{\ell}
			&
			\hat{\Ten{S}}_{\ell}
			\\
			&
			\hat{\Ten{J}}_{\ell}
			\\
		\end{bmatrix}
		=
		\hat{\Core{U}}^{\RP \ell}
		\equiv
		\begin{bmatrix*}[l]
				I	& J^{\MT} \QQ						\\
					& J											\\
		\end{bmatrix*}^{\RP \ell}
		,
	\end{equation}
	where the blocks $I$ and $J$ and given by~\eqref{Eq:DefBlocksIJ}
	and the core $\hat{\Core{U}}$ is as defined in~\eqref{Eq:DefCoresZZZ0}.
\end{lemma}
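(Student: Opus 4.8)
The plan is to prove the identity by induction on $\ell$, using the recursive structure $\hat{\Core{U}}^{\RP(\ell+1)} = \hat{\Core{U}}^{\RP\ell} \RP \hat{\Core{U}}$ of the iterated strong Kronecker product.

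For the base case $\ell=1$, I would simply note that, in terms of the $2\times 2$ blocks in~\eqref{Eq:DefBlocksIJ}, the order-$2$ identity matrix is $\hat{\Ten{I}}_1 = I$, the order-$2$ shift matrix $\hat{\Ten{S}}_1$ has its single nonzero entry in the lower-left corner and hence equals $J^\MT$, and $\hat{\Ten{J}}_1 = J^{\KProd 1} = J$, while the lower-left block of the asserted $2\times 2$ block matrix is zero; hence $\begin{bmatrix} \hat{\Ten{I}}_1 & \hat{\Ten{S}}_1 \\ & \hat{\Ten{J}}_1 \end{bmatrix}$ is exactly $\hat{\Core{U}}$.

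For the inductive step, assuming the claim for $\ell$, I would expand $\hat{\Core{U}}^{\RP\ell}\RP\hat{\Core{U}}$ blockwise as in~\eqref{Eq:ExRP} --- matrix multiplication of the $2\times 2$ block structures, with Kronecker products of the blocks --- and substitute the induction hypothesis into the left factor. Since the lower-left block vanishes in both factors, the result reduces to
\[
  \begin{bmatrix} \hat{\Ten{I}}_\ell \KProd I & \hat{\Ten{I}}_\ell \KProd J^\MT + \hat{\Ten{S}}_\ell \KProd J \\ 0 & \hat{\Ten{J}}_\ell \KProd J \end{bmatrix}.
\]
The $(1,1)$ block equals $\hat{\Ten{I}}_{\ell+1}$ and, by~\eqref{Eq:DefJ}, the $(2,2)$ block equals $J^{\KProd\ell}\KProd J = J^{\KProd(\ell+1)} = \hat{\Ten{J}}_{\ell+1}$, both immediately; the $(2,1)$ block is zero, as required. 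What remains --- and is the one step that calls for some care --- is the identity $\hat{\Ten{S}}_{\ell+1} = \hat{\Ten{I}}_\ell \KProd J^\MT + \hat{\Ten{S}}_\ell \KProd J$, which I would prove by splitting the index of the order-$2^{\ell+1}$ shift matrix into its $\ell$ most significant bits and its least significant bit and observing that advancing the index by one either raises the low bit from $0$ to $1$ (the term $\hat{\Ten{I}}_\ell\KProd J^\MT$) or, when a carry occurs, resets the low bit to $0$ and advances the high part (the term $\hat{\Ten{S}}_\ell\KProd J$); this carry bookkeeping requires keeping the paper's big-endian digit ordering consistent with the standard Kronecker-product convention, but is otherwise routine.

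As an alternative that avoids the induction, one can evaluate the slices of $\hat{\Core{U}}^{\RP\ell}$ directly. The nonzero slices of $\hat{\Core{U}}$ are $\BM{\hat{\Core{U}}}{0,0}=\BM{\hat{\Core{U}}}{1,1}=I_1$, $\BM{\hat{\Core{U}}}{1,0}=J$ and $\BM{\hat{\Core{U}}}{0,1}=I_2$; since multiplication of these $2\times 2$ matrices closes up so that a product $\BM{\hat{\Core{U}}}{i_1,j_1}\cdots\BM{\hat{\Core{U}}}{i_\ell,j_\ell}$ is nonzero only for the patterns $I_1\cdots I_1$, $I_2\cdots I_2$, or $I_1\cdots I_1\,J\,I_2\cdots I_2$, and since the slice pattern coincides with the binary digits of the index pair, these three cases correspond exactly to $(i_1\cdots i_\ell)=(j_1\cdots j_\ell)$, to $(i_1\cdots i_\ell,\,j_1\cdots j_\ell)=(0\cdots 0,\,1\cdots 1)$, and to $(i_1\cdots i_\ell)=(j_1\cdots j_\ell)+1$, respectively; this reproduces precisely the nonzero entries of $\hat{\Ten{I}}_\ell$, $\hat{\Ten{J}}_\ell$ and $\hat{\Ten{S}}_\ell$ in the $(1,1)$, $(2,2)$ and $(1,2)$ block positions, with the $(2,1)$ block identically zero.
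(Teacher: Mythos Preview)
Your inductive proof is correct and follows essentially the same approach as the paper: both establish the base case $\ell=1$ by inspection and then use a one-step Kronecker recurrence for $\hat{\Ten{I}}_\ell$, $\hat{\Ten{S}}_\ell$, $\hat{\Ten{J}}_\ell$ to close the induction. The only (inessential) difference is the direction of peeling: the paper writes $\hat{\Core{U}}^{\RP\ell} = \hat{\Core{U}}\RP\hat{\Core{U}}^{\RP(\ell-1)}$ and hence obtains the recurrence $\hat{\Ten{S}}_\ell = I\KProd\hat{\Ten{S}}_{\ell-1} + J^\MT\KProd\hat{\Ten{J}}_{\ell-1}$, splitting off the most significant bit, whereas you write $\hat{\Core{U}}^{\RP(\ell+1)} = \hat{\Core{U}}^{\RP\ell}\RP\hat{\Core{U}}$ and obtain $\hat{\Ten{S}}_{\ell+1} = \hat{\Ten{I}}_\ell\KProd J^\MT + \hat{\Ten{S}}_\ell\KProd J$, splitting off the least significant bit. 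Your alternative slice-product argument is a genuinely different, direct combinatorial route that does not appear in the paper; it is a nice addition but not needed.
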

\begin{proof}
	For $\ell=1$, the claim is trivial. Let us assume that $\ell > 1$.
	Then, splitting each of the matrices
	$\hat{\Ten{S}}_{\ell}$, $\hat{\Ten{I}}_{\ell}$ and $\hat{\Ten{J}}_{\ell}$
	into four blocks,
	we obtain the following recurrence relations:
	\begin{equation}\label{Eq:KronRecISJ}
		\begin{aligned}
			\hat{\Ten{I}}_{\ell}
			&
			=
			I \KProd \hat{\Ten{I}}_{\ell-1}
			=
			\begin{bmatrix}
				I
			\end{bmatrix}
			\RP
			\begin{bmatrix}
				\hat{\Ten{I}}_{\ell-1}
			\end{bmatrix}
			,
			\\
			\hat{\Ten{S}}_{\ell}
			&
			=
			I \KProd \QQ \hat{\Ten{S}}_{\ell-1}
			+
			J^{\MT} \KProd \hat{\Ten{J}}_{\ell-1}
			=
			\begin{bmatrix}
				I	& J^{\MT} \QQ
			\end{bmatrix}
			\RP
			\begin{bmatrix}
				\hat{\Ten{S}}_{\ell-1}	\\
				\hat{\Ten{J}}_{\ell-1}	\\
			\end{bmatrix}
			,
			\\
			\hat{\Ten{J}}_{\ell}
			&
			=
			J \KProd \hat{\Ten{J}}_{\ell-1}
			=
			\begin{bmatrix}
				J
			\end{bmatrix}
			\RP
			\begin{bmatrix}
				\hat{\Ten{J}}_{\ell-1}	\\
			\end{bmatrix}
			.
		\end{aligned}
	\end{equation}
	Using the core product, these relations can be recast as
	\begin{equation}\label{Eq:RecISJ}
		\begin{bmatrix}
			\hat{\Ten{I}}_{\ell}
			&
			\hat{\Ten{S}}_{\ell}
			\\
			&
			\hat{\Ten{J}}_{\ell}
			\\
		\end{bmatrix}
		=
		\hat{\Core{U}}
		\RP
		\begin{bmatrix}
			\hat{\Ten{I}}_{\ell-1}
			&
			\hat{\Ten{S}}_{\ell-1}
			\\
			&
			\hat{\Ten{J}}_{\ell-1}
			\\
		\end{bmatrix}
		\, .
	\end{equation}
	Applying~\eqref{Eq:RecISJ} recursively, we obtain~\eqref{Eq:DecISJ}.
\end{proof}

As the following auxiliary result shows, a similar technique applies to cores whose blocks are vectors.
\begin{lemma}\label{Lm:DecXiEta}
	For every $\ell\in\Nz$,
	the vectors $\hat{\Vec{\xi}}_{\ell}$ and $\hat{\Vec{\eta}}_{\ell}$,
	given by~\eqref{Eq:DefXiEta}, satisfy
	\begin{equation}\label{Eq:DecXiEta}
		\begin{bmatrix*}[r]
			\hat{\Vec{\eta}}_{\ell}
			\\
			\hat{\Vec{\xi}}_{\ell} - \hat{\Vec{\eta}}_{\ell}
			\\
		\end{bmatrix*}
		=
		\hat{\Core{X}}^{\RP \ell}
		\RP
		\hat{\Core{P}}
		\, ,
	\end{equation}
	where $\hat{X}$ is given by \eqref{Eq:DefCoresZZZ0}.
\end{lemma}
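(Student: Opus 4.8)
The plan is to follow the pattern of the proof of Lemma~\ref{Lm:DecSIJ}: I would regard the stacked pair $(\hat{\Vec{\eta}}_\ell,\ \hat{\Vec{\xi}}_\ell - \hat{\Vec{\eta}}_\ell)$ as a core of rank $2\times 1$ and mode size $2^\ell\times 1$, derive a one-step recurrence that expresses it as $\hat{\Core{X}}$ strong-Kronecker-multiplied with the corresponding level-$(\ell-1)$ object, and then iterate this relation down to the base case $\ell=0$.

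First I would split each $2^\ell$-component vector into two halves of length $2^{\ell-1}$, indexed by the most significant bit, and write $e_1 = (1,0)^{\MT}$, $e_2 = (0,1)^{\MT}$, so that $e_1\otimes u$ and $e_2\otimes u$ denote the vectors with $u$ in the top and bottom half, respectively. From $(\hat{\Vec{\eta}}_\ell)_j = 2^{-\ell}j$ one reads off that the top half of $\hat{\Vec{\eta}}_\ell$ equals $\tfrac12\hat{\Vec{\eta}}_{\ell-1}$, while the bottom half, corresponding to $j = 2^{\ell-1}+j'$ with $j'=1,\ldots,2^{\ell-1}$, equals $\tfrac12 + 2^{-\ell}j' = \tfrac12\hat{\Vec{\xi}}_{\ell-1} + \tfrac12\hat{\Vec{\eta}}_{\ell-1}$; and $\hat{\Vec{\xi}}_\ell$ is just $\hat{\Vec{\xi}}_{\ell-1}$ stacked twice, so subtracting yields the companion relation for $\hat{\Vec{\xi}}_\ell - \hat{\Vec{\eta}}_\ell$. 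Writing $a = \hat{\Vec{\eta}}_{\ell-1}$, $b = \hat{\Vec{\xi}}_{\ell-1} - \hat{\Vec{\eta}}_{\ell-1}$, these become $\hat{\Vec{\eta}}_\ell = e_1\otimes\tfrac12 a + e_2\otimes(a+\tfrac12 b)$ and $\hat{\Vec{\xi}}_\ell - \hat{\Vec{\eta}}_\ell = e_1\otimes(\tfrac12 a + b) + e_2\otimes\tfrac12 b$.

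Next I would check that these two identities are exactly blocks $(1,1)$ and $(2,1)$ of the strong Kronecker product of $\hat{\Core{X}}$ with the level-$(\ell-1)$ core. Reading the four blocks of $\hat{\Core{X}}$ off~\eqref{Eq:DefCoresZZZ0} and applying Definition~\ref{Df:SKP}, block $(1,1)$ is $\tfrac12(1,2)^{\MT}\otimes a + \tfrac12(0,1)^{\MT}\otimes b$, which equals $e_1\otimes\tfrac12 a + e_2\otimes(a+\tfrac12 b)$ since $(1,2)^{\MT} = e_1 + 2e_2$, and block $(2,1)$ is $\tfrac12(1,0)^{\MT}\otimes a + \tfrac12(2,1)^{\MT}\otimes b = e_1\otimes(\tfrac12 a + b) + e_2\otimes\tfrac12 b$. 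Hence
\[
  \begin{bmatrix} \hat{\Vec{\eta}}_\ell \\ \hat{\Vec{\xi}}_\ell - \hat{\Vec{\eta}}_\ell \end{bmatrix}
  = \hat{\Core{X}} \RP
  \begin{bmatrix} \hat{\Vec{\eta}}_{\ell-1} \\ \hat{\Vec{\xi}}_{\ell-1} - \hat{\Vec{\eta}}_{\ell-1} \end{bmatrix} ,
\]
and iterating this $\ell$ times, using associativity of $\RP$, reduces the claim to the base case $\ell=0$, where $\hat{\Vec{\eta}}_0 = \hat{\Vec{\xi}}_0 = (1)$ so that the left-hand side is the core $(1,0)^{\MT} = \hat{\Core{P}}$, matching $\hat{\Core{X}}^{\RP 0}\RP\hat{\Core{P}} = \hat{\Core{P}}$ under the usual empty-product convention. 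This yields~\eqref{Eq:DecXiEta}.

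I do not expect a genuine obstacle here; the only points requiring care are bookkeeping ones — keeping the big-endian index ordering of Definition~\ref{Df:SKP} consistent, so that the newly introduced factor $\hat{\Core{X}}$ always acts on the coarsest remaining level, and verifying that the scalar $\tfrac12$ in the definition of $\hat{\Core{X}}$ reproduces exactly the $2^{-\ell}$ normalization of $\hat{\Vec{\eta}}_\ell$ rather than some other power of two. As with Lemma~\ref{Lm:DecSIJ}, the same computation will serve as a building block for the joint low-rank representations of $\hat{\Ten{M}}_{L \CQ \alpha}$ and $\hat{\Ten{P}}_{\ell,L}$ used in Section~\ref{sec:tensorstructure}.
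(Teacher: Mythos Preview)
Your proposal is correct and follows essentially the same approach as the paper: both derive the one-step recurrence $\begin{bmatrix}\hat{\Vec{\eta}}_\ell\\ \hat{\Vec{\xi}}_\ell-\hat{\Vec{\eta}}_\ell\end{bmatrix}=\hat{\Core{X}}\RP\begin{bmatrix}\hat{\Vec{\eta}}_{\ell-1}\\ \hat{\Vec{\xi}}_{\ell-1}-\hat{\Vec{\eta}}_{\ell-1}\end{bmatrix}$ from the half-splitting of the vectors and then iterate. The only cosmetic difference is that the paper first writes the recursion in the variables $(\hat{\Vec{\eta}}_{\ell-1},\hat{\Vec{\xi}}_{\ell-1})$ before passing to $(\hat{\Vec{\eta}}_{\ell-1},\hat{\Vec{\xi}}_{\ell-1}-\hat{\Vec{\eta}}_{\ell-1})$ and terminates the recursion at $\ell=1$, whereas you go directly to the latter pair and terminate at $\ell=0$.
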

\begin{proof}
	For $\ell=0,1$, the claim is trivial.
	Let us assume that $\ell > 1$.
	Splitting each of the vectors
	$\hat{\Vec{\xi}}_{\ell}$ and $\hat{\Vec{\eta}}_{\ell}$
	into two blocks,
	we arrive at the recursion
	\begin{subequations}
	\begin{equation}\label{Eq:KronRecXiEta}
			\hat{\Vec{\xi}}_{\ell}
			=
			\begin{pmatrix}
				1	\\
				1	\\
			\end{pmatrix}
			\KProd \hat{\Vec{\xi}}_{\ell-1}
			,
			\qquad
			\hat{\Vec{\eta}}_{\ell}
			=
			\begin{pmatrix}
				1/2	\\
				1/2	\\
			\end{pmatrix}
			\KProd \hat{\Vec{\eta}}_{\ell-1}
			+
			\begin{pmatrix}
				0		\\
				1/2	\\
			\end{pmatrix}
			\KProd \hat{\Vec{\xi}}_{\ell-1}
			,
	\end{equation}
	from which it is easy to see that
	\begin{equation}\label{Eq:KronRecXi-Eta}
		\begin{aligned}
			\hat{\Vec{\eta}}_{\ell}
			&
			=
			\begin{pmatrix}
				1/2	\\
				1		\\
			\end{pmatrix}
			\KProd \hat{\Vec{\eta}}_{\ell-1}
			+
			\begin{pmatrix}
				0		\\
				1/2	\\
			\end{pmatrix}
			\KProd \, \Par{ \hat{\Vec{\xi}}_{\ell-1}-\hat{\Vec{\eta}}_{\ell-1} }
			\, ,
			\\
			\hat{\Vec{\xi}}_{\ell} - \hat{\Vec{\eta}}_{\ell}
			&
			=
			\begin{pmatrix}
				1/2	\\
				0				\\
			\end{pmatrix}
			\KProd \hat{\Vec{\eta}}_{\ell-1}
			+
			\begin{pmatrix}
				1				\\
				1/2	\\
			\end{pmatrix}
			\KProd \, \Par{ \hat{\Vec{\xi}}_{\ell-1}-\hat{\Vec{\eta}}_{\ell-1} }
			\, .
		\end{aligned}
	\end{equation}
	\end{subequations}
	Using the core product,
	the relations~\eqref{Eq:KronRecXiEta} and~\eqref{Eq:KronRecXi-Eta}
	can be recast as
	\begin{equation}\label{Eq:RecXiEta}
		\begin{bmatrix*}[r]
			\hat{\Vec{\eta}}_{\ell}
			\\
			\hat{\Vec{\xi}}_{\ell} - \hat{\Vec{\eta}}_{\ell}
			\\
		\end{bmatrix*}
		=
		\hat{\Core{X}}
		\RP
		\begin{bmatrix*}[r]
			\hat{\Vec{\eta}}_{\ell-1}
			\\
			\hat{\Vec{\xi}}_{\ell-1} - \hat{\Vec{\eta}}_{\ell-1}
			\\
		\end{bmatrix*}
		.
	\end{equation}
	Applying~\eqref{Eq:RecXiEta} recursively and
	comparing $\hat{\Vec{\xi}}_1$ and $\hat{\Vec{\eta}}_1$
	with the first column of the core $\hat{\Core{X}}$,
	which is given by $\hat{\Core{X}} \RP \hat{\Core{P}}$,
	we obtain~\eqref{Eq:DecXiEta}.
\end{proof}

\subsection{Explicit analysis of univariate factors}\label{Sc:tensorstructure1d}
In this section, we show how the auxiliary results of Section~\ref{Sc:tensorstructure-aux-res}
translate into low-rank decompositions of the univariate factors
$\hat{\Ten{M}}_{L \CQ \alpha}$ with $\alpha\in\Set{0,1}$
and $\hat{\Ten{P}}_{\ell,L}$ with $\ell=0,\ldots,L$,
where $L\in\N$.
These matrices are introduced in~\eqref{Eq:DefM01} and~\eqref{Eq:DefP}.
Let
\begin{equation}\label{Eq:DefCoresZZZ1}
	\begin{gathered}
		\hat{\Core{A}}
		=
		\begin{bmatrix*}[l]
			1			&
			0
		\end{bmatrix*}
		,
		\quad
		\hat{\Core{T}}_{0}
		=
		\begin{bmatrix*}[r]
			1	& 1						\\
			1	& -1					\\
		\end{bmatrix*}
		,
		\\
		\hat{\Core{V}}
		=
		\frac12
		\:
		\hat{\Core{T}}_{0}
		\RP
		\hat{\Core{U}}
		\RP
		\hat{\Core{T}}_{0}
		=
		\frac12
		\begin{bmatrix*}[c]
			I+J^{\MT}+J
			&
			I-J^{\MT}-J
			\\
			I+J^{\MT}-J
			&
			I-J^{\MT}+J
		\end{bmatrix*}
		,
		\quad
		\hat{\Core{M}}_0
		=
		\frac12
		\begin{bmatrix*}[r]
			\begin{pmatrix*}[r]
				1	\\
				0	\\
			\end{pmatrix*}
			\\
			\begin{pmatrix*}[r]
				0	\\
				1	\\
			\end{pmatrix*}
		\end{bmatrix*}
		\,
		,
		\quad
		\hat{\Core{M}}_1
		=
		\begin{bmatrix}
			0	\\
			1	\\
		\end{bmatrix}
		\, .		%
	\end{gathered}
\end{equation}
\begin{lemma}\label{Lm:DecM}
For every $L\in\N$ and for $\alpha=0,1$,
the matrix $\hat{\Ten{M}}_{L \CQ \alpha}$,
given by~\eqref{Eq:DefM01}, satisfies
\begin{equation}\label{Eq:DecM}
	\hat{\Ten{M}}_{L \CQ \alpha}
	=
	2^{ \Par{\alpha+\frac12} L }
	\;
	\hat{\Core{A}}
	\RP
	\hat{\Core{U}}^{\RP \ell}
	\RP
	\hat{\Core{T}}_{0}
	\RP
	\hat{\Core{V}}^{\RP \Par{L-\ell}}
	\RP
	\hat{\Core{M}}_\alpha
\end{equation}
for every $\ell=0,\ldots,L$,
where the cores $\hat{\Core{A}}$, $\hat{\Core{U}}$,
$\hat{\Core{V}}$, $\hat{\Core{T}}_{0}$
and
$\hat{\Core{M}}_\alpha$ with $\alpha=0,1$
are given by~\eqref{Eq:DefCoresZZZ0} and~\eqref{Eq:DefCoresZZZ1}.
\end{lemma}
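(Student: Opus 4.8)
The plan is to first collapse the $\ell$-dependence in the right-hand side of~\eqref{Eq:DecM} by means of a commutation identity between the cores $\hat{\Core{T}}_0$, $\hat{\Core{U}}$ and $\hat{\Core{V}}$, and then to evaluate the resulting $\ell$-independent strong Kronecker product explicitly with the help of Lemma~\ref{Lm:DecSIJ}.

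The central step is the commutation relation $\hat{\Core{T}}_0 \RP \hat{\Core{V}} = \hat{\Core{U}} \RP \hat{\Core{T}}_0$. To obtain it, I view $\hat{\Core{T}}_0$ from~\eqref{Eq:DefCoresZZZ1} as a core of rank $2\times2$ with scalar ($1\times1$) blocks, so that its unique slice is the matrix $\left[\begin{smallmatrix}1 & 1\\ 1 & -1\end{smallmatrix}\right]$. By Definition~\ref{Df:SKP}, the strong Kronecker product acts by matrix multiplication on slices, whence $\hat{\Core{T}}_0 \RP \hat{\Core{T}}_0$ is twice the rank-$2\times2$ core $\Core{E}$ whose unique slice is the $2\times2$ identity, and this core $\Core{E}$ acts as a left neutral element under $\RP$ (its slices being the identity). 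Using the factorization $\hat{\Core{V}} = \tfrac12\,\hat{\Core{T}}_0 \RP \hat{\Core{U}} \RP \hat{\Core{T}}_0$ of~\eqref{Eq:DefCoresZZZ1} together with associativity of $\RP$ then gives
\[
  \hat{\Core{T}}_0 \RP \hat{\Core{V}}
  = \tfrac12\,(\hat{\Core{T}}_0 \RP \hat{\Core{T}}_0) \RP \hat{\Core{U}} \RP \hat{\Core{T}}_0
  = \Core{E} \RP \hat{\Core{U}} \RP \hat{\Core{T}}_0
  = \hat{\Core{U}} \RP \hat{\Core{T}}_0 .
\]
Iterating this $L-\ell$ times yields $\hat{\Core{T}}_0 \RP \hat{\Core{V}}^{\RP(L-\ell)} = \hat{\Core{U}}^{\RP(L-\ell)} \RP \hat{\Core{T}}_0$, so that, using $\hat{\Core{U}}^{\RP \ell} \RP \hat{\Core{U}}^{\RP(L-\ell)} = \hat{\Core{U}}^{\RP L}$, the right-hand side of~\eqref{Eq:DecM} equals $2^{(\alpha+\frac12)L}\,\hat{\Core{A}} \RP \hat{\Core{U}}^{\RP L} \RP \hat{\Core{T}}_0 \RP \hat{\Core{M}}_\alpha$, which is manifestly independent of $\ell$.

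It then remains to evaluate $\hat{\Core{A}} \RP \hat{\Core{U}}^{\RP L} \RP \hat{\Core{T}}_0 \RP \hat{\Core{M}}_\alpha$ for $\alpha=0,1$. By Lemma~\ref{Lm:DecSIJ}, $\hat{\Core{U}}^{\RP L} = \left[\begin{smallmatrix}\hat{\Ten{I}}_L & \hat{\Ten{S}}_L\\ & \hat{\Ten{J}}_L\end{smallmatrix}\right]$; multiplying on the left by the scalar-block core $\hat{\Core{A}} = [\,1\ \ 0\,]$ extracts its first block-row, giving the rank-$1\times2$ core $[\,\hat{\Ten{I}}_L\ \ \hat{\Ten{S}}_L\,]$; multiplying on the right by $\hat{\Core{T}}_0$ produces $[\,\hat{\Ten{I}}_L + \hat{\Ten{S}}_L\ \ \hat{\Ten{I}}_L - \hat{\Ten{S}}_L\,]$; and finally applying $\hat{\Core{M}}_0$ resp.\ $\hat{\Core{M}}_1$ from~\eqref{Eq:DefCoresZZZ1} yields $\tfrac12\bigl[(\hat{\Ten{I}}_L + \hat{\Ten{S}}_L)\KProd\binom{1}{0} + (\hat{\Ten{I}}_L - \hat{\Ten{S}}_L)\KProd\binom{0}{1}\bigr]$ resp.\ $\hat{\Ten{I}}_L - \hat{\Ten{S}}_L$. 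Multiplying by $2^{(\alpha+\frac12)L}$ — that is, by $2^{L/2}$ for $\alpha=0$ and $2^{3L/2}$ for $\alpha=1$ — and comparing with the explicit forms of $\hat{\Ten{M}}_{L\CQ0}$ and $\hat{\Ten{M}}_{L\CQ1}$ in~\eqref{Eq:DefM01} completes the proof.

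I expect the only delicate point to be the careful bookkeeping of the strong Kronecker product when cores of differing mode sizes are combined — the scalar blocks of $\hat{\Core{A}}$, $\hat{\Core{T}}_0$ and $\hat{\Core{M}}_1$ against the $2\times2$ (resp.\ $2\times1$) mode sizes of $\hat{\Core{U}}$ and $\hat{\Core{M}}_0$ — together with the routine verifications that $\RP$ is associative in the sense used above and that $\hat{\Core{V}}$ does have the factorization recorded in~\eqref{Eq:DefCoresZZZ1}; all remaining manipulations are mechanical.
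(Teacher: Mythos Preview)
Your proof is correct and follows essentially the same approach as the paper: both rely on Lemma~\ref{Lm:DecSIJ} to identify the $\ell=L$ form $\hat{\Core{A}}\RP\hat{\Core{U}}^{\RP L}\RP\hat{\Core{T}}_0\RP\hat{\Core{M}}_\alpha$ with $\hat{\Ten{M}}_{L\CQ\alpha}$, and both use the identity $\hat{\Core{T}}_0\RP\hat{\Core{T}}_0=2\hat{\Core{I}}$ together with the definition $\hat{\Core{V}}=\tfrac12\,\hat{\Core{T}}_0\RP\hat{\Core{U}}\RP\hat{\Core{T}}_0$ to pass between different values of $\ell$. The only cosmetic difference is direction: the paper starts from the definition, establishes the case $\ell=L$, and then inserts factors of $\hat{\Core{T}}_0\RP(\tfrac12\hat{\Core{T}}_0)$ to reach general $\ell$, whereas you start from general $\ell$ and commute $\hat{\Core{T}}_0$ past the $\hat{\Core{V}}$ factors to collapse to $\ell=L$.
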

\begin{proof}
	Consider $L\in\N$ and $\alpha\in\Set{0,1}$.
	Immediately from~\eqref{Eq:DefM01}, we obtain the representation
	\begin{equation}\nonumber
		\hat{\Ten{M}}_{L \CQ \alpha}
		=
		2^{ \Par{\alpha+\frac12} L }
		\;
		\hat{\Core{A}}
		\RP
		\begin{bmatrix}
			\hat{\Ten{I}}_{L}
			&
			\hat{\Ten{S}}_{L \,}
			\\
			&
			\hat{\Ten{J}}_{L}
			\\
		\end{bmatrix}
		\RP
		\hat{\Core{T}}_{0}
		\RP
		\hat{\Core{M}}_\alpha
		\, .
	\end{equation}
	Applying Lemma~\ref{Lm:DecSIJ},
	we arrive at the claimed decomposition in the case of $\ell=L$,
	\begin{equation}\nonumber
		\hat{\Ten{M}}_{L \CQ \alpha}
		=
		2^{ \Par{\alpha+\frac12} L }
		\;
		\hat{\Core{A}}
		\RP
		\hat{\Core{U}}^{\RP L}
		\RP
		\hat{\Core{T}}_{0}
		\RP
		\hat{\Core{M}}_\alpha
		\, .
	\end{equation}
	Using that
	$\hat{\Core{T}}_{0} \RP \hat{\Core{T}}_{0} = 2 \QQ \hat{\Core{I}}$,
	we obtain
	\begin{equation}\nonumber
		\hat{\Ten{M}}_{L \CQ \alpha}
		=
		2^{ \Par{\alpha+\frac12} L }
		\;
		\hat{\Core{A}}
		\RP
		\hat{\Core{U}}^{\RP \ell}
		\RP
		\hat{\Core{T}}_{0}
		\RP
		\Par[3]{
			\frac12
			\QQ
			\hat{\Core{T}}_{0}
			\RP
			\hat{\Core{U}}
			\RP
			\hat{\Core{T}}_{0}
			\QQ
		}^{\RP \Par{L-\ell}}
		\RP
		\hat{\Core{M}}_\alpha
	\end{equation}
	for every $\ell=0,\ldots,L-1$,
	which completes the proof due to~\eqref{Eq:DefCoresZZZ1}.
\end{proof}

\begin{lemma}\label{Lm:DecP}
For all $L\in\Nz$ and $\ell=0,\ldots,L$,
the matrix $\hat{\Ten{P}}_{\ell,L}$, given by~\eqref{Eq:DefP},
has the representation
\begin{equation}\label{Eq:DecP}
	\hat{\Ten{P}}_{\ell,L}
	=
	2^{ -\frac12 \Par{L-\ell} }
	\;
	\hat{\Core{A}}
	\RP
	\hat{\Core{U}}^{\RP \ell}
	\RP
	\hat{\Core{X}}^{\RP \Par{L-\ell}}
	\RP
	\hat{\Core{P}}
	\, ,
\end{equation}
where
$\hat{\Core{A}}$, $\hat{\Core{U}}$, $\hat{\Core{X}}$
and
$\hat{\Core{P}}$
are the cores given by~\eqref{Eq:DefCoresZZZ0} and~\eqref{Eq:DefCoresZZZ1}.
\end{lemma}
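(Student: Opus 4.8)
The plan is to read off \eqref{Eq:DecP} directly from the explicit form \eqref{Eq:DefP} of $\hat{\Ten{P}}_{\ell,L}$, by recognizing its two Kronecker‑product summands as a single strong Kronecker product of cores and then substituting the factorizations already established in Lemmas~\ref{Lm:DecSIJ} and~\ref{Lm:DecXiEta}. Concretely, \eqref{Eq:DefP} reads
\[
  2^{(L-\ell)/2}\,\hat{\Ten{P}}_{\ell,L}
  =
  \hat{\Ten{I}}_{\ell}\KProd\hat{\Vec{\eta}}_{L-\ell}
  +
  \hat{\Ten{S}}_{\ell}\KProd\bigl(\hat{\Vec{\xi}}_{L-\ell}-\hat{\Vec{\eta}}_{L-\ell}\bigr),
\]
and by Definition~\ref{Df:SKP} the right-hand side is exactly the rank $1\times 1$ strong Kronecker product
\[
  \begin{bmatrix}\hat{\Ten{I}}_{\ell} & \hat{\Ten{S}}_{\ell}\end{bmatrix}
  \RP
  \begin{bmatrix}\hat{\Vec{\eta}}_{L-\ell} \\ \hat{\Vec{\xi}}_{L-\ell}-\hat{\Vec{\eta}}_{L-\ell}\end{bmatrix}
\]
of the $1\times 2$ core whose blocks are the matrices $\hat{\Ten{I}}_{\ell},\hat{\Ten{S}}_{\ell}$ and the $2\times 1$ core whose blocks are the vectors $\hat{\Vec{\eta}}_{L-\ell},\hat{\Vec{\xi}}_{L-\ell}-\hat{\Vec{\eta}}_{L-\ell}$.

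I would then treat the two factors separately. For the left one, Lemma~\ref{Lm:DecSIJ} gives $\begin{bmatrix}\hat{\Ten{I}}_{\ell} & \hat{\Ten{S}}_{\ell}\\ & \hat{\Ten{J}}_{\ell}\end{bmatrix}=\hat{\Core{U}}^{\RP\ell}$, and pre-composing with the \emph{row selector} $\hat{\Core{A}}=\begin{bmatrix}1 & 0\end{bmatrix}$ from \eqref{Eq:DefCoresZZZ1} extracts precisely the first block row of that core, so that $\hat{\Core{A}}\RP\hat{\Core{U}}^{\RP\ell}=\begin{bmatrix}\hat{\Ten{I}}_{\ell} & \hat{\Ten{S}}_{\ell}\end{bmatrix}$. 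For the right factor, Lemma~\ref{Lm:DecXiEta} already states $\begin{bmatrix}\hat{\Vec{\eta}}_{L-\ell} \\ \hat{\Vec{\xi}}_{L-\ell}-\hat{\Vec{\eta}}_{L-\ell}\end{bmatrix}=\hat{\Core{X}}^{\RP(L-\ell)}\RP\hat{\Core{P}}$. Substituting both identities and using associativity of $\RP$, I obtain
\[
  2^{(L-\ell)/2}\,\hat{\Ten{P}}_{\ell,L}
  =
  \hat{\Core{A}}\RP\hat{\Core{U}}^{\RP\ell}\RP\hat{\Core{X}}^{\RP(L-\ell)}\RP\hat{\Core{P}},
\]
which is \eqref{Eq:DecP} after dividing by $2^{(L-\ell)/2}$. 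The boundary cases $\ell=0$ and $\ell=L$ fit into the same chain once $\hat{\Core{U}}^{\RP 0}$, respectively $\hat{\Core{X}}^{\RP 0}$, is read as a $2\times 2$ identity core, the convention already implicitly used in Lemma~\ref{Lm:DecM}; in those cases one checks directly that $\hat{\Core{A}}=\begin{bmatrix}\hat{\Ten{I}}_{0} & \hat{\Ten{S}}_{0}\end{bmatrix}$ and $\hat{\Core{P}}=\begin{bmatrix}\hat{\Vec{\eta}}_{0} \\ \hat{\Vec{\xi}}_{0}-\hat{\Vec{\eta}}_{0}\end{bmatrix}$.

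I do not expect any analytic difficulty here: the only point that needs care is the index bookkeeping in the strong Kronecker product chain — verifying that the rank dimensions of $\hat{\Core{A}}$ ($1\times 2$), $\hat{\Core{U}}^{\RP\ell}$ ($2\times 2$), $\hat{\Core{X}}^{\RP(L-\ell)}$ ($2\times 2$) and $\hat{\Core{P}}$ ($2\times 1$) chain together to collapse the product to a rank $1\times 1$ core, and that the mode sizes multiply to $2^{L}\times 2^{\ell}$, matching $\hat{\Ten{P}}_{\ell,L}$ as a map from level $\ell$ to level $L$. As an alternative that avoids citing the two auxiliary lemmas, one could instead mimic the recursion in the proof of Lemma~\ref{Lm:DecM}, peeling off one $\hat{\Core{U}}$- and one $\hat{\Core{X}}$-core at a time using the block recursions \eqref{Eq:KronRecISJ} and \eqref{Eq:KronRecXiEta}; but the route via Lemmas~\ref{Lm:DecSIJ} and~\ref{Lm:DecXiEta} is the shortest.
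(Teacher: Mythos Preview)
Your proposal is correct and follows essentially the same route as the paper's proof: rewrite \eqref{Eq:DefP} as a strong Kronecker product of the $1\times 2$ core $\hat{\Core{A}}\RP\begin{bmatrix}\hat{\Ten{I}}_{\ell}&\hat{\Ten{S}}_{\ell}\\&\hat{\Ten{J}}_{\ell}\end{bmatrix}$ with the $2\times 1$ core $\begin{bmatrix}\hat{\Vec{\eta}}_{L-\ell}\\\hat{\Vec{\xi}}_{L-\ell}-\hat{\Vec{\eta}}_{L-\ell}\end{bmatrix}$, then invoke Lemmas~\ref{Lm:DecSIJ} and~\ref{Lm:DecXiEta}. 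The only cosmetic difference is that the paper keeps the full $2\times 2$ core from Lemma~\ref{Lm:DecSIJ} and pre-multiplies by $\hat{\Core{A}}$, whereas you first extract the first block row and then identify it as $\hat{\Core{A}}\RP\hat{\Core{U}}^{\RP\ell}$; the content is identical.
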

\begin{proof}
	We start with rewriting~\eqref{Eq:DefP} in terms of the core product as
	\begin{equation}\nonumber
		\hat{\Ten{P}}_{\ell,L}
		=
		2^{ -\frac12 \Par{L-\ell} }
		\;
		\hat{\Core{A}}
		\RP
		\begin{bmatrix}
			\hat{\Ten{I}}_{\ell}
			&
			\hat{\Ten{S}}_{\ell}
			\\
			&
			\hat{\Ten{J}}_{\ell}
			\\
		\end{bmatrix}
		\RP
		\begin{bmatrix*}[r]
			\hat{\Vec{\eta}}_{L-\ell}
			\\
			\hat{\Vec{\xi}}_{L-\ell}-\hat{\Vec{\eta}}_{L-\ell}
		\end{bmatrix*}
		,
	\end{equation}
	where the middle core should be omitted when
	$\ell=0$.
	Applying
	Lemma~\ref{Lm:DecSIJ} (for $\ell > 0$)
	and
	Lemma~\ref{Lm:DecXiEta}
	to expand the middle and the last cores,
	we prove the claim.
\end{proof}

\subsection{Explicit analysis of univariate factors under preconditioning}\label{Sc:tensorstructure1dprec}
Here, obtain an optimal-rank representation of the product
$\Ten{M}_{L \CQ \alpha} \, \Ten{P}_{\ell,L}$ and note how the products
$ \hat{\Ten{M}}_{L \CQ \alpha} \, \hat{\Ten{P}}_{\ell,L} \, \hat{\Ten{P}}_{\ell,L}^\MT$
and
$\hat{\Ten{P}}_{\ell,L} \, \hat{\Ten{P}}_{\ell,L}^\MT$ can be represented,
all for
$L\in\N$, $\alpha\in\Set{0,1}^D$ and $\ell=0,\ldots,L$.

The optimal-rank representation of the product $\Ten{M}_{L \CQ \alpha} \, \Ten{P}_{\ell,L}$
is obtained in terms of the following cores:
\begin{equation}
	\label{Eq:DefCoresZZZ4}
	\begin{gathered}
		\hat{\Core{T}}_{1}
		=
		\begin{bmatrix*}[r]
			 1					\\
			-1					\\
		\end{bmatrix*}
		,
		\quad
		\hat{\Core{I}}
		=
		\begin{bmatrix}
			1 & 0 \\
			0 & 1 \\
		\end{bmatrix}
		,
		\\
		\hat{\Core{Y}}_{0}
		=
		\frac12
		\begin{bmatrix*}[r]
			\begin{pmatrix}
				2			\\
				2			\\
			\end{pmatrix}
			&
			\\
			\begin{pmatrix*}[r]
				-1			\\
				 1			\\
			\end{pmatrix*}
			&
			\begin{pmatrix}
				1			\\
				1			\\
			\end{pmatrix}
		\end{bmatrix*}
		,
		\quad
		\hat{\Core{Y}}_{1}
		=
		\frac12
		\begin{bmatrix*}
			\begin{pmatrix}
				1			\\
				1			\\
			\end{pmatrix}
		\end{bmatrix*}
		\, ,
		\quad
		\hat{\Core{N}}_1
		=
		\begin{bmatrix}
			1	\\
		\end{bmatrix}
		\quad\text{and}\quad
		\hat{\Core{N}}_0
		=
		\frac12
		\begin{bmatrix*}[r]
			\begin{pmatrix*}[r]
				1	\\
				0	\\
			\end{pmatrix*}
			\\
			\begin{pmatrix*}[r]
				0	\\
				1	\\
			\end{pmatrix*}
		\end{bmatrix*}
		\, .
	\end{gathered}
\end{equation}
The proof of the following lemma is rather technical and is therefore given in Appendix~\ref{App:RR}.
\begin{lemma}\label{Lm:DecMP}
	For all $L,\ell\in\Nz$ such that $\ell \leq L$,
	the matrices
	$
		\hat{\Ten{M}}_{L \CQ \alpha}
		\,
		\hat{\Ten{P}}_{\ell,L}
	$
	with $\alpha=0,1$,
	where the factors are given by~\eqref{Eq:DefM01} and~\eqref{Eq:DefP},
	admit the representation
	\begin{equation}\label{Eq:DecMPbequadro}
		\hat{\Ten{M}}_{L \CQ \alpha}
		\,
		\hat{\Ten{P}}_{\ell,L}
		=
		2^{ \Par{\alpha+\frac12} \ell }
		\;
		\hat{\Core{A}}
		\RP
		\hat{\Core{U}}^{\RP \ell}
		\RP
		\hat{\Core{T}}_{\alpha}
		\RP
		\hat{\Core{Y}}_{\alpha}^{\RP \Par{L-\ell}}
		\RP
		\hat{\Core{N}}_\alpha
		\, ,
	\end{equation}
	where the cores
	$\hat{\Core{A}}$, $\hat{\Core{U}}$ and $\hat{\Core{T}}_{\alpha}$,
	$\hat{\Core{Y}}_{\alpha}$, $\hat{\Core{N}}_{\alpha}$
	with $\alpha=0,1$
	are as in~\eqref{Eq:DefCoresZZZ0} and~\eqref{Eq:DefCoresZZZ1}.
\end{lemma}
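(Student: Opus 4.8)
The plan is to obtain~\eqref{Eq:DecMPbequadro} by inserting the explicit core decompositions of the two factors from Lemmas~\ref{Lm:DecM} and~\ref{Lm:DecP} into the matrix product $\hat{\Ten{M}}_{L\CQ\alpha}\,\hat{\Ten{P}}_{\ell,L}$ and then reducing the redundant rank structure that this creates. The conceptual point is that the cancellation-prone core $\hat{\Core{T}}_0$ appearing in the decomposition of $\hat{\Ten{M}}_{L\CQ\alpha}$ --- the source of the representation ill-conditioning quantified in Proposition~\ref{prop:lapddrepcond} --- is exactly what the averaging structure of the prolongation $\hat{\Ten{P}}_{\ell,L}$ absorbs, so that the reduced product retains only the harmless cores $\hat{\Core{U}},\hat{\Core{T}}_\alpha,\hat{\Core{Y}}_\alpha,\hat{\Core{N}}_\alpha$.

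Concretely, I would take $\hat{\Ten{M}}_{L\CQ\alpha}=2^{(\alpha+\frac12)L}\,\hat{\Core{A}}\RP\hat{\Core{U}}^{\RP\ell}\RP\hat{\Core{T}}_0\RP\hat{\Core{V}}^{\RP(L-\ell)}\RP\hat{\Core{M}}_\alpha$ from Lemma~\ref{Lm:DecM} with its free parameter set to the same $\ell$, and $\hat{\Ten{P}}_{\ell,L}=2^{-\frac12(L-\ell)}\,\hat{\Core{A}}\RP\hat{\Core{U}}^{\RP\ell}\RP\hat{\Core{X}}^{\RP(L-\ell)}\RP\hat{\Core{P}}$ from Lemma~\ref{Lm:DecP}. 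Before the two chains can be combined core by core via the distributivity relation~\eqref{Eq:Distr-RP-MP}, their layouts over the $\hat{\cJ}_L$-index that gets contracted must be matched: the extra core $\hat{\Core{T}}_0$ of the first chain, which has trivial mode size and carries none of the contracted bits, is absorbed into the last $\hat{\Core{U}}$ of its coarse part, after which both chains carry the $L$ contracted bits in the same pattern. Applying~\eqref{Eq:Distr-RP-MP} then presents the product as a chain of mode products: the coarse cores $\hat{\Core{A}}\MP\hat{\Core{A}}$ and $\hat{\Core{U}}\MP\hat{\Core{U}}$ collapse, through a left-to-right rank reduction of the type~\eqref{Eq:CoreTransformation1}--\eqref{Eq:CoreTransformation3}, back to $\hat{\Core{A}}$ and $\hat{\Core{U}}$ (the common prefix survives unchanged); the seam core $(\hat{\Core{U}}\RP\hat{\Core{T}}_0)\MP\hat{\Core{U}}$ reduces to $\hat{\Core{U}}\RP\hat{\Core{T}}_\alpha$; the fine cores $\hat{\Core{V}}\MP\hat{\Core{X}}$ reduce, via a right-to-left sweep seeded by the boundary, to $\hat{\Core{Y}}_\alpha$; and the boundary cores $\hat{\Core{M}}_\alpha\MP\hat{\Core{P}}$ to $\hat{\Core{N}}_\alpha$. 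Collecting the scalars $2^{(\alpha+\frac12)L}\cdot2^{-\frac12(L-\ell)}$ together with the $\tfrac12$ carried by each $\hat{\Core{Y}}_\alpha$ and the remaining rescalings yields the prefactor $2^{(\alpha+\frac12)\ell}$ in~\eqref{Eq:DecMPbequadro}.

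An alternative that minimizes the explicit rank reduction is to argue by induction on the number of fine levels $L-\ell$. The base case $L=\ell$ is $\hat{\Ten{P}}_{\ell,\ell}=\hat{\Ten{I}}_\ell$, so~\eqref{Eq:DecMPbequadro} reduces to Lemma~\ref{Lm:DecM} at $\ell=L$ together with the small-core identity $\hat{\Core{T}}_0\RP\hat{\Core{M}}_\alpha=\hat{\Core{T}}_\alpha\RP\hat{\Core{N}}_\alpha$, a direct check on the cores in~\eqref{Eq:DefCoresZZZ0}, \eqref{Eq:DefCoresZZZ1} and~\eqref{Eq:DefCoresZZZ4}. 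For the step one writes $\hat{\Ten{P}}_{\ell,L}=\hat{\Ten{P}}_{\ell+1,L}\,\hat{\Ten{P}}_{\ell,\ell+1}$, applies the hypothesis to $\hat{\Ten{M}}_{L\CQ\alpha}\,\hat{\Ten{P}}_{\ell+1,L}$, and multiplies on the right by the single-level prolongation $\hat{\Ten{P}}_{\ell,\ell+1}=2^{-1/2}\,\hat{\Core{A}}\RP\hat{\Core{U}}^{\RP\ell}\RP\hat{\Core{X}}\RP\hat{\Core{P}}$; this only touches the seam between the last coarse core and the first fine core and boils down to a finite, $L$-independent identity among $2\times2$ and $2\times1$ blocks, which turns $\hat{\Core{U}}^{\RP(\ell+1)}\RP\hat{\Core{T}}_\alpha\RP\hat{\Core{Y}}_\alpha^{\RP(L-\ell-1)}$ into $\hat{\Core{U}}^{\RP\ell}\RP\hat{\Core{T}}_\alpha\RP\hat{\Core{Y}}_\alpha^{\RP(L-\ell)}$. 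In either approach the cases $\alpha=0$ and $\alpha=1$ are treated separately, since only $\alpha=0$ carries the extra $\{0,1\}$ row index --- this is what distinguishes $\hat{\Core{M}}_0,\hat{\Core{N}}_0,\hat{\Core{T}}_0$ from $\hat{\Core{M}}_1,\hat{\Core{N}}_1,\hat{\Core{T}}_1$ --- and the embedding~\eqref{Eq:M01Emb} links the two, so that carrying out one case in detail suffices.

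The step I expect to be the main obstacle is precisely the rank bookkeeping at this seam: verifying that the mode product of the coarse parts genuinely collapses from rank $4$ back to rank $2$ and produces exactly the transition core $\hat{\Core{T}}_\alpha$, i.e.\ that the $\hat{\Core{T}}_0$-structure of $\hat{\Ten{M}}_{L\CQ\alpha}$ is fully cancelled against the $\hat{\Core{X}}$-structure of the prolongation. This is a finite computation with small blocks, but it is the heart of the statement; the remainder is telescoping of vector and matrix recursions of the kind already carried out in the proofs of Lemmas~\ref{Lm:DecSIJ} and~\ref{Lm:DecXiEta}, plus tracking powers of two.
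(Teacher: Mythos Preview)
Your primary approach—forming the core-wise mode product of the decompositions from Lemmas~\ref{Lm:DecM} and~\ref{Lm:DecP} and then reducing ranks by sweeping transformation cores through the chain—is exactly the paper's (Appendix~\ref{App:RR}); there the sweeps are made concrete via explicit auxiliary cores $\Core{C},\Core{G},\Core{F},\Core{H}$ satisfying small identities such as $\Core{F}\RP(\hat{\Core{U}}\MP\hat{\Core{U}})\RP\Core{G}=\hat{\Core{U}}\RP\Core{F}$ and $\Core{H}\RP(\hat{\Core{V}}\MP\hat{\Core{X}})\RP\Core{C}=\hat{\Core{Y}}_0\RP\Core{H}$, organized as a right-to-left preparatory pass followed by a left-to-right rank-reducing pass (plus one further sweep for $\alpha=1$). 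Your inductive alternative on $L-\ell$ is not in the paper but is a legitimate route that trades the sweep machinery for a direct verification of a single-level seam identity.
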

Combining
the decomposition~\eqref{Eq:DecP}
and its transpose,
we can rewrite the product
$
	\hat{\Ten{P}}_{\ell,L}
	\QQ
	\hat{\Ten{P}}_{\ell,L}^\MT
$
core-wise:
	\begin{equation}\label{Eq:DecPP}
		\hat{\Ten{P}}_{\ell,L}
		\,
		\hat{\Ten{P}}_{\ell,L}^\MT
		=
		2^{ - \Par{L-\ell} }
		\;
		\hat{\Core{A}}_\flat
		\RP
		\hat{\Core{U}}_\flat^{\RP \ell}
		\RP
		\hat{\Core{X}}_\flat^ {\RP \Par{L-\ell}}
		\RP
		\hat{\Core{P}}_{\, \flat}
		\, ,
	\end{equation}
where the factors are
\begin{equation}\label{Eq:DefCoresBemolle1}
	\hat{\Core{A}}_\flat
	=
	\hat{\Core{A}} \MP \hat{\Core{A}}
	\, ,
	\quad
	\hat{\Core{U}}_\flat
	=
	\hat{\Core{U}} \MP \hat{\Core{U}}^\MT
	\, ,
	\quad
	\hat{\Core{X}}_\flat
	=
	\hat{\Core{X}} \MP \hat{\Core{X}}^\MT
	\, ,
	\quad
	\hat{\Core{P}}_{\, \flat}
	=
	\hat{\Core{P}} \MP \hat{\Core{P}}
	\, .
\end{equation}
We remark that the ranks of the decomposition~\eqref{Eq:DecPP}
are $4,\ldots,4$.

Applying the same argument to the product
$
		\hat{\Ten{M}}_{L \CQ \alpha}
		\QQ
		\Par{
			\hat{\Ten{P}}_{\ell,L}
			\,
			\hat{\Ten{P}}_{\ell,L}^\MT
		}
$,
the factors $\hat{\Ten{M}}_{L \CQ \alpha}$ and $\hat{\Ten{P}}_{\ell,L} \, \hat{\Ten{P}}_{\ell,L}^\MT$ being taken
in the form of~\eqref{Eq:DecM} and~\eqref{Eq:DecPP},
we could obtain its explicit decomposition with
ranks $2^3,\ldots,2^3$.
Instead, we multiply
$\hat{\Ten{M}}_{L \CQ \alpha} \, \hat{\Ten{P}}_{\ell,L}$ and $\hat{\Ten{P}}_{\ell,L}^\MT$
using the representations~\eqref{Eq:DecMPbequadro} and~\eqref{Eq:DecP}
to form a representation of the same product
$
		\hat{\Ten{M}}_{L \CQ \alpha}
		\,
		\hat{\Ten{P}}_{\ell,L}
		\,
		\hat{\Ten{P}}_{\ell,L}^\MT
$.
This representation has the ranks $2^2,\ldots,2^2,2^{2-\alpha},\ldots,2^{2-\alpha}$,
which means that the ranks of unfolding matrices
$1,\ldots,\ell-1$ and $\ell,\ldots,L-\alpha$ are bounded by $4$ and $2^{2-\alpha}$ respectively.
As we discuss in Section~\ref{Sc:tensorstructureDd} below,
this reduction is substantial in the case of multiple dimensions,
when the exponents
($2$ or $2-\alpha$ instead of $3$)
that correspond to the dimensions
are summed.

Specifically, combining~\eqref{Eq:DecMPbequadro}
and~\eqref{Eq:DecP},
we arrive at
	\begin{equation}\label{Eq:DecBemolle}
		\hat{\Ten{M}}_{L \CQ \alpha}
		\,
		\hat{\Ten{P}}_{\ell,L}
		\,
		\hat{\Ten{P}}_{\ell,L}^\MT
		=
		2^{ \Par{\alpha+\frac12} L - \Par{L-\ell} }
		\;
		\hat{\Core{A}}_\flat
		\RP
		\hat{\Core{U}}_\flat^{\RP \ell}
		\RP
		\hat{\Core{W}}_\alpha
		\RP
		\hat{\Core{Z}}_\alpha^ {\RP \Par{L-\ell}}
		\RP
		\hat{\Core{K}}_\alpha
		\, ,
	\end{equation}
where
\begin{equation}\label{Eq:DefCoresBemolle2}
	\hat{\Core{W}}_\alpha
	=
	\hat{\Core{T}}_{\alpha}
	\MP
	\hat{\Core{I}}
	\, ,
	\quad
	\hat{\Core{Z}}_\alpha
	=
	\hat{\Core{Y}}_{\alpha}
	\MP
	\hat{\Core{X}}^\MT
	\, ,
	\quad
	\hat{\Core{K}}_\alpha
	=
	\hat{\Core{N}}_\alpha
	\MP
	\hat{\Core{P}}
	\quad\text{with}\quad
	\alpha=0,1
	\, .
\end{equation}
The decomposition~\eqref{Eq:DecBemolle} is exact and explicit,
the latter meaning that all the cores involved are
provided in closed form.
Since $\hat{\Core{U}}_\flat$ and $\hat{\Core{Y}}_{\alpha}$
are of ranks $2^2 \times 2^2$ and $2^{2-\alpha} \times 2^{2-\alpha}$
respectively,
the ranks of the decomposition~\eqref{Eq:DecBemolle} are
$2^2,\ldots,2^2,2^{2-\alpha},\ldots,2^{2-\alpha}$.

Direct calculation with expressions given in~\eqref{Eq:DefCoresZZZ0}--\eqref{Eq:DefCoresZZZ1}
leads to
$
		\hat{\Core{A}}_\flat
		=
		\begin{bmatrix}
			1 & 0 & 0 & 0 \\
		\end{bmatrix}
$,
	\begin{equation}\label{Eq:DefCoresBemolleExpl}
		\hat{\Core{P}}_{\, \flat}
		=
		\begin{bmatrix}
			1	\\
			0	\\
			0	\\
			0	\\
		\end{bmatrix}
		\, ,
		\quad
		\hat{\Core{W}}_0
		=
		\begin{bmatrix*}[r]
			1 & 0 &  1 &  0 \\
			0 & 1 &  0 &  1 \\
			1 & 0 & -1 &  0 \\
			0 & 1 &  0 & -1 \\
		\end{bmatrix*}
		\quad\text{and}\quad
		\hat{\Core{W}}_{ 1}
		=
		\begin{bmatrix*}[r]
			 1 &  0 \\
			 0 &  1 \\
			-1 &  0 \\
			 0 & -1 \\
		\end{bmatrix*}
		\, .
	\end{equation}
Explicit expression for $\hat{\Core{U}}_\flat$,
$\hat{\Core{X}}_\flat$
and
$\hat{\Core{Z}}_\alpha$, $\hat{\Core{K}}_\alpha$
with $\alpha=0,1$
can be likewise calculated
based on~\eqref{Eq:DefCoresZZZ0} and~\eqref{Eq:DefCoresZZZ1},
from which we refrain to keep exposition concise.

\subsection{Analysis in $D$ dimensions by tensorization}\label{Sc:tensorstructureDd}

In this section, we generalize
the results of
Sections~\ref{Sc:tensorstructure1dprec}
to the case of multiple dimensions
and analyze the
low-rank tensor structure
of the preconditioner $\Ten{C}_{L}$,
given by~\eqref{Cdef},
and of the preconditioned discrete differential operator
$\Ten{B}_{L}$ in the form of~\eqref{Eq:PrecOpDec}.
For the latter, we first derive a representation of
the matrices $\Ten{Q}_{L \CQ \alpha}$
with $L\in\N$ and $\alpha\in\Set{0,1}^D$,
which are defined in~\eqref{Eq:DefQ}.

The representations derived below are composed from
the following cores:
\begin{equation}\label{Eq:DefCoresBemolleD}
	\begin{gathered}
		\Core{A}_\flat
		=
		\hat{\Core{A}}_\flat^{\QQ \otimes \QQ D}
		, \quad
		\Core{U}_\flat
		=
		\hat{\Core{U}}_\flat^{\QQ \otimes \QQ D}
		, \quad
		\Core{X}_\flat
		=
		\hat{\Core{X}}_\flat^{\QQ \otimes \QQ D}
		, \quad
		\Core{P}_{\, \flat}
		=
		\hat{\Core{P}}_{\, \flat}^{\QQ \otimes \QQ D}
		,
		\\
		\Core{W}_\alpha
		=
		\KProd_{k=1}^D
		\hat{\Core{W}}_{\alpha_k}
		, \quad
		\Core{Z}_\alpha
		=
		\KProd_{k=1}^D
		\hat{\Core{Z}}_{\alpha_k}
		, \quad
		\Core{K}_\alpha
		=
		\KProd_{k=1}^D
		\hat{\Core{K}}_{\alpha_k}
	\end{gathered}
\end{equation}
for all
$\alpha\in\Set{0,1}^D$,
where the factors are given by~\eqref{Eq:DefCoresBemolle1} and~\eqref{Eq:DefCoresBemolle2}.

Tensorizing~\eqref{Eq:DecPP} core-wise and distributing the scaling factor
over the cores, we obtain the decompositions
	\begin{equation}\label{Eq:DecPPD}
		2^{-\ell}
		\QQ
		\Ten{\varPi}_{L}
		\QQ
		{\Ten{P}}_{\ell,L}
		\QQ
		{\Ten{P}}_{\ell,L}^\MT
		\,
		\Ten{\varPi}_{L}^\MT
		\begin{aligned}[t]
			&=
			2^{ -\ell - D\Par{L-\ell}}
			\,
			\Core{A}_\flat
			\RP
			\Core{U}_\flat^{\RP \ell}
			\RP
			\Core{X}_\flat^{\RP \Par{L-\ell}}
			\RP
			\Core{P}_{\, \flat}
			\\
			&=
			2^{ -\ell}
			\,
			\Core{A}_\flat
			\RP
			\Core{U}_\flat^{\RP \ell}
			\RP
			\Par{
				2^{-D}
				\,
				\Core{X}_\flat
			}^{\RP \Par{L-\ell}}
			\RP
			\Core{P}_{\, \flat}
		\end{aligned}
	\end{equation}
of ranks $2^{2D},\ldots,2^{2D}$,
where the cores are given by~\eqref{Eq:DefCoresBemolleD}
and the permutation matrix $\Ten{\varPi}_{L}$
is as defined in~\eqref{Eq:IndTranspMatrix}.
Applying~\cite[Lemma~5.5]{KKh:2012:QTT} to the sum of such matrices
with $\ell=1,\ldots,L$ and adding the term corresponding to $\ell=0$,
we obtain the following result.
\begin{theorem}\label{Th:DecC}
	For any $L\in\N$, the matrix $\Ten{C}_{L}$,
	defined by~\eqref{Cdef}, admits
	the decomposition
	\begin{equation}\label{Eq:DecC}
		\Ten{\varPi}_{L}
		\QQ
		\Ten{C}_{L}
		\Ten{\varPi}_{L}^\MT
		=
		\begin{bmatrix}
			\Core{A}_\flat
			&
			\Core{A}_\flat
		\end{bmatrix}
		\RP
		\Core{C}_1
		\RP \cdots \RP
		\Core{C}_L
		\RP
		\begin{bmatrix}
			\\
			\Core{P}_{\, \flat}
		\end{bmatrix}
	\end{equation}
	of ranks $2^{2D}+2^{2D},\ldots,2^{2D}+2^{2D}$,
	all equal to $2^{2D+1}$,
	where the middle cores are
	\begin{equation}\nonumber
		\Core{C}_{\ell}
		=
		\begin{bmatrix*}[r]
			\Core{U}_\flat
			&
			2^{-\ell}
			\,
			\Core{U}_\flat
			\\
			&
			2^{-D}
			\,
			\Core{X}_\flat
		\end{bmatrix*}
		\quad\text{with}\quad
		\ell=1,\ldots,L
		\, ,
	\end{equation}
	the subcores being as in~\eqref{Eq:DefCoresBemolleD}.
\end{theorem}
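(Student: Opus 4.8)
The plan is to assemble the explicit rank-$2^{2D}$ representations of the individual summands of $\Ten{C}_L$ recorded in \eqref{Eq:DecPPD} into a single tensor train by a block-upper-triangular ``staircase'' construction, which is precisely the mechanism of \cite[Lemma~5.5]{KKh:2012:QTT}.

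First I would note that, by \eqref{Eq:DecPPD} applied for each $\ell=0,\ldots,L$ (with the convention that $\Core{U}_\flat^{\RP 0}$ and $(2^{-D}\Core{X}_\flat)^{\RP 0}$ denote empty strong Kronecker products),
\[
 2^{-\ell}\,\Ten{\varPi}_L\,\Ten{P}_{\ell,L}\,\Ten{P}_{\ell,L}^\MT\,\Ten{\varPi}_L^\MT
 = 2^{-\ell}\,\Core{A}_\flat \RP \Core{U}_\flat^{\RP\ell} \RP \bigl(2^{-D}\Core{X}_\flat\bigr)^{\RP(L-\ell)} \RP \Core{P}_{\,\flat}.
\]
Summing this over $\ell$ and inserting the definition \eqref{Cdef} of $\Ten{C}_L$ gives
\[
 \Ten{\varPi}_L\,\Ten{C}_L\,\Ten{\varPi}_L^\MT
 = \sum_{\ell=0}^L 2^{-\ell}\,\Core{A}_\flat \RP \Core{U}_\flat^{\RP\ell} \RP \bigl(2^{-D}\Core{X}_\flat\bigr)^{\RP(L-\ell)} \RP \Core{P}_{\,\flat},
\]
so it remains only to recognize the right-hand side of \eqref{Eq:DecC} as this sum.

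Next I would expand the strong Kronecker product on the right of \eqref{Eq:DecC} slice-wise: each slice is a matrix product of the $2\times 2$ block matrices $\begin{bmatrix}\Core{U}_\flat & 2^{-\ell}\Core{U}_\flat \\ & 2^{-D}\Core{X}_\flat\end{bmatrix}$ bracketed by the boundary blocks. Since these block matrices are upper triangular, the product decomposes as a sum over ``staircase paths'': the path that stays in the first block-row for positions $1,\ldots,\ell-1$, uses the $(1,2)$ entry $2^{-\ell}\Core{U}_\flat$ at position $\ell$, and stays in the second block-row for positions $\ell+1,\ldots,L$ contributes $\Core{A}_\flat \RP \Core{U}_\flat^{\RP(\ell-1)} \RP (2^{-\ell}\Core{U}_\flat) \RP (2^{-D}\Core{X}_\flat)^{\RP(L-\ell)} \RP \Core{P}_{\,\flat}$, i.e.\ exactly the $\ell$-th term for $\ell=1,\ldots,L$; the one remaining admissible path — staying in the second block-row throughout — contributes the $\ell=0$ term. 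The left boundary $\begin{bmatrix}\Core{A}_\flat & \Core{A}_\flat\end{bmatrix}$ feeds both block-rows on the left, and the right boundary $\begin{bmatrix}\\ \Core{P}_{\,\flat}\end{bmatrix}$ absorbs only the second block-row, so a path that stays in the first block-row forever contributes nothing. Summing the contributions yields $\Ten{\varPi}_L\Ten{C}_L\Ten{\varPi}_L^\MT$ equal to the right-hand side of \eqref{Eq:DecC}, as claimed.

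Finally I would read off the ranks: by \eqref{Eq:DefCoresBemolleD} the cores $\Core{U}_\flat$ and $\Core{X}_\flat$ are $D$-fold Kronecker products of the rank-$4$ cores $\hat{\Core{U}}_\flat$, $\hat{\Core{X}}_\flat$ and hence have rank $2^{2D}\times 2^{2D}$, so $\Core{C}_\ell$ has rank $(2^{2D}+2^{2D})\times(2^{2D}+2^{2D})=2^{2D+1}\times 2^{2D+1}$, while the two boundary cores collapse the outer ranks to $1$. The only delicate points are the edge cases $\ell\in\{0,L\}$ (where one of the iterated strong Kronecker products is empty) and checking that the scalars $2^{-\ell}$ sit in the $(1,2)$ entries of $\Core{C}_\ell$ precisely so that a path switching at position $\ell$ picks up the factor $2^{-\ell}$ and no further power of $2$ beyond those already carried by $\Core{U}_\flat$ and $2^{-D}\Core{X}_\flat$; this bookkeeping is exactly the content of \cite[Lemma~5.5]{KKh:2012:QTT}, which the argument may simply invoke for the sum over $\ell=1,\ldots,L$, after which the $\ell=0$ term accounts for the second $\Core{A}_\flat$ in the left boundary core.
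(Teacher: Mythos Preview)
Your proposal is correct and follows essentially the same approach as the paper: both start from the rank-$2^{2D}$ representations \eqref{Eq:DecPPD} of the individual summands, invoke \cite[Lemma~5.5]{KKh:2012:QTT} to assemble the sum over $\ell=1,\ldots,L$ into the block-upper-triangular ``staircase'' TT form, and then observe that the $\ell=0$ term is accounted for by the second $\Core{A}_\flat$ in the left boundary core. Your expansion of the staircase mechanism in terms of paths through the block rows simply spells out what that lemma does.
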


For any $L\in\N$, $\ell=0,1\ldots,L$ and $\alpha\in\Set{0,1}^D$,
tensorizing~\eqref{Eq:DecBemolle} core-wise
and distributing the scaling factor
over the cores
results in the decompositions
\begin{multline}\label{Eq:DecBemolleD}
	2^{-\ell}
	\,
	\Ten{\widetilde{\varPi}}_{L \CQ \alpha}
	\,
	\Ten{M}_{L \CQ \alpha}
	\,
	\Ten{P}_{\ell,L}
	\,
	\Ten{P}_{\ell,L}^\MT
	\,
	\Ten{\varPi}_{L}^\MT
	\\
	\begin{aligned}[t]
		&=
		2^{ -\ell + \Par{\IndNorm{\alpha}+\frac12 D} L - D\Par{L-\ell} }
		\;
		\Core{A}_\flat
		\RP
		\Core{U}_\flat^{\RP \ell}
		\RP
		\Core{W}_{\!\QQ \alpha}
		\RP
		\Core{Z}_\alpha^ {\RP \Par{L-\ell}}
		\RP
		\Core{K}_\alpha
		\\
		&=
		2^{ -\Par{1-\IndNorm{\alpha}} \QQ \ell }
		\;
		\Core{A}_\flat
		\RP
		\Par{
			2^{\frac12 D}
			\,
			\Core{U}_\flat
		}^{\RP \ell}
		\RP
		\Core{W}_{\!\QQ \alpha}
		\RP
		\Par{
			2^{\IndNorm{\alpha}-\frac12 D}
			\,
			\Core{Z}_\alpha
		}^{\RP \Par{L-\ell}}
		\RP
		\Core{K}_\alpha
	\end{aligned}
\end{multline}
of ranks
$2^{2D},\ldots,2^{2D},2^{2D-\IndNorm{\alpha}},\ldots,2^{2D-\IndNorm{\alpha}}$,
where
$\Ten{M}_{L \CQ \alpha}$
and
$\Ten{P}_{\ell \CQ L}$
are given by~\eqref{Eq:DefMD} and~\eqref{Eq:DefPD},
the cores are given by~\eqref{Eq:DefCoresBemolleD}
and the permutation matrices $\Ten{\varPi}_{L}$ and
$\Ten{\widetilde{\varPi}}_{L \CQ \alpha}$
are as defined in~\eqref{Eq:IndTranspMatrix} and~\eqref{Eq:IndTranspMatrixTilde}.

Similarly as for $\Ten{C}_{L}$ above,
we can apply~\cite[Lemma~5.5]{KKh:2012:QTT} to the sum of the matrices
given by~\eqref{Eq:DecBemolleD}
with $\ell=1,\ldots,L$ and add the term corresponding to $\ell=0$.
This leads to the following result, which is analogous to Theorem~\ref{Th:DecC}.
\begin{theorem}\label{Th:DecQ}
	For any $L\in\N$ and $\alpha\in\Set{0,1}^D$,
	the matrix $\Ten{Q}_{L \CQ \alpha}$,
	given by~\eqref{Eq:DefQ}, admits
	the decomposition
	\begin{equation}\label{Eq:DecQ}
			\Ten{\widetilde{\varPi}}_{L \CQ \alpha}
			\QQ
			\Ten{Q}_{\ell \CQ L \CQ \alpha}
			\QQ
			\Ten{\varPi}_{L}^\MT
			=
		\begin{bmatrix*}[r]
			\Core{A}_\flat
			&
			\Core{A}_\flat
			\RP
			\Core{W}_\alpha
		\end{bmatrix*}
		\RP
		\Core{Q}_1
		\RP \cdots \RP
		\Core{Q}_L
		\RP
		\begin{bmatrix}
			\\
			\Core{K}_\alpha
		\end{bmatrix}
	\end{equation}
	of ranks $2^{2D}+2^{2D-\IndNorm{\alpha}},\ldots,2^{2D}+2^{2D-\IndNorm{\alpha}}$,
	all bounded from above by $2^{2D+1}$,
	where the middle cores are
	\begin{equation}\nonumber
		\Core{Q}_{\ell}
		=
		\begin{bmatrix*}[r]
			\Core{U}_\flat
			&
			2^{-\Par{1-\IndNorm{\alpha}} \QQ \ell}
			\,
			\Core{U}_\flat
			\RP
			\Core{W}_\alpha
			\\
			&
			2^{\IndNorm{\alpha}-\frac12 D}
			\,
			\Core{Z}_\alpha
		\end{bmatrix*}
		\quad\text{with}\quad
		\ell=1,\ldots,L
		\, ,
	\end{equation}
	the subcores being defined by~\eqref{Eq:DefCoresBemolleD}.
\end{theorem}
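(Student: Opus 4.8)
The plan is to follow the proof of Theorem~\ref{Th:DecC} in structure: expand $\Ten{Q}_{L \CQ \alpha}$ into a sum of level contributions, each of which is already in explicit \emph{staircase} strong-Kronecker form by~\eqref{Eq:DecBemolleD}, and then collapse the whole sum into a single tensor train by the block-bidiagonal bundling of \cite[Lemma~5.5]{KKh:2012:QTT}. By~\eqref{Eq:DefQ} and~\eqref{Cdef}, $\Ten{Q}_{L \CQ \alpha}=\Ten{M}_{L \CQ \alpha}\Ten{C}_L=\sum_{\ell=0}^{L}2^{-\ell}\,\Ten{M}_{L \CQ \alpha}\Ten{P}_{\ell,L}\Ten{P}_{\ell,L}^{\MT}$, so conjugating with the permutation matrices and inserting the decomposition~\eqref{Eq:DecBemolleD} for each $\ell$ expresses $\Ten{\widetilde{\varPi}}_{L \CQ \alpha}\Ten{Q}_{L \CQ \alpha}\Ten{\varPi}_{L}^{\MT}$ as a sum of $L+1$ strong-Kronecker products: for $\ell\geq 1$ the factor pattern is $\Core{A}_\flat$, then $\ell$ copies of $\Core{U}_\flat$, the transition core $\Core{W}_\alpha$, then $L-\ell$ copies of $\Core{Z}_\alpha$, then $\Core{K}_\alpha$, carrying a scalar that is an explicit power of $2$ geometric in $\ell$; the $\ell=0$ term has the same shape with the $\Core{U}_\flat$-string empty.

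This is exactly the situation handled by \cite[Lemma~5.5]{KKh:2012:QTT}: reading the $L$ tensor-bearing positions from left to right, the $\ell$-th summand stays in the $\Core{U}_\flat$ branch on the first $\ell$ positions and in the $\Core{Z}_\alpha$ branch afterwards, with a single crossover. The one new feature compared with $\Ten{C}_L$ is the mode-size-$1\times 1$ transition core $\Core{W}_\alpha$: for $\ell\geq 1$ it is absorbed into the last $\Core{U}_\flat$ of the summand, which is why the off-diagonal block of the bundled core $\Core{Q}_\ell$ is proportional to $\Core{U}_\flat\RP\Core{W}_\alpha$, while for $\ell=0$ it is absorbed into the left boundary, producing the entry $\Core{A}_\flat\RP\Core{W}_\alpha$ of the boundary core in~\eqref{Eq:DecQ}. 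The geometric prefactors --- the $2^{-\ell}$ from $\Ten{C}_L$ together with the two $\ell$-dependent powers coming from distributing the prefactor of~\eqref{Eq:DecBemolleD} over the $\Core{U}_\flat$- and $\Core{Z}_\alpha$-strings --- are apportioned over the bundled cores so that the diagonal blocks are level-independent ($\Core{U}_\flat$ upper-left, $2^{\IndNorm{\alpha}-\frac12 D}\Core{Z}_\alpha$ lower-right) and only the residual factor $2^{-(1-\IndNorm{\alpha})\ell}$ survives on the off-diagonal block; this is the same apportionment as in $\Core{C}_\ell$ of Theorem~\ref{Th:DecC}, now with the extra $\Core{W}_\alpha$ factor. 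Carrying it out yields~\eqref{Eq:DecQ} with the stated cores.

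Finally, the rank claim follows from the Kronecker structure~\eqref{Eq:DefCoresBemolleD}: $\Core{U}_\flat=\hat{\Core{U}}_\flat^{\otimes D}$ has rank $2^{2D}\times 2^{2D}$ (each $\hat{\Core{U}}_\flat=\hat{\Core{U}}\MP\hat{\Core{U}}^{\MT}$ has rank $2^2\times 2^2$), and $\Core{Z}_\alpha=\bigotimes_{k}\hat{\Core{Z}}_{\alpha_k}$ has rank $2^{2D-\IndNorm{\alpha}}\times 2^{2D-\IndNorm{\alpha}}$ (each $\hat{\Core{Z}}_{\alpha_k}=\hat{\Core{Y}}_{\alpha_k}\MP\hat{\Core{X}}^{\MT}$ has rank $2^{2-\alpha_k}\times 2^{2-\alpha_k}$); since $\Core{Q}_\ell$ is block upper triangular with these two blocks on the diagonal, its rank equals $2^{2D}+2^{2D-\IndNorm{\alpha}}\leq 2^{2D+1}$. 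I expect the main obstacle to be the bookkeeping in the bundling step: one must check that the three geometric prefactors combine so that the diagonal blocks come out level-independent and that the transition core $\Core{W}_\alpha$ fits into the crossover consistently for \emph{all} $\ell$, including the boundary cases $\ell=0$ and $\ell=L$; once this is verified, the rest reduces to the routine $2\times 2$-block telescoping of \cite[Lemma~5.5]{KKh:2012:QTT} together with the Kronecker rank count above.
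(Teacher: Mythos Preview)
Your proposal is correct and follows essentially the same route as the paper: the text immediately preceding Theorem~\ref{Th:DecQ} states that the result is obtained by applying \cite[Lemma~5.5]{KKh:2012:QTT} to the sum of the terms~\eqref{Eq:DecBemolleD} for $\ell=1,\ldots,L$ and then adding the $\ell=0$ term, exactly as you describe. Your additional remarks on how the transition core $\Core{W}_\alpha$ is absorbed (into the crossover block for $\ell\geq 1$ and into the left boundary core for $\ell=0$) and on the Kronecker rank count spell out details that the paper leaves implicit.
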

\begin{subequations}
	In Example~\ref{Ex:LambdaLaplace},
	the case of the Laplace operator was considered
	and
	the factors
	$\Ten{\varLambda}_{L \CQ \alpha \alpha'}$
	with $\Tuple{\alpha,\alpha'} \in \mathcal{D}$
	for the suitable $\mathcal{D}$ were
	explicitly given in the Kronecker product
	form~\eqref{Eq:LambdaLaplaceKronProd}.
	That form immediately
	leads to a multilevel TT decomposition of ranks $1,\ldots,1$
	for each $\Ten{\varLambda}_{L \CQ \alpha \alpha'}$.
	Here, we analyze the structure of
	$\Ten{\varLambda}_{L \CQ \alpha \alpha'}$
	with $\Tuple{\alpha,\alpha'} \in \mathcal{D}$
	in the general setting of
	Section~\ref{Sc:FactDiffOp},
	for an arbitrary
	$\mathcal{D} \subset \Set{0,1}^D \times \Set{0,1}^D$
	of differential indices,
	under the additional assumption that
	the coefficient functions~\eqref{Eq:DefBLFc} exhibit low-rank structure.

	Specifically, for each $\Tuple{\alpha,\alpha'} \in \mathcal{D}$,
	we assume that
	the coefficient vector
	$
		\Vec{c}_{L \CQ \alpha \alpha'}
		\in
		\R^{\cJ_L \times \varGamma_{\! \alpha \alpha'}}
		\simeq
		\R^{2^{DL} R_{\alpha \alpha'}}
	$
	parametrizing the
	coefficient function $c_{\alpha \alpha'}$ through~\eqref{Eq:DefBLFc}
	is given in a multilevel TT representation of ranks
	$r_{0 \CQ \alpha \alpha'},\ldots,r_{L \CQ \alpha \alpha'}$:
	\begin{equation}\label{Eq:DecBLFc}
		\Ten{\widetilde{\varPi}}_{L \CQ \alpha}
		\,
		\Vec{c}_{L \CQ \alpha \alpha'}
		=
		C_{L \CQ 0 \CQ \alpha \CQ \alpha'}
		\RP
		C_{L \CQ 1 \CQ \alpha \CQ \alpha'}
		\RP \cdots \RP
		C_{L \CQ L \CQ \alpha \CQ \alpha'}
		\RP
		C_{L \CQ L+1 \CQ \alpha \CQ \alpha'}
		\, ,
	\end{equation}
	where each of
	$C_{L \CQ 1 \CQ \alpha \CQ \alpha'},\ldots,C_{L \CQ L \CQ \alpha \CQ \alpha'}$
	is of mode size $2^D$, whereas
	$C_{L \CQ 0 \CQ \alpha \CQ \alpha'}$
	is of mode size $1$
	and
	$C_{L \CQ L+1 \CQ \alpha \CQ \alpha'}$
	is of mode size $R_{\alpha \alpha'}=\Card{\varGamma_{\! \alpha \alpha'}}$.
	Then the corresponding factor $\Ten{\varLambda}_{L \CQ \alpha \alpha'}$,
	given by~\eqref{Eq:OpDecTermwiseLambda}, can as well be represented
	with ranks $r_{0 \CQ \alpha \alpha'},\ldots,r_{L \CQ \alpha \alpha'}$:
	\begin{equation}\label{Eq:DecTermwiseLambda}
		\Ten{\widetilde{\varPi}}_{L \CQ \alpha}
		\,
		\Ten{\varLambda}_{L \CQ \alpha \CQ \alpha'}
		\QQ
		\Ten{\widetilde{\varPi}}_{L \CQ \alpha}^\MT
		=
		\varLambda_{L \CQ 0 \CQ \alpha \CQ \alpha'}
		\RP
		\varLambda_{L \CQ 1 \CQ \alpha \CQ \alpha'}
		\RP \cdots \RP
		\varLambda_{L \CQ L \CQ \alpha \CQ \alpha'}
		\RP
		\varLambda_{L \CQ L+1 \CQ \alpha \CQ \alpha'}
		\, ,
	\end{equation}
	where the cores are defined in terms of those appearing in~\eqref{Eq:DecBLFc}
	as follows.
	First, one sets
	$
		\varLambda_{L \CQ 0 \CQ \alpha \CQ \alpha'}
		=
		C_{L \CQ 0 \CQ \alpha \CQ \alpha'}
	$
	and defines each core
	$\varLambda_{L \CQ \ell \CQ \alpha \CQ \alpha'}$
	with $\ell=1,\ldots,L$ by
	\begin{equation}\label{Eq:DecTermwiseLambdaMid}
		\Par{\varLambda_{L \CQ \ell \CQ \alpha \CQ \alpha'}}_{
			\gamma_{\ell-1} \; i_\ell \; i'_\ell \; \gamma_\ell
		}
		=
		2^{-D}
		\,
		\delta_{i_\ell \QQ i'_\ell}
		\,
		\Par{C_{L \CQ \ell \CQ \alpha \CQ \alpha'}}_{
			\gamma_{\ell-1} \; i_\ell \; \gamma_\ell
		}
	\end{equation}
	for all $\gamma_{\ell-1}=1,\ldots,r_{\ell-1 \CQ \alpha \alpha'}$,
	$\gamma_\ell=1,\ldots,r_{\ell \CQ \alpha \alpha'}$
	and
	$i_\ell,i'_\ell =1,2$.
	Then the last core should be defined by
	\begin{equation}\label{Eq:DecTermwiseLambdaLast}
		\Par{\varLambda_{L \CQ L+1 \CQ \alpha \CQ \alpha'}}_{
			\gamma_L \; \beta \; \beta'
		}
		=
		2^{-D}
		\!\!
		\sum_{\gamma\in\varGamma_{\! \alpha \alpha'}}
		\!\!
		\Par{C_{L \CQ L+1 \CQ \alpha \CQ \alpha'}}_{ \gamma_L \; \gamma }
		\!\!\!\!
		\int\displaylimits_{\IntOO{-1}{1}^D}
		\!\!\!\!\!\!
		\chi_{\alpha \alpha' \gamma}
		\,
		\Par{\partial^{\QQ \alpha} \psi_{\beta}}
		\,
		\Par{\partial^{\QQ \alpha' \!} \psi_{\beta' \!}}
	\end{equation}
	for all
	$\gamma_L = 1,\ldots,r_{L \CQ \alpha \alpha'}$,
	$\beta\in\Set{\alpha_1,1} \times\cdots\times \Set{\alpha_D,1}$
	and
	$\beta'\in\Set{\alpha'_1,1} \times\cdots\times \Set{\alpha'_D,1}$,
	cf.~\eqref{Eq:OpDecTermwiseLambda}.
\end{subequations}

Using the fact that the ranks add under addition and multiply
under multiplication~\cite{Oseledets:2011:TT}, we obtain the following result.

\begin{theorem}\label{Th:DecB}
	For $\mathcal{D} \subset \Set{0,1}^D \times \Set{0,1}^D$
	and $L\in\N$, consider a bilinear form
	of the type~\eqref{Eq:DefBLF2}--\eqref{Eq:DefBLFc},
	where each coefficient vector $\Vec{c}_{L \CQ \alpha \alpha'}$
	with $\Tuple{\alpha,\alpha'} \in \mathcal{D}$
	admits a multilevel TT decomposition of the form~\eqref{Eq:DecBLFc}
	with ranks
	$r_{0 \CQ \alpha \alpha'},\ldots,r_{L \CQ \alpha \alpha'}$
	not exceeding $r \in\N$.
	Then the preconditioned matrix $\Ten{B}_L$ of $a$,
	defined by~\eqref{Eq:DefBLFA}, \eqref{Cdef}
	and~\eqref{precondsystem},
	admits a multilevel TT decomposition
	\begin{equation}\nonumber
		\Ten{\varPi}_{L}
		\,
		\Ten{B}_{L}
		\QQ
		\Ten{\varPi}_{L}^\MT
		=
		B_{L \CQ 0}
		\RP
		B_{L \CQ 1}
		\RP \cdots \RP
		B_{L \CQ L}
		\RP
		B_{L \CQ L+1}
	\end{equation}
	of ranks
	$R_0,\ldots,R_L$, where
	\begin{equation}\label{Eq:RankBoundB}
		R_\ell
		=
		2^{\QQ 4D}
		\!\!
		\sum_{\Tuple{\alpha,\alpha'} \in \mathcal{D}}
		\!\!
		\Par[1]{ \QQ 1+2^{-\IndNorm{\alpha}} \QQ }^2
		\:
		r_{\ell \CQ \alpha \alpha'}
		\leq
		2^{\QQ 4D + 2}
		\!\!
		\sum_{\Tuple{\alpha,\alpha'} \in \mathcal{D}}
		\!\!
		r_{\ell \CQ \alpha \alpha'}
		\leq
		12 \QQ D^{\QQ 2}
		\,
		2^{\QQ 4D}
		\,
		r
	\end{equation}
	for $\ell=0,\ldots,L$.
\end{theorem}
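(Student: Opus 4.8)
The plan is to build the TT decomposition of $\Ten{B}_L$ by multiplying together, core by core, the decompositions already established for the three factors appearing in~\eqref{Eq:PrecOpDec} for each $(\alpha,\alpha')\in\mathcal{D}$, and then summing over $\mathcal{D}$. Starting from $\Ten{B}_L=\sum_{(\alpha,\alpha')\in\mathcal{D}}\Ten{Q}_{L\CQ\alpha}^\MT\Ten{\varLambda}_{L\CQ\alpha\alpha'}\Ten{Q}_{L\CQ\alpha'}$, I conjugate by $\Ten{\varPi}_{L}$ and insert the identities $\widetilde{\Ten{\varPi}}_{L\CQ\alpha}^\MT\widetilde{\Ten{\varPi}}_{L\CQ\alpha}=\Ten{I}$ and $\widetilde{\Ten{\varPi}}_{L\CQ\alpha'}^\MT\widetilde{\Ten{\varPi}}_{L\CQ\alpha'}=\Ten{I}$ in each summand, so that
\begin{equation*}
 \Ten{\varPi}_{L}\Ten{B}_L\Ten{\varPi}_{L}^\MT=\sum_{(\alpha,\alpha')\in\mathcal{D}}\bigl(\Ten{\varPi}_{L}\Ten{Q}_{L\CQ\alpha}^\MT\widetilde{\Ten{\varPi}}_{L\CQ\alpha}^\MT\bigr)\bigl(\widetilde{\Ten{\varPi}}_{L\CQ\alpha}\Ten{\varLambda}_{L\CQ\alpha\alpha'}\widetilde{\Ten{\varPi}}_{L\CQ\alpha'}^\MT\bigr)\bigl(\widetilde{\Ten{\varPi}}_{L\CQ\alpha'}\Ten{Q}_{L\CQ\alpha'}\Ten{\varPi}_{L}^\MT\bigr).
\end{equation*}
The third factor is exactly the matrix decomposed in Theorem~\ref{Th:DecQ}, an $(L+2)$-core matrix TT representation with intermediate ranks $2^{2D}+2^{2D-\IndNorm{\alpha'}}$; the first factor equals $\bigl(\widetilde{\Ten{\varPi}}_{L\CQ\alpha}\Ten{Q}_{L\CQ\alpha}\Ten{\varPi}_{L}^\MT\bigr)^\MT$, the core-wise transpose of the same decomposition for the index $\alpha$, which by~\eqref{Eq:DefCoreT} has the same ranks $2^{2D}+2^{2D-\IndNorm{\alpha}}$; and the middle factor $\widetilde{\Ten{\varPi}}_{L\CQ\alpha}\Ten{\varLambda}_{L\CQ\alpha\alpha'}\widetilde{\Ten{\varPi}}_{L\CQ\alpha'}^\MT$ is given by~\eqref{Eq:DecTermwiseLambda} (read with the matching right-hand permutation $\widetilde{\Ten{\varPi}}_{L\CQ\alpha'}$, consistent with the column mode indices $\beta'\in\{\alpha'_1,1\}\times\cdots\times\{\alpha'_D,1\}$ of its last core), an $(L+2)$-core representation with intermediate ranks $r_{\ell\CQ\alpha\alpha'}$; the permutations flanking $\Ten{\varLambda}$ cancel against those produced by the two $\Ten{Q}$-factors.

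Since all three factors in a given summand share the core index range $\ell=0,\ldots,L+1$, the distributivity~\eqref{Eq:Distr-RP-MP} of the mode product over the strong Kronecker product lets me rewrite the product as the strong Kronecker product of the core-wise mode products $A_\ell\MP\varLambda_\ell\MP Q_\ell$, again a single $(L+2)$-core matrix TT representation. Because TT ranks multiply under matrix products~\cite{Oseledets:2011:TT}, the intermediate ranks of the $(\alpha,\alpha')$-summand are at most $(2^{2D}+2^{2D-\IndNorm{\alpha}})\,r_{\ell\CQ\alpha\alpha'}\,(2^{2D}+2^{2D-\IndNorm{\alpha'}})$; and since TT ranks add under matrix sums (realized by block-diagonal middle cores and block row/column boundary cores, precisely the shape of the boundary cores in Theorems~\ref{Th:DecC} and~\ref{Th:DecQ}), summing over $(\alpha,\alpha')\in\mathcal{D}$ yields a decomposition $\Ten{\varPi}_{L}\Ten{B}_L\Ten{\varPi}_{L}^\MT=B_{L\CQ0}\RP\cdots\RP B_{L\CQ L+1}$ with intermediate ranks bounded by $2^{4D}\sum_{(\alpha,\alpha')\in\mathcal{D}}(1+2^{-\IndNorm{\alpha}})(1+2^{-\IndNorm{\alpha'}})\,r_{\ell\CQ\alpha\alpha'}$. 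Pairing the contributions of $(\alpha,\alpha')$ and $(\alpha',\alpha)$ and applying $xy\le\tfrac{1}{2}(x^2+y^2)$ (using $r_{\ell\CQ\alpha\alpha'}=r_{\ell\CQ\alpha'\alpha}$, valid for a symmetric diffusion coefficient) recovers the first expression in~\eqref{Eq:RankBoundB}, while the coarse estimates $1+2^{-\IndNorm{\alpha}}\le2$, $r_{\ell\CQ\alpha\alpha'}\le r$, and $\Card{\mathcal{D}}\le D^2+1\le3D^2$ for the reaction--diffusion form~\eqref{Eq:DefBLF} give the remaining two inequalities.

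I do not expect a genuine analytic obstacle: the substance sits in Theorems~\ref{Th:DecC} and~\ref{Th:DecQ} and in~\eqref{Eq:DecTermwiseLambda}, and the present statement is essentially bookkeeping built on the multiplicativity and additivity of TT ranks. The one place requiring care is keeping the two families of permutations $\Ten{\varPi}_{L}$ and $\widetilde{\Ten{\varPi}}_{L\CQ\alpha}$ straight so that they cancel correctly between the $\Ten{Q}$- and $\Ten{\varLambda}$-factors, and making sure the common core index range $\ell=0,\ldots,L+1$ lines up across the three factors being multiplied; once this is arranged, the rank bound is immediate.
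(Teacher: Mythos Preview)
Your approach is correct and matches the paper's, which is essentially the one-line remark preceding the theorem: ``Using the fact that the ranks add under addition and multiply under multiplication~\cite{Oseledets:2011:TT}, we obtain the following result.'' You have simply spelled out in detail how this is carried out via~\eqref{Eq:PrecOpDec}, Theorem~\ref{Th:DecQ}, and~\eqref{Eq:DecTermwiseLambda}, which is exactly what the paper intends.

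One small remark: the natural rank count from multiplying the three factors gives $2^{4D}(1+2^{-\IndNorm{\alpha}})(1+2^{-\IndNorm{\alpha'}})\,r_{\ell\CQ\alpha\alpha'}$, not $(1+2^{-\IndNorm{\alpha}})^2$. For the specific bilinear form~\eqref{Eq:DefBLF} underlying the theorem (pure diffusion plus reaction), every $(\alpha,\alpha')\in\mathcal{D}$ satisfies $\IndNorm{\alpha}=\IndNorm{\alpha'}$, so the two expressions coincide and no further argument is needed. Your AM--GM pairing is only required if one reads the hypothesis $\mathcal{D}\subset\{0,1\}^D\times\{0,1\}^D$ more broadly; in that case your symmetry assumption $r_{\ell\CQ\alpha\alpha'}=r_{\ell\CQ\alpha'\alpha}$ does the job, but it is not something the paper itself invokes.
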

\begin{remark}[sharper bounds in specific cases]
			The last inequality of~\eqref{Eq:RankBoundB}
			is given for a general case with $D^2$ second-order terms
			(no symmetry is assumed),
			$D$ first-order terms and a zero-order term.
			However, for the Laplacian in the case $D=2$,
			the first equality given in~\eqref{Eq:RankBoundB} results in
			$R_\ell = 1152$, which is a marked reduction from the bound $R_\ell \leq 12288$ obtained for a general second-order bilinear form
			with constant coefficients.
\end{remark}

\begin{remark}[inexact application]
			In computations, algorithms using
			products of $\Ten{B}_L$ with vectors
			rather than explicit representations of $\Ten{B}_L$
			may be expected to be more efficient.
			Indeed, such products can be formed by adding the products of the terms
			in the sum~\eqref{Eq:PrecOpDec},
			and for each term the product can be computed
			by three multiplications.
			On the intermediate results obtained between these multiplications and additions, low-rank re-approximation
			can be performed, as explained further in the example of the discretized Laplacian in Section \ref{sec:stabillustr}.
			The given bounds for TT ranks appear to be highly pessimistic
			for such inexact schemes.
\end{remark}

\begin{remark}
	The analysis in $D$ dimensions is given here for the most generic
			discretization obtained by tensorization.
			The approach can be applied to discretizations that are not of tensor product form
			in order to mitigate the growth of the rank bounds
			with respect to $D$.
\end{remark}

\subsection{Numerical illustrations}\label{sec:stabillustr}

In summary, we obtain a combined tensor representation $\Rep{B}_L$ with
$\tau(\Rep{B}_L) = \Ten{\varPi}_{L} \QQ \Ten{B}_L \QQ \Ten{\varPi}_{L}^\MT = \Ten{\varPi}_{L} \QQ \Par{\Ten{C}_L\Ten{A}_L\Ten{C}_L} \QQ \Ten{\varPi}_{L}^\MT$. Similarly, from Theorem \ref{Th:DecC} we also have $\Rep{C}_L$ with $\tau(\Rep{C}_L) = \Ten{\varPi}_{L} \QQ \Ten{C}_L \QQ \Ten{\varPi}_{L}^\MT$. With a representation $\Rep{A}_L$ of the stiffness matrix $\Ten{A}_L$, such that $\tau(\Rep{A}_L) = \Ten{\varPi}_{L} \QQ \Ten{A}_L \QQ \Ten{\varPi}_{L}^\MT$, one can alternatively consider the simple product representation $\Rep{C}_L \MP \Rep{A}_L \MP \Rep{C}_L$, which corresponds to performing the action of the preconditioner $\Ten{C}_L$ separately from that of $\Ten{A}_L$.

Note that, in Section \ref{sec:opramp}, we have assumed decompositions consisting of $L$ cores. The decompositions in Theorems \ref{Th:DecC}, \ref{Th:DecQ}, and \ref{Th:DecB} comprise $L+2$ cores, with first and last playing special roles since they can be merged with the respective adjacent cores. The cores in these extended decompositions are thus indexed by $\ell=0,\ldots,L+1$ in what follows, so that again the bounds for $\ell=1,\ldots,L$ are relevant.

One benefit of the combined representation $\Rep{B}_L$ is the rank reduction compared to $\Rep{C}_L \MP \Rep{A}_L \MP \Rep{C}_L$. More importantly, however, the decomposition $\Rep{B}_L$ is constructed so that the representation condition numbers $\oprcond_\ell(\Rep{B}_L)$, $\ell=1,\ldots,L$, remain moderate even for large $L$. In contrast, the representation condition numbers of $\Rep{C}_L \MP \Rep{A}_L \MP \Rep{C}_L$ are in general of the same order of magnitude as those of $\Rep{A}_L$ -- in other words, whereas the \emph{matrix} condition number of $\Ten{C}_L \Ten{A}_L \Ten{C}_L$ is uniformly bounded, for improving also the \emph{representation} condition number, applying the preconditioner $\Ten{C}_L$ separately is insufficient and one instead needs a carefully constructed \emph{combined} representation $\Rep{B}_L$.

We now present numerical observations that illustrate how different
the decompositions
$\Rep{A}_L$, $\Rep{C}_L \MP \Rep{A}_L \MP \Rep{C}_L$ and $\Rep{B}_L$
are in terms of representation conditioning
and demonstrate the improvement afforded by our findings presented
in Sections \ref{sec:repcond}, \ref{Sc:tensorstructure1d}, and \ref{Sc:tensorstructureDd}.
As in Example~\ref{Ex:LambdaLaplace},
we consider the case of the Laplacian:
$\Ten{A}_L= \Ten{D}_L$ with $\Ten{D}_L$ as in \eqref{Eq:DL}.
Using \eqref{Eq:DefBlocksIJ}, for $D=1$ we have $\Ten{A}_L =  A_1 \RP \cdots \RP A_L$ with
	$A_1 = 4 \, \SqBr{ \, I \;\, J^\MT \;\, J \;\, I_2 \,}$,
	\begin{equation*}
			A_2 =\cdots =A_{L-1} =
	    4 \begin{bmatrix}
			 I &  J^\MT & J  & \\
	       &  J &  & \\
	       &   & J^\MT & \\
	       &  &  & I_2
	        \end{bmatrix},\quad
	   A_L = 4 \begin{bmatrix} 2I - J - J^\MT \QQ \\  -J \\ -J^\MT  \\ - I_2 \end{bmatrix},
	\end{equation*}
	as derived in \cite{KKh:2012:QTT}; similar representations can be obtained for $D>1$
	by tensorization.

We first consider the upper bounds $\beta_\ell$, defined in \eqref{betadef}, for $\opramp_\ell$ from Proposition \ref{matrepbound}. Since both $\norm{\Ten{A}_L^{-1}}$ and $\norm{\Ten{B}_L^{-1}}$ are bounded independently of $L$, by \eqref{oprcondbound}, up to fixed constants the respective $\beta_\ell$ are also upper bounds of the corresponding representation condition numbers $\oprcond_\ell$.

For $\Rep{B}_L$, instead of directly computing the estimates for
$\opramp_\ell(\Rep{B}_L)$ with $\ell=1,\ldots,L$
given by Proposition~\ref{matrepbound},
we will do this for the factors of a
decomposition that is equivalent to $\Rep{B}_L$
and is also based on~\eqref{Eq:PrecOpDec}.
Let us note that
the equality
\begin{equation}\label{Eq:DecBTheta}
	\Rep{B}_L = \sum_{k=1}^D \Rep{\Theta}_{L \CQ k}^\MT \MP \Rep{\Theta}_{L \CQ k}
\end{equation}
of decompositions
holds in terms of the factors
$\Rep{\Theta}_{L \CQ k}$
with $k=1,\ldots,D$
given as follows: for every $k$, we set
$
	\Rep{\Theta}_{L \CQ k}
	=
	\Rep{\Lambda}_{L \CQ k}^{1/2}
	\MP
	\Rep{Q}_{L \CQ \alpha}
$
with $\alpha = \Tuple{\delta_{k 1},\ldots,\delta_{k D}}$,
where $\Rep{\Lambda}_{L \CQ \alpha \CQ \alpha}^{1/2}$
is the decomposition of $\Ten{\Lambda}_{L \CQ \alpha \CQ \alpha}^{1/2}$,
which is diagonal
and of Kronecker product form~\eqref{Eq:LambdaLaplaceKronProd};
thus its decomposition with ranks $1,\ldots,1$ is obtained
by element-wise application of the square root to each core.
Equality~\eqref{Eq:DecBTheta}
results in the second of the following inequalities:
\begin{equation}\label{Eq:betaBndThetaB}
	\max_{\ell=1,\ldots,L} \oprcond_\ell (\Rep{B}_L)
	\lesssim
	\max_{\ell=1,\ldots,L} \opramp_\ell (\Rep{B}_L)
	\lesssim
	\max_{\ell=1,\ldots,L}[\beta_\ell( \Rep{\Theta}_{L,1} )]^2
	\, ,
\end{equation}
where
the equivalence is uniform with respect to $L\in\N$
and,
for each $L\in\N$,
$\beta_\ell$ with $\ell=1,\ldots,L$ are as defined in~\eqref{betadef}.
As well as in~\eqref{Eq:betaBndThetaB},
the alternate form~\eqref{Eq:DecBTheta} of $\Rep{B}_L$ is used to improve
the efficiency of residual approximation
in the numerical tests of Section \ref{sec:numexp}.

Figure \ref{fig:stabest}(a) shows the computed values of $\max_\ell \beta_\ell( \Rep{\Theta}_{L\CQ 1} )$ for different values of $L$ and $D=1,2$, where we observe $\max_\ell \beta_\ell( \Rep{\Theta}_{L\CQ 1} ) = \cO(L)$ in both cases, corresponding to
\[
 \max_{\ell=1,\ldots,L} \oprcond_\ell (\Rep{B}_L)\lesssim \max_{\ell=1,\ldots,L} \opramp_\ell (\Rep{B}_L) \lesssim \max_{\ell=1,\ldots,L} \beta_\ell (\Rep{B}_L) \lesssim L^2.
\]
In contrast, as shown in Figure \ref{fig:stabest}(b), both $\max_\ell \beta_\ell(\Rep{A}_L)$ and $\max_\ell \beta_\ell(\Rep{C}_L \MP \Rep{A}_L \MP \Rep{C}_L)$ increase exponentially with respect to $L$.

Although Proposition \ref{prop:lapddrepcond} shows that they can lead to useful qualitative statements, the upper bounds provided by $\beta_\ell$ cannot be expected to be quantitatively sharp.
The direct evaluation of the suprema in the definitions \eqref{Eq:oprconddef} is in general infeasible, but testing with concrete $\Rep{V} \in \TTset_L$ can provide some further insight.
For $D=1$, we use TT-SVD representations $\Rep{V}_1$, $\Rep{V}_\mathrm{min}$, $\Rep{V}_\mathrm{max}$ (of maximum ranks $1$, $2$, and $2$, respectively) of the vectors
\begin{gather*}
 \Vec{v}_1 =  \bigl( c_1 \bigr)_{k=1,\ldots,2^L}, \quad \Vec{v}_{\mathrm{min}} = \bigl(c_\mathrm{min} \sin(\textstyle\frac\pi2\displaystyle x_i) \bigr)_{i=1,\ldots,2^L}, \\
  \Vec{v}_{\mathrm{max}} =  \bigl( c_\mathrm{max} \sin(\textstyle\frac\pi2\displaystyle (1+2^{L+1})x_i) \bigr)_{i=1,\ldots,2^L},
 \end{gather*}
 with $x_i = 2^{-L}i$ and with constants $c_1$, $c_\mathrm{min}$, $c_\mathrm{max}$ chosen so that $\norm{\Vec{v}_1}_2 = \norm{\Vec{v}_{\mathrm{min}}}_2 = \norm{\Vec{v}_{\mathrm{max}}}_2 = 1$. By Proposition \ref{prop:rcond}(iii), $\rcond_\ell(\Rep{V}_1) = 1$ and $1\leq \rcond_\ell(\Rep{V}_\mathrm{min}) \leq \sqrt{2}$, $1\leq \rcond_\ell(\Rep{V}_\mathrm{max})\leq \sqrt{2}$. Consequently, as in the examples of Section \ref{sec:examples}, for each such a choice of $\Rep{V}$ and any representation of a matrix $\Rep{M}$, the absolute and relative errors incurred by the orthogonalization of $\Rep{M} \MP \Rep{V}$
 give an indication of the order of magnitude of $\ramp_\ell(\Rep{M} \MP \Rep{V})$ and $\rcond_\ell(\Rep{M} \MP \Rep{V})$.

The results are summarized in Tables~\ref{tab:absstabtest} and~\ref{tab:relstabtest}. We see that in all cases, the absolute and relative errors for $\Rep{B}_L$ are close to machine precision $\epsilon \approx 2.2\times 10^{-16}$, which is quantitatively better than indicated by the upper bounds in Figure \ref{fig:stabest}. For $\Rep{A}_L$ and $\Rep{C}_L \MP \Rep{A}_L \MP \Rep{C}_L$, we observe an amplification of relative errors that is exponential in $L$ (and in fact slightly worse for $\Rep{C}_L \MP \Rep{A}_L \MP \Rep{C}_L$). The absolute errors for $\Rep{C}_L$ are close to $\epsilon$, which is important for the evaluation of preconditioned right-hand sides; the corresponding relative errors increase with $L$ in the case of $\Rep{V}_\mathrm{max}$, which is to be expected since $\Ten{C}_L$ damps high-frequency oscillations.

\begin{figure}\centering
\begin{tabular}{cc}
	\includegraphics[width=6cm]{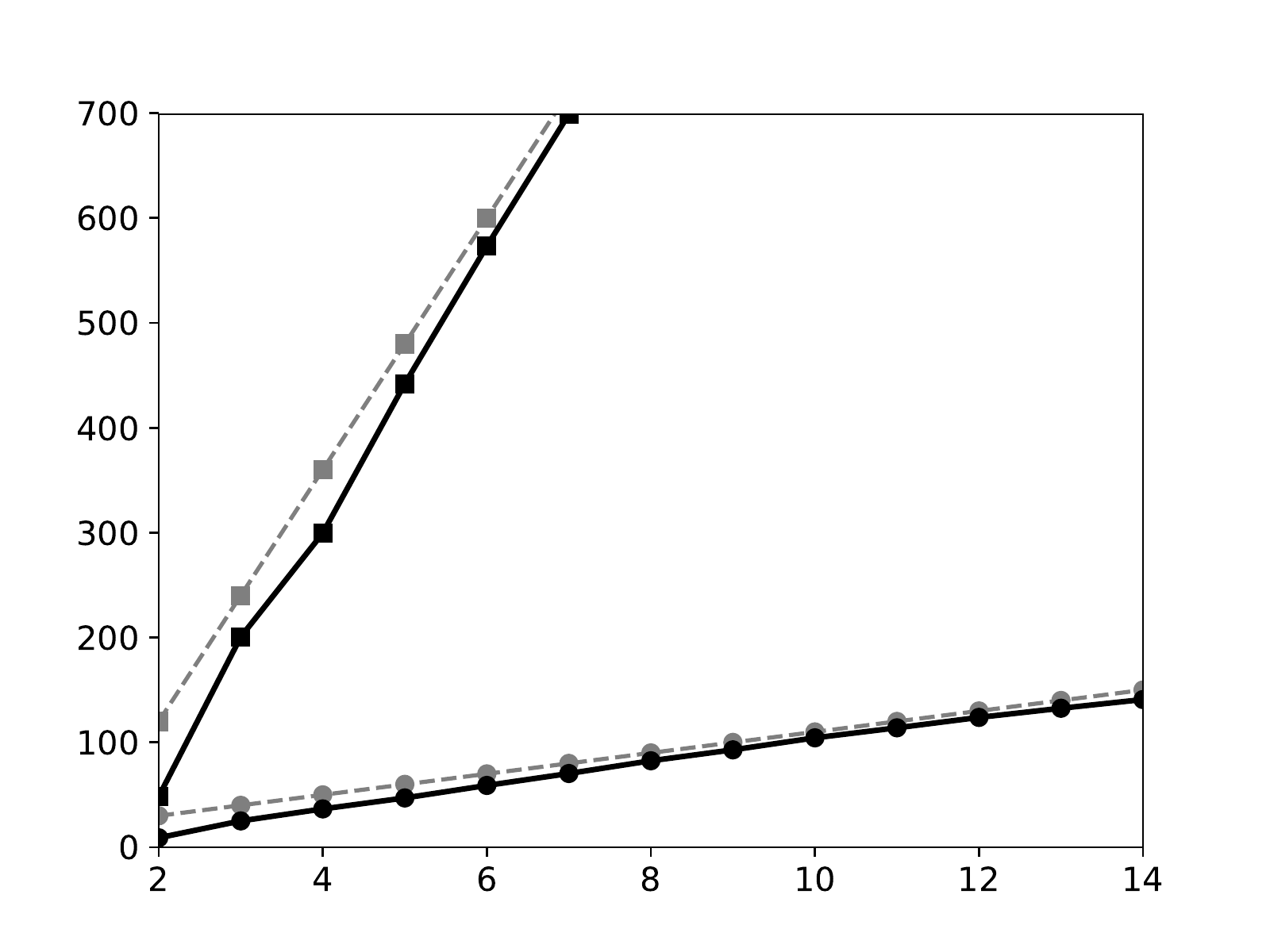} &
	\includegraphics[width=6cm]{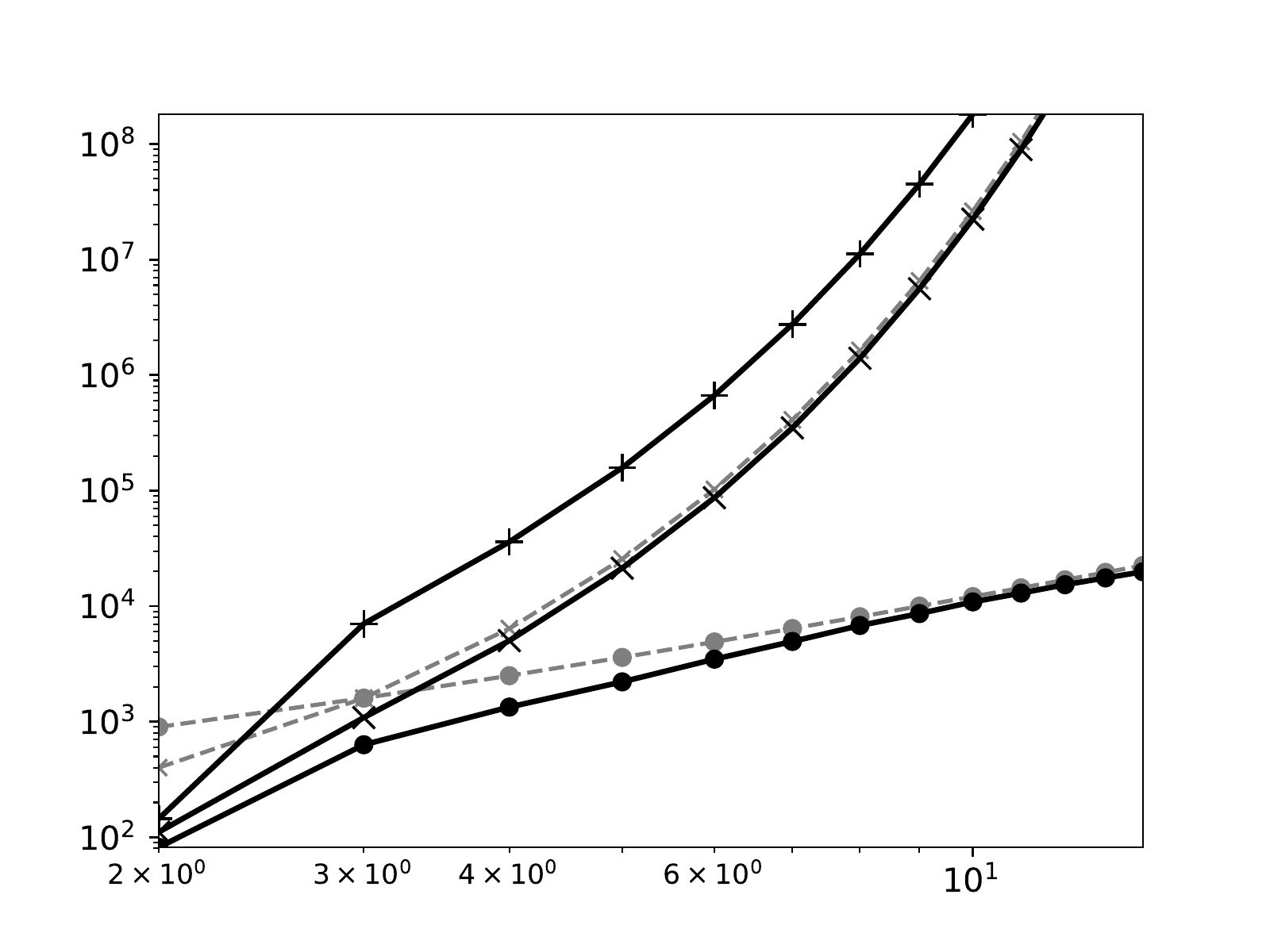} \\[-6pt]
	\footnotesize(a)  & \footnotesize(b)
\end{tabular}
\caption{Upper bounds $\max_{\ell=1,\ldots,L} \beta_\ell$ as in \eqref{betadef} for $\max_{\ell=1,\ldots,L} \opramp_\ell$ from Proposition \ref{matrepbound}, in dependence of $L$: (a) $\max_\ell \beta_\ell( \Rep{\Theta}_{L,1} )$ for $D=1$ (circles) and $D=2$ (squares), with dashed lines representing $10 (L+1)$ and $120 (L-1)$, respectively; (b) $\max_\ell [\beta_\ell( \Rep{\Theta}_{L,1} )]^2$ (circles), $\max_\ell \beta_\ell( \Rep{A}_L )$ (crosses), and $\max_\ell \beta_\ell( \Rep{C}_L \MP \Rep{A}_L \MP \Rep{C}_L )$ (plusses), for $D=1$, with dashed lines representing $(11 L)^2$ and $25\times 2^{2L}$, respectively.
The quantities $\max_\ell [\beta_\ell( \Rep{\Theta}_{L,1} )]^2$
bound
$\max_\ell [\beta_\ell( \Rep{B}_{L} )]$
up to a constant independent of $L$, see~\eqref{Eq:betaBndThetaB}.
}\label{fig:stabest}
\end{figure}

\begin{table}
\centering\small
	\begin{tabular}{ll|n{1}{2}|n{1}{2}|n{1}{2}}
 $\Rep{V}$	& $\Rep{M}$	 & \multicolumn{1}{c}{$L=20$}  & \multicolumn{1}{c}{$L=30$} & \multicolumn{1}{c}{$L=40$} \\ \hline
 $\Rep{V}_1$	&  $\Rep{B}_L$	 & 1.47e-14 & 2.08e-14 & 3.30e-14  \\
 	&   $\Rep{C}_L$  &  1.16e-15 & 2.05e-15 & 5.70e-15 \\
 	&	$\Rep{C}_L \MP \Rep{A}_L \MP \Rep{C}_L$ & 3.06e-04 & 2.65e+02 & 3.27e+08   \\
	&	$\Rep{A}_L$  &  2.66e-04 & 2.08e+02 &  2.13e+08 \\  \hline
	$\Rep{V}_\mathrm{min}$	&  $\Rep{B}_L$	 & 1.89e-14 & 3.78e-14 & 2.96e-14  \\
	&   $\Rep{C}_L$  &  2.69e-15 & 1.70e-15 & 2.20e-15 \\
 	&	$\Rep{C}_L \MP \Rep{A}_L \MP \Rep{C}_L$ & 4.58e-04 & 3.60e+02 & 5.23e+08   \\
	&	$\Rep{A}_L$  &  4.99e-04 & 5.96e+02 &  4.27e+08 \\ \hline
	$\Rep{V}_\mathrm{max}$	&  $\Rep{B}_L$	 & 1.31e-14 & 1.20e-14 & 9.29e-15  \\
	&   $\Rep{C}_L$  &  9.82e-17 & 1.20e-16 & 1.07e-16 \\
 	&	$\Rep{C}_L \MP \Rep{A}_L \MP \Rep{C}_L$ & 1.08e-04 & 1.80e+02 & 1.26e+08   \\
	&	$\Rep{A}_L$  &  6.62e-03 & 1.43e+04 &  1.12e+10
	\end{tabular}
		\caption{Absolute errors ${\norm{\asm(\Rep{M} \MP \Rep{V}) - \asm(\lorth(\Rep{M} \MP \Rep{V}))}_2}$ with $\Rep{M}=\Rep{B}_L,\,\Rep{C}_L,\,\Rep{C}_L \MP \Rep{A}_L \MP \Rep{C}_L,\,\Rep{A}_L$ and $\Rep{V}=\Rep{V}_1,\Rep{V}_\mathrm{min},\Rep{V}_\mathrm{max}$, as given in Section \ref{sec:stabillustr}.}\label{tab:absstabtest}
	\end{table}

\begin{table}
\centering\small
	\begin{tabular}{ll|n{1}{2}|n{1}{2}|n{1}{2}}
 $\Rep{V}$	& $\Rep{M}$	 & \multicolumn{1}{c}{$L=20$}  & \multicolumn{1}{c}{$L=30$} & \multicolumn{1}{c}{$L=40$} \\ \hline
 $\Rep{V}_1$	&  $\Rep{B}_L$	 & 2.87e-15 & 4.06e-15 & 6.44e-15  \\
 	&   $\Rep{C}_L$  &  1.11e-15 & 1.95e-15 & 5.44e-15 \\
 	&	$\Rep{C}_L \MP \Rep{A}_L \MP \Rep{C}_L$ & 5.97e-05 & 1.89e+00 & 2.18e+00   \\
	&	$\Rep{A}_L$  &  2.48e-13 & 5.92e-12 &  1.85e-10 \\ \hline
	$\Rep{V}_\mathrm{min}$	&  $\Rep{B}_L$	 & 4.17e-15 & 8.32e-15 & 6.52e-15  \\
	&   $\Rep{C}_L$  &  2.40e-15 & 1.52e-15 & 1.97e-15 \\
 	&	$\Rep{C}_L \MP \Rep{A}_L \MP \Rep{C}_L$ & 1.01e-04 & 1.50e+00 & 5.41e+00   \\
	&	$\Rep{A}_L$  &  2.02e-04 & 7.22e-01 &  6.59e-01 \\ \hline
	$\Rep{V}_\mathrm{max}$	&  $\Rep{B}_L$	 & 3.28e-15 & 3.00e-15 & 2.32e-15  \\
	&   $\Rep{C}_L$  &  6.91e-11 & 8.61e-08 & 7.91e-05 \\
 	&	$\Rep{C}_L \MP \Rep{A}_L \MP \Rep{C}_L$ & 2.70e-05 & 6.32e+00 & 2.78e+00   \\
	&	$\Rep{A}_L$  &  1.51e-15 & 3.10e-15 &  2.31e-15
	\end{tabular}
	\caption{Relative errors ${\norm{\asm(\Rep{M} \MP \Rep{V}) - \asm(\lorth(\Rep{M} \MP \Rep{V}))}_2}/{\norm{\asm(\Rep{M} \MP \Rep{V})}_2}$ with $\Rep{M}$ and $\Rep{V}$ as in Table \ref{tab:absstabtest}.}\label{tab:relstabtest}
\end{table}

\section{Complexity of Solvers}\label{sec:solvers}

We now consider the numerical computation of $\Vec{u}_{L}$ solving $\Ten{B}_{L} \Vec{u}_{L} = \Vec{f}_{L}$
with $\Ten{B}_{L} = \Ten{C}_{L} \Ten{A}_{L} \Ten{C}_{L}$ and $\bg_L = \Ten{C}_{L} \Vec{f}_{L}$ as in \eqref{precondsystem}. Here the objective is to find $u_\varepsilon \in V_{L(\varepsilon)}$ such that $\norm{u - u_\varepsilon}_{\SSp{1}}\lesssim \varepsilon$, and we obtain an estimate for the computational complexity of achieving this goal. Assuming that $L(\varepsilon) \sim \abs{\log\varepsilon}$ is suitably chosen a priori and that the TT singular values of $\Vec{u}_{L}$ satisfy a natural decay estimate, we show that the number of arithmetic operations for computing a tensor train representation of $u_\varepsilon$ is of order $\cO(\abs{\log \varepsilon}^\theta)$, where $\theta>0$ depends only on the low-rank approximability of the $\Vec{u}_{L}$.

\begin{remark}
	The methods we consider rely on the accurate evaluation of residuals $\Ten{B}_L \Vec{v} - \Ten{C}_{L} \Vec{f}_{L}$. As we have seen in Section \ref{sec:stabillustr}, for the representations $\Rep{B}_L$ and $\Rep{C}_L$ of $\Ten{B}_L$ and $\Ten{C}_L$ that we have constructed, the quantities $\opramp_\ell(\Rep{B}_L)$ and $\opramp_\ell(\Rep{C}_L)$ grow only moderately with respect to $L$. Indeed, the results of Table \ref{tab:absstabtest} indicate that provided that $\Vec{v}$ and $\Vec{f}_{L}$ are given in well-conditioned representations, the corresponding residuals can be evaluated with an absolute error close to machine precision, which is corroborated also by our further numerical tests in Section \ref{sec:numexp}.
	For the convergence analysis of this section, we assume exact arithmetic.
\end{remark}

\subsection{Estimates of ranks and computational costs}

To estimate the computational complexity of finding approximate solutions, we use the quasi-optimality properties of an iterative method using soft thresholding of hierarchical tensors introduced in \cite{BS16}. This construction directly carries over to the special case of the TT format, leading to a soft thresholding operation $\cS_\alpha$ that is non-expansive with respect to the $\ell^2$-norm. It can be realized numerically for TT representations, described in \cite[Sec.\ 3]{BS16}, at essentially the same cost as the TT-SVD.

Note that since $\Ten{B}_{L}$ is well-conditioned uniformly with respect to $L$, as a consequence of Theorem \ref{thm:bpx} we can choose $\omega >0$ such that $\xi =\sup_{L>0}\norm{ I - \omega \Ten{B}_{L}}$ satisfies $\xi < 1$.
The basic iterative method applied to the present problem has the form
\begin{equation}\label{stiter1}
   \Vec{u}^{n+1}_L = \mathcal{S}_{\alpha_n} \bigl( \Vec{u}^n_L - \omega ( \Ten{B}_{L} \Vec{u}^n_L - \Vec{g}_L) \bigr), \quad n\geq 0,
\end{equation}
with $\Vec{u}^0_L=0$ and $\alpha_n \to 0$ determined (according to \cite[Alg.\ 2]{BS16}) as follows: set $\alpha_0 = \omega\norm{\Vec{g}_L}_2/(d-1)$, and for a fixed $\bar B > \norm{\Ten{B}_{L}}_{2\to2}$, take
\begin{equation}\label{stiter2}
 \alpha_{n+1} = \begin{cases}
 	  \frac12 \alpha_n, & \text{if } \norm{\bu^{n+1}_L - \bu^n_L}_2\leq \frac{1 - \xi }{\xi \bar B} \, \norm{\Ten{B}_{L} \bu^{n+1}_L - \bg_L}_2,\\
 	  \alpha_n, &\text{else.}
 \end{cases}
\end{equation}
In what follows, we refer to the algorithm given by \eqref{stiter1}, \eqref{stiter2} as \stsolve.

Recall that $u_L = \sum_{j\in\cJ_L} (\Ten{C}_{L} \Vec{u}_{L})_j \vphiD{L}{j}$, with analogous notation for the iterates, where $\norm{u_L}_V \sim \norm{\Vec{u}_L}_{2}$. Our convergence analysis is based on the following assumption on uniform decay of singular values, which is discussed further in Section \ref{sec:lrapprox}.

\begin{assumption}\label{svddecay}
For all $L\in\N$ and $\ell=1,\ldots,L$, let the singular values
$\sigma_{\ell,j}(\Vec{u}_L)$
with $j=1,\ldots,2^{D \QQ \max(\ell,L-\ell)}$
of the $\ell$th unfolding matrix $\Mat{\ell}(\Vec{u}_L)$,
defined as in~\eqref{Eq:DefUnfVec},
satisfy the bound
	\begin{equation}\label{unifsvs}
		\sigma_{\ell,j}(\Vec{u}_L)
		\leq
		C e^{- c j^\beta}
		\quad\text{for all}\quad
		j=1,\ldots,2^{D \QQ \max(\ell,L-\ell)}
	\end{equation}
	with $C,c,\beta >0$ independent of $\ell$ and $L$.
\end{assumption}

\begin{theorem}\label{thm:stconv}
	Let $\varepsilon >0$. Then \stsolve\ stops with $\bu_{L,\varepsilon}$ such that \[ \norm{u_L - u_{L,\varepsilon}}_{\SSp{1}}\lesssim \norm{\Vec{u}_{L} - \bu_{L,\varepsilon}}_{2} \leq \varepsilon \] after finitely many steps. In addition, let Assumption \ref{svddecay} hold.
	Then there exist $c_1,c_2>0$ and $\rho \in (0,1)$ independent of $L$ and $n$ such that with $\varepsilon_n = \rho^{n/\log L}$,
	\[
	  \norm{ u_L - u^n_L }_{\SSp{1}} \leq c_1 L \varepsilon_n , \quad
	    \max_{\ell=1,\ldots,L-1} \rank_\ell(\Vec{u}^n_L) \leq c_2 L^2 \bigl( 1 + \abs{\log \varepsilon_n} \bigr)^{\frac1\beta}.
	\]
\end{theorem}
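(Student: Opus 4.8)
The argument splits cleanly into two parts: first, the finite termination and unconditional convergence statement, which relies purely on Theorem \ref{thm:bpx} (uniform conditioning of $\Ten{B}_L$) together with the quasi-optimality properties of \stsolve\ established in \cite{BS16}; second, the quantitative rank and accuracy bounds under Assumption \ref{svddecay}. The plan is to begin with the first part. Since $\xi = \sup_L \norm{I - \omega \Ten{B}_L} < 1$ by the remark following Theorem \ref{thm:bpx}, the results of \cite[Alg.~2]{BS16} apply verbatim to \eqref{stiter1}--\eqref{stiter2}: the soft-thresholding operator $\cS_\alpha$ is non-expansive in $\ell^2$, so the iteration is a perturbed contraction, $\alpha_n \to 0$ is enforced by the stopping rule \eqref{stiter2}, and the scheme terminates after finitely many steps with $\norm{\Vec{u}_L - \bu_{L,\varepsilon}}_2 \leq \varepsilon$. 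The bound $\norm{u_L - u_{L,\varepsilon}}_{\SSp{1}} \lesssim \norm{\Vec{u}_L - \bu_{L,\varepsilon}}_2$ then follows immediately from the norm equivalence \eqref{normequiv}, applied as in the paragraph after \eqref{precondsystem}: writing $\bar{\Vec{v}} = \Ten{C}_L \Vec{v}$, one has $\norm{u_L - u_{L,\varepsilon}}_{\SSp{1}} \lesssim \norm{\Ten{C}_L^{-1}(\bar{\Vec{u}}_L - \bar{\bu}_{L,\varepsilon})}_2 = \norm{\Vec{u}_L - \bu_{L,\varepsilon}}_2$.

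For the second part, the key is that Assumption \ref{svddecay} controls the best low-rank approximation error of $\Vec{u}_L$ at every unfolding simultaneously. The plan is: given a target accuracy $\delta$, truncating the $\ell$th unfolding to rank $r$ incurs error $\bigl(\sum_{j>r}\sigma_{\ell,j}(\Vec{u}_L)^2\bigr)^{1/2} \leq C'e^{-c'r^\beta}$ by \eqref{unifsvs}; hence rank $r_\ell(\delta) \lesssim (1 + \abs{\log\delta})^{1/\beta}$ suffices per mode, and summing the $L-1$ truncation errors in the TT-SVD (as in \cite[Theorem~2.2]{Oseledets:2011:TT}) gives a total TT approximation error $\lesssim \sqrt{L}\, C'e^{-c'r^\beta}$, so that achieving overall error $\delta$ still needs only ranks $\lesssim (1 + \abs{\log(\delta/\sqrt L)})^{1/\beta} \lesssim (1 + \abs{\log\delta})^{1/\beta}$ with constants absorbing the $\log L$ factor. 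Feeding this into the quasi-optimality estimate of \stsolve\ from \cite{BS16}: the soft-thresholding iteration produces iterates whose ranks are controlled by the ranks of near-best approximations of the solution at the current accuracy level, which by the uniform conditioning degrades the geometric rate only by a factor depending on $\xi$; the $\log L$ in the exponent of $\varepsilon_n = \rho^{n/\log L}$ reflects precisely the number of halvings of $\alpha_n$ needed to traverse the relevant range of scales, and the extra $L^2$ prefactor in the rank bound tracks the accumulation of the per-mode error summation across levels together with the rank inflation inherent in the thresholding step (each residual evaluation uses $\Rep{B}_L$, whose TT ranks are $O(1)$ in $L$ by Theorem \ref{Th:DecB}, so this contributes only a constant factor per application but $O(L)$ applications enter the telescoping estimate). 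The factor $c_1 L$ in $\norm{u_L - u^n_L}_{\SSp{1}} \leq c_1 L \varepsilon_n$ comes from \eqref{normequiv} combined with the $L$-step accumulation in the convergence recursion.

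The main obstacle I anticipate is not the contraction/termination argument — that is essentially a citation of \cite{BS16} plus \eqref{normequiv} — but rather making the dependence on $L$ in the rank bound and in $\varepsilon_n$ fully explicit and uniform. Specifically, one must verify that the quasi-optimality constants in \cite{BS16} (which are stated for a fixed operator) can be chosen uniformly in $L$, using that $\xi$, $\bar B$, and the TT ranks of $\Rep{B}_L$ are all $L$-uniform; and one must carefully track how the a priori choice $L(\varepsilon) \sim \abs{\log\varepsilon}$ interacts with the $\sqrt{L}$ loss in the TT-SVD error summation so that the final ranks indeed grow only polynomially in $L$ and polylogarithmically in $\varepsilon$. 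This bookkeeping — essentially propagating Assumption \ref{svddecay} through the adaptive thresholding schedule — is where the real work lies, though each individual estimate is routine.
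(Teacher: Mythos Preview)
Your overall approach matches the paper's: the theorem is obtained by invoking the convergence and quasi-optimality results for the soft-thresholding iteration from \cite{BS16}, with the norm equivalence \eqref{normequiv} supplying the $\SSp{1}$-bound. The paper's proof is in fact nothing more than a direct citation of \cite[Thm.~5.1(ii)]{BS16} for the main statement and \cite[Rem.~5.6]{BS16} for the $L$-dependence of $\varepsilon_n$; no re-derivation is attempted.

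Where your proposal drifts is in the speculative accounting for the $L$-dependent prefactors. The factor $c_1 L$ does \emph{not} come from \eqref{normequiv} --- the constants there are uniform in $L$ --- but from the soft-thresholding error bound in \cite{BS16}, which sums contributions over the $L-1$ matricizations. Likewise, your explanation of the $L^2$ in the rank bound via ``$O(L)$ applications entering the telescoping estimate'' and residual-rank inflation through $\Rep{B}_L$ is off: the $L^2$ arises inside the analysis of \cite{BS16} from combining the number of unfoldings with the thresholding-level schedule, not from operator applications, and the $\log L$ in $\varepsilon_n$ is precisely the content of \cite[Rem.~5.6]{BS16}. None of this invalidates your plan, but if you intend to actually reproduce the argument rather than cite it, these attributions would need correcting; the cleaner route --- and the one the paper takes --- is simply to verify that the hypotheses of \cite[Thm.~5.1(ii)]{BS16} hold with $L$-uniform constants (which follows from Theorem~\ref{Th:Prec} and the fixed representation ranks of $\Rep{B}_L$) and then quote the conclusion.
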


\begin{proof}
	This is the statement of \cite[Thm.\ 5.1(ii)]{BS16} applied to our setting, combined with \cite[Rem.\ 5.6]{BS16} concerning the dependence of $\varepsilon_n$ on $L$.
\end{proof}

The above statement makes assumptions on the low-rank approximability of the approximations $u_L$. We next relate this, by an appropriate choice of $L$, to the approximability of the exact solution $u \in V$ of \eqref{varform}.

\begin{corollary}\label{cor:iterrankbounds}
 Assume that there exist $C_1>0$ and $s >0$ such that
 $
   \norm{u - u_L}_{\SSp{1}} \leq C_1 2^{-s L}
 $.
 Then for given $\varepsilon \in (0,1)$, taking $L= \frac1s ( 1 + \abs{\log \varepsilon})$, with $c_1,c_2>0$ and $\varepsilon_n = \rho^{n/\log L}$ as in Theorem \ref{thm:stconv}, 	for $n>0$ we have
 \[
 \begin{aligned}
	  \norm{ u_L - u^n_L }_{\SSp{1}} &\leq c_1 s^{-1} ( 1 + \abs{\log \varepsilon}) \varepsilon_n , \\
	    \max_{\ell=1,\ldots,L-1} \rank_\ell(\Vec{u}^n_L) &\leq c_2 s^{-2} ( 1 + \abs{\log \varepsilon})^2 \bigl( 1 + \abs{\log \varepsilon_n} \bigr)^{\frac1\beta},
 \end{aligned}
	\]
and for $N= (\abs{\log \varepsilon} + \log L) \log L \lesssim (1 + \abs{\log \varepsilon}) \log ( 1 + \abs{\log \varepsilon})$, we obtain
\[
	\norm{ u - u^N_L }_{\SSp{1}} \leq C_2 \varepsilon , \qquad
	  \max_{\ell=1,\ldots,L-1} \rank_\ell(\Vec{u}^N_L) \leq C_3 ( 1 + \abs{\log \varepsilon})^{2 + \frac1\beta},
\]
where $C_2,C_3>0$ depend on $c_1$, $c_2$, $\rho$, $C_1$, and $s$.
\end{corollary}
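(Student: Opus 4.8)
The plan is to split the total error by the triangle inequality, $\norm{u - u^n_L}_{\SSp{1}} \le \norm{u - u_L}_{\SSp{1}} + \norm{u_L - u^n_L}_{\SSp{1}}$, bound the first term by the hypothesis and the second by Theorem~\ref{thm:stconv}, and then substitute the prescribed relation $L = \tfrac1s(1 + \abs{\log\varepsilon})$. Since Assumption~\ref{svddecay} is in force, Theorem~\ref{thm:stconv} applies directly: writing $L = s^{-1}(1 + \abs{\log\varepsilon})$ turns $\norm{u_L - u^n_L}_{\SSp{1}} \le c_1 L\varepsilon_n$ and $\max_{\ell}\rank_\ell(\Vec{u}^n_L) \le c_2 L^2 (1 + \abs{\log\varepsilon_n})^{1/\beta}$ into exactly the two displayed intermediate estimates of the corollary, so this step is pure substitution. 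For the discretization term, the hypothesis $\norm{u - u_L}_{\SSp{1}} \le C_1 2^{-sL}$ combined with $sL = 1 + \abs{\log\varepsilon}$ gives $\norm{u - u_L}_{\SSp{1}} \lesssim \varepsilon$, since $L$ has been calibrated precisely so that $2^{-sL} \lesssim \varepsilon$, with constant depending only on $C_1$.

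The one computation to carry out is the choice of $N$. From $\varepsilon_n = \rho^{n/\log L}$ one has $\log\varepsilon_N = \tfrac{N}{\log L}\log\rho$, so requiring $c_1 L\varepsilon_N \lesssim \varepsilon$ reduces, upon taking logarithms, to $N \gtrsim \tfrac{\log L}{\abs{\log\rho}}\bigl(\abs{\log\varepsilon} + \log L\bigr)$; thus $N$ equal to a suitable fixed multiple of $(\abs{\log\varepsilon} + \log L)\log L$ works, and because $\log L = \log(1/s) + \log(1 + \abs{\log\varepsilon}) \lesssim \log(1 + \abs{\log\varepsilon})$ this is $\lesssim (1 + \abs{\log\varepsilon})\log(1 + \abs{\log\varepsilon})$. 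Feeding this $N$ back into the rank estimate, $\abs{\log\varepsilon_N} = \tfrac{N}{\log L}\abs{\log\rho} = (\abs{\log\varepsilon} + \log L)\abs{\log\rho} \lesssim 1 + \abs{\log\varepsilon}$, so $(1 + \abs{\log\varepsilon_N})^{1/\beta} \lesssim (1 + \abs{\log\varepsilon})^{1/\beta}$ and, together with $L^2 = s^{-2}(1+\abs{\log\varepsilon})^2$, the intermediate rank bound collapses to $\max_\ell\rank_\ell(\Vec{u}^N_L) \le C_3 (1 + \abs{\log\varepsilon})^{2 + 1/\beta}$. Meanwhile $\norm{u - u^N_L}_{\SSp{1}} \le \norm{u - u_L}_{\SSp{1}} + c_1 L\varepsilon_N \le C_2\varepsilon$, and one checks that $C_2$ depends only on $c_1, \rho, C_1, s$ while $C_3$ depends only on $c_2, \rho, s, \beta$.

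The proof is essentially bookkeeping on top of Theorem~\ref{thm:stconv}. The only point requiring care --- and the natural candidate for the ``main obstacle'' --- is the logarithmic arithmetic in the choice of $N$: one must verify that a number of iterations that is only logarithmic in $1/\varepsilon$ (up to a $\log\log$ factor) simultaneously drives $c_1 L\varepsilon_N$ below a constant multiple of $\varepsilon$ and keeps $\abs{\log\varepsilon_N}$ comparable to $\abs{\log\varepsilon}$, so that the rank bound does not deteriorate. Throughout, one should track that every implied constant depends only on $c_1, c_2, \rho, \beta, C_1, s$ and is independent of $L$ and $\varepsilon$; this is where the exact normalization $L = \tfrac1s(1 + \abs{\log\varepsilon})$ and the calibration $2^{-sL}\lesssim\varepsilon$ are used.
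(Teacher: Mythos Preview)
Your proposal is correct and matches the paper's implicit approach: the paper states the corollary without proof, treating it as a direct consequence of Theorem~\ref{thm:stconv} via exactly the substitution and logarithmic bookkeeping you carry out. Your identification of the only nontrivial step---choosing $N$ so that $c_1 L\varepsilon_N \lesssim \varepsilon$ while keeping $\abs{\log\varepsilon_N}$ comparable to $\abs{\log\varepsilon}$---is apt, and your computation of it is right.
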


\begin{remark}[Complexity bounds]
If $\Ten{B}_{L}$ has fixed representation ranks, as in the case of the Laplacian, the costs of each step are dominated by those of applying $\cS_{\alpha_n}$, which are of order $\cO(L (\max_\ell \rank_\ell(\Vec{u}^n_L))^3)$. By Corollary \ref{cor:iterrankbounds}, the total number of operations for $N$ steps to guarantee an $H^1$-error of order $\varepsilon$ is thus bounded by
\begin{equation} \label{complexityestimate}
C (1 + \abs{\log \varepsilon})^{8 + \frac{3}{\beta}} \log( 1 + \abs{\log\varepsilon})
\end{equation}
 with a uniform constant $C>0$.
\end{remark}

In cases with variable coefficients such that $\Ten{B}_{L}$ does not have an exact low-rank form, but needs to be applied approximately, the iteration given in \eqref{stiter1} and \eqref{stiter2} can be adapted to residual approximations with prescribed tolerance as given in \cite[Alg.\ 3]{BS16}, which preserves the statement of Theorem \ref{thm:stconv} as shown in \cite[Prop.\ 5.9]{BS16}. Depending on the $L$- and $\varepsilon$-dependent rank bounds for $\Ten{B}_{L}$, one may then obtain additional factors in the estimate \eqref{complexityestimate}.

\begin{remark}
Complexity estimates are also given in \cite{BD:15} for a similar iterative method based on hierarchical SVD truncation (which in the present setting translates to a direct TT-SVD truncation).
A simplified version of this method operating on fixed discretizations is given in \cite[Alg.\ 4]{BS16}.
Based on the theory for this method, one can also derive rank and complexity bounds similar to \eqref{complexityestimate}, but with a less favorable exponent: For this method, one arrives at a number of operations bounded by $ C (1 + \abs{\log \varepsilon})^{t + \frac{3}{\beta}} $ for some $C>0$, where $t>0$ now depends on the representation ranks and condition number of $\Ten{B}_L$, and the bound can be substantially worse than \eqref{complexityestimate}.
 The practical performance of the scheme from \cite{BD:15}, however, tends to be comparable to the one of \stsolve\ considered above.
\end{remark}

\begin{remark}
Alternatively, the linear systems $\bB_L \bu_L = \bg_L$ can be solved by the \amen\ methods introduced in \cite{DS:14}. The basic version analyzed in \cite[Sec.\ 5]{DS:14} relies on residual approximations of a certain quality and increases approximation ranks in each iteration. However, the available convergence results only lead to a complexity bound that increases faster than exponentially in $L$. In the practical implementation that we also consider for comparison in Section \ref{sec:numexp}, the basic method is combined with a faster heuristic residual approximation scheme based on the alternating least squares (ALS) method and with additional rank reduction steps. Although no convergence analysis is available for this version, the method performs well in our tests with well-conditioned $\bB_L$.
\end{remark}

\subsection{Low-rank approximability assumptions}\label{sec:lrapprox}

	For the case of one or two dimensions,
	a low-rank approximation analysis
	for the solution of the problem~\eqref{varform}
	under certain analyticity assumptions
	on the coefficients and right-hand side,
	following
	from the regularity analysis developed
	in~\cite{Babuska:1988:RegularityI,Babuska:1988:hpCurvedBoundary},
	is available in \cite{KS:2015:PAMM,KazeevPhD,KazeevSchwab}.
	The following result can be obtained as an immediate consequence of \cite[Theorem~5.16]{KazeevSchwab}.

	\begin{theorem}\label{Th:QTT-FE-approx}
		Consider the problem~\eqref{varform} with $D=2$ dimensions
		under the ellipticity and regularity assumptions made in
		Section~\ref{Sc:DiscrPrec}.
		Assume additionally that the data (the diffusion coefficient
		and the right-hand side) are analytic on $\overline{\varOmega}$.
		Then the following holds with positive constants $C,C',b,b'$.
		For all $L,R\in\N$, the exact solution
		$u$
		admits an approximation
		$u_{L,R}\in V_L$ that can be exactly represented in the multilevel TT
		decomposition in the sense
		of~\eqref{Eq:MultLevTenDec-LinComb}--\eqref{Eq:MultLevTenDec-CoeffRepr},
		with ranks not exceeding $R$
		and such that
		\begin{equation}\label{Eq:QTT-FE-approx}
			\Norm{
				\QQ
				u - u_{L,R}
			}_{
				\SSp{1}\Par{\varOmega}
			}
			\leq
			C \QQ e^{- b \QQ L }
			+
			C' e^{- b' \sqrt{R} \, }
			\, .
		\end{equation}
	\end{theorem}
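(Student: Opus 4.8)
The plan is to deduce Theorem~\ref{Th:QTT-FE-approx} from \cite[Theorem~5.16]{KazeevSchwab} essentially by transcription of notation. The hypotheses required there --- strong ellipticity of the bilinear form and analyticity of the data up to $\overline{\varOmega}$ --- are exactly the standing assumptions of Section~\ref{Sc:DiscrPrec} together with the extra analyticity imposed in the statement (read so as to include the reaction coefficient), and the conclusion of that theorem, once written in our notation, is precisely the existence of $u_{L,R}\in V_L$ admitting a multilevel TT representation of ranks at most $R$ in the sense of \eqref{Eq:MultLevTenDec-LinComb}--\eqref{Eq:MultLevTenDec-CoeffRepr} and satisfying \eqref{Eq:QTT-FE-approx}. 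Thus the task reduces to verifying that the finite element space $V_L$, the folding of coefficient vectors into tensors (via the reordering $\Ten{\varPi}_L$), and the normalization \eqref{Eq:PartFct} used here coincide with those of \cite{KazeevSchwab} up to modifications that do not change TT ranks and are uniform in $L$.

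For orientation I would also record the mechanism that underlies \cite[Theorem~5.16]{KazeevSchwab}. Splitting $\|u-u_{L,R}\|_{\SSp{1}(\varOmega)}\le \|u-\tilde u_L\|_{\SSp{1}(\varOmega)}+\|\tilde u_L-u_{L,R}\|_{\SSp{1}(\varOmega)}$ with a suitable quasi-interpolant $\tilde u_L\in V_L$, the first term is a pure finite element error: because $\varOmega$ is a square, the corner singular exponents of the mixed Dirichlet/Neumann problem are integers (the singular functions are in fact polynomials), so analyticity of the coefficients and of $f$ propagates up to $\overline{\varOmega}$ by the regularity theory of \cite{Babuska:1988:RegularityI,Babuska:1988:hpCurvedBoundary}; in particular $u\in\SSp{2}(\varOmega)$ with a quantitative bound, and the interpolation estimate for piecewise $D$-linear elements on the uniform mesh of width $h\sim 2^{-L}$ gives $\|u-\tilde u_L\|_{\SSp{1}(\varOmega)}\lesssim 2^{-L}\,|u|_{\SSp{2}(\varOmega)}$, a term of the form $Ce^{-bL}$ (with $b=\log 2$ from the piecewise-bilinear rate). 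The second term is the error of truncating the folded coefficient vector of $\tilde u_L$ to TT ranks at most $R$: the content of \cite[Theorem~5.16]{KazeevSchwab} is that for analytic data in two dimensions the singular values of the unfolding matrices of this tensor decay like $e^{-cj^{1/2}}$ uniformly in $\ell$ and $L$ --- equivalently, the ranks needed for accuracy $\eta$ grow like $(\log\eta^{-1})^2$ --- so that retaining only the $R$ largest singular values in each unfolding yields $u_{L,R}\in V_L$ of TT ranks at most $R$ with $\SSp{1}$-truncation error of order $e^{-b'\sqrt{R}}$. The passage from the $\ell^2$-tail of the singular values to the $\SSp{1}$-norm is carried out in an $\SSp{1}$-stable multilevel basis; in the present paper this role can equally be played by the Riesz basis of \eqref{normequiv}, in which $\|v\|_{\SSp{1}}\sim\|\Ten{C}_L^{-1}\Vec{v}\|_2$, together with the fact that $\Ten{C}_L$ and $\Ten{C}_L^{-1}$ have $L$-independent TT ranks by Theorem~\ref{Th:DecC}. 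Adding the two contributions gives \eqref{Eq:QTT-FE-approx}.

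The main obstacle is bookkeeping rather than analysis. One must check that (i) the per-level rescalings induced by the normalization \eqref{Eq:PartFct}--\eqref{Eq:vphiNormD} and by the big-endian-to-little-endian reordering $\Ten{\varPi}_L$ of \eqref{Eq:IndTranspMatrix} leave the TT ranks and the relative decay of the unfolding singular values unchanged, so that the estimates of \cite{KazeevSchwab} apply verbatim in our normalization; (ii) ``rank at most $R$'' in \cite[Theorem~5.16]{KazeevSchwab} is a bound on each unfolding rank $\rank_\ell$ individually, as asserted in our statement, and not merely on an aggregate; and (iii) the polylogarithmic rank bound of \cite{KazeevSchwab} is of degree two in $\log\eta^{-1}$, which is exactly what produces the exponent $\tfrac12$ in $e^{-b'\sqrt{R}}$ and the square root in \eqref{Eq:QTT-FE-approx}. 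None of these steps needs a new idea, but the constants $C,C',b,b'$ and the precise $R$-dependence have to be tracked through the identification, and this is the only place where care is required.
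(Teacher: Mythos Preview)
Your core approach is correct and matches the paper exactly: both treat the result as an immediate consequence of \cite[Theorem~5.16]{KazeevSchwab}, with the only work being the identification of notation and finite element spaces.

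However, your ``for orientation'' sketch of the underlying mechanism contains a genuine error. You assert that $\Ten{C}_L$ and $\Ten{C}_L^{-1}$ have $L$-independent TT ranks ``by Theorem~\ref{Th:DecC}'', but that theorem treats only $\Ten{C}_L$; the paper states explicitly in Section~\ref{sec:lrapprox} that $\Ten{C}_L^{-1}$, unlike $\Ten{C}_L$, does \emph{not} have an explicit low-rank form. More to the point, the theorem concerns TT ranks of the \emph{nodal} coefficient vector $\bar{\bu}_L$ (this is what \eqref{Eq:MultLevTenDec-LinComb}--\eqref{Eq:MultLevTenDec-CoeffRepr} refer to), so the preconditioner plays no role here at all. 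The actual mechanism in \cite{KazeevSchwab} is not SVD truncation followed by a change to an $\SSp{1}$-stable basis; rather, an exponentially convergent $hp$-type approximant (available under the analytic-data regularity, including corner singularities) is re-interpolated onto the uniform level-$L$ mesh, and the resulting nodal vector is shown \emph{constructively} to have TT ranks $\mathcal{O}\bigl((\log\eta^{-1})^2\bigr)$, with the $\SSp{1}$-error controlled directly by the $hp$ analysis. This produces the two terms in \eqref{Eq:QTT-FE-approx} without any appeal to $\Ten{C}_L^{-1}$. Since this paragraph is only orientation and your actual argument is the citation, the proof stands; but the sketch as written would not go through.
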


	Theorem~\ref{Th:QTT-FE-approx} and analogous
	results for highly oscillatory solutions~\cite{KORS:2017:Multiscale1d}
	cover the tensor approximation of
	exact solutions in the nodal basis, described in Section~\ref{Sc:BasisFunctionsD}.
	The requirements of Assumption \ref{svddecay} are somewhat different:
	they refer to the solution of the Galerkin discretization
	(uniformly in the discretization level $L$),
	and the application of $\Ten{C}_{L}^{-1}$ to the corresponding
	coefficient $\bar{\bu}_L$ (which is with respect to the nodal basis)
	yields the coefficient $\bu_L = \Ten{C}_{L}^{-1} \bar{\bu}_L$ with respect to the preconditioned basis.
	Nevertheless, the $\SSp{1}$-errors bounded
	implicitly by the decay of singular values
	in Assumption~\ref{svddecay} and explicitly by the second term
	in the right-hand side of~\eqref{Eq:QTT-FE-approx} both correspond to
	low-rank tensor approximation within the underlying finite element space $V_L$.

  The verification of the low-rank approximability of $\bu_L$, $L\in\N$,
  stipulated in Assumption \ref{svddecay}
  requires the result of Theorem~\ref{Th:QTT-FE-approx}
  to be complemented by two further ingredients:
  bounds on the ranks of Galerkin discretizations
  (as opposed to interpolants of the exact solution);
  and
  suitable low-rank approximations of $\Ten{C}_{L}^{-1}$,
  (which, unlike $\Ten{C}_L$, does not have an explicit low-rank form).

	In the present work, we restrict ourselves to studying the resulting approximability of $\bu_L$ numerically.
    We are not aware of existing analysis that would allow to arrive at conclusions on Galerkin solution ranks, covering also the convergence behavior for accuracies below the size of the Galerkin discretization error; this appears to be a question of independent interest.
	In certain special cases, such as Poisson problems in $D=1$, the Galerkin solution can in fact be shown to be the nodal interpolant of the exact solution. For more general problems and for $D>1$, however, this is in general not the case.

	The numerically observed decay of matricization singular values of the preconditioned solution coefficients $\bu_L$ (with $\norm{\bu_L}_2 \sim \norm{u_L}_{\SSp{1}}$) and of the vector of scaled nodal values $\bar{\bu}_L = \bC_L \bu_L$ (with $\norm{\bar{\bu}_L}_2 \sim \norm{u_L}_{\LSp{2}}$) for a Poisson problem in spatial dimension $D=2$ is illustrated in Figure \ref{fig:solranks}. We find that the action of $\bC_L^{-1}$ on the vector of nodal values preserves the exponential decay of singular values, but at a slightly modified rate. This is consistent with the further numerical tests for this problem in Section \ref{sec:test2d}.
	Similar results are also observed in further experiments presented in Section \ref{sec:numexp}.

\begin{figure}\centering
\begin{tabular}{cc}\hspace{-6pt}
	\includegraphics[width=6cm]{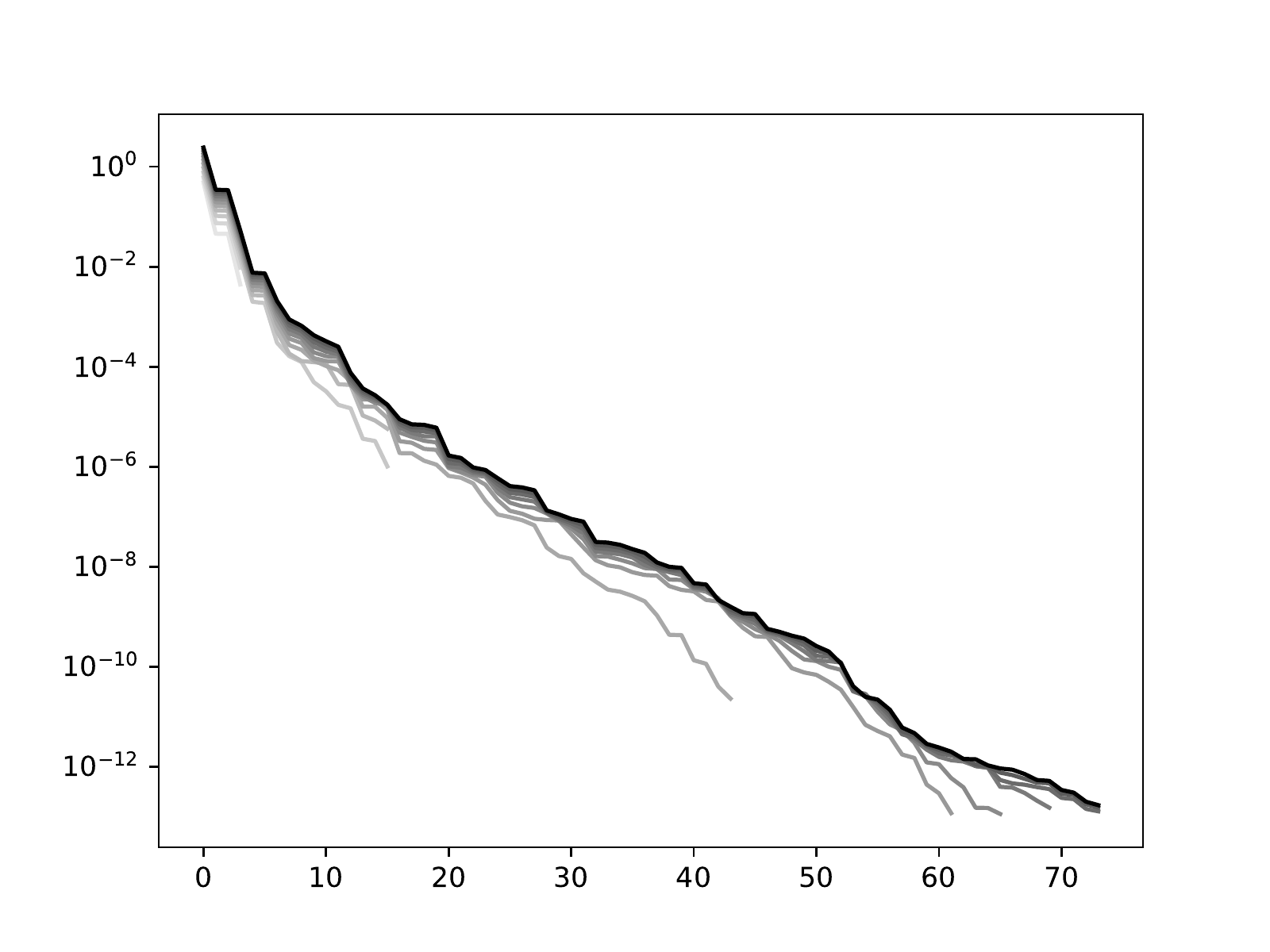} &
	\hspace{-6pt}\includegraphics[width=6cm]{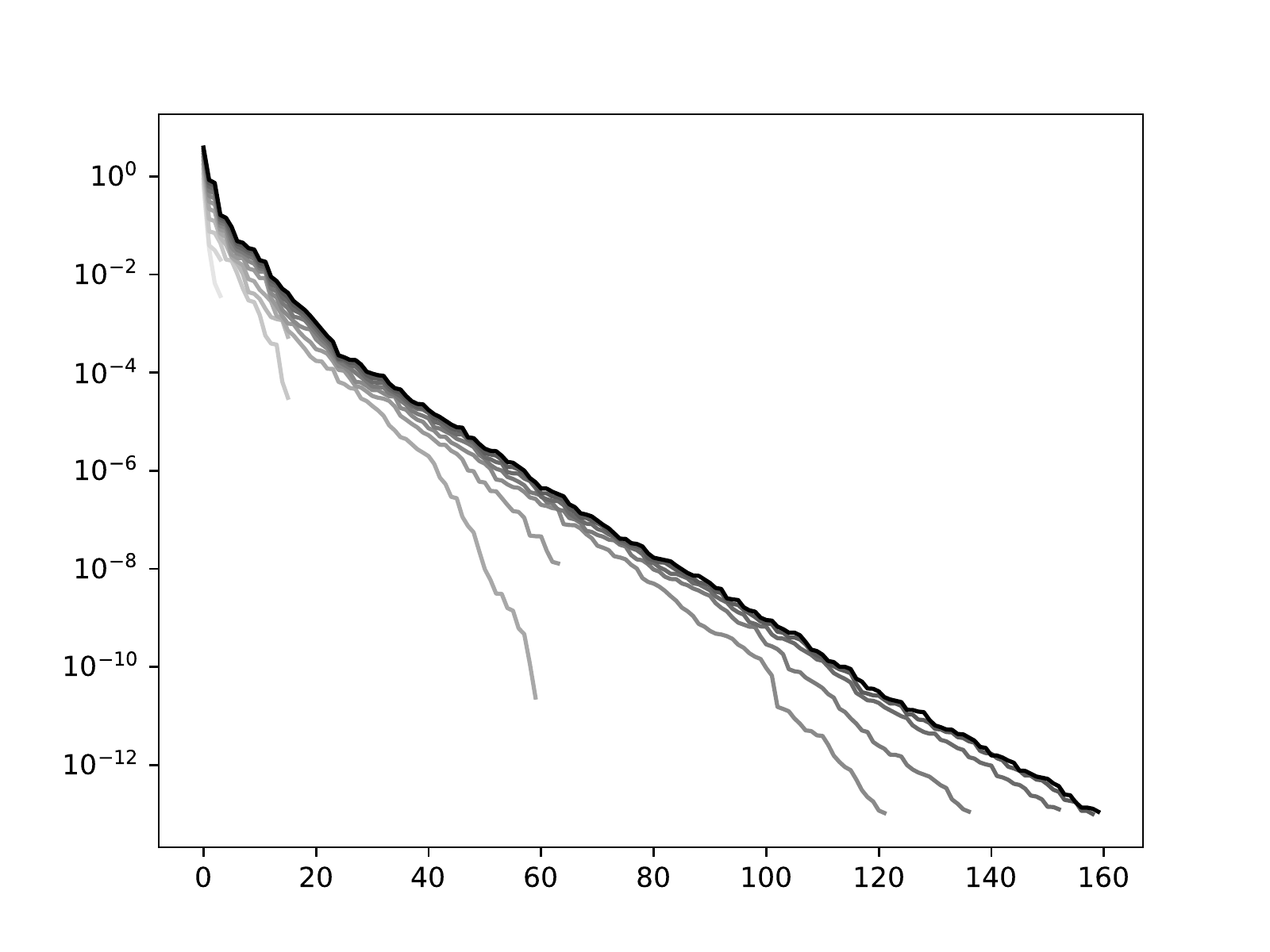}	\\
	\small(a) $\displaystyle \max_{\ell=1,\ldots,L-1} \sigma_{\ell,j}(\Ten{C}_L \bu_L)$~vs.~$j$
	&
	\small(b) $\displaystyle \max_{\ell=1,\ldots,L-1} \sigma_{\ell,j}(\bu_L)$~vs.~$j$
\end{tabular}
\caption{Singular values of unfolding matrices
(see Assumption~\ref{svddecay})
for $u$ solving $-\Delta u=1$ on $(0,1)^2$ with boundary conditions according
to \eqref{Eq:Space}, for $L=2,\ldots,12$.
}\label{fig:solranks}
\end{figure}

\section{Numerical Experiments}\label{sec:numexp}

In our numerical tests, we apply the preconditioned discretization matrices in well-conditioned tensor representations obtained in Section \ref{sec:tensorstructure} to different problems of the type \eqref{varform}, both with constant and with highly oscillatory diffusion coefficients $A$ in \eqref{Eq:DefBLF}.

For solving the resulting systems of equations, on the one hand we use \stsolve\ analyzed in Section \ref{sec:solvers}, implemented in the Julia programming language;
on the other hand, we compare to results obtained using a Fortran implementation of the \amen\ solver \cite{DS:14} wrapped by the Python version of the TT Toolbox by I.\ Oseledets.

These two solvers have quite distinct characteristics. The parameters for \stsolve\ are chosen such that the convergence and complexity estimates of Theorem \ref{thm:stconv} are guaranteed, which leads to a very conservative control of the iteration. Since residuals are approximated with guaranteed accuracy, this method yields rigorous error bounds. In contrast, the considered version of \amen\ uses several heuristic extensions, as described in \cite[Sec.\ 6]{DS:14}. In particular, it uses a simplified ALS-type residual approximation that has strongly reduced complexity, but does not give any error guarantees.

Moreover, in the given results, iteration numbers for \amen\ need to be interpreted differently, where each iteration in the convergence plots comprises several substeps with local residual evaluations for each core.

\subsection{Results without preconditioning}\label{sec:noprecond}

We first illustrate the results obtained by a direct application of multilevel tensor representations of stiffness matrices $\bA_L$ without preconditioning. Such representations have been derived, for instance, in \cite{KKh:2012:QTT}. In the present case of mixed Dirichlet and Neumann boundary conditions, this leads to representations similar to the pure Dirichlet case in \eqref{laplacedd}. Here we consider the case $D=1$, where for simplicity we take reaction coefficient $c = 0$ and right-hand side $f=1$, that is, we solve the weak formulation of
\begin{equation}\label{test1d}
 - u'' = 1, \quad u(0)=0, \; u'(1) = 0.
\end{equation}

Using \amen\ directly with system matrix $\bA_L$ and right-hand side $\bbf_L$, we observe that the resulting residual indicators stagnate at values above $2^{2L} \epsilon$, where $\epsilon\approx 2.2 \times 10^{-16}$ is the relative machine precision. This is to be expected in view of the matrix and representation ill-conditioning of $\bA_L$.

If we instead implement the preconditioned matrix $\bC_L\bA_L \bC_L$ by pre- and post-multiplying with a separate tensor representation $\bC_L$ of the preconditioner, we still obtain essentially the same type of stagnation at approximately $2^{2L} \epsilon$. Since the represented matrix $\bC_L\bA_L \bC_L$ is now well-conditioned, these remaining catastrophic round-off errors and the resulting stagnation are entirely due to \emph{representation} ill-conditioning, which is not removed by simply multiplying by the preconditioner. This effect is observed both with \amen\ and with \stsolve. The results are shown in Figure \ref{fig:noprec}, with the residual values with respect to the system matrices $\bA_L$ and $\bC_L\bA_L \bC_L$, respectively.

\begin{figure}\centering
\begin{tabular}{ccc}
	\includegraphics[width=5cm]{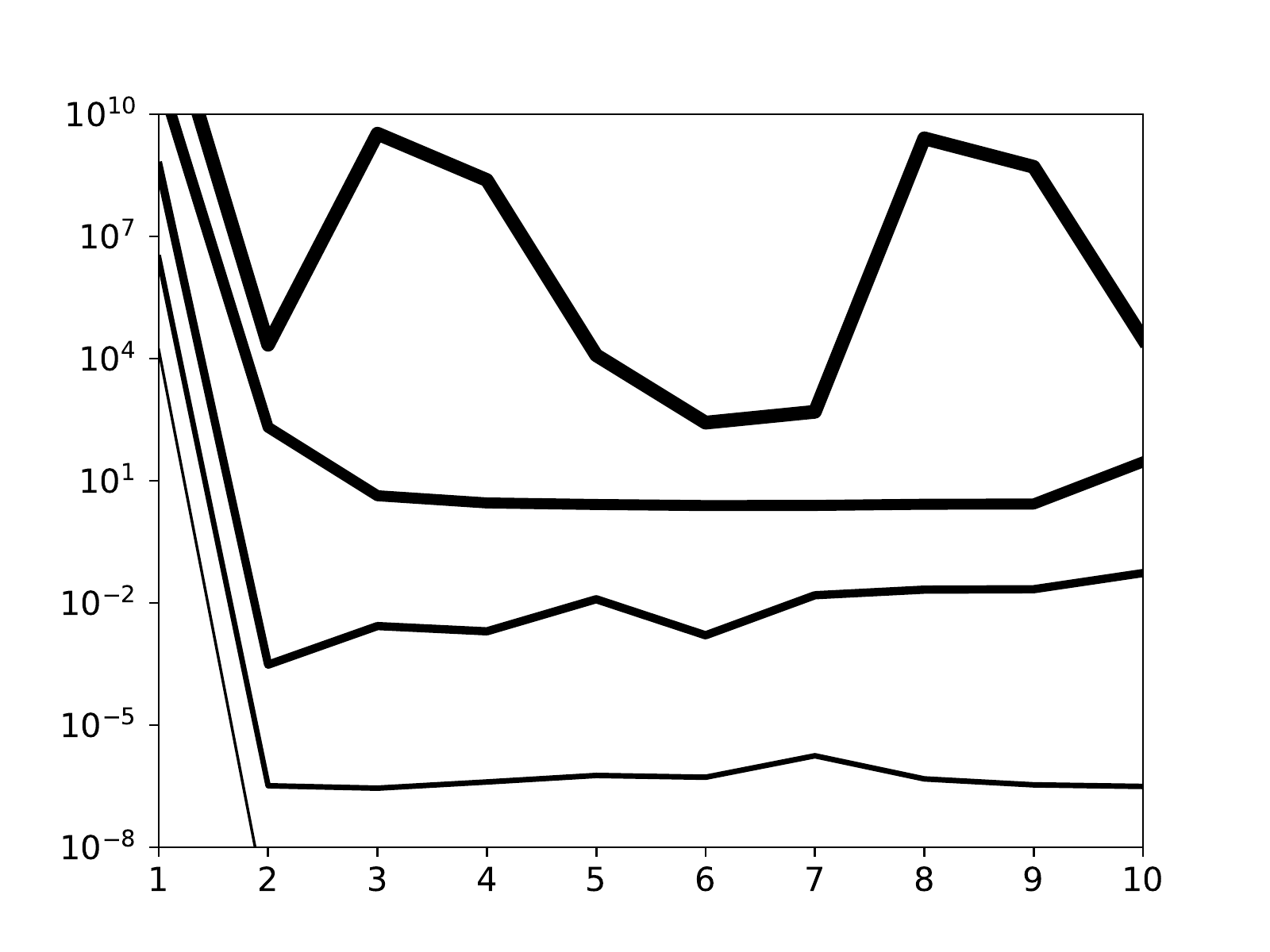} &
	\includegraphics[width=5cm]{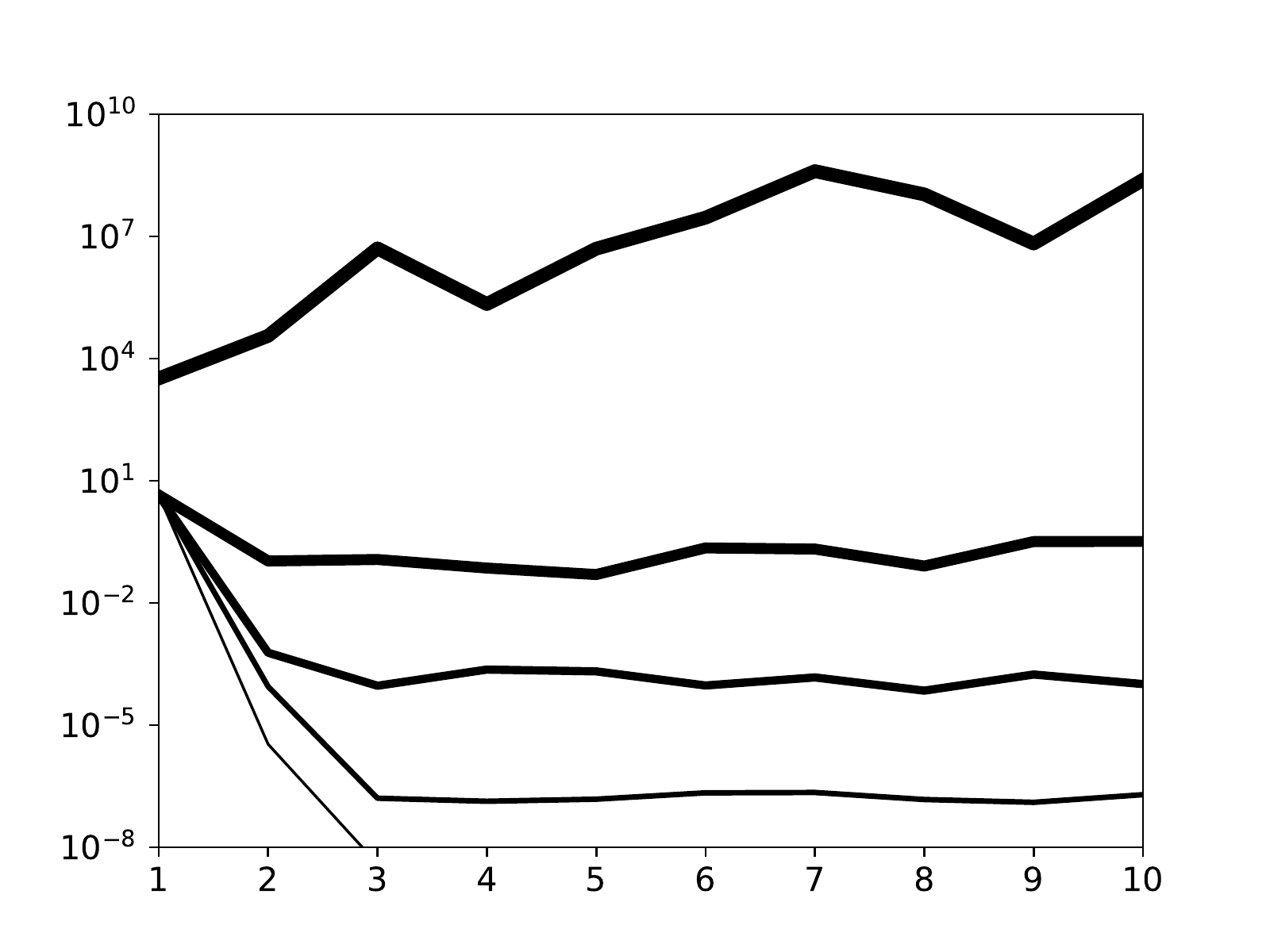} &
	\includegraphics[width=5cm]{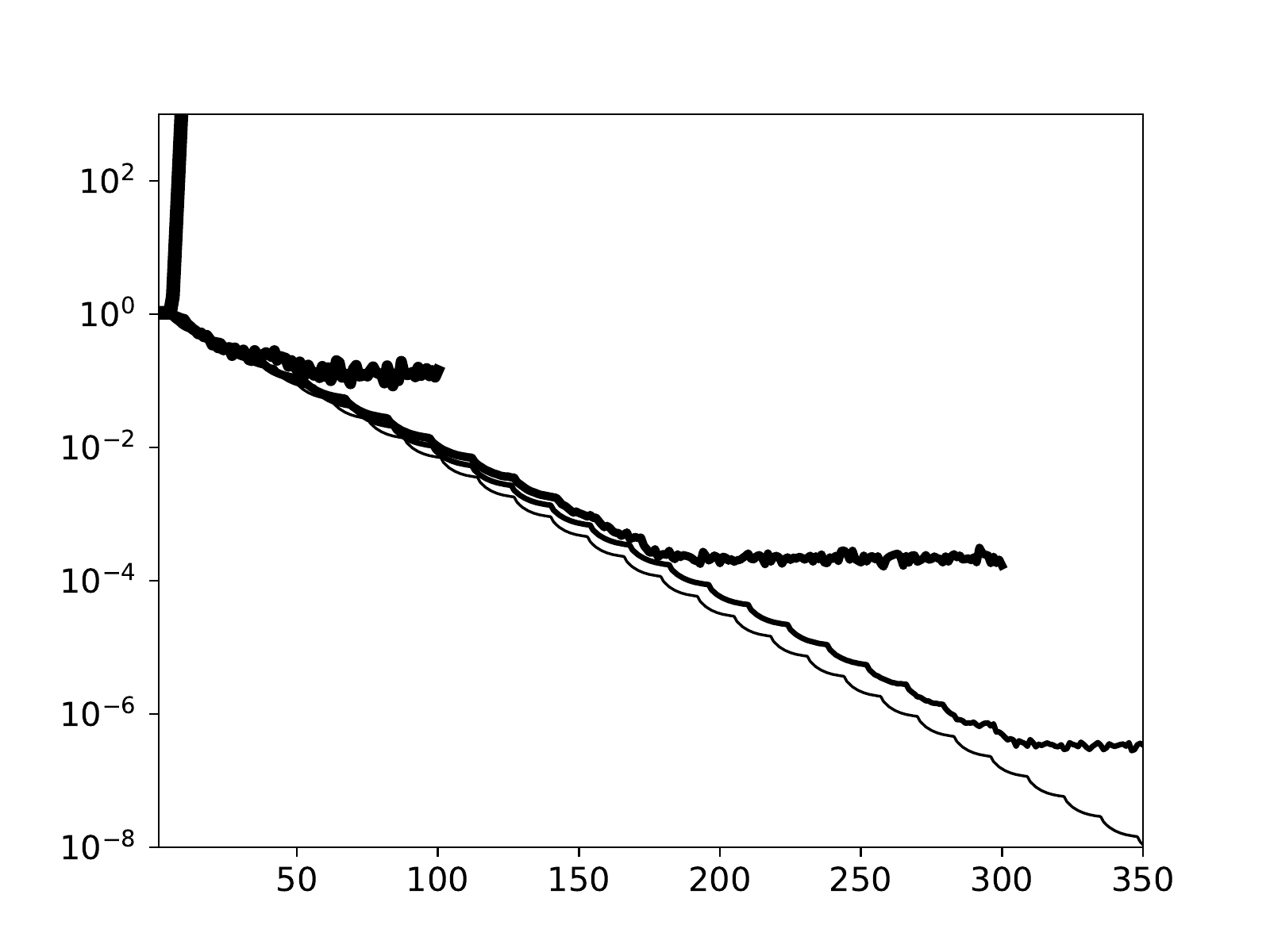} \\
	\footnotesize(a) \amen\ with $\bA_L$ & \footnotesize(b) \amen\ with $\bC_L\bA_L \bC_L$ & \footnotesize(c) \stsolve\ with $\bC_L\bA_L \bC_L$
\end{tabular}
\caption{Results for Section \ref{sec:noprecond}, computed residual bounds in dependence on iteration count: (a) \amen\ applied directly to $\bA_L$, (b) \amen\ with directly multiplied $\bC_L\bA_L \bC_L$, (c) \stsolve\ with directly multiplied $\bC_L\bA_L \bC_L$; each for $L=10,15,20,25,30$ (by increasing line thickness).}\label{fig:noprec}
\end{figure}

\subsection{Constant-coefficient diffusion, $D=1$}\label{sec:test1d}

We now consider the same basic test case \eqref{test1d}, but with $\bB_L =\bC_L\bA_L \bC_L$ in the combined tensor representation constructed in Section \ref{sec:tensorstructure}. In this and the following tests, residual values always refer to the preconditioned residuals $\norm{ \bB_L \cdot - \bg_L }_2$, which is proportional to the $\SSp{1}$-errors in the corresponding grid functions. With a target residual of $10^{-12}$, both \amen\ and \stsolve\ converge unaffected by any round-off errors for very large values of $L$. Indeed, this remains true for values $L$ that are substantially larger than in the case $L=50$ shown here, but since the corresponding mesh widths are then smaller than machine precision, the results are  more difficult to interpret.

For the \amen\ solver, we assemble the complete representation of $\bB_L$. In exact arithmetic, this would in fact be equivalent to applying representations $\Rep{A}_L$ and $\Rep{C}_L$ separately, and differences are entirely due to the different tensor decomposition in the previous case. With \stsolve, we have the additional option of using error-controlled inexact residual evaluations as in \cite[Alg.\ 3]{BS16} to reduce the arising  ranks of intermediate results; as shown in \cite[Prop.\ 5.9]{BS16}, the statement of Theorem \ref{thm:stconv} still applies to this modification. To this end, we use that the tensor representation can be directly rewritten in the form $\bB_L = \Ten{\varTheta}_{L\CQ 1}^\MT \: \Ten{\varTheta}_{L\CQ 1}$ as in \eqref{Eq:DecBTheta}, where $\norm{\Ten{\varTheta}_{L\CQ 1}}$ is uniformly bounded with respect to $L$, and apply an additional recompression by TT-SVD after applying $\Ten{\varTheta}_{L\CQ 1}$.

\begin{figure}\centering
\begin{tabular}{cc}
	\includegraphics[width=6cm]{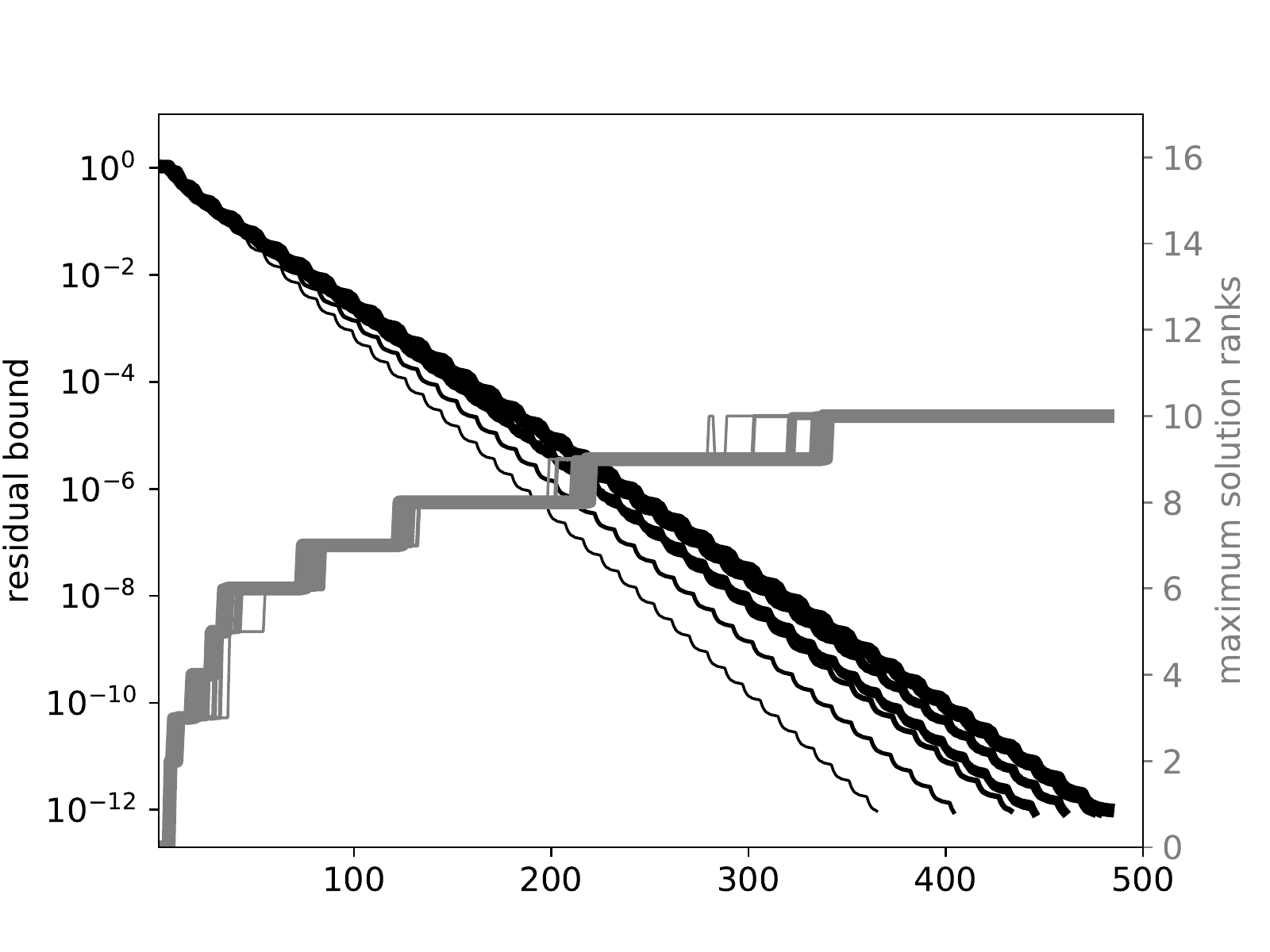} &
	\includegraphics[width=6cm]{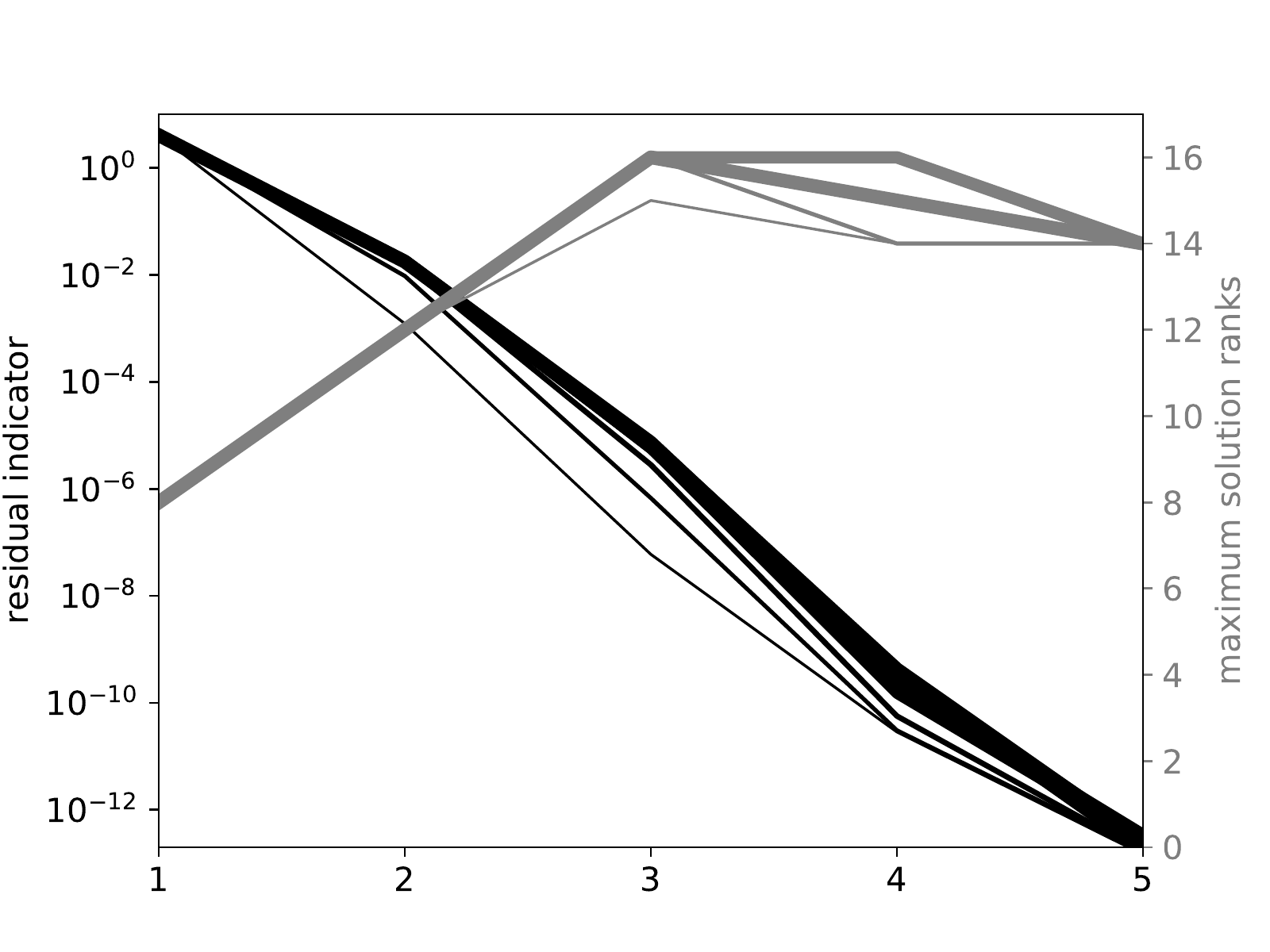} \\
	\footnotesize(a) \stsolve & \footnotesize(b) \amen
\end{tabular}
\caption{Results for Section \ref{sec:test1d}: residual bounds (black) and maximum approximation ranks (grey), with well-conditioned combined representation of $\bB_L = \bC_L\bA_L \bC_L$ for $L=10,15,20,25,30,35,40,45,50$ (by increasing line thickness).}\label{fig:lap1}
\end{figure}

\subsection{Highly oscillatory diffusion coefficients, $D=1$}\label{sec:testosc}

We next consider the family of problems with oscillatory diffusion coefficients on $\varOmega=(0,1)$ given by
\begin{equation}\label{testosc}
 - ( a_K u')' = 1, \quad u(0)=0, \; u'(1) = 0, \quad  a_K(x) = \bigl( 2 + \cos (K\pi x)\bigr)^{-1}
\end{equation}
for large values of $K$. The exact solution reads
\begin{equation}\label{testoscsol}
 u(x) = x(2-x) + (K\pi)^{-1} \bigl[ (1-x) \sin(K\pi x) + (K\pi)^{-1}\bigl(1 - \cos(K\pi x)\bigr)\bigr]
 \, .
\end{equation}
For $K\in 4\N$, we represent the vectors $\Vec{u}_\mathrm{ex}$ and $\Vec{v}_\mathrm{ex}$ of nodal values of $u$ and $u'$
in the multiscale TT format
with ranks bounded by seven and six, respectively.

The coefficient $a_K$ does not have an explicit low-rank form, and we compute approximations as follows: using the explicit rank-three representation of $c(x) = 2 + \cos (K\pi x)$, using \stsolve\ we solve the equation
$c(x_i) \,a_K(x_i) = 1$ in the points $x_i = 2^{-L}(i-\frac12)$, $i=1,\ldots,2^L$, as an elliptic problem on $\ell^2(\{1,\ldots,2^L\})$ for $a_K$; the tolerance is chosen to ensure a sufficient uniform error bound.

We compare the results for the values $K=2^{10}, 2^{20}, 2^{30}, 2^{40}$ with $L = 50$ in Figure \ref{fig:osc}.  The observed convergence patterns of both methods show hardly any influence of the value of $K$. Note that the computed preconditioned coefficients $\bu_L$ do not satisfy the same rank bound as \eqref{testoscsol} (which holds for $\bC_L \bu_L$, the corresponding vector of scaled nodal values).
In each case, comparison with the explicit low-rank form of $\Vec{u}_\mathrm{ex}$, $\Vec{v}_\mathrm{ex}$ shows that the expected total error bounds are achieved.

More specifically, approximations of the $\SSp{1}$-error in the solutions can be obtained in a numerically stable way by evaluating $\norm{\Vec{u}_\mathrm{ex} - \bC_L \bu_L}_2$ and $\norm{\Vec{v}_\mathrm{ex} -\Ten{\varTheta}_{L\CQ 1} \bu_L}_2$, where $\Ten{\varTheta}_{L\CQ 1}$ is the factor of the preconditioned Laplacian stiffness matrix as in Section \ref{sec:test1d}.
In Table \ref{tab:osc}, we summarize the obtained approximations of $\SSp{1}$-errors for different solver tolerances and parameters $L$. We observe an effect that is particular to the present one-dimensional setting, where the accuracy in the nodal values is limited only by the solver tolerance as soon as $L$ is sufficiently large for resolving the oscillations in the solution.

\begin{figure}[ht]\centering
\begin{tabular}{cc}
	\includegraphics[width=6cm]{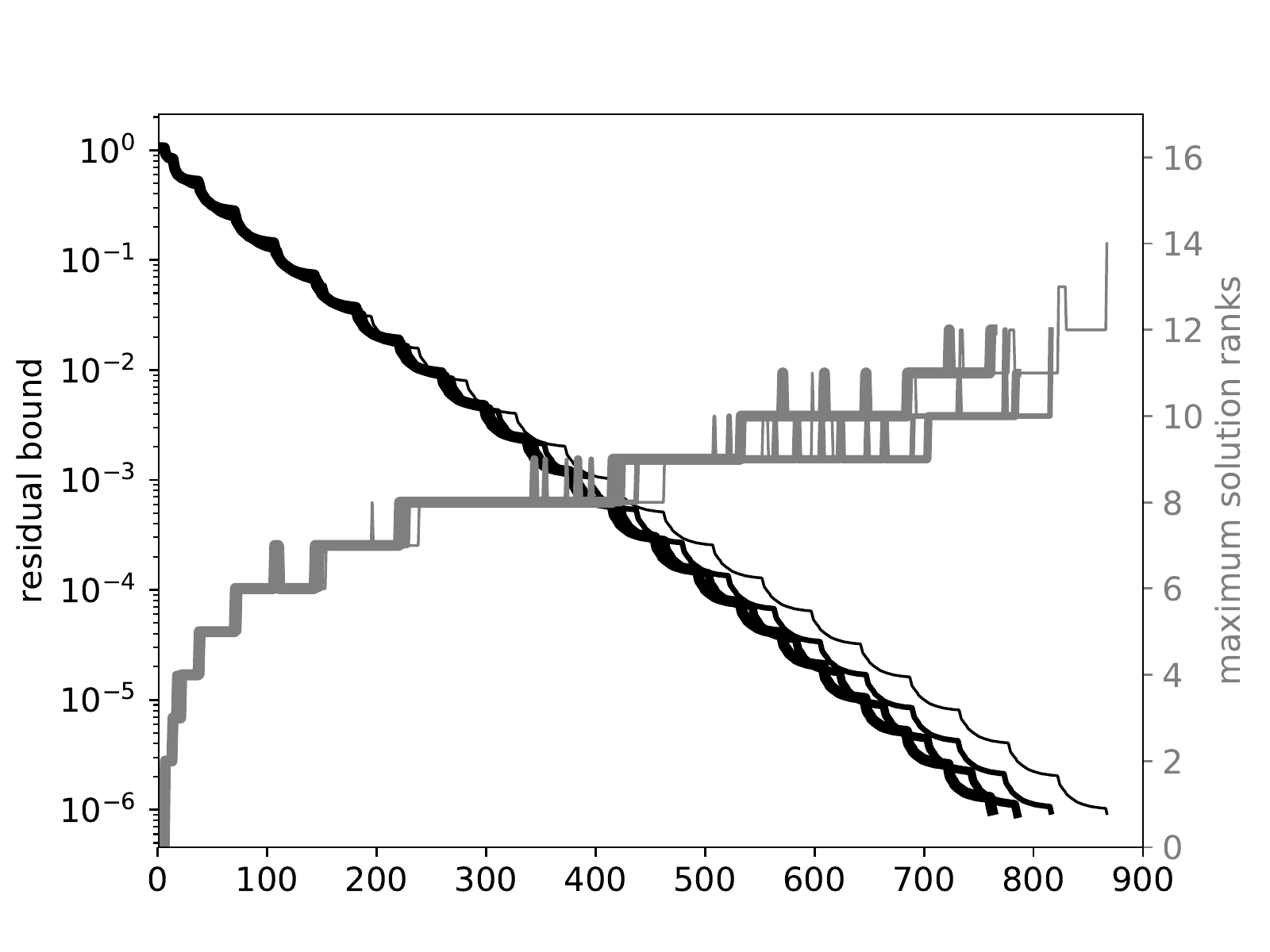} &
	\includegraphics[width=6cm]{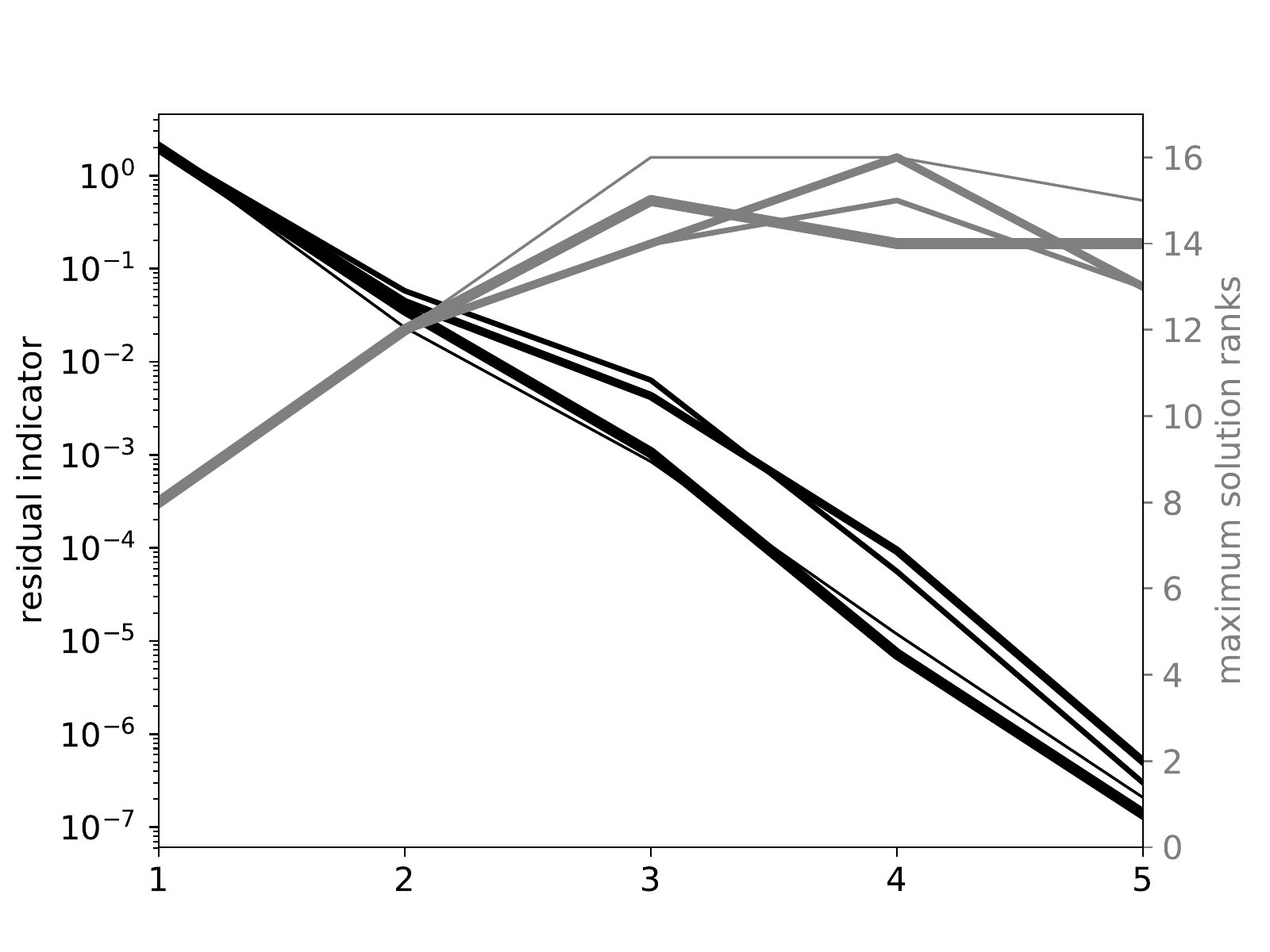} \\
	\footnotesize(a) \stsolve & \footnotesize(b) \amen
\end{tabular}
\caption{Results for Section \ref{sec:testosc}: residual bounds (black) and maximum approximation ranks (grey), with well-conditioned representation of $\bB_L$ for oscillatory coefficient $a_K$ with $K= 2^{10},2^{20},2^{30},2^{40}$ (by increasing line thickness) and $L=50$.}\label{fig:osc}
\end{figure}

\begin{table}[ht]\centering\footnotesize
	\begin{tabular}{c|n{1}{2}|n{1}{2}|n{1}{2}|n{1}{2}}
		tol.\ & \multicolumn{1}{c}{$L= 10$} &  \multicolumn{1}{c}{$L=20$} & \multicolumn{1}{c}{$L=30$} & \multicolumn{1}{c}{$L=40$} \\
$10^{-4}$ & 3.65e-01 & 3.65e-01 & 3.21e-05 & 3.45e-05 \\
$10^{-6}$ & 3.65e-01 & 3.65e-01 & 2.89e-07 & 2.88e-07 \\
$10^{-8}$ & 3.65e-01 & 3.65e-01 & 3.73e-08 & 2.71e-08
	\end{tabular}
	\caption{$\SSp{1}$-errors in approximations computed by {\amen} with $K=2^{30}$, solver tolerances $10^{-4}$, $10^{-6}$, $10^{-8}$, and discretization parameters $L$.}\label{tab:osc}
\end{table}

\subsection{Constant-coefficient diffusion, $D=2$}\label{sec:test2d}

On $\varOmega = (0,1)^2$, we consider \eqref{varform} with $A=1$, $c=0$ and $f=1$, that is, the weak form of
\begin{equation}\label{test2d}
 -\Delta u = 1, \quad u|_\varGamma = 0, \;\; \partial_n u|_{\partial\varOmega\setminus\varGamma} = 0,
\end{equation}
with $\varGamma$ as in \eqref{Eq:Space}. Both \stsolve\ and \amen\ show the expected convergence for $L=50$, with ranks that are consistent with the singular value decay of discretized solutions of Figure \ref{fig:solranks}(b).

Similarly to Section \ref{sec:test1d}, \stsolve\ is used with inexact residual evaluation, now using that the tensor representation of $\bB_L$ can be written in the form $\bB_L = \Ten{\varTheta}_{L\CQ 1}^\MT \; \Ten{\varTheta}_{L\CQ 1} + \Ten{\varTheta}_{L\CQ 2}^\MT \; \Ten{\varTheta}_{L\CQ 2}$ as in \eqref{Eq:DecBTheta}. Here $\Ten{\varTheta}_{L\CQ 1}$ and $\Ten{\varTheta}_{L\CQ 2}$ are uniformly bounded, and each has maximum representation rank $24$.
Although these ranks remain independent of $L$,
additional rank reductions in this decomposition are important
from a quantitative point of view:
since $\bB_L$ has maximum representation rank $1152$,
applying it directly would lead to very large ranks.
In the available version of \amen, the decomposition of $\bB_L$ needs to be
used directly,
but the impact of large residual ranks is limited due to the ALS-type
residual approximation.
In this case, the main downside of the direct assembly of $\bB_L$ is
in the higher memory requirements for large $L$.

\begin{figure}\centering
\begin{tabular}{cc}
	\includegraphics[width=6cm]{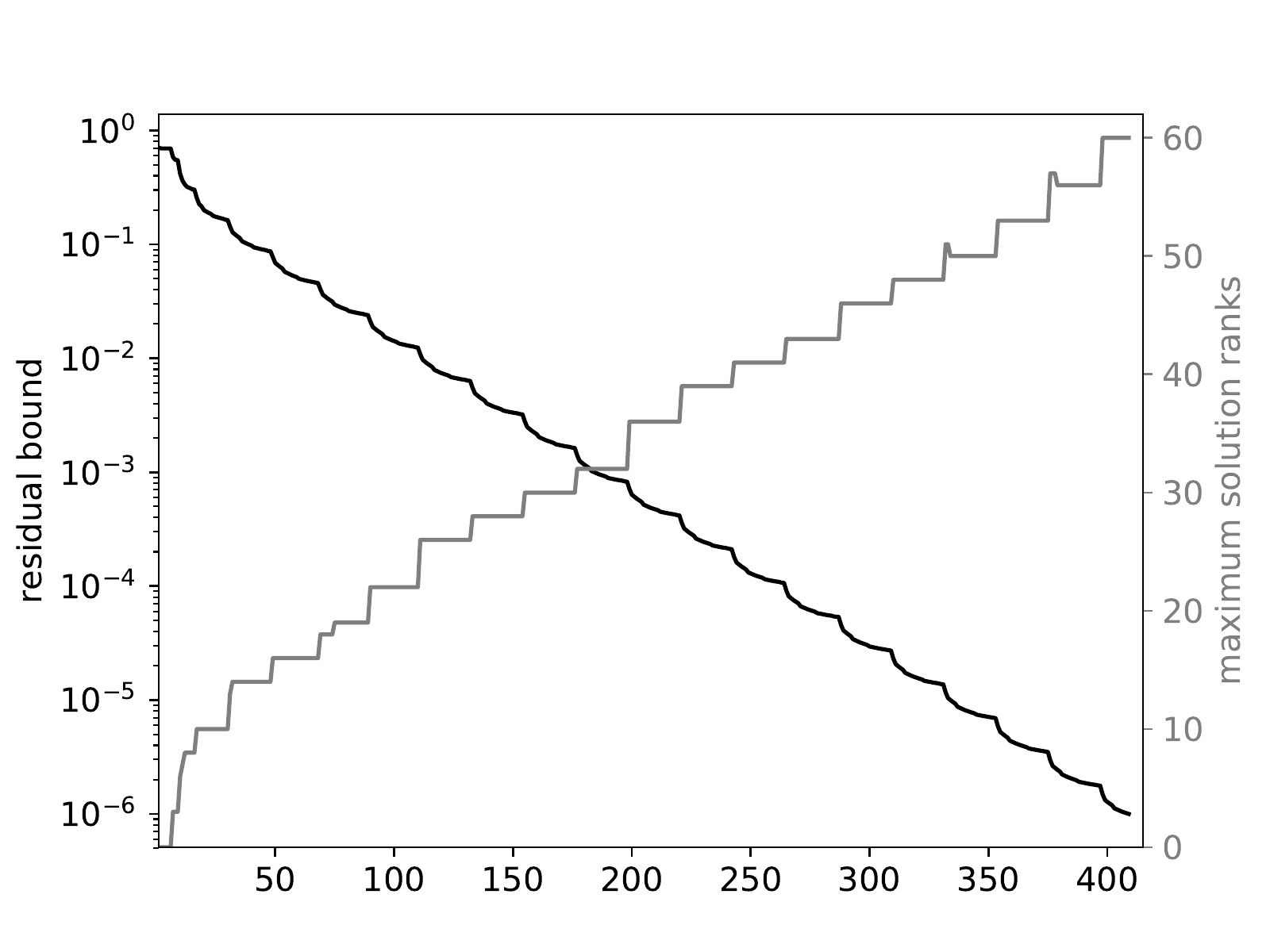} &
	\includegraphics[width=6cm]{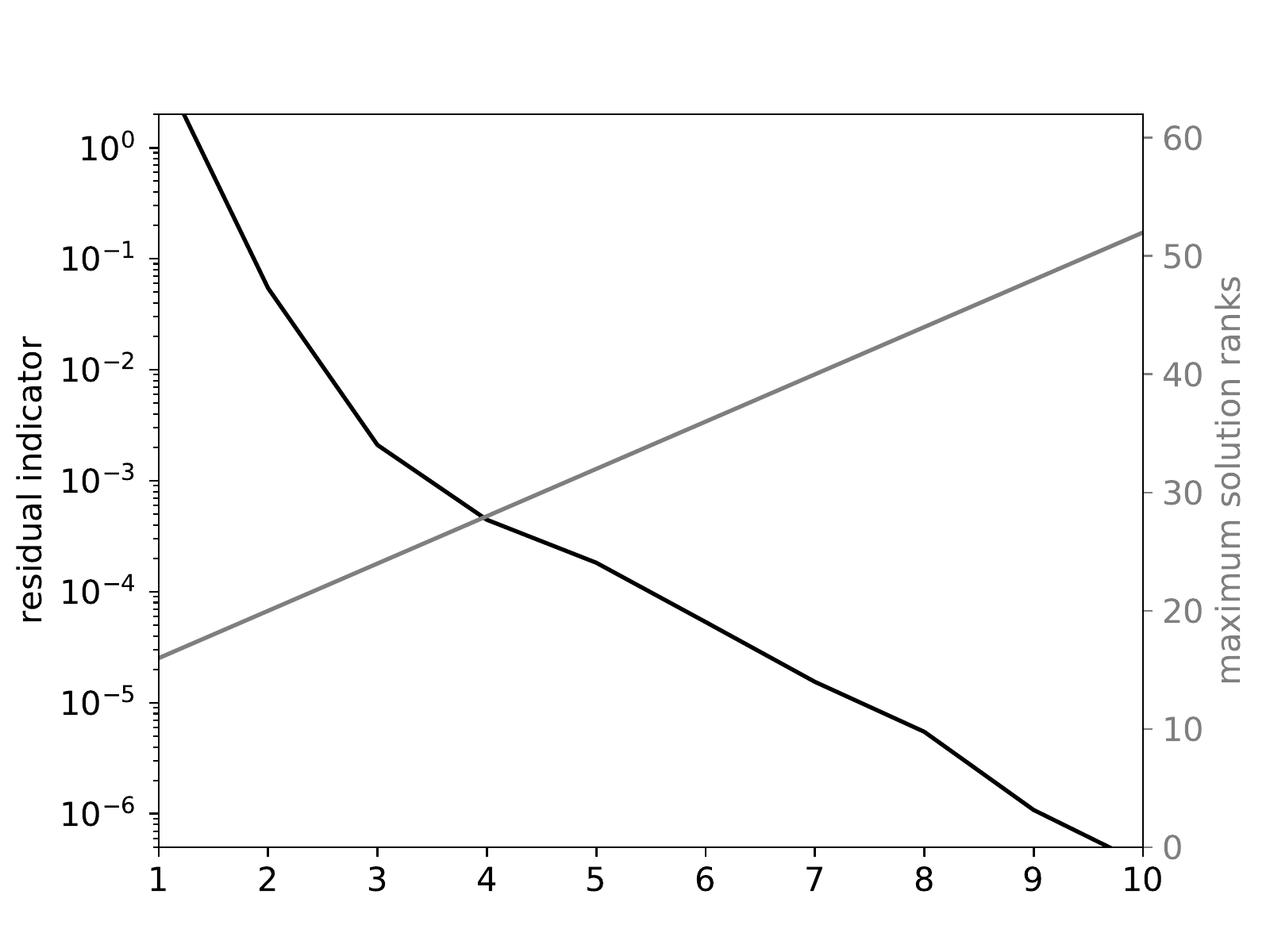} \\
	\footnotesize(a) \stsolve & \footnotesize(b) \amen
\end{tabular}
\caption{Results for Section \ref{sec:test2d}: residual bounds (black) and maximum approximation ranks (grey), for well-conditioned representation of $\bB_L$,  $L=50$.}\label{fig:lap2}
\end{figure}

In terms of computational costs, the error-controlled full residual approximation used by \stsolve\ is substantially more expensive in all considered tests than the heuristic ALS-based residual approximation used by \amen. The precise CPU timings are of limited significance due to the different implementations, but we observe running times on the order of several minutes with \stsolve\ and of seconds with \amen\ in the tests with $D=1$, and of several hours with \stsolve\ and several minutes with \amen\ in the case of $D=2$.
Although no convergence analysis is available for this \amen\ implementation, especially for the present well-conditioned representations it is thus an interesting practical choice.

\section{Conclusion and Outlook}

We have identified notions of condition numbers of tensor representations that determine the propagation of errors in numerical algorithms. In the application to multilevel tensor-structured discretizations of second-order elliptic PDEs, the careful construction of tensor representations of preconditioned system matrices guided by these notions leads to solvers that remain numerically stable also for very large discretization levels. For one such method based on soft thresholding of tensors, we have shown that the total number of arithmetic operations scales like a fixed power of the logarithm of the prescribed bound on the total solution error.

The new variant of BPX preconditioning that we have analyzed leads to a very natural low-rank structure of the symmetrically preconditioned stiffness matrix. Remarkably, unlike the rank increase with discretization levels observed in the case of separation of \emph{spatial coordinates} \cite{MR3474850}, in the present case of tensor separation of \emph{scales}, we obtain preconditioner representation ranks that remain uniformly bounded with respect to the discretization level. Similar results can be obtained for related preconditioners based on wavelet transforms, which are the subject of ongoing work.

For the preconditioned solvers, the relevant approximability properties of solutions we have identified are slightly different from the ones for nodal basis coefficients studied, e.g., in \cite{KazeevSchwab}. The numerically observed favorable decay of TT singular values of preconditioned quantities thus requires further investigation; it also depends on the particular choice of preconditioner.

The practical application to more general problems was not considered here to avoid further technicalities, but one can similarly treat different boundary conditions, more general coefficients (such as highly oscillatory diffusion coefficients in $D>1$) or more general domains by techniques developed in \cite{KazeevPhD}.
We also expect that our basic considerations concerning the combined low-rank representations of preconditioners and discretization matrices of differential operators can be applied, with potentially more technical effort, to other types of basis expansions and to different classes of PDE problems.

Although the representation ranks of preconditioned matrices that we obtain are bounded independently of the discretization level, they are fairly large for $D>1$. This suggests the further investigation of solvers with improved quantitative performance, in particular the combination of \amen-type methods with efficient residual approximation strategies for preconditioned operator representations.

We expect that the framework we have proposed here for studying the conditioning of tensor representations can be developed further to provide more detailed information, as well as sharper computable bounds for representations of matrices.

\bibliographystyle{amsplain}
\bibliography{tensorbpx}

\begin{appendix}

\section{Preconditioner Optimality}\label{App:PO}

In preparation of the proof of Theorem \ref{Th:Prec}, we define the square matrix $\Ten{D}_L$ of order $2^{DL}$ by
\begin{equation}\label{Eq:DL}
 \Par{\Ten{D}_L}_{j \; j'} = \IProd[1]{ \nabla\vphiD{L}{j} }{ \nabla \vphiD{L}{j'} }_{\LSp{2}{(\varOmega)}}
 \quad\text{for all}\quad
 j,j'\in \cJ_L.
\end{equation}
Since the bilinear form $a$ is elliptic on $V$ with $\norm{\cdot}_V = \norm{\nabla \cdot \,}_{\LSp{2}\Par{\varOmega}^d}$, we obtain
\[
\begin{aligned}
 \langle \Ten{C}_L \bA_L \Ten{C}_L \bv, \bv\rangle
   & = a\Bigl(\sum_{\ell = 0}^L 2^{-\ell} \sum_{j\in \cJ_\ell} \vphiD{\ell}{j} (\Ten{ P}_{\ell,L}^\MT \Ten{v})_j, \sum_{\ell = 0}^L 2^{-\ell} \sum_{j\in \cJ_\ell} \vphiD{\ell}{j} (\Ten{ P}_{\ell,L}^\MT \Ten{v})_j \Bigr)   \\
  & \sim \Bignorm{\nabla \Bigl( \sum_{\ell = 0}^L 2^{-\ell} \sum_{j\in \cJ_\ell} \vphiD{\ell}{j} (\Ten{ P}_{\ell,L}^\MT \Ten{v})_j \Bigr) }^2_{\LSp{2}}
  = \langle \Ten{C}_L \Ten{D}_L \Ten{C}_L \bv, \bv\rangle,
\end{aligned}
 \]
and it thus suffices to show \eqref{bpxtobeshown} for $\Ten{D}_L$ in place of $\Ten{A}_L$.

For $\ell=0,\ldots,L$, we introduce the nested subspaces
$
  \cV_\ell = \ran \Ten{P}_{\ell,L} \subseteq \R^{\cJ_L}
 $;
that is, the spaces $\cV_\ell$ are spanned by vectors of finest-grid nodal values of the functions $\vphiD{\ell}{j}$, $j\in \cJ_\ell$. In particular, $\cV_L = \R^{\cJ_L}$.

\begin{lemma}\label{Lm:strengthenedCS}
For $\ell, \ell' \in \{0,\ldots,L\}$, let
\begin{equation}\label{Def:Lll}
	{\Ten{L}}_{\ell, \ell'}
	=
	2^{-\ell-\ell'}
	\,
	\Ten{ P}_{\ell,L}^\MT
	\:
	\Ten{D}_{L}
	\,
	\Ten{ P}_{\ell',L}.
\end{equation}
Then for $0\leq k\leq \ell$, $0\leq k'\leq \ell'$,
\begin{equation}\label{combinedoffdiagest}
	\Abs[1]{
		\IProd[1]{
			{\Ten{L}}_{\ell,\ell'} \Ten{ P}_{\ell',L}^\MT\bw_{k'}
		}{
			\Ten{ P}_{\ell,L}^\MT \bw_k
		}
	}
	\lesssim
	2^{-\frac12 \abs{\ell'-\ell}}
	\;
	2^{\frac12 (k'-\ell')}
	\,
	\norm{\bw_{k'}}_2
	\;
	2^{\frac12 (k-\ell)}
	\,
	\norm{\bw_{k}}_2
\end{equation}
for all $\bw_k \in \cV_k$ and $\bw_{k'} \in \cV_{k'}$.
\end{lemma}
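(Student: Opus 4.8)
Throughout, $v_\ell\in V_\ell$ denotes the finite element function whose nodal coefficient vector is $\Ten{P}_{\ell,L}^\MT\bw_k$, and $v_{\ell'}\in V_{\ell'}$ the one with coefficient vector $\Ten{P}_{\ell',L}^\MT\bw_{k'}$. The plan is to rewrite the left-hand side of \eqref{combinedoffdiagest} as a continuous bilinear expression and then apply a variant of the strengthened Cauchy--Schwarz inequality familiar from the analysis of multilevel preconditioners, exploiting the coarseness hypotheses $k\le\ell$ and $k'\le\ell'$. First, unwinding the definitions of $\Ten{L}_{\ell,\ell'}$ in \eqref{Def:Lll}, of $\Ten{D}_{L}$ in \eqref{Eq:DL}, and of the prolongations, one obtains
\[
  \IProd[1]{\Ten{L}_{\ell,\ell'}\Ten{P}_{\ell',L}^\MT\bw_{k'}}{\Ten{P}_{\ell,L}^\MT\bw_k}
  = 2^{-\ell-\ell'}\IProd[1]{\nabla v_\ell}{\nabla v_{\ell'}}_{\LSp{2}(\varOmega)}.
\]
Since $\bw_k\in\cV_k$ is the vector of finest-grid nodal values of some $w_k\in V_k\subseteq V_\ell$, and since $\Ten{P}_{\ell,L}^\MT\Ten{P}_{\ell,L}$ is a lumped mass matrix that is uniformly spectrally equivalent to the identity, $v_\ell$ is the image of $w_k$ under a mass-type operator on $V_\ell$. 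Using the equivalence of $\ell^2$-norms of coefficients and $\LSp{2}(\varOmega)$-norms and the uniform boundedness of $\Ten{P}_{\ell,L}$, this gives $\Norm{v_\ell}_{\LSp{2}(\varOmega)}\lesssim\Norm{\bw_k}_2$; combining the inverse inequality on $V_\ell$ with the uniform $H^1$-stability of that mass-type operator and the inverse inequality on $V_k$ yields the key bound $\Norm{\nabla v_\ell}_{\LSp{2}(\varOmega)}\lesssim 2^{k}\Norm{\bw_k}_2$. The analogous bounds hold for $v_{\ell'}$ with $k'$ in place of $k$. Because $\Ten{L}_{\ell,\ell'}^\MT=\Ten{L}_{\ell',\ell}$, the statement \eqref{combinedoffdiagest} is symmetric under interchanging $(\ell,k,\bw_k)\leftrightarrow(\ell',k',\bw_{k'})$, so one may assume $\ell\le\ell'$; a short computation of exponents then shows that \eqref{combinedoffdiagest} is equivalent to $\Abs{\IProd{\nabla v_\ell}{\nabla v_{\ell'}}_{\LSp{2}(\varOmega)}}\lesssim 2^{\ell+\frac12 k+\frac12 k'}\Norm{\bw_k}_2\Norm{\bw_{k'}}_2$.

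If $k'\le\ell$, I would simply use Cauchy--Schwarz and the two gradient bounds: $\Abs{\IProd{\nabla v_\ell}{\nabla v_{\ell'}}}\le\Norm{\nabla v_\ell}_{\LSp{2}}\Norm{\nabla v_{\ell'}}_{\LSp{2}}\lesssim 2^{k+k'}\Norm{\bw_k}_2\Norm{\bw_{k'}}_2$, and since $k,k'\le\ell$ one has $2^{k+k'}\le 2^{\ell+\frac12 k+\frac12 k'}$. The substantial case is $\ell<k'$. Here $v_\ell$ is piecewise $D$-linear on the level-$\ell$ mesh, so $\Delta v_\ell=0$ on the interior of every level-$\ell$ element, and element-wise integration by parts leaves only face contributions,
\[
  \IProd[1]{\nabla v_\ell}{\nabla v_{\ell'}}_{\LSp{2}(\varOmega)}
  = \sum_{F}\int_F \llbracket\partial_n v_\ell\rrbracket\,v_{\ell'},
\]
the sum running over the interior faces $F$ of the level-$\ell$ mesh (with $\llbracket\cdot\rrbracket$ the normal-derivative jump) together with the Neumann boundary faces, while the Dirichlet faces contribute nothing since $v_{\ell'}$ vanishes on $\varGamma$; the boundary terms are handled exactly like the interior ones. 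On each $F$, inverse and trace inequalities give $\Norm{\llbracket\partial_n v_\ell\rrbracket}_{\LSp{2}(F)}\lesssim 2^{\ell/2}\Norm{\nabla v_\ell}_{\LSp{2}(\omega_F)}$ with $\omega_F$ a level-$\ell$ neighbourhood of $F$.

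The point is then to estimate the $\LSp{2}(F)$-norm of $v_{\ell'}$ sharply by splitting $v_{\ell'}=Q_{k'}v_{\ell'}+(I-Q_{k'})v_{\ell'}$, where $Q_{k'}$ is the $\LSp{2}$-orthogonal projection onto $V_{k'}$, which is $H^1$-stable and satisfies $\Norm{(I-Q_{k'})w}_{\LSp{2}}\lesssim 2^{-k'}\Norm{\nabla w}_{\LSp{2}}$ uniformly. The trace inequality on the level-$k'$ mesh plus the inverse inequality on $V_{k'}$ yields $\sum_F\Norm{Q_{k'}v_{\ell'}}_{\LSp{2}(F)}^2\lesssim 2^{k'}\Norm{v_{\ell'}}_{\LSp{2}(\varOmega)}^2$, whereas the pure-$H^1$ trace inequality scaled to level $k'$ together with the approximation property of $Q_{k'}$ gives $\sum_F\Norm{(I-Q_{k'})v_{\ell'}}_{\LSp{2}(F)}^2\lesssim 2^{-k'}\Norm{\nabla v_{\ell'}}_{\LSp{2}(\varOmega)}^2$. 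Combining these with the jump bound via Cauchy--Schwarz over the faces (using the finite overlap of the $\omega_F$), one arrives at
\[
  \Abs[1]{\IProd[1]{\nabla v_\ell}{\nabla v_{\ell'}}_{\LSp{2}(\varOmega)}}
  \lesssim 2^{\frac12(\ell+k')}\Norm{\nabla v_\ell}_{\LSp{2}}\Norm{v_{\ell'}}_{\LSp{2}}
  + 2^{\frac12(\ell-k')}\Norm{\nabla v_\ell}_{\LSp{2}}\Norm{\nabla v_{\ell'}}_{\LSp{2}},
\]
and inserting $\Norm{\nabla v_\ell}_{\LSp{2}}\lesssim 2^{k}\Norm{\bw_k}_2$, $\Norm{v_{\ell'}}_{\LSp{2}}\lesssim\Norm{\bw_{k'}}_2$ and $\Norm{\nabla v_{\ell'}}_{\LSp{2}}\lesssim 2^{k'}\Norm{\bw_{k'}}_2$, both terms are $\lesssim 2^{\ell+\frac12 k+\frac12 k'}\Norm{\bw_k}_2\Norm{\bw_{k'}}_2$ precisely because $k\le\ell$, which is the claim.

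I expect the main obstacle to be the bookkeeping in the last case: the element-wise integration by parts and the trace and inverse estimates have to be arranged so that the hypotheses $k\le\ell$ and $k'\le\ell'$ produce exactly the geometric factors $2^{\frac12(k-\ell)}$ and $2^{\frac12(k'-\ell')}$ of \eqref{combinedoffdiagest}; in particular, the gain of a factor $2^{-k'/2}$ from the component $(I-Q_{k'})v_{\ell'}$ near the coarse skeleton is essential and cannot be obtained from a crude estimate. A secondary technical point is the uniform $H^1$-stability of the mass-type operator used in the reduction, for which the tensor-product structure of the meshes --- the near-simultaneous diagonalisability of the level-$\ell$ mass and stiffness matrices, up to lower-order boundary terms --- is used; apart from that, the argument is an adaptation of the strengthened Cauchy--Schwarz estimates standard in the BPX literature, e.g.\ \cite{MR1443598,MR1189535,MR2425155}.
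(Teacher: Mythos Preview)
Your proposal is essentially correct and follows a genuinely different route from the paper.  The paper works entirely at the discrete level: it exploits the Kronecker structure~\eqref{Akronecker} to reduce to univariate factors, uses the explicit relation $\hat{\Ten{L}}_{\ell,\ell'} = 2^{-\frac12|\ell'-\ell|}\hat{\Ten{\varXi}}_{\ell,\ell'}\hat{\Ten{L}}_{\ell',\ell'}$ coming from the orthogonality of $\vphi{\ell}{j}'$ to constants, and the key technical step is the entry-wise estimate $\norm{\hat{\Ten{P}}_{\ell,L}^\MT\hat{\Ten{P}}_{k,L}-\hat{\Ten{P}}_{k,\ell}}_{2\to 2}\lesssim 2^{\frac12(k-\ell)}$, which is combined with inverse-type bounds \eqref{eq:bernstein}--\eqref{eq:bernsteinmod} and a discrete Cauchy--Schwarz.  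You instead translate to finite element functions, establish $\norm{\nabla v_\ell}_{\LSp{2}}\lesssim 2^k\norm{\bw_k}_2$ via $H^1$-stability of the lumped-mass operator $M_\ell$ (this replaces the paper's \eqref{eq:projkl}), and then run the classical strengthened Cauchy--Schwarz argument by element-wise integration by parts on the coarse mesh and trace/inverse inequalities.  Your route is closer to the textbook BPX analysis and would generalize more readily to non-tensor-product meshes, at the price of the $H^1$-stability of $M_\ell$ as a separate ingredient; the paper's route is self-contained in the discrete setting and sidesteps that operator altogether.

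Two minor points.  First, the trace estimate you need for $(I-Q_{k'})v_{\ell'}$ on the level-$\ell$ skeleton relies on \emph{local} approximation, so a Scott--Zhang or Cl\'ement quasi-interpolant is the clean choice; the global $\LSp{2}$-projection works on these tensor-product meshes because of its known local $H^1$-stability, but you should say so.  Second, the $H^1$-stability of $M_\ell$ can be obtained more directly than via diagonalisation: in one dimension $M_\ell$ acts on nodal values by a fixed three-point stencil, hence maps first differences to a bounded combination of first differences, and the tensor-product structure then carries this to $D$ dimensions; the boundary modifications are rank-one and harmless.
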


\begin{proof}
	The matrices defined in \ref{Def:Lll} can also be expressed in terms of
\begin{equation}\label{eq:LEexpr}
		\hat{\Ten{L}}_{\ell,\ell'}
		=
		\bigl( 2^{-\ell-\ell'} \langle\vphi{\ell}{j}' ,\vphi{\ell'}{j'}'\rangle_{\LSp{2}(0,1)} \bigr)_{j \in \hat{\cJ}_\ell, j'\in\hat{\cJ}_{\ell'}}
		\, ,
		\qquad
		\hat{\Ten{E}}_{\ell,\ell'}
		=
		\bigl( \langle\vphi{\ell}{j} ,\vphi{\ell'}{j'}\rangle_{\LSp{2}(0,1)} \bigr)_{j \in \hat{\cJ}_\ell, j'\in\hat{\cJ}_{\ell'}}
\end{equation}
as
\begin{equation}\label{Akronecker}
  {\Ten{L}}_{\ell, \ell'} = \sum_{d=1}^D \Bigl(\bigotimes_{i=1}^{d-1} \hat{\Ten{E}}_{\ell,\ell'}\Bigr)  \otimes  \hat{\Ten{L}}_{\ell,\ell'} \otimes \Bigl(\bigotimes_{i=d+1}^D \hat{\Ten{E}}_{\ell,\ell'} \Bigr).
\end{equation}
The matrices $\hat{\Ten{L}}_{\ell, \ell'}$ for $\ell > \ell'$ can be written in terms of $\hat{\Ten{L}}_{\ell',\ell'}$ as follows: since $\vphi{\ell}{j}'$ for $j\in \hat{\cJ}_\ell$ with $j < 2^\ell$ are $\LSp{2}$-orthogonal to constants, the inner products of these functions with $\vphi{\ell'}{j'}'$, $j'\in \hat{\cJ}_{\ell'}$, can be nonzero only when $j = 2^{\ell - \ell'} j'$.
For $\ell \geq \ell'$, we thus define
$\hat{\Ten{\varXi}} \in \R^{ \hat{\cJ}_\ell \times \hat{\cJ}_{\ell'} }$ by
\[
 \Par[1]{ \hat{\Ten{\varXi}}_{\ell,\ell'} }_{j \; j'} = \delta_{j , \, 2^{\ell - \ell'} j'}
 \quad\text{for all}\quad
 j \in \hat{\cJ}_\ell, \, j'\in \hat{\cJ}_{\ell'}
 \, .
\]
Additionally taking into account the difference in $\LSp{2}$-normalization factors between levels $\ell$ and $\ell'$, we obtain
\[
 \hat{\Ten{L}}_{\ell, \ell'} =2^{-\frac12\abs{\ell'-\ell}} {\hat{\Ten{\varXi}}}_{\ell,\ell'} \hat{\Ten{L}}_{\ell',\ell'}\;.
\]

Let $\hat\cV_{k,\ell} = \ran \hat{\Ten{P}}_{k,\ell} \subseteq \R^{\hat{\cJ}_\ell}$ and $\hat{\cV}_k = \ran \hat{\Ten{ P}}_{k,L}$. For $k\leq \ell$ and $\bw \in \hat\cV_{k,\ell}$, let $w \in V_k$ be the function represented by $\bw$. Then by \eqref{eq:LEexpr} and the standard inverse estimate for $V_k$ (see, e.g., \cite[Sec.\ 8.8.3]{Hackbusch:book:2017}), we have $\langle \hat{\Ten{L}}_{\ell,\ell} \bw , \bw\rangle = 2^{-2\ell} \abs{w}_{\SSp[0]{1}}^2 \lesssim 2^{2(k-\ell)} \norm{w}_{\LSp{2}}^2$, and thus
\begin{equation}
\label{eq:bernstein}
	\langle \hat{\Ten{L}}_{\ell,\ell} \bw , \bw\rangle \leq 2^{2(k-\ell)} \norm{\bw}_2^2 ,\qquad
	  k \leq \ell, \;\bw \in \hat\cV_{k,\ell};
\end{equation}
in particular, we also have $\norm{\hat{\Ten{L}}_{\ell,\ell}}_{2\to 2} \lesssim 1$.
Moreover, one has
\begin{equation}
\label{eq:bernsteinmod}
	\langle \hat{\Ten{\varXi}}_{\ell,\ell'} \hat{\Ten{L}}_{\ell',\ell'} \hat{\Ten{\varXi}}_{\ell,\ell'}^\MT\, \bw , \bw\rangle \leq 2^{k-\ell + \min\{k-\ell',0\}} \norm{\bw}_2^2, \qquad
	  k,\ell'\leq \ell, \; \bw \in \hat\cV_{k,\ell}.
\end{equation}
To see this, denote again by $w \in V_k$ the function represented by $\bw$, and consider first $\ell' \leq k \leq \ell$. Then $\tilde{\bw} := \hat{\Ten{\varXi}}_{\ell,\ell'}^\MT\, \bw$ corresponds to evaluations of $w$ on the grid of level $\ell'$, which is coarser than the one on which it is piecewise linear, and consequently $2^{\ell - k} \sum_{j' \in \hat{\cJ}_{\ell'}} \abs{\tilde{\bw}_{j'}}^2 \lesssim  \sum_{j \in \hat{\cJ}_{\ell} } \abs{\bw_j}^2$. Thus $\norm{\tilde{\bw}}_2 =\norm{\hat{\Ten{\varXi}}_{\ell,\ell'}^\MT\, \bw}_2 \lesssim 2^{\frac12(k-\ell)} \norm{\bw}_2$, and \eqref{eq:bernsteinmod} follows in this case.  If $k < \ell' \leq \ell$, $\tilde{\bw} \in \hat\cV_{k,\ell'}$ corresponds to a reinterpolation of $w$ that is still on a finer level than $k$, and thus $\norm{\tilde{\bw}}_2 \leq 2^{\frac12(\ell'-\ell)} \norm{\bw}_2$. Using \eqref{eq:bernstein}, we thus obtain
$\langle \hat{\Ten{L}}_{\ell',\ell'} \tilde{\bw},\tilde{\bw} \rangle
 \lesssim 2^{2(k-\ell')} \norm{\tilde{\bw}}_2^2 \lesssim 2^{2k - 2\ell' + \ell' - \ell} \norm{\bw}_2^2$, which gives \eqref{eq:bernsteinmod}.

We next show that
\begin{equation}\label{eq:projkl}
   \norm{ \hat{\Ten{P}}_{\ell,L}^\MT \hat{\Ten{P}}_{k,L} - \hat{\Ten{P}}_{k,\ell} }_{2\to 2} \lesssim 2^{\frac12 (k - \ell)}, \quad k \leq \ell.
\end{equation}
Let $s_{j\,i} := (\hat{\Ten{P}}_{\ell,L}^\MT \hat{\Ten{P}}_{k,L})_{j\,i}$, $v_{j\,i} := (\hat{\Ten{P}}_{k,\ell})_{j\,i}$, $j \in \hat{\cJ}_{\ell}$, $i \in \hat{\cJ}_{k}$.
Recalling \eqref{Eq:PartFct}, and taking into account that $\supp \vphi{\ell}{j} = [2^{-\ell} (j-1), 2^{-\ell} (j+1)]\cap [0,1]$,
\[
  s_{j\,i} = 2^{-L} \sum_{n = 2^{L-\ell} (j - 1) }^{\min\{2^{L-\ell} (j+1), 2^L\}} \vphi{\ell}{j}(2^{-L}n)\, \vphi{k}{i}(2^{-L}n),
   \qquad
  v_{j\,i} =  2^{-\frac12 \ell}\vphi{k}{i}(2^{-\ell} j).
\]
Whenever $\vphi{k}{i}$ is linear on $\supp \vphi{\ell}{j}$, one has $s_{j\,i}=v_{j\,i}$ by the symmetries in the summation in $s_{j\,i}$. This fails to hold only when $j = 2^{\ell - k} i$. In these cases, one easily verifies that $\abs{ s_{j\,i} - v_{j\,i} } \lesssim 2^{\frac32 (k-\ell)}$ when $j < 2^\ell$ and $\abs{ s_{j\,i} - v_{j\,i} } \lesssim 2^{\frac12 (k-\ell)}$ for $i=2^k$, $j = 2^\ell$, with $L$-independent constants. Using interpolation to bound $\norm{ \hat{\Ten{P}}_{\ell,L}^\MT \hat{\Ten{P}}_{k,L} - \hat{\Ten{P}}_{k,\ell} }_{2\to 2}$ by the corresponding row- and column-sum norms, where the number of nonzero entries in each row and column is uniformly bounded, we obtain \eqref{eq:projkl}.

Note that for any $\bw \in \hat\cV_{k}$ there exists a unique $\Vec{z}\in \R^{\hat{\cJ}_{k}}$ such that $\bw = \hat{\Ten{P}}_{k,L} \Vec{z}$, where $\norm{\bw}_2 \sim \norm{\Vec{z}}_2$ with constants independent of $k$, $L$.
As a consequence, using this with \eqref{eq:projkl}, we obtain
$
   \norm{ \hat{\Ten{P}}_{\ell,L}^\MT \bw - \hat{\Ten{P}}_{k,\ell} \Vec{z} }_2 \lesssim  2^{\frac12 (k - \ell)} \norm{\bw}_2
$ for such $\bw$ and $\Vec{z}$.
Since
\begin{multline*}
  \langle \hat{\Ten{L}}_{\ell,\ell} \hat{\Ten{ P}}_{\ell,L}^\MT\bw, \hat{\Ten{ P}}_{\ell,L}^\MT \bw\rangle
    = \langle  \hat{\Ten{L}}_{\ell,\ell} \hat{\Ten{P}}_{k,\ell} \Vec{z}, \hat{\Ten{P}}_{k,\ell} \Vec{z} \rangle
       + 2 \langle \hat{\Ten{L}}_{\ell,\ell} (\hat{\Ten{ P}}_{\ell,L}^\MT\bw - \hat{\Ten{P}}_{k,\ell} \Vec{z}) , \hat{\Ten{P}}_{k,\ell} \Vec{z}\rangle  \\
       +  \langle \hat{\Ten{L}}_{\ell,\ell} (\hat{\Ten{ P}}_{\ell,L}^\MT\bw - \hat{\Ten{P}}_{k,\ell} \Vec{z}) , (\hat{\Ten{ P}}_{\ell,L}^\MT\bw - \hat{\Ten{P}}_{k,\ell} \Vec{z})  \rangle,
\end{multline*}
using \eqref{eq:bernstein} for $\hat{\Ten{P}}_{k,\ell} \Vec{z} \in \hat\cV_{k,\ell}$, $\norm{\hat{\Ten{L}}_{\ell,\ell} }_{2\to 2}\lesssim 1$, and the Cauchy--Schwarz inequality for the middle term on the right, we obtain
\[
   \langle \hat{\Ten{L}}_{\ell,\ell} \hat{\Ten{ P}}_{\ell,L}^\MT\bw, \hat{\Ten{ P}}_{\ell,L}^\MT \bw\rangle
     \lesssim ( 2^{2 (k-\ell)}  + 2^{\frac32 (k-\ell)} + 2^{k-\ell} ) \norm{\bw}_2^2
   \lesssim 2^{k-\ell}  \norm{\bw}_2^2,
\]
and similarly, using \eqref{eq:bernsteinmod} in the same manner,
\[
\langle  \hat{\Ten{\varXi}}_{\ell,\ell'} \hat{\Ten{L}}_{\ell',\ell'} \hat{\Ten{\varXi}}_{\ell,\ell'}^\MT\, \hat{\Ten{ P}}_{\ell,L}^\MT\bw, \hat{\Ten{ P}}_{\ell,L}^\MT \bw\rangle
  \lesssim 2^{k - \ell} \norm{\bw}_2^2,
\]
for any $\bw \in \hat\cV_k$, $k\leq \ell$.

Consequently, with $0\leq k\leq \ell$, $0\leq k'\leq \ell'$, $\ell\leq \ell'$, for all $\bw_k \in \hat{\cV}_k$ and $\bw_{k'} \in \hat{\cV}_{k'}$,
\begin{equation}\label{offdiagestimate}
	\begin{aligned}
  \Abs[1]{
		\IProd[1]{
			\hat{\Ten{L}}_{\ell,\ell'}
			\hat{\Ten{ P}}_{\ell',L}^\MT \, \bw_{k'}
		}{
			\hat{\Ten{ P}}_{\ell,L}^\MT \, \bw_k
		}
	}	&=
			2^{-\frac12 \abs{\ell'-\ell}}
			\;
			\Abs[1]{
				\IProd[1]{
					\hat{\Ten{L}}_{\ell',\ell'}
					\hat{\Ten{ P}}_{\ell',L}^\MT \, \bw_{k'}
				}{
					{\hat{\Ten{\varXi}}}_{\ell,\ell'}^\MT \hat{\Ten{ P}}_{\ell,L}^\MT \bw_k
				}
			}
			\\
			&\leq
			2^{-\frac12 \abs{\ell'-\ell}}
			\;
			\begin{multlined}[t]
				\IProd[1]{
					\hat{\Ten{L}}_{\ell',\ell'} \hat{\Ten{ P}}_{\ell',L}^\MT\bw_{k'}
				}{
					\hat{\Ten{ P}}_{\ell',L}^\MT\bw_{k'}
				}^{\frac12}
				\\
				\times
				\IProd{
					\hat{\Ten{L}}_{\ell',\ell'} {\hat{\Ten{\varXi}}}_{\ell,\ell'}^\MT \, \hat{\Ten{ P}}_{\ell,L}^\MT \bw_k
				}{
					{\hat{\Ten{\varXi}}}_{\ell,\ell'}^\MT \hat{\Ten{ P}}_{\ell,L}^\MT \bw_k
				}^{\frac12}
			\end{multlined}
			\\
			&\leq
			2^{-\frac12 \abs{\ell'-\ell}}
			\;
			2^{\frac12 (k'-\ell')}
			\norm{\bw_{k'}}_2
			\;
			2^{\frac12 (k-\ell)}
			\norm{\bw_{k}}_2
			\, .
	\end{aligned}
\end{equation}
By~\eqref{Akronecker}, since $\norm{\hat{\Ten{E}}_{\ell,\ell'}}_{2\to 2} \leq 1$, this implies \eqref{combinedoffdiagest}.
\end{proof}

\begin{proof}[Proof of Theorem \ref{Th:Prec}]
Theorem \ref{thm:bpx} implies in particular that $\langle \Ten{C}_{2,L} \bv,\bv\rangle \sim \langle \Ten{D}_L^{-1} \bv,\bv\rangle$ for all $\bv$, that is,
\begin{equation}\label{stdbpxinv}
  \langle \Ten{D}_L^{-1} \bv,\bv\rangle \sim \sum_{\ell=0}^L \bignorm{2^{-\ell} \Ten{ P}_{\ell,L}^\MT \Ten{v} }_2^2.
\end{equation}
We use this in the following proof of the lower bound in \eqref{bpxtobeshown}, which is inspired by arguments using frame theory from \cite{MR2425155}.
Let $
 \bar\cV_L = \bigtimes_{\ell=0}^L \R^{\cJ_\ell}
$. We consider the mappings
$\Ten{F}\colon \cV_L \to \bar\cV_L$ and $\Ten{F}^\MT \colon \bar\cV_L \to \cV_L$ given by
\[
 \Ten{F}\colon \bv \mapsto \bigl(   2^{-\ell} \Ten{ P}_{\ell,L}^\MT \bv \bigr)_{\ell=0,\ldots,L},
 \qquad
  \Ten{F}^\MT\colon (\bv_\ell)_{\ell=0,\ldots,L} \mapsto \sum_{\ell=0}^L 2^{-\ell} \Ten{ P}_{\ell,L} \bv_\ell.
\]
For any $\bw = (\bw_\ell)_{\ell=0,\ldots,L} \in\ran \Ten{F}$, where $\bw = \Ten{F}\bv$ for $\bv \in \cV_L$, we obtain
\[
 \norm{\Ten{F}^\MT \bw}_{\Ten{D}_L} = \sup_{\Vec{z}\neq 0} \frac{\langle \Ten{F}^\MT \bw, \Vec{z}\rangle}{\norm{\Vec{z}}_{\Ten{D}_L^{-1}}} =
  \sup_{\Vec{z}\neq 0} \frac{\langle\Ten{F} \bv , \Ten{F}\Vec{z}\rangle}{\sqrt{\langle \Ten{D}_L^{-1}\Vec{z},\Vec{z}\rangle}} \sim \norm{\Ten{F} \bv }_2 = \norm{\bw}_2
\]
by \eqref{stdbpxinv}.
Now let $\Ten{G} \colon \cV_L \to \bar\cV_L, \, \Vec{v} \mapsto \bigl( \Ten{ P}_{\ell,L}^\MT \Vec{v}\bigr)_{\ell=0,\ldots,L}$. Then $\ran \Ten{G} \subset \ran \Ten{F}$, and thus
\[
   \langle \Ten{C}_{L} \Ten{D}_{L} \Ten{C}_{L}\Vec{v},\Vec{v} \rangle
 = \norm{ \Ten{F}^\MT \Ten{G} \Vec{v}}_{\Ten{D}_L}^2
      \sim \norm{\Ten{G} \Vec{v}}_2^2 \gtrsim \norm{\Vec{v}}_2^2,
\]
which shows the lower bound in \eqref{bpxtobeshown}.

Arguing along similar lines to obtain the upper bound in \eqref{bpxtobeshown} would lead to a constant depending linearly on $L$, and we thus now turn to a different approach using Lemma \ref{Lm:strengthenedCS}.
Let
$\Ten{R}_\ell = \Ten{ P}_{\ell,L} (\Ten{ P}_{\ell,L}^\MT \, \Ten{ P}_{\ell,L})^{-1}\Ten{ P}_{\ell,L}^\MT$
be the discrete orthogonal projector onto $\cV_\ell$.
For any $\Vec{w} \in \cV_L$, setting
$\Vec{w}_0 = \Ten{R}_0 \QQ \Vec{w}$
and
$\Vec{w}_\ell = (\Ten{R}_\ell - \Ten{R}_{\ell-1}) \, \Vec{w}$
for $\ell = 1,\ldots,L$,
we obtain the decomposition
\begin{equation}\label{wdecomp}
  \Vec{w} =\sum_{\ell=0}^L \Vec{w}_\ell
  \quad\text{with}\quad
  \norm{\Vec{w}}_2^2 =  \sum_{\ell=0}^L \norm{\Vec{w}_\ell}_2^2
  \, ,
\end{equation}
which yields
\[
  \langle \Ten{C}_L \Ten{D}_L \Ten{C}_L \bw,\bw\rangle
    = \sum_{\ell,\ell'=0}^L \Bigl\langle {\Ten{L}}_{\ell,\ell'} \Ten{ P}_{\ell',L}^\MT \sum_{k'=0}^{\ell'} \bw_{k'},\; \Ten{ P}_{\ell,L}^\MT \sum_{k=0}^\ell \bw_k \Bigr\rangle .
\]
For $n = 0,1,\ldots,L$, by Lemma \ref{Lm:strengthenedCS},
\begin{multline*}
	\sum_{\ell=0}^{L-n}
	\IProd[2]{
		{\Ten{L}}_{\ell,\ell+n}
		\Ten{ P}_{\ell+n,L}^\MT
		\sum_{k'=0}^{\ell+n} \bw_{k'}
	}{
		\Ten{ P}_{\ell,L}^\MT
		\sum_{k=0}^\ell \bw_k
	}
	\\
	\lesssim
	2^{-\frac12 n}
	\sum_{\ell=0}^{L-n}
	\sum_{k'=0}^{\ell+n}
	\sum_{k=0}^\ell
	2^{\frac12 (k'-\ell-n)}
	\,
	2^{\frac12 (k-\ell)}
	\,
	\norm{\bw_k}_2
	\,
	\norm{\bw_{k'}}_2
	\\
	\leq
	2^{-\frac12 n }
	\sum_{\ell=0}^{L-n}
	\CuBr[4]{
		\,
		\sum_{k'=0}^{\ell+n}
		2^{\frac12 (k'-\ell-n) }
		\,
		\norm{\bw_{k'}}_2^2
		+
		\sum_{k=0}^\ell
		2^{\frac12 (k-\ell) }
		\,
		\norm{\bw_k}_2^2
	}
	\, .
\end{multline*}
We thus arrive at
\[
  \langle \Ten{C}_L \Ten{D}_L \Ten{C}_L \bw,\bw\rangle
   \lesssim  \sum_{n=0}^L 2^{-\frac12 n} \sum_{\ell=0}^{L} \norm{\bw_\ell}_2^2
    \lesssim \norm{\bw}_2^2,
\]
completing the proof of the upper bound in \eqref{bpxtobeshown} and hence of Theorem \ref{Th:Prec}.
\end{proof}

\begin{remark}
Although we have used some simplifications due to the tensor structure in our particular setting, the proof of Theorem \ref{thm:bpxsymm} carries over to more general hierarchies of finite element spaces, provided that one can establish a corresponding strengthened Cauchy--Schwarz inequality as in \eqref{combinedoffdiagest}, see, e.g., \cite{MR1213412,MR853662,MR1189535}.
\end{remark}

\section{Rank-Reduced Decomposition}\label{App:RR}

The following proof of Lemma~\ref{Lm:DecMP} relies on properties of
the strong Kronecker product inherited from the matrix and Kronecker products:
linearity, associativity, and distributivity.
In particular, products of cores can be transformed
into products of smaller cores
by eliminating linear dependence from
the decomposition, as the following example illustrates.

	For any scalar coefficients $\alpha$, $\beta$ and
	blocks \emph{or subcores} $V_{11}$, $V_{12}$, $V_{21}$, $V_{22}$, $W_{11}$, $W_{12}$
	of suitable rank and mode size, we have
	\begin{subequations}
	\begin{gather}
		\begin{bmatrix}
			V_{11}	& V_{12}	\\
			V_{21}	& V_{22}	\\
		\end{bmatrix}
		\RP
		\begin{bmatrix}
			\alpha W_{11}	& \alpha W_{12}	\\
			\beta W_{11}	& \beta W_{12}	\\
		\end{bmatrix}
		=
		\begin{bmatrix}
			V_{11}	&V_{12}	\\
			V_{21}	&V_{22}	\\
		\end{bmatrix}
		\RP
		\left(
		\begin{bmatrix}
			\alpha
			\\
			\beta
			\\
		\end{bmatrix}
		\RP
		\begin{bmatrix}
			W_{11}
			&
			W_{12}
			\\
		\end{bmatrix}
		\right)
		\label{Eq:CoreTransformation1}
		\\
		=
		\begin{bmatrix}
			V_{11}	& V_{12}	\\
			V_{21}	& V_{22}	\\
		\end{bmatrix}
		\RP
		\begin{bmatrix}
			\alpha	\\
			\beta	\\
		\end{bmatrix}
		\RP
		\begin{bmatrix}
			W_{11}	& W_{12}	\\
		\end{bmatrix}
		\label{Eq:CoreTransformation2}
		\\
		=
		\left(
		\begin{bmatrix}
			V_{11}	& V_{12}	\\
			V_{21}	& V_{22}	\\
		\end{bmatrix}
		\RP
		\begin{bmatrix}
			\alpha	\\
			\beta	\\
		\end{bmatrix}
		\right)
		\RP
		\begin{bmatrix}
			W_{11}	& W_{12}	\\
		\end{bmatrix}
		=
		\begin{bmatrix}
			\alpha V_{11} + \beta V_{12}	\\
			\alpha V_{21} + \beta V_{22}	\\
		\end{bmatrix}
		\RP
		\begin{bmatrix}
			W_{11}
			&
			W_{12}
			\\
		\end{bmatrix}
		\, .
		\label{Eq:CoreTransformation3}
	\end{gather}
	\end{subequations}
	When the partitioning shown
	in~\eqref{Eq:CoreTransformation1}--\eqref{Eq:CoreTransformation3}
	is in terms of blocks (which, by our identification convention, are subcores of rank $1 \times 1$),
	the rank of the product is
	$2 \times 2$.
	The left-hand side of~\eqref{Eq:CoreTransformation1}
	and the right-hand side of~\eqref{Eq:CoreTransformation3}
	represent this core ``in the TT format'', which has only
	only one rank parameter and
	happens to be
	nothing else than low-rank matrix factorization in these two cases.
	The ``ranks'' of the first decomposition, equal to $2$,
	are larger than the ``ranks'' of the last decomposition, equal to $1$.

	The TT representation~\eqref{Eq:CoreTransformation2}
	consists of three cores and has
	ranks $2,1$. However, all mode indices of its middle core are dummy indices
	(the mode size of the middle core is $1 \times 1$),
	so the middle core can be merged with either of the neighboring cores
	\emph{without changing the decomposition scheme}
	(by the latter we mean the set and the ordering of the
	variables separated by the TT format).

\begin{proof}[Proof of Lemma~\ref{Lm:DecMP}]
	\begin{subequations}
	Let
		$\hat{\Ten{N}}_{\ell \CQ L \CQ \alpha}
		=
		\hat{\Ten{M}}_{L \CQ \alpha}
		\,
		\hat{\Ten{P}}_{\ell,L}
		$ and $
		c_{\ell,L}
		=
		2^{ \Par{\alpha+\frac12} L - \frac12 \Par{L-\ell} }
		$.
	Applying Lemma~\ref{Lm:DecM} with the same $\ell$ as fixed here,
	we obtain
	\begin{equation}\label{Eq:DecM1}
		\hat{\Ten{M}}_{L \CQ \alpha}
		=
		2^{ \Par{\alpha+\frac12} L }
		\;
		\hat{\Core{A}}
		\RP
		\hat{\Core{U}}^{\RP \ell}
		\RP
		\hat{\Core{T}}_{0}
		\RP
		\hat{\Core{V}}^{\RP \Par{L-\ell}}
		\RP
		\hat{\Core{I}}
		\RP
		\hat{\Core{M}}_\alpha
		\, ,
	\end{equation}
	where $\hat{\Core{I}}$ is as defined in~\eqref{Eq:DefCoresZZZ4}.
	On the other hand, Lemma~\ref{Lm:DecP} gives the decomposition
	\begin{equation}\label{Eq:DecP1}
		\hat{\Ten{P}}_{\ell,L}
		=
		2^{ -\frac12 \Par{L-\ell} }
		\;
		\hat{\Core{A}}
		\RP
		\hat{\Core{U}}^{\RP \ell}
		\RP
		\hat{\Core{I}}
		\RP
		\hat{\Core{X}}^{\RP \Par{L-\ell}}
		\RP
		\hat{\Core{P}}
		\RP
		\begin{bmatrix}
			1
		\end{bmatrix}
		\, .
	\end{equation}
	Rewriting
	matrix multiplication core-wise,
	we combine the rank-two decompositions
	given by~\eqref{Eq:DecM1}--\eqref{Eq:DecP1}
	into a rank-four decomposition for the product:
	\begin{equation}\label{Eq:DecMPdiesis}
		\hat{\Ten{N}}_{\ell \CQ L \CQ \alpha}
		=
		c_{\ell,L}
		\;
		\hat{\Core{A}}_\flat
		\RP
		\hat{\Core{U}}_\sharp^{\RP \ell}
		\RP
		\hat{\Core{W}}_0
		\RP
		\hat{\Core{Y}}_\sharp^{\RP \Par{L-\ell}}
		\RP
		\Core{E}
		\RP
		\hat{\Core{M}}_\alpha
		\, ,
	\end{equation}
	where
	$\hat{\Core{A}}_\flat$
	and $\hat{\Core{W}}_\alpha$ with $\alpha=0,1$
	are as in~\eqref{Eq:DefCoresBemolle1} and~\eqref{Eq:DefCoresBemolle2}
	and
	$
		\Core{E}
		=
		\hat{\Core{I}} \MP \hat{\Core{P}}
	$,
	$
		\hat{\Core{U}}_\sharp
		=
		\hat{\Core{U}} \MP \hat{\Core{U}}
	$
	and
	$
		\hat{\Core{Y}}_\sharp
		=
		\hat{\Core{V}}	\MP \hat{\Core{X}}
	$
	are newly introduced cores.
	Direct calculation with expressions given in~\eqref{Eq:DefCoresZZZ0}, \eqref{Eq:DefCoresZZZ1}
	and~\eqref{Eq:DefCoresZZZ4} yields
	\begin{equation*}
			\Core{E}
			=
			\begin{bmatrix*}[r]
				1 &    \\
				0 &    \\
					& 1  \\
					& 0  \\
			\end{bmatrix*}
			\,
			,
		\qquad
				\hat{\Core{U}}_\sharp
				=
				\begin{bmatrix*}[l]
						I	& J^{\MT} & J^{\MT}	&				\\
							& J				&					& I_2		\\
							&					&	J				& I_1		\\
							&					&					&				\\
				\end{bmatrix*}
				\, ,
			 \quad
				\hat{\Core{Y}}_\sharp
				=
				\frac14
				\begin{bmatrix*}[r]
					\begin{pmatrix}
						3			\\
						3			\\
					\end{pmatrix}
					&
					\begin{pmatrix}
						1			\\
						1			\\
					\end{pmatrix}
					&
					\begin{pmatrix*}[r]
						-1		\\
						1			\\
					\end{pmatrix*}
					&
					\begin{pmatrix*}[r]
						-1		\\
						1			\\
					\end{pmatrix*}
					\\
					\begin{pmatrix}
						1			\\
						1			\\
					\end{pmatrix}
					&
					\begin{pmatrix}
						3			\\
						3			\\
					\end{pmatrix}
					&
					\begin{pmatrix*}[r]
						1			\\
						-1		\\
					\end{pmatrix*}
					&
					\begin{pmatrix*}[r]
						1			\\
						-1		\\
					\end{pmatrix*}
					\\
					\begin{pmatrix*}[r]
						-1		\\
						3			\\
					\end{pmatrix*}
					&
					\begin{pmatrix*}[r]
						-1		\\
						1			\\
					\end{pmatrix*}
					&
					\begin{pmatrix}
						3			\\
						1			\\
					\end{pmatrix}
					&
					\begin{pmatrix}
						1			\\
						1			\\
					\end{pmatrix}
					\\
					\begin{pmatrix}
						1			\\
						1			\\
					\end{pmatrix}
					&
					\begin{pmatrix}
						1			\\
						3			\\
					\end{pmatrix}
					&
					\begin{pmatrix*}[r]
						1			\\
						-1		\\
					\end{pmatrix*}
					&
					\begin{pmatrix*}[r]
						3			\\
						-1		\\
					\end{pmatrix*}
				\\
			\end{bmatrix*}
	\end{equation*}
	in terms of the blocks $I$, $I_1$, $I_2$ and $J$
	defined in~\eqref{Eq:DefBlocksIJ}.

	\textbf{Sweeping from level $L$ to level $1$.}
	Let us define the following cores:
	\begin{equation}\nonumber
		\Core{C}
		=
		\begin{bmatrix}
			1 &   &   &   \\
			  & 1 &   &   \\
				&   & 1 &   \\
				& 1 &   & 0 \\
		\end{bmatrix}
		\qquad\text{and}\qquad
		\Core{G}
		=
		\begin{bmatrix}
			1 &   &   &   \\
				& 1 &   &   \\
				&   & 1 &   \\
			  &   &   & 0 \\
		\end{bmatrix}
		\, .
	\end{equation}
	First, we note that
	the second and fourth rows in each of the cores
	$\Core{E}$ and $\hat{\Core{Y}}_\sharp \RP \Core{C}$ are equal.
	This implies that
	$
		\Core{E}
		=
		\Core{C}
		\RP
		\Core{E}
	$
	and
	$
		\hat{\Core{Y}}_\sharp
		\RP
		\Core{C}
		=
		\Core{C}
		\RP
		\hat{\Core{Y}}_\sharp
		\RP
		\Core{C}
	$.
	Further, in each of the cores
	$\hat{\Core{W}}_0 \RP \Core{C}$
	and
	$\hat{\Core{U}}_\sharp$, the last row is zero, so that
	$
		\hat{\Core{W}}_0
		\RP
		\Core{C}
		=
		\Core{G}
		\RP
		\hat{\Core{W}}_0
		\RP
		\Core{C}
	$
	and
	$
		\hat{\Core{U}}_\sharp
		=
		\Core{G}
		\RP
		\hat{\Core{U}}_\sharp
	$.
	These equalities allow to sweep the cores $\Core{C}$ and
	$\Core{G}$
	through the
	last $L-\ell$ and
	first $\ell$ levels respectively:
	starting
	from~\eqref{Eq:DecMPdiesis}, we obtain
	\begin{equation}\label{Eq:DecMPdiesis1}
		\begin{aligned}
			\hat{\Ten{N}}_{\ell \CQ L \CQ \alpha}
			&=
			c_{\ell,L}
			\;
			\hat{\Core{A}}_\flat
			\RP
			\hat{\Core{U}}_\sharp^{\RP \ell}
			\RP
			\hat{\Core{W}}_0
			\RP
			\hat{\Core{Y}}_\sharp^{\RP \Par{L-\ell}}
			\RP
			\Core{C}
			\RP
			\Core{E}
			\RP
			\hat{\Core{M}}_\alpha
			\\
			&=
			c_{\ell,L}
			\;
			\hat{\Core{A}}_\flat
			\RP
			\hat{\Core{U}}_\sharp^{\RP \ell}
			\RP
			\hat{\Core{W}}_0
			\RP
			\Core{C}
			\RP
			\Par{
				\hat{\Core{Y}}_\sharp
				\RP
				\Core{C}
			}^{\RP \Par{L-\ell}}
			\RP
			\Core{E}
			\RP
			\hat{\Core{M}}_\alpha
			\\
			&=
			c_{\ell,L}
			\;
			\hat{\Core{A}}_\flat
			\RP
			\hat{\Core{U}}_\sharp^{\RP \ell}
			\RP
			\Core{G}
			\RP
			\hat{\Core{W}}_0
			\RP
			\Core{C}
			\RP
			\Par{
				\hat{\Core{Y}}_\sharp
				\RP
				\Core{C}
			}^{\RP \Par{L-\ell}}
			\RP
			\Core{E}
			\RP
			\hat{\Core{M}}_\alpha
			\\
			&=
			c_{\ell,L}
			\;
			\hat{\Core{A}}_\flat
			\RP
			\Par{
				\hat{\Core{U}}_\sharp
				\RP
				\Core{G}
			}^{\RP \ell}
			\RP
			\hat{\Core{W}}_0
			\RP
			\Core{C}
			\RP
			\Par{
				\hat{\Core{Y}}_\sharp
				\RP
				\Core{C}
			}^{\RP \Par{L-\ell}}
			\RP
			\Core{E}
			\RP
			\hat{\Core{M}}_\alpha
			\, .
		\end{aligned}
	\end{equation}

	\textbf{Sweeping from level $1$ to level $L$.}
	Further, we notice that the cores
	\begin{equation}\nonumber
		\Core{F}
		=
		\begin{bmatrix}
			1 &   &   &   \\
			  & 1 & 1 & 0 \\
		\end{bmatrix}
		\qquad\text{and}\qquad
		\Core{H}
		=
		\begin{bmatrix*}[r]
			1 &  1 &   &   \\
			  & -1 & 1 & 0 \\
		\end{bmatrix*}
	\end{equation}
	satisfy the relations
		$
		\hat{\Core{A}}_\flat
		=
		\hat{\Core{A}}
		\RP
		\Core{F}
	$,
	$
		\Core{F}
		\RP
		\hat{\Core{U}}_\sharp
		\RP
		\Core{G}
		=
		\hat{\Core{U}}
		\RP
		\Core{F}
	$,
	$
		\Core{F}
		\RP
		\hat{\Core{W}}_0
		\RP
		\Core{C}
		=
		\hat{\Core{T}}_{0}
		\RP
		\Core{H}
	$,
	$
		\Core{H}
		\RP
		\hat{\Core{Y}}_\sharp
		\RP
		\Core{C}
		=
		\hat{\Core{Y}}_{0}
		\RP
		\Core{H}
	$
	and
	$
		\Core{H}
		\RP
		\Core{E}
		=
		\hat{\Core{I}}
	$.
	These relations allow to sweep the cores $\Core{F}$ and
	$\Core{H}$
	through the
	first $\ell$ and
	last $L-\ell$ levels respectively:
	continuing~\eqref{Eq:DecMPdiesis}, we derive
	\begin{equation}\label{Eq:DecMPdiesis2}
		\begin{aligned}
			\hat{\Ten{N}}_{\ell \CQ L \CQ \alpha}
			&=
			c_{\ell,L}
			\;
			\hat{\Core{A}}
			\RP
			\Core{F}
			\RP
			\Par{
				\hat{\Core{U}}_\sharp
				\RP
				\Core{G}
			}^{\RP \ell}
			\!
			\RP
			\hat{\Core{W}}_0
			\RP
			\Core{C}
			\RP
			\Par{
				\hat{\Core{Y}}_\sharp
				\RP
				\Core{C}
			}^{\RP \Par{L-\ell}}
			\!
			\RP
			\Core{E}
			\RP
			\hat{\Core{M}}_\alpha
			\\
			&=
			c_{\ell,L}
			\;
			\hat{\Core{A}}
			\RP
			\hat{\Core{U}}^{\RP \ell}
			\RP
			\Core{F}
			\RP
			\hat{\Core{W}}_0
			\RP
			\Core{C}
			\RP
			\Par{
				\hat{\Core{Y}}_\sharp
				\RP
				\Core{C}
			}^{\RP \Par{L-\ell}}
			\RP
			\Core{E}
			\RP
			\hat{\Core{M}}_\alpha
			\\
			&=
			c_{\ell,L}
			\;
			\hat{\Core{A}}
			\RP
			\hat{\Core{U}}^{\RP \ell}
			\RP
			\hat{\Core{T}}_{0}
			\RP
			\Core{H}
			\RP
			\Par{
				\hat{\Core{Y}}_\sharp
				\RP
				\Core{C}
			}^{\RP \Par{L-\ell}}
			\RP
			\Core{E}
			\RP
			\hat{\Core{M}}_\alpha
			\\
			&=
			c_{\ell,L}
			\;
			\hat{\Core{A}}
			\RP
			\hat{\Core{U}}^{\RP \ell}
			\RP
			\hat{\Core{T}}_{0}
			\RP
			\hat{\Core{Y}}_{0}^{\RP \Par{L-\ell}}
			\RP
			\Core{H}
			\RP
			\Core{E}
			\RP
			\hat{\Core{M}}_\alpha
			\\
			&=
			c_{\ell,L}
			\;
			\hat{\Core{A}}
			\RP
			\hat{\Core{U}}^{\RP \ell}
			\RP
			\hat{\Core{T}}_{0}
			\RP
			\hat{\Core{Y}}_{0}^{\RP \Par{L-\ell}}
			\RP
			\hat{\Core{M}}_\alpha
			\, .
		\end{aligned}
	\end{equation}
	This proves the claim in the case of $\alpha=0$
	since $\hat{\Core{M}}_0 = \hat{\Core{N}}_0$ by~\eqref{Eq:DefCoresZZZ1} and~\eqref{Eq:DefCoresZZZ4}.

	\textbf{Sweeping from level $L$ to level $\ell$.}
	In the decomposition~\eqref{Eq:DecMPdiesis2},
	the ranks involved in the core products
	to the right of $\hat{\Core{T}}_{0}$
	(in particular, those bounding the ranks of unfolding matrices
	$\ell,\ldots,L-1+\alpha$)
	are all equal to two.
	To prove the claim, it remains to
	consider the case of $\alpha=1$ and obtain
	a reduced decomposition in which
	those ranks are all equal to one instead of two.
	To this end, we note that
	$
		\hat{\Core{Y}}_{0}
		\RP
		\hat{\Core{M}}_1
		=
		\hat{\Core{M}}_1
		\RP
		\hat{\Core{Y}}_{1}
		=
		\hat{\Core{M}}_1
		\RP
		\hat{\Core{Y}}_{1}
		\RP
		\hat{\Core{N}}_1
	$
	and
	$
		\hat{\Core{T}}_{0}
		\RP
		\hat{\Core{M}}_1
		=
		\hat{T}_1
	$.
	Applying these relations to~\eqref{Eq:DecMPdiesis2},
	we obtain the claim in the case of $\alpha=1$.
	\end{subequations}
\end{proof}

\end{appendix}

\end{document}